\def\BState{\State\hskip-\ALG@thistlm}
\newcommand\independent{\protect\mathpalette{\protect\independent}{\perp}} 
\def\independent#1#2{\mathrel{\rlap{$#1#2$}\mkern2mu{#1#2}}}
\newcommand{\bin}{\mathrm{Bin}}
\newcommand{\F}{\mathbb{F}}
\newcommand{\mZ}{\mathbb{Z}}
\newcommand{\pp}{\mathbb{P}}
\newcommand{\E}{\mathbb{E}}
\DeclareMathOperator{\Var}{Var}
\newcommand{\e}{\varepsilon}
\DeclareMathOperator{\diag}{diag}
\newcommand{\1}{\mathbb{1}}
\newcommand{\snr}{\mathrm{SNR}}
\newcommand{\sbm}{\mathrm{SBM}}
\newtheorem{theorem}{Theorem}
\newtheorem{lemma}{Lemma}
\newtheorem{corollary}{Corollary}
\newtheorem{definition}{Definition}
\newtheorem{remark}{Remark}
\newtheorem{conjecture}{Conjecture}
\begin{document}

\title{Detection in the stochastic block model with multiple clusters: proof of the achievability conjectures, acyclic BP, and the information-computation gap\footnote{Information-theoretic part in the Proc.\ of ISIT 2016 \cite{colin3isit}, algorithmic part to appear in the Proc.\ of NIPS 2016 \cite{colin3nips}.}
}

\author{Emmanuel Abbe\thanks{Program in Applied and Computational Mathematics, and EE Department, Princeton University, USA, \texttt{eabbe@princeton.edu}. This research was partly supported by the NSF CAREER Award CCF-1552131, the ARO grant W911NF-16-1-0051, and the Google Faculty Research Award.} \and 
Colin Sandon\thanks{Department of Mathematics, Princeton University, USA,
\texttt{sandon@princeton.edu}.
}}

\date{}
%\date{February 28, 2015}
\maketitle

\begin{abstract}
In a paper that initiated the modern study of the stochastic block model, Decelle et al., backed by Mossel et al., made the following conjecture: Denote by $k$ the number of balanced communities, $a/n$ the probability of connecting inside communities and $b/n$ across, and set $\mathrm{SNR}=(a-b)^2/(k(a+(k-1)b)$; for any $k \geq 2$, it is possible to detect communities efficiently whenever $\mathrm{SNR}>1$ (the KS threshold), whereas for $k\geq 4$, it is possible to detect communities information-theoretically for some $\mathrm{SNR}<1$. Massouli\'e, Mossel et al.\ and Bordenave et al.\ succeeded in proving that the KS threshold is efficiently achievable for $k=2$, while Mossel et al.\ proved that it cannot be crossed information-theoretically for $k=2$. The above conjecture remained open for $k \geq 3$. 

This paper proves this conjecture, further extending the efficient detection to non-symmetrical SBMs with a generalized notion of detection and KS threshold. For the efficient part, a linearized acyclic belief propagation (ABP) algorithm is developed and proved to detect communities for any $k$ down to the KS threshold in time $O(n \log n)$. Achieving this requires showing optimality of ABP in the presence of cycles, a challenge for message passing algorithms. The paper further connects ABP to a power iteration method with a nonbacktracking operator of generalized order, formalizing the interplay between message passing and spectral methods.  For the information-theoretic (IT) part, a non-efficient algorithm sampling a typical clustering is shown to break down the KS threshold at $k=4$. The emerging gap is shown to be large in some cases; if $a=0$, the KS threshold reads $b \gtrsim k^2$ whereas the IT bound reads $b \gtrsim k \ln(k)$, making the SBM a good study-case for information-computation gaps.
\end{abstract}

\thispagestyle{empty}
\newpage

\tableofcontents

\thispagestyle{empty}
\newpage

\pagenumbering{arabic}

%%%%%%%%%%%%%%%%%%%%%%%%%%%%%%%%%%%%%%%%%%
%%%%%%%%%%%%%%%%%%%%%%%%%%%%%%%%%%%%%%%%%%

\section{Introduction}
The stochastic block model (SBM) is a canonical model of networks with communities, and a natural model to study various central questions in machine learning, algorithms and statistics. The model serves in particular as a test bed for clustering and community detection algorithms, commonly used in social networks \cite{social1}, protein-to-protein interactions networks \cite{ppi2}, gene expressions \cite{gene-expr2}, recommendation systems \cite{amazon}, medical prognosis \cite{tumor}, DNA folding \cite{irineo}, image segmentation \cite{image1}, natural language processing \cite{ball} and more.

The SBM emerged independently in multiple scientific communities. The block model terminology, which seems to have dominated in the recent years, comes from the machine learning and statistics literature \cite{holland,sbm1,sbm3,sbm4,bickel,newman2,snij,rohe,choi}, while the model is typically called the planted partition model in theoretical computer science \cite{bui,dyer,boppana,jerrum,condon,carson,mcsherry}, and the inhomogeneous random graphs model in the mathematical literature \cite{bollo_inhomo}. 
Although the model was defined as far back as the 80s, it resurged in recent years due in part to the following fascinating conjecture established first in \cite{decelle}, and backed in \cite{Mossel_SBM1}, from deep but non-rigorous statistical physics arguments:

\begin{conjecture}\label{c1}
Let $(X,G)$ be drawn from SBM$(n,k,a,b)$, i.e., $X$ is uniformly drawn among partitions of $[n]$ into $k$ balanced clusters, and $G$ is a random graph on the vertex set $[n]$ where edges are placed independently with probability $a/n$ inside the clusters and $b/n$ across. Define $\snr=\frac{(a-b)^2}{k(a+(k-1)b)}$ and say that an algorithm detects communities if it takes as an input the graph $G$ and outputs a clustering $\hat{X}$ that is positively correlated with $X$ with high probability. Then, 
\begin{enumerate}
\item[(i)] Irrespective of $k$, if $\snr>1$, it is possible to detect communities in polynomial time,  i.e., the Kesten-Stigum (KS) threshold can be achieved efficiently;
%\item If $k \in \{2,3,4\}$, and $\snr \leq 1$, it is impossible to detect communities information-theoretically,
\item[(ii)] If\footnote{The conjecture requires $k\ge 5$ when imposing the constraint that $a>b$, and $k\ge 4$ is enough in general.} $k \geq 4$, it is possible to detect communities information-theoretically for some $\snr$ strictly below 1.
\end{enumerate}
\end{conjecture}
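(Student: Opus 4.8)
The statement splits into an algorithmic half, (i), and an information-theoretic half, (ii), which call for unrelated machinery. For (i) the plan is to linearize belief propagation and run it along walks that are non-backtracking and, via an explicit correction, effectively acyclic. Working in the space $\mR^{\vec E}$ of functions on directed edges, one initializes messages $y^{(0)}_{(u,v)}$ as i.i.d.\ random signs and iterates a recursion of the form $y^{(t+1)}_{(u,v)} = \sum_{w:(v,w)\in E,\, w\ne u} y^{(t)}_{(v,w)} - (\text{correction for walks closing a cycle of length}\le r)$; after $m$ of order $\log n$ steps the value at a vertex aggregates $\pm$-weighted starting labels over all (corrected, essentially tree-like) non-backtracking walks of length $m$. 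The first step of the analysis is a conditional first-moment computation: given the planted partition $X$, $\E[y^{(m)}_{(u,v)}\mid X]$ equals, up to lower-order terms, $\mu_2^{\,m}$ times a fixed function of the community of $v$, where $\mu_2$ is the second eigenvalue of the mean connectivity matrix (so $\mu_2=(a-b)/k$ and the Perron eigenvalue is $d=(a+(k-1)b)/k$ in the symmetric case).

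\textbf{Variance and the spectral picture.} The second step is a second-moment bound: the squared $\ell_2$ mass $\E[\|y^{(m)}\|^2]$ grows like $d^{\,m}$ (the number of non-backtracking walks of length $m$), so the statistic has signal-to-noise of order $\mu_2^{2m}/d^{\,m} = \snr^{\,m}$, which diverges exactly when $\snr>1$; thresholding the aggregated messages, or running $k$-means on them (or on a few rounds of the associated power iteration), then produces $\hat X$ positively correlated with $X$ with high probability. Since the recursion is a sparse linear map applied $O(\log n)$ times to a vector of length $O(n)$, the running time is $O(n\log n)$. I would then reinterpret the whole procedure as a power iteration for a \emph{non-backtracking operator of generalized order $r$} on directed edges --- the matrix that forbids immediate backtracking and also transitions closing a short cycle --- whose Perron eigenvalue is $\approx d$ and whose next eigenvalues are $\approx \mu_2,\dots,\mu_k$; the KS condition $\mu_2^2>d$ is precisely the spectral-gap condition for this operator, which formalizes the folklore link between message passing and spectral methods and handles all $k$ uniformly.

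\textbf{Information-theoretic part.} For (ii), where exponential time is allowed, I would analyze the estimator that outputs a uniformly random \emph{good} partition, ``good'' meaning a balanced $k$-partition whose monochromatic-edge profile is at least as favorable under the posterior as that of the planted $X$. The core is a first-moment argument over the overlap matrix: for a $k\times k$ overlap profile $T$ between a candidate and $X$ (rows and columns summing to $1/k$), $\E\big[\#\{\text{good partitions with overlap }T\}\big] \approx \exp\!\big(n(H(T)-g(T))\big)$, where $H(T)$ is the exponential growth rate of balanced partitions with overlap $T$ and $g(T)$ is the large-deviations cost of the edge-profile constraint. One shows that for $k\ge 4$ and $\snr$ slightly below $1$ the exponent $H(T)-g(T)$ is strictly negative for every $T$ outside a neighborhood of the uncorrelated profile $\tfrac1{k^2}J$, hence with high probability \emph{no} uncorrelated good partition exists; as $X$ is itself good, a uniformly random good partition is then forced to have overlap bounded away from $\tfrac1{k^2}J$, i.e.\ it detects. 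Specializing to $a=0$ makes ``good'' synonymous with ``balanced proper $k$-coloring of $G$'', $H$ and $g$ become explicit, and the exponent turns negative near the uncorrelated profile once $b\gtrsim k\ln k$, versus the KS scaling $b\asymp k^2$ --- whence the large gap.

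\textbf{Main obstacles.} For (i) the crux is the second-moment bound \emph{in the presence of cycles}: a sparse random graph has $\Theta(1)$ short cycles through a typical vertex and $\omega(1)$ cycles in the depth-$m$ ball, so the tree heuristic is false; one must verify that the order-$r$ acyclic correction cancels exactly the walk-pairs whose covariance would otherwise dominate $\mu_2^{2m}$, and then control the combinatorics of the surviving self-intersecting non-backtracking walk pairs --- this is where the choice of $r$, and the $O(n\log n)$ bookkeeping, enter. For (ii) the difficulty is optimizing $H(T)-g(T)$ over the $(k-1)^2$-dimensional polytope of overlap profiles: one needs the maximum strictly negative, which demands a careful second-order expansion of both $H$ and $g$ around $\tfrac1{k^2}J$ (to kill a neighborhood of the uncorrelated point) together with a separate, more global estimate ruling out maximizers far from it --- and it is precisely the interplay of these two regimes that produces the requirement $k\ge 4$.
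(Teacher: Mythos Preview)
Your outline for part~(i) captures the right algorithm (linearized BP on $r$-nonbacktracking walks, i.e.\ power iteration on a generalized nonbacktracking operator) but omits the step that makes it work: \emph{compensation for the Perron eigenvalue}. The random initialization $y^{(0)}$ has a $\Theta(1/\sqrt{n})$ component along the top eigenvector (the all-ones direction in the symmetric case), and under the raw recursion this component is amplified by $\lambda_1^m=d^{\,m}$, not $\mu_2^m$. Because vertex degrees fluctuate, this is not a pure shift but a degree-dependent bias, so it contributes empirical variance of order $\lambda_1^{2m}/n$ to $y^{(m)}$; since $\lambda_1>|\mu_2|$, this always dominates the signal $\mu_2^{2m}/n$, and your claimed second-moment growth $\E\|y^{(m)}\|^2\asymp d^{\,m}$ is simply false without compensation. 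The paper inserts either a per-step mean subtraction or, equivalently, a post-hoc application of $(W^{(r)}-\lambda_1 I)^{m'}$ for $m'$ a suitable fraction of $m$; for general $PQ$ one must deflate each eigenvalue $\lambda_{s'}$ with $|\lambda_{s'}|>|\lambda_s|$, and controlling the effect of these subtractions on the remaining walk counts (the ``shard decomposition'' in the paper) is where most of the technical work lies. Your sketch treats ``thresholding the aggregated messages'' as the endgame; in fact, without the deflation layer neither your first-moment identity $\E[y^{(m)}\mid X]\approx \mu_2^m\cdot(\cdot)$ nor your variance bound holds.

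For part~(ii) the first-moment argument you describe is exactly the paper's initial union bound: bounding $\E|\{\text{typical clusterings with bad overlap}\}|$ by $k^n\cdot e^{-nA/k}$ and asking when this is $o(1)$. That calculation crosses the KS threshold only at $k=5$ (and for $b=0$ it requires $a>2k$, double the truth); it does \emph{not} reach $k=4$, so your claim that the $H(T)-g(T)$ optimization ``produces the requirement $k\ge4$'' is off by one. To get $k=4$ the paper does not sharpen the numerator but instead lower-bounds the denominator $|T_\delta(G)|$: it counts typical clusterings arising from (a) relabelling the $\Theta(n)$ vertices on isolated tree components via the broadcast-on-trees distribution, (b) doing the same on planted trees hanging off the giant, and (c) swapping unsaturated vertices whose neighbourhoods miss some community. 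This yields $|T_\delta(G)|\ge k^{(\psi-\varepsilon)n}$ for an explicit $\psi>0$, and it is the ratio $k^n e^{-nA/k}/k^{\psi n}$, not the raw first moment, that one drives to zero. The specialization to $a=0$ you mention does give the $b\gtrsim k\ln k$ scaling, but via this refined denominator bound, not via the overlap-landscape optimization alone.
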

%For $k=2$, part 1 was proved in \cite{} and part 2 was proved in \cite{}. 

We prove this conjecture in this paper. The problem was settled already for the case of $k=2$: It was proved in \cite{massoulie-STOC,Mossel_SBM2} that the KS threshold can be achieved efficiently for $k=2$, with an alternative proof later given in \cite{bordenave}, and \cite{Mossel_SBM1} shows that no information-computation gap takes places for $k=2$ with a tight converse. It was also shown in \cite{bordenave} that for SBMs with multiple communities satisfying a certain asymmetry condition (i.e., the requirement that $\mu_k$ is a simple eigenvalue in Theorem 5 of \cite{bordenave}), the KS threshold can be achieved. Yet, \cite{bordenave} does not resolve Conjecture \ref{c1} for $k \geq 3$.

An interesting challenge raised by part (i) of the conjecture is that standard clustering methods, commonly used in applications, fail to achieve the KS threshold. This includes spectral methods based on the adjacency matrix or standard Laplacians, as well as SDPs. For standard spectral methods, a first issue is that the fluctuations in the node degrees produce high-degree nodes that disrupt the eigenvectors from concentrating on the clusters.\footnote{This issue is further enhanced on real networks where degree variations are large.}  A classical trick is to trim such high-degree nodes \cite{coja-sbm,Vu-arxiv,sbm-groth,new-vu}, throwing away some information, but this does not suffice to achieve the KS threshold. SDPs are a natural alternative, but they also stumble\footnote{The recent results of \cite{ankur_SBM} on robustness to monotone adversaries suggest that SDPs can in fact not achieve the KS threshold.} before the KS threshold \cite{sbm-groth,montanari_sen}, focusing on the most likely rather than typical clusterings. As we shall show in this paper, and as already investigated in \cite{redemption,bordenave} for two communities, a linearized BP algorithm, or equivalently a spectral algorithm on a nonbacktracking operator, provides instead a solution to the conjecture. 

The nonbacktracking matrix $B$ of a graph was introduced by Hashimoto \cite{hashimoto} to study the Ihara zeta function, with the identity $\det (I - z B)=\frac{1}{\zeta(z)}$, where $\zeta$ is the Ihara zeta function of the graph. In particular, the poles of the Ihara zeta function are the reciprocal of the eigenvalues of $B$. Studying the spectrum of a graph thus implies properties on the location of the Ihara zeta function. The matrix is further used to define the graph Riemann hypothesis \cite{terras}, and studying its spectrum for random graphs such as the block model allows for generalizations of notions of Ramanujan graphs and Friedman's Theorem to non-regular cases, as discussed in \cite{bordenave}. The operator that we study is a natural extension of the classical nonbacktracking operator of Hashimoto, where we prohibit not only standard backtracks but also finite cycles. 
%satisfied for graphs whose nonbacktracking matrix eigenvalues do not lie in $(\sqrt{\rho_{B}},\rho_{B_g})$, where $\rho_{B_g}$ is the Frobenius eigenvalue of $B_g$.
% As discussed in \cite{bordenave}, characterizing the spectrum of random graphs such as the block model has thus consequences for generalizing notions of Ramanujan graphs to non-regular cases, and counter-parts of Friedman's Theorem \cite{}. 

In their original paper \cite{decelle}, Decelle et al.\ conjecture that belief propagation (BP) achieves the KS threshold, and in fact, gives the the optimal accuracy in the reconstruction of the communities. However, the main issue when applying BP to the SBM is the classical one: the presence of cycles in the graph makes the behavior of the algorithm much more difficult to understand, and BP is susceptible to settling down in the wrong fixed points.\footnote{Empirical studies of BP on loopy graph show that convergence still takes place in some cases \cite{loopy}.} This is a long standing challenge in the realm of message passing algorithms for graphical models. Moreover, achieving the KS threshold requires precisely running BP to an extent where the graph is not even tree-like, thus precluding simple tricks. Numerical simulations suggest that starting with a purely random initialization, i.e., letting each vertex in the graph guess its community membership at random, and running BP works. However, no method is currently known to control random initialization, as discussed in \cite{Mossel_SBM2}. We develop here an alternate approach, using a linearized version of belief propagation. 

Further, the paper proves part (ii) of the conjecture, crossing the KS threshold at $k=4$ using a non-efficient algorithm that samples a typical clustering (i.e., a clustering having the right proportions of edges inside and across clusters). Note that the information-computation gap concerns the gap between the KS and information-theoretic thresholds, which is the gap between the computational and information-theoretic thresholds only under non-formal evidences \cite{decelle}. 
However, the IT bound that results from our analysis gives a gap to the KS threshold which is large in some cases (quasi-linear in $k$), making the SBM a good study-case for such gap phenomena. 
%Bounds on the information-theoretic thresholds were also obtained independently in \cite{banks}.  

%\noindent
%For the case of $k=2$, it was proved in \cite{massoulie-STOC,Mossel_SBM2} that the KS threshold can be achieved efficiently. An alternative proof was later given in \cite{bordenave}. However, for $k=2$, no information-computation gap takes places as shown with a tight converse in \cite{Mossel_SBM1}. It was also shown in \cite{bordenave} that for SBMs with multiple slightly asymmetric communities, the KS threshold can be achieved. Yet, \cite{bordenave} does not resolve Conjecture \ref{c1} for $k \geq 3$. The fact that BP with a random initialization could achieve the KS threshold for arbitrary $k$ was believed to take place \cite{decelle}, but no prior method allowed to control random initialization (see discussion in \cite{Mossel_SBM2}) and handle cycles. This paper takes this approach upfront, proving that a random initialization leads to an accurate solution when adapting BP to handle cycles. Concerning crossing the KS threshold, note that the information-computation gap concerns the gap between the KS threshold and what is achieved information-theoretically, which is the gap between the information-theoretic and computational thresholds only under non-formal evidences \cite{decelle}. 
%No results were known regarding part $(ii)$ of Conjecture 1. 

\subsection{Our results}\label{results}
The following are obtained:
\begin{enumerate}
\item A linearized acyclic belief propagation (ABP) algorithm is developed and shown to detect communities down to the KS threshold with complexity $O(n \log n)$, proving part (i) of Conjecture 1. A more general result applying to arbitrary (possibly asymmetrical) SBMs with a generalized notion of detection and KS threshold is also developed. The complexity of ABP is either comparable or improved compared to prior algorithms for $k=2$ \cite{massoulie-STOC,Mossel_SBM2,bordenave}, while ABP achieves universally the KS threshold (see Theorem \ref{main1});
\item An algorithm that samples a clustering with typical volumes and cuts is shown to detect communities below the KS threshold at $k=4$, proving part (ii) of Conjecture 1;
\item A connection between ABP and a power iteration method on a non-backtracking operator is developed, extending the operator of \cite{hashimoto} to higher order non backtracks, and formalizing the interplay described in \cite{redemption} between linearized BP and nonbacktracking operators;
\item An information-theoretic (IT) bound is derived for the symmetric SBM. For $a=0$, it is shown that detection is information-theoretically solvable if $b >c k \ln k + o_k(1)$, $c \in [1,2]$. Thus the information-computation gap --- defined as the gap between the KS threshold and the IT bound --- can be large since the KS threshold reads $b > k(k-1)$. 
Our bound interpolates the optimal threshold at $a=0$, and is conjectured to be tight in the scaling of $b$ for small $b$ and any $k$, and in the scaling of $k$ for large $k$.
\item An efficient algorithm is shown to learn the parameters $a,b,k$ in the symmetric SBM down to the KS threshold. 
\end{enumerate}

To achieve the KS threshold, we rely on a linearized version of BP that can handle cycles. The simplest linearized\footnote{Different forms of approximate message passing algorithms have been studied, such as in \cite{amp} for compressed sensing.} version of BP is to simply repeatedly update beliefs about a vertex's community based on its neighbor's suspected communities while ignoring the part of that belief that results from the beliefs about that vertex's community to prevent a feedback loop. However, this only works ideally if the graph is a tree. The correct response to a cycle would be to discount information reaching the vertex along either branch of the cycle to compensate for the redundancy of the two branches. However, due to computational issues we simply prevent information from cycling around small cycles in order to limit feedback. We also add steps where a multiple of the beliefs in the previous step are subtracted from the beliefs in the current step to prevent the beliefs from settling into an equilibrium where vertices' communities are sytematically misrepresented in ways that add credibility to each other. We refer to Section \ref{ABP} for a complete description of the algorithm and Section \ref{proof_tech1} for further intuition on how it performs. 

The fact that ABP is equivalent to a power iteration method on a non-backtracking operator results from its linearized form, as pointed out first  in \cite{redemption}.  This provides an intriguing synergy between message passing and spectral algorithms. 
It further allows us to interpret the obstructions of spectral methods through the lens of BP. The risk of obtaining eigenvectors that concentrate on singular structures (e.g., high degree nodes for Laplacian's), is related to the risk that BP settles down in wrong fixed points (e.g., due to cycling around high-degree nodes). Rather than removing such obstructions, ABP mitigates the feedback coming from the loops, giving rise to a non-backtracking operator, which extends the operator of \cite{hashimoto} by considering higher order nonbacktracks. In addition to simplifying the proofs, considering higher order nonbacktracking operators can help mitigating short loops in more general models, which is of independent interest. Further details are provided in Section \ref{proof_tech1}.

Note that solving the detection problem has direct implications on obtaining algorithms having optimal agreement (i.e., least fraction of mislabelled vertices) for the SBM. Once one has reasonable initial guesses of which communities the vertices are in, one can simply use full belief propagation to improve this to a better agreement. In order to do that from that output of ABP, one needs to first convert the division of vertices into two sets that are correlated with their communities to an assignment of each vertex to a nontrivial probability distribution for how likely it is to be in each community. Then, for each adjacent $v$ and $v'$, one can determine the probability distribution of what community $v$ is in based of the signs of $y'_{(v'',v)}$ for all $v''\ne v'$. Finally, use these as the starting probabilities for a belief propagation algorithm of depth $\ln(n)/3\ln(\lambda_1)$. See Section \ref{open} for further details on how this can be done. The main algorithmic challenge for obtaining optimal agreement in the SBM seems then captured by the detection problem.

To cross the KS threshold information theoretically, we rely on a non-efficient algorithm that samples a typical clustering. Upon observing a graph drawn from the SBM, the algorithm builds the set of all partitions of the $n$ nodes that have a typical fraction of edges inside and across clusters, and then samples a partition uniformly at random from that set. The analysis of the algorithm reveals three different regimes, that reflect three layers of refinement in the bounds on the typical set's size. In a first regime, no bad clustering (i.e., partition of the nodes that classifies close to $1/k$ of the vertices correctly) is typical with high probability based on a union-bound, and the algorithm samples only good clusterings with high probability. This allows us to cross the KS threshold for $k=5$ when $a=0$ but does not give the right bound at $b=0$. In a second regime, the large number of tree-like components in the graph is exploited, finding some bad clusterings to be typical but unlikely to be sampled. This gives a regime where the algorithm succeeds with the right bound at $b=0$, but not the right approximation at small $b$. To address the latter, a finer estimate on the typical set's size is obtained by also exploiting parts of the giant that are tree-like. Finally, we tighten our estimates on the typical set's size by taking into account vertices that are not saturated, i.e., whose neighbors do not cover all communities. The final bound crosses the KS threshold at $k=4$, interpolates the optimal threshold at $a=0$, and is conjectured to be tight in the scaling of $b$ for small $b$ and in the scaling of $k$ for large $k$ and small $a$. Further details are in Section \ref{proof_tech2}.

The learning of the parameters $a,b,k$ is done similarly as for the case $k=2$ \cite{Mossel_SBM1}. Note that learning the parameters when $k$ is unknown was previously settled only for diverging degrees \cite{colin2nips}, with related results in \cite{borgs_nips}.

\subsection{Related literature}

Several methods were proved to succeed down to the KS threshold for two communities. The first is based\footnote{Related ideas relying on shortest paths were also considered in \cite{bhatt-bickel}.} on a spectral method from the matrix of self-avoiding walks (entry $(i,j)$ counts the number of self-avoiding walks of moderate size between vertices $i$ and $j$) \cite{massoulie-STOC}, the second on counting weighted non-backtracking walks between vertices \cite{Mossel_SBM2}, and the third on a spectral method with the matrix of non-backtracking walks between directed edges (each edge is replaced with two directed edges and entry $(e,f)$ is one if and only if edge $e$ follows edge $f$) \cite{bordenave}. The first method has a complexity of $O(n^{1+\e})$, $\e>0$, while the second method affords a lesser complexity of $O(n \log^2 n)$ but with a large constant (see discussion in \cite{Mossel_SBM2}). These two methods were the first to achieve the KS threshold for two communities. The third method is based on a thorough analysis of the spectrum of the non-backtracking operator and allows going beyond the SBM with 2 communities, requiring however a certain asymmetry in the SBM parameters to obtain a result for detection (the precise condition is the requirement on $\mu_k$ being a simple eigenvalue of $M$ in Theorem 5 of \cite{bordenave}), thus falling short of proving Conjecture 1.(i) for $k \geq 3$ (since the second eigenvalue in this case has multiplicity at least 2). Note that a certain amount of symmetry is needed to make the detection problem interesting. For example, if the communities have different average degrees, detection becomes trivial. Thus the symmetric model SBM$(n,k,a,b)$ is in a sense the most challenging model for detection.

The non-backtracking operator was proposed first for the SBM in \cite{redemption}, also described as a linearization of BP. A precise spectral analysis of this operator is developed in \cite{bordenave} and applied to the SBM. This approach gives a fascinating approach to community detection, giving the first rigorous understanding on why nonbacktracking operators achieve the KS threshold. Besides the previously mentioned shortcomings for Conjecture 1 in the symmetric case, the operator suffers from an increase in dimension, as the derived matrix scales with the number of edges rather than vertices (specifically $2|E| \times 2|E|$, where $|E|$ is the number of edges).\footnote{The non-backtracking matrix is also not normal and has thus a complex spectrum; an interesting heuristic based on the Bethe Hessian operator was proposed in \cite{florent_bethe} to address the dimensionality and normality issues.} Nonbactracking spectral methods were also developed recently for the problem of detecting a single planted community \cite{}.

Our results are closest to \cite{Mossel_SBM2,bordenave}, while diverging in several key parts. 
A few technical expansions in the paper are similar to those carried in \cite{Mossel_SBM2}, such as the weighted sums over nonbacktracking walks and the SAW decomposition from \cite{Mossel_SBM2}, which are similar to our compensated nonbacktracking walk counts and standard decomposition. Our modifications are however developed to cope with general SBMs rather than the 2-symmetric case, in particular to compensate for the dominant eigenvalues in the latter setting, which is delicate due to the numerous potentially close eigenvalues. Our algorithm complexity is also slightly reduced by a logarithmic factor. 

Our algorithm is also closely related to \cite{bordenave}, which focuses on extracting the eigenvectors of the standard nonbacktracking operator. However, our proof technique is different than the one in \cite{bordenave}, so that we can cope with the setting of Conjecture 1. Also, we do not proceed with the eigenvectors extraction, but implement the algorithm in a belief propagation fashion. This avoids building the nonbacktracking matrix whose dimension grows with the number of edges. Note that from a spectral point of view, the power iteration method that we use is not relying on a traditional deflation method that subtracts the dominant eigenvector. Such an approach is likely to work in the symmetric SBM, but in the general SBM, we rely on a different approach that subtracts large eigenvalues times the identity matrix. Another difference from \cite{bordenave} is that we rely on nonbacktracking operators of higher orders $r$. While $r=2$ is arguably the simplest implementation and may suffice for the sole purpose of achieving the KS threshold, a larger $r$ may be beneficial in practice. For example, an adversary may add triangles for which ABP with $r=2$ would fail while larger $r$ would succeed. Finally, the approach of ABP can be extended beyond the linearized setting to improve the algorithm's accuracy.

For the information-theoretic part, a few papers have studied information-theoretic bounds and information-computation tradeoffs for SBMs with a growing number of communities \cite{chen-xu}, two unbalanced communities \cite{neeman-k}, and a single community \cite{am_1comm}. No results seemed known for the symmetric SBM and Conjecture 1(b). Shortly after this paper posting, \cite{banks} obtained bounds on the information theoretic threshold in an independent effort using moment methods. The bound in \cite{banks} crosses at $k=5$ rather than $k=4$ and does not interpolate to the giant component bound for $b=0$.

\subsection{Related models}
Exact recovery is a stronger recovery requirement than detection, which has long been studied for the SBM \cite{bui,dyer,boppana,snij,jerrum,condon,carson,mcsherry,bickel,rohe,choi,sbm-algos,Vu-arxiv,chen-xu,levina,abbs}, and more recently in the lens of sharp thresholds \cite{abh,mossel-consist,prout,new-xu,afonso_single,harrison,jog,prout2}. The notion of exact recovery requires a reconstruction of the complete communities with high probability. It was proved in \cite{abh,mossel-consist} that exact recovery has a sharp threshold for 
SBM$(n,2,a \log(n),b\log(n))$ at $|\sqrt{a}-\sqrt{b}|=1$, which can be achieved efficiently. As opposed to detection which can exploit variations in degrees, exact recovery becomes harder when considering general SBMs, where communities have different relative sizes and different connectivity parameters. In \cite{colin1}, it was proved that for the general SBM with linear size communities, exact recovery has a sharp threshold at the CH-divergence, and the threshold is proved to be efficiently achievable (without knowing the parameters in \cite{colin2}). This further improves on the result of \cite{Vu-arxiv} that apply to the logarithmic degree regime in full generality. Thus, for exact recovery with linear size communities, there is no information-computation gap. 
When considering sub-linear communities and coarser regime of the parameters, \cite{chen-xu} gives evidences that exact recovery can again have information-computation gaps. We also conjecture that similar phenomenon can take place in the setting of \cite{colin1} for exact recovery when $k$ is larger than $\log(n)$.

Finally, many variants of the SBM can be studied, such as the labelled block model \cite{airoldi,label_marc,jiaming}, the censored block model \cite{abbetoc,abbs,Chen_Goldsmith_ISIT2014,abbs-isit,rough,new-vu,florent_CBM}, the degree-corrected block model \cite{newman2}, overlapping block models \cite{fortunato} and more. While most of the fundamental challenges seem to be captured by the SBM already, these represent important extensions for applications.

\section{Results}

%The SBM can be defined with a Binomial or uniform community prior. This means that for a probability vector $p=(p_1,\dots,p_k)$, each vertex may be assigned a label in $[k]$ independently with probability $p$, or the communities may be drawn uniformly at random among partitions of $n$ that have a fraction of $p_i +o(1)$ vertices in community $i$. These are equivalent for the purpose of this paper, due to standard concentration arguments \cite{AlonSpencer} and the fact that the graph contains a constant fraction of isolated nodes with high probability. In the case when $p=(1/k,\dots,1/k)$, we say that the communities are balanced.

% and define $\mathrm{Balanced}(n,k)$ as the set of vectors in $[k]^n$ with an asymptotically uniform fraction of components in each community.

%We denote the general sparse SBM by $\sbm(n,p,Q/n)$, where $n$ is the number of vertices in the graph, $p$ is a probability distribution on $[k]$ governing the relative sizes of the communities,\footnote{Note that $p$ does not scale with $n$.} and $Q$ is a $k \times k$ symmetric matrix with nonnegative entries, such that $Q/n$ gives the connectivity matrix (for sufficiently large $n$), i.e., pair of nodes in community $i$ and $j$ connect independently with probability $Q_{i,j}/n$. The symmetric SBM is defined as follows. 

\begin{definition}
For positive integers $k,n$, a probability distribution $p\in (0,1)^k$, and a $k\times k$ symmetric matrix $Q$ with nonnegative entries, we define $\sbm(n,p,Q/n)$ as the probability distribution over ordered pairs $(\sigma,G)$ of an assignment of vertices to one of $k$ communities and an $n$-vertex graph generated by the following procedure. First, each vertex $v \in V(G)$ is independently assigned a community $\sigma_v$ under the probability distribution $p$. Then, for every $v\ne v'$, an edge is drawn in $G$ between $v$ and $v'$ with probability $Q_{\sigma_v,\sigma_{v'}}/n$, independently of other edges. We define $\Omega_i=\{v:\sigma_v=i\}$.
\end{definition}
We sometimes say that $G$ is drawn under $\sbm(n,p,Q/n)$ without specifying $\sigma$. The SBM is called symmetric if $p$ is uniform and if $Q$ takes the same value on the diagonal and the same value outside the diagonal. 
\begin{definition}
$(\sigma,G)$ is drawn under $\mathrm{SBM}(n,k,a,b)$, if $p_i=1/k$, $Q_{i,i}=a$ and $Q_{i,j}=b$ for every $i,j \in [k]$, $i \neq j$.
\end{definition}
Our goal is to find an algorithm that can distinguish between vertices from one community and vertices from another community in a non trivial way, as defined below. 

\begin{definition}
Let $A$ be an algorithm that takes a graph as input and outputs a partition of its vertices into two sets. $A$ solves detection\footnote{Detection is also called weak recovery.} (or detects communities) in graphs drawn from $\sbm(n,p,Q/n)$ if there exists $\epsilon>0$ such that the following holds. When $(\sigma,G)$ is drawn from $\sbm(n,p,Q/n)$ and $A$ divides its vertices into $S$ and $S^c$, with probability $1-o(1)$, there exist $i,j \in [k]$ such that $|\Omega_i\cap S|/|\Omega_i|-|\Omega_j\cap S|/|\Omega_j|>\epsilon$. Detection is solvable efficiently if the algorithm runs in polynomial time in $n$, and information-theoretically if no such complexity bound is obtained. 
\end{definition}

In other words, an algorithm solves detection if it divides the graph's vertices into two sets such that vertices from different communities have different probabilities of being assigned to one of the sets. An alternate definition by Decelle et al.\ \cite{decelle} says that an algorithm succeeds at detection if it divides the vertices into $k$ sets and there exists $\epsilon>0$ such that with high probability there exists an identification of the sets with the communities such that the algorithm classifies at least $\max p_i+\epsilon$ of the vertices correctly:
%\begin{definition}
%Let $\sigma \in [k]^n$ and $\e>0$. We define the set of bad clusterings with respect to $\sigma$ as
%\begin{align}
%B_\e(\sigma)&=\{y \in [n]^k : \frac{1}{n} d_*(\sigma,y) > 1-\frac{1}{k} - \e \},
%\end{align}
%where $d_*(\sigma,y)$ is the minimum Hamming distance between $\sigma$ and any relabelling\footnote{Relabellings need to be considered since only the partition needs to be detected and not the actual labels. It is simply convenient to work with labels.} 
% of $y$ (i.e., any mapping of the components of $y$ with a fixed permutation of $[k]$ ). 
%\end{definition}

\begin{definition}
An algorithm $\hat{\sigma}:2^{[n] \choose 2} \to [k]^n$ solves max-detection in $\mathrm{SBM}(n,p,Q)$ if for some $\e>0$,  
\begin{align}
\pp\{A(\sigma,\hat{\sigma})\geq \max_{i \in [k]} p_i +\e\} = 1-o(1),
\end{align} 
where $(\sigma,G)\sim \mathrm{SBM}(n,p,Q)$ and for $x,y \in [k]^n$, $A(x,y) = \max_{\pi \in S_k} \frac{1}{n} \sum_{i=1}^n \1(x_i=\pi(y_i))$ denotes the agreement\footnote{Permutations $\pi$ need to be considered since only the partition needs to be detected and not the actual labels.} between $x$ and $y$. 
\end{definition}

In the $k$ community symmetric case, these definitions (detection and max-detection) are equivalent, but under asymmetry, this may 
not hold. 
%be possible to detect in the general sense but not for max-detection. 
%Consider a two community SBM where each vertex is in community 1 with probability .99. Each pair of vertices in community 1 have an edge between them with probability $2/n$, while vertices in community 2 never have edges. Dividing the vertices into those with degree 0 and those with positive degree satisfies our definition for detection. however, regardless of what edges a vertex has it is more likely to be in community 1 than community 2, so detection according to Decelle's definition is impossible.
Consider a two community asymmetric case where $p=(.2,.8)$. An algorithm that could find a set containing $2/3$ of the vertices from the large community and $1/3$ of the vertices from the small community would satisfy our definition; however, it would not satisfy Decelle's definition because each vertex is more likely to be in the large community than the small one no matter what the algorithm outputs. In general, our definition is satisfied by any algorithm that produces nontrivial amounts of evidence on what communities the vertices are in, while Decelle's definition requires the algorithm to sometimes produce enough evidence to overcome the prior probability. This may not always be possible (take for example the extreme case of an SBM with two community where each vertex is in community 1 with probability 0.99 and each pair of vertices in community 1 have an edge between them with probability $2/n$, while vertices in community 2 never have edges). If all communities have the same size then this distinction is meaningless, and we have:

\begin{lemma}
Let $k>0$, $Q$ be a $k\times k$ symmetric matrix with nonnegative entries, $p$ be the uniform distribution over $k$ sets, and $A$ be an algorithm that solves detection in graphs drawn from $\sbm(n,p,Q/n)$. Then $A$ also solves detection according to Decelle's criterion, provided that we consider it as returning $k-2$ empty sets in addition to its actual output.
\end{lemma}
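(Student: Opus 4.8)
The plan is to show that the two definitions coincide when $p$ is uniform, so that an algorithm satisfying the (two-set) detection criterion can be mechanically post-processed into one satisfying Decelle's $k$-set max-detection criterion. First I would set up notation: suppose $A$ divides $V(G)$ into $S$ and $S^c$, and with probability $1-o(1)$ there are $i,j\in[k]$ with $|\Omega_i\cap S|/|\Omega_i|-|\Omega_j\cap S|/|\Omega_j|>\epsilon$. Since $p$ is uniform, $|\Omega_\ell|=(1+o(1))n/k$ for all $\ell$ with high probability by a Chernoff bound, so the fractional imbalance translates into an absolute imbalance: $|\Omega_i\cap S|-|\Omega_j\cap S|>(\epsilon/k - o(1))\,n$. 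Define the augmented output $\hat\sigma$ to assign every vertex in $S$ the label $1$ and every vertex in $S^c$ the label $2$ (labels $3,\dots,k$ are the promised empty sets).

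The key step is to lower-bound the agreement $A(\sigma,\hat\sigma)=\max_{\pi\in S_k}\frac1n\sum_v \1(\sigma_v=\pi(\hat\sigma_v))$. It suffices to exhibit one permutation $\pi$ doing well. I would take $\pi$ to send $\hat\sigma$-label $1$ to community $i$, label $2$ to community $j$, and send labels $3,\dots,k$ bijectively onto the remaining $k-2$ communities. With this choice the number of correctly labelled vertices is at least $|\Omega_i\cap S| + |\Omega_j\cap S^c|$. Now use $|\Omega_j\cap S^c| = |\Omega_j| - |\Omega_j\cap S|$ together with the imbalance bound and $|\Omega_j|\ge(1-o(1))n/k$: this gives correctly labelled count $\ge |\Omega_j| + (|\Omega_i\cap S|-|\Omega_j\cap S|) \ge (1-o(1))n/k + (\epsilon/k-o(1))n$. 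Dividing by $n$ and recalling $\max_\ell p_\ell = 1/k$, we get $A(\sigma,\hat\sigma)\ge 1/k + \epsilon/k - o(1) = \max_\ell p_\ell + \epsilon'$ for $\epsilon' = \epsilon/(2k)$, say, absorbing the $o(1)$. Since all of this holds on an event of probability $1-o(1)$, Decelle's criterion is met with constant $\epsilon'$.

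I expect the only real subtlety — and the main place to be careful rather than a genuine obstacle — is bookkeeping the $o(1)$ terms and the concentration of the $|\Omega_\ell|$: one must check that the high-probability event on which the community sizes concentrate and the high-probability event on which $A$'s imbalance guarantee holds can be intersected while still having probability $1-o(1)$, which is immediate since there are finitely many of them. One should also note that the same reasoning does \emph{not} survive when $p$ is non-uniform, since then $|\Omega_j\cap S^c|$ can be small relative to $\max_\ell |\Omega_\ell|$, matching the two-community $p=(.2,.8)$ counterexample discussed above; invoking uniformity of $p$ at exactly the two points above (equal sizes, and $\max_\ell p_\ell = 1/k$) is what makes the argument go through.
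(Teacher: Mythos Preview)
Your proposal is correct and follows essentially the same argument as the paper: map $S$ to community $i$ and $S^c$ to community $j$, compute the fraction correctly classified as $(|\Omega_i\cap S|+|\Omega_j\cap S^c|)/n = |\Omega_j|/n + (|\Omega_i\cap S|-|\Omega_j\cap S|)/n \ge 1/k + \epsilon/k - o(1)$. The paper's version is terser (it leaves the concentration of $|\Omega_\ell|$ implicit), but the computation and the choice of permutation are identical.
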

%\Enote{In case people get disturbed by the 'empty sets', maybe mention that one can also pick random subsets appropriately}
%\Cnote{Why is that better? The Decelle definition implicitly uses putting all vertices in one community as a basepoint, so it does not seem like there is a problem with empty sets.}

\begin{proof}
Let $(\sigma,G)$ be drawn from $\sbm(n,p,Q/n)$ and $A(G)$ return $S$ and $S'$. There exists $\epsilon>0$ such that with high probability there exist $i$ and $j$ such that $|\Omega_i\cap S|/|\Omega_i|-|\Omega_j\cap S|/|\Omega_j|>\epsilon$. So, if we map $S$ to community $i$ and $S'$ to community $j$, the algorithm classifies at least 
\[|\Omega_i\cap S|/n+|\Omega_j\cap S'|/n=|\Omega_j|/n+|\Omega_i\cap S|/n-|\Omega_j\cap S|/n\ge 1/k+\epsilon/k-o(1)\]
 of the vertices correctly with high probability.
\end{proof}

\subsection{Achieving the KS threshold efficiently}
We present first a result that applies to the general SBM. We next specify the result for symmetric SBMs, and provide the ABP algorithm in the next section. Given parameters $p$ and $Q$ for the SBM, let $P$ be the diagonal matrix such that $P_{i,i}=p_i$ for each $i \in [k]$. Also, let $\lambda_1,...,\lambda_h$ be the distinct eigenvalues of $PQ$ in order of nonincreasing magnitude. Our results are in terms of the following notion of SNR:
\begin{definition}
The signal to noise ratio of $\sbm(n,p,Q/n)$ is defined by $$\snr=\lambda_2^2/\lambda_1.$$ 
\end{definition}
This paper shows that efficient detection is possible if $\snr >1$. In the $k$ community symmetric case $\sbm(n,k,a,b)$ where vertices are connected with probability $a/n$ inside communities and $b/n$ across, we have $\snr=(\frac{a-b}{k})^2/(\frac{a+(k-1)b}{k})=(a-b)^2/(k(a+(k-1)b))$, which is the quantity in Conjecture 1.

\begin{theorem}\label{main1}
Let $p\in (0,1)^k$ with $\sum p=1$, $Q$ be a symmetric matrix with nonnegative entries, $P$ be the diagonal matrix such that $P_{i,i}=p_i$, and $\lambda_1,...,\lambda_{h}$ be the distinct eigenvalues of $PQ$ in order of nonincreasing magnitude. If $\lambda_2^2>\lambda_1$ then there exist constants $r,c$ and $m=\Theta(\log(n))$ such that the acyclic belief propagation algorithm with these parameters solves detection in $SBM(n,p,Q/n)$. The algorithm can be run in $O(n\log n)$ time.
\end{theorem}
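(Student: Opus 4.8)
The plan is to prove Theorem~\ref{main1} by analyzing the acyclic belief propagation (ABP) algorithm through its linearization, which is a power iteration with a nonbacktracking operator of order $r$. The first step is to set up the message-passing recursion: after one bootstrapping step (to break the symmetry of the all-ones fixed point), ABP propagates real-valued messages $y^{(t)}_{(u,v)}$ along directed edges, where the update sums incoming messages over nonbacktracking walks of length $t$ that avoid cycles of length $\le r$, and periodically subtracts a multiple $\lambda_1$ of the depth-$(t-1)$ beliefs to kill the leading eigenvalue. The key identity to establish is that, modulo the cycle corrections, $y^{(t)}_{(u,v)}$ equals (up to normalization) a sum over nonbacktracking walks of length $t$ in $G$ ending at $v$, weighted by the initial random labels; this is the "compensated nonbacktracking walk count" analogue of the SAW decomposition of \cite{Mossel_SBM2}.

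The heart of the proof is a first and second moment computation for these walk-count statistics on $\sbm(n,p,Q/n)$. On the locally tree-like part of $G$, a nonbacktracking walk of length $t$ from $v$ behaves like a Galton--Watson multitype branching process with offspring mean matrix $M = PQ$ (equivalently $Q$ acting with the community-size weighting $P$). Conditioning on $\sigma$, the expected contribution of the walk statistic aligned with the second eigenvector of $M$ grows like $\lambda_2^t$, while the variance — dominated by pairs of walks that diverge after a short shared prefix — grows like $\lambda_1^t$ (the spectral radius governs the second moment / number of walks). Taking $t = m = \Theta(\log n)$ with a small constant, the condition $\lambda_2^2 > \lambda_1$ is exactly what makes the signal $\lambda_2^{2m}$ dominate the noise $\lambda_1^{m}$, so the statistic $\sum_{(u,v)} y^{(m)}_{(u,v)} \cdot (\text{community indicator})$ concentrates and is bounded away from what it would be under an independent labeling. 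One then thresholds the final beliefs $y^{(m)}$ to produce the partition $S, S^c$, and shows the resulting sets satisfy the detection guarantee $|\Omega_i \cap S|/|\Omega_i| - |\Omega_j \cap S|/|\Omega_j| > \epsilon$ for the pair $i,j$ realizing the sign pattern of the second eigenvector.

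The main obstacle — and the place where this proof must diverge from \cite{Mossel_SBM2,bordenave} — is controlling the effect of cycles, i.e., showing that the acyclic truncation and the $\lambda_1$-subtraction together make the linearized recursion behave, for $m = \Theta(\log n)$, as if the graph were a tree with the leading eigenvalue projected out. Two things can go wrong: short cycles create feedback that corrupts the walk counts, and (crucially in the non-symmetric / $k \ge 3$ case) $\lambda_2$ may have multiplicity $\ge 2$ and there may be several eigenvalues of $M$ close to $\lambda_1$ in magnitude whose contributions must all be subtracted or shown negligible. The fix is to subtract $\lambda_1 \cdot I$ rather than deflate a single eigenvector, and to bound the number of self-avoiding walks that pass near a short cycle by a careful edge-counting argument (a $o(n)$ fraction of vertices lie on short cycles, and walks through them contribute a lower-order term to both moments). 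A secondary technical point is the choice of $r$: $r$ must be large enough that cycles of length $\le r$ are rare enough to tile with the tree approximation, yet $r = O(1)$ so the per-step cost is $O(n)$ and, over $m = \Theta(\log n)$ steps with the sparse adjacency structure, the total running time is $O(n \log n)$. Once the moment bounds and the cycle-correction estimates are in place, the detection guarantee and the complexity claim both follow.
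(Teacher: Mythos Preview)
Your high-level architecture matches the paper's: random Gaussian initialization, $r$-nonbacktracking walk counts with eigenvalue subtraction, first/second moment analysis under the condition $\lambda_2^2>\lambda_1$, threshold at the end. But the sketch has real gaps in exactly the places where the paper has to work hardest.

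\textbf{The variance bound is not the tree case.} You write that the second moment is ``dominated by pairs of walks that diverge after a short shared prefix'' and therefore grows like $\lambda_1^t$. That is the tree heuristic, and it is precisely what fails here: since $m=\Theta(\log n)$ is chosen large enough that $\lambda_1^m$ is polynomial in $n$, the neighborhood at depth $m$ is \emph{not} tree-like and pairs of walks can share edges in arbitrarily complicated patterns, not just a common prefix. The paper's actual mechanism is a combinatorial ``shard decomposition'': one writes $W_{(c_0,\dots,c_m)[r]}((v_0,\dots,v_m))$ as a signed sum (via a recursive \emph{path-sum-conversion}) of terms in which every nonzero $c_i$ is far from every repeated vertex; each such shard is then bounded by a product over the fresh segments of its walk graph of eigenvalue factors $\prod|\lambda_j-c_i|$. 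A separate \emph{degree bound lemma} sums these shard bounds over all walk-pair topologies (indexed by number of repeated vertices/edges) and shows that only the near-tree shards survive. Your proposal does not contain an argument that plays this role.

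\textbf{The cycle argument is insufficient.} Saying ``$o(n)$ vertices lie on short cycles, so walks through them are lower order'' does not control the second moment at depth $\Theta(\log n)$: the number of walks is super-polynomial, and a single cycle can be traversed many times. The paper's fix is structural (the $r$-nonbacktracking constraint plus path-sum-conversion), not a density-of-cycles estimate.

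\textbf{Two further ingredients you are missing.} First, when $|\lambda_2|=|\lambda_3|$ the paper sets $s=3$ and subtracts \emph{both} $\lambda_1$ and $\lambda_2$ (this is exactly the situation the Bordenave--Lelarge--Massouli\'e argument could not handle); subtracting only $\lambda_1\cdot I$ is not enough in that case. Second, to upgrade ``expected detection'' to ``detection with probability $1-o(1)$'', the paper splits off a small edge set $\Gamma$ at the start, runs ABP on $G\setminus\Gamma$, and then averages $y^{(m)}$ over $\Gamma$-neighbors and over a shell at distance $\lfloor\sqrt{\log\log n}\rfloor$. This decoupling step is what makes the final concentration go through; without it you get only the weaker statement about expected bias.
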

The proof is in Section \ref{proofs}. 
%\Cnote{This does not quite follow from the theorem because the theorem does not rule out the possibility that ABP has an asymptotically nonzero chance of failure. It is still true because the algorithm distinguishes some pair of communities with probability $1-o(1)$, but the theorem does not say that.}

\begin{corollary}\label{main1b}
ABP solves detection in $\sbm(n,k,a,b)$ if
\begin{align}
\frac{(a-b)^2}{k(a+(k-1)b)} > 1
\end{align} 
and can be run in $O(n \log n)$ time.
\end{corollary}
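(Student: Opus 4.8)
The plan is to obtain Corollary \ref{main1b} as a direct instantiation of Theorem \ref{main1} to the symmetric model $\sbm(n,k,a,b)$: all of the algorithmic design and the probabilistic analysis is already packaged inside Theorem \ref{main1}, so the only thing to do is to compute the two leading distinct eigenvalues $\lambda_1,\lambda_2$ of $PQ$ in this special case and verify that the hypothesis $\lambda_2^2>\lambda_1$ is precisely the stated inequality.

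First I would note that for $\sbm(n,k,a,b)$ we have $p_i=1/k$ for all $i$, so $P=\tfrac1k I_k$, and $Q=(a-b)I_k+bJ_k$ where $J_k$ is the all-ones $k\times k$ matrix. Hence $PQ=\tfrac1k Q$, and since $J_k$ has eigenvalue $k$ on the span of $\1$ and eigenvalue $0$ on its orthogonal complement, $Q$ has eigenvalue $a+(k-1)b$ (simple, eigenvector $\1$) and eigenvalue $a-b$ (multiplicity $k-1$). Thus $PQ$ has exactly the distinct eigenvalues $\tfrac{a+(k-1)b}{k}$ and $\tfrac{a-b}{k}$ whenever $a\ne b$. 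Next I would check the ordering by magnitude demanded in Theorem \ref{main1}: since $a,b\ge 0$ and $k\ge 2$, one has $a+(k-1)b\ge|a-b|$ (the cases $a\ge b$ and $b>a$ reduce respectively to $kb\ge 0$ and $2a+(k-2)b\ge 0$), so $\lambda_1=\tfrac{a+(k-1)b}{k}$ and $\lambda_2=\tfrac{a-b}{k}$. Consequently
\[
\snr=\frac{\lambda_2^2}{\lambda_1}=\frac{(a-b)^2/k^2}{(a+(k-1)b)/k}=\frac{(a-b)^2}{k(a+(k-1)b)},
\]
so the hypothesis $\lambda_2^2>\lambda_1$ of Theorem \ref{main1} coincides with the assumed inequality. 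This inequality already forces $a\ne b$ and $a+(k-1)b>0$, so the degenerate Erd\H{o}s--R\'enyi case $a=b$, in which $PQ$ has only one distinct eigenvalue and $\lambda_2$ is undefined, never occurs under the hypothesis and requires no separate handling. Applying Theorem \ref{main1} with this $(p,Q)$ then produces constants $r,c$ and $m=\Theta(\log n)$ for which ABP solves detection, and the $O(n\log n)$ running time is inherited verbatim.

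I do not expect any genuine obstacle here: the corollary is a pure specialization, and the only points meriting a line of care are the magnitude ordering of the two eigenvalues of $PQ$ and the remark that the claimed inequality automatically rules out the degenerate regime where $\lambda_2$ fails to exist.
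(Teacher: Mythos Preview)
Your proposal is correct and matches the paper's approach: the paper treats Corollary~\ref{main1b} as an immediate specialization of Theorem~\ref{main1}, having already noted just before the theorem that in the symmetric case $\snr=(\tfrac{a-b}{k})^2/(\tfrac{a+(k-1)b}{k})=(a-b)^2/(k(a+(k-1)b))$. Your additional care in verifying the magnitude ordering $|\lambda_1|\ge|\lambda_2|$ and excluding the degenerate case $a=b$ is sound and slightly more explicit than what the paper spells out.
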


\begin{remark}
Our definition of detection also extends to deciding whether or not an algorithm distinguishes between two specific communities, in the sense that there exists $\epsilon>0$ such that the fraction of the vertices from one of these communities assigned to $S$ differs from the fraction of the vertices from the other community assigned to $S$ by at least $\epsilon$ with high probability. The right version of our ABP algorithm can then distinguish between communities $i$ and $j$ if there exists an eigenvector $w$ of $PQ$ with an eigenvalue of magnitude greater than $\sqrt{\lambda_1}$ such that $w_i\ne w_j$. 
\end{remark}

\subsubsection{Acyclic Belief Propagation (ABP) Algorithm}\label{ABP}
We present here two versions of our main algorithm. A simplified version ABP$^*$ that applies to the symmetric SBM and that can easily be implemented, and the general version ABP that is used to prove Theorem \ref{main1}. The general version has additional steps that are used to prove the theorem, but these can be removed for practical applications. The intuitions behind the algorithms are discussed in Section \ref{simplified}, and in Section \ref{spectral}, we show that how the algorithms can be viewed as applying a power iteration method on a nonbacktracking operator $W^{(r)}$ of generalized order (where $r$ denotes the order of the nonbacktracks). The algorithms below have a message passing implementation and correspond to linearized version of belief propagation in which we originally guess what communities each vertex is likely to be in and then determine what communties each vertex's neighbors provide evidence for it to be in. Then we use that information to update our beliefs about what evidence each vertex provides about its neighbors' communities and repeat while mitigating cycles. \\

{\em $\mathrm{ABP}^*(G,m,r)$:}
\begin{enumerate}

\item For each adjacent $v$ and $v'$ in $G$, randomly draw $y^{(1)}_{v,v'}$ from a Gaussian distribution with mean $0$ and variance $1$. Also, consider $y^{(t)}_{v,v'}$ as having a value of $0$ whenever $t<1$.

%\item Find all cycles in $G$ of length $r$ or less.

\item 
%Find all cycles in $G$ of length $r$ or less. 
For each $1<t\le m$, set 
\begin{align}
x^{(t-1)}_{v,v'}=y^{(t-1)}_{v,v'}-\frac{1}{2|E(G)|}\sum_{(v'',v''')\in E(G)} y^{(t-1)}_{v'',v'''} \label{bias}
\end{align}
for all adjacent $v$ and $v'$. For each adjacent $v,v'$ in $G$ that are not part of a cycle of length $r$ or less, set
\[y^{(t)}_{v,v'}=\sum_{v'':(v',v'')\in E(G),v''\ne v} z^{(t-1)}_{v',v''},\]
and for the other adjacent $v,v'$ in $G$, let the other vertex in the cycle that is adjacent to $v$ be $v'''$, the length of the cycle be $r'$, and set \[y_{v,v'}^{(t)}=\sum_{v'':(v',v'')\in E(G),v''\ne v} z_{v',v''}^{(t-1)}-\sum_{v'':(v,v'')\in E(G),v''\ne v',v''\ne v'''} z_{v,v''}^{(t-r')}\]
unless $t=r'$, in which case, set $y_{v,v'}^{(t)}=\sum_{v'':(v',v'')\in E(G),v''\ne v} z_{v',v''}^{(t-1)}-z^{(1)}_{v''',v}$.

\item Set $y'_v=\sum_{v':(v',v)\in E(G)} y^{(m)}_{v,v'}$ for every $v\in G$.
 Return $(\{v:y'_v>0\},\{v: y'_v\le 0\})$.
\end{enumerate}

\noindent
{\bf Remarks:}\\
\noindent
(1) In the $r=2$ case, one does not need to find cycles and one can exit step 2.\ after the second line. As mentioned above, we rely on a less compact version of the algorithm to prove the theorem, but expect that the above also succeeds at detection as long as $m>2\ln(n)/\ln(\snr)$. \\

\noindent
(2) What the algorithm does if $(v,v')$ is in multiple cycles of length $r$ or less is unspecified above, as there is no such edge with probability $1-o(1)$ in the sparse SBM. This can be modified for more general settings. The simplest such modification would be to apply this adjustment independently for each such cycle, and thus to set \[y_{v,v'}^{(t)}=\sum_{v'':(v',v'')\in E(G),v''\ne v} z_{v',v''}^{(t-1)}-\sum_{r'=1}^r\sum_{v''':(v,v''')\in E(G)} C^{(r')}_{v''',v,v'} \sum_{v'':(v,v'')\in E(G),v''\ne v',v''\ne v'''} z_{v,v''}^{(t-r')},\] where $C^{(r')}_{v''',v,v'}$ denotes the number of length $r'$ cycles that contain $v''',v,v'$ as consecutive vertices, substituting $z_{v''',v}^{(1)}$ for $\sum_{v'':(v,v'')\in E(G),v''\ne v',v''\ne v'''} z_{v,v''}^{(t-r')}$ when $r'=t$.
 This will not quite count r-nonbacktracking walks, but we believe that it would give a good enough approximation.\\

\noindent
(3) The purpose of setting $z^{(t-1)}_{v,v'}=y^{(t-1)}_{v,v'}-\frac{1}{2|E(G)|}\sum_{(v'',v''')\in E(G)} y^{(t-1)}_{v'',v'''}$ is to ensure that the average value of the $y^{(t)}$ is approximately $0$, and thus that the eventual division of the vertices into two sets is roughly even. There is an alternate way of doing this in which we simply let $z^{(t-1)}_{v,v'}=y^{(t-1)}_{v,v'}$ and then compensate for any bias of $y^{(t)}$ towards positive or negative values at the end. More specifically, we define $Y$ to be the $n\times m$ matrix such that for all $t$ and $v$, $Y_{v,t}=\sum_{v':(v',v)\in E(G)} y^{(t)}_{v,v'}$, and $M$ to be the $m\times m$ matrix such that $M_{i,i}=1$ and $M_{i,i+1}=-\lambda_1$ for all $i$, and all other entries of $M$ are equal to $0$. Then we set  $y'=YM^{m'}e_m$, where $e_m\in\mathbb{R}^m$ denotes the unit vector with 1 in the $m$-th entry, and $m'$ is a suitable integer.\\

\noindent
The full version of the algorithm applying to the general SBM is as follows.\\ 

\noindent
$ABP(G,m,r,c, (\lambda_1,...,\lambda_h)):$
\begin{enumerate}
\item Initialize:
\begin{enumerate}
\item Set $s=2$ unless $h>2$ and $|\lambda_2|=|\lambda_3|$, in which case set $s=3$.

\item Set $\gamma=(1-\lambda_1/\lambda^2_2)/2$.

\item Set $l=\max((s-1)/\ln((1-\gamma)|\lambda_s|)+s-1,2(2r+1)(s-1))$.

\item Assign each edge of $G$ independently with probability $\gamma$ to a set $\Gamma$. Then, remove these edges from $G$.

%\item Find all cycles of length $r$ or less in $G$. 

\item For every vertex $v\in G$, randomly draw $x_v$ from a Gaussian distribution with mean $0$ and variance $1$.

\item For each adjacent $v$ and $v'$, set $y_{v,v'}^{(1)}=x_{v'}$, and $y_{v,v'}^{(t)}=0$ for all $t<1$.

\end{enumerate}

\item Propagate:

\begin{enumerate}

\item For each $1\le t\le m$, and each adjacent $(v,v')\in E(G)$, set 
\[y_{v,v'}^{(t)}=\sum_{v'':(v',v'')\in E(G),v''\ne v} y_{v',v''}^{(t-1)}\] unless $(v,v')$ is part of a cycle of length $r$ or less. If it is, then let the other vertex in the cycle that is adjacent to $v$ be $v'''$, and the length of the cycle be $r'$\footnote{What the algorithm does if $(v,v')$ is in multiple cycles of length $r$ or less is unspecified as there is no such edge with probability $1-o(1)$ in the SBM. One can adapt this for more general models.}. Set \[y_{v,v'}^{(t)}=\sum_{v'':(v',v'')\in E(G),v''\ne v} y_{v',v''}^{(t-1)}-\sum_{v'':(v,v'')\in E(G),v''\ne v',v''\ne v'''} y_{v,v''}^{(t-r')}\]
unless $t=r'$. In that case, set 
\[y_{v,v'}^{(t)}=\sum_{v'':(v',v'')\in E(G),v''\ne v} y_{v',v''}^{(t-1)}-x_v.\]

\item Set $Y$ to be the $n\times m$ matrix such that for all $t$ and $v$, \[Y_{v,t}=\sum_{v':(v',v)\in E(G)} y^{(t)}_{v,v'}\]

\item For each $s'<s$, set $M_{s'}$ to be the $m\times m$ matrix such that $M_{i,i}=1$ and $M_{i,i+1}=-(1-\gamma)\lambda_{s'}$ for all $i$, and all other entries of $M_{s'}$ are equal to $0$. Also, let $e_m\in\mathbb{R}^m$ be the vector with an $m$-th entry of $1$ and all other entries equal to $0$. Set $$y^{(m)}=Y\left(\prod_{s'<s} M_{s'}^{\lceil \frac{m-r-(2r+1)s'}{l}\rceil}\right)e_m.$$

\item For each $v$, set 
\[y'_v=\sum_{v':(v,v')\in \Gamma} y_{v'}^{(m)}\]
and set $y''_v$ to the sum of $y'_{v'}$ over all $v'$ that have shortest paths to $v$ of length $\lfloor \sqrt{\log\log n}\rfloor$.

\end{enumerate}

\item Assign:

\begin{enumerate}

\item Set $c'=c\cdot \sqrt{\sum_{v\in G} (y''_v)^2/n}$. Create sets of vertices $S_1$ and $S_2$ as follows. For each vertex $v$, if $y''_v<-c'$, assign $v$ to $S_1$. If $y''_v>c'$, then assign $v$ to $S_2$. Otherwise, assign $v$ to $S_2$ with probability $1/2+y''_v/2c'$ and $S_1$ otherwise.

\item Return $(S_1,S_2)$.

\end{enumerate}
\end{enumerate}

\subsection{Crossing the KS threshold information-theoretically}
The following gives a region of the parameters in the symmetric SBM where detection can be solved information-theoretically. 
\begin{theorem}\label{main2}
Let $d:=\frac{a+(k-1)b}{k}$, assume $d>1$, and let $\tau=\tau_d$ be the unique solution in $(0,1)$ of $\tau e^{-\tau}=de^{-d}$, i.e., $\tau = \sum_{j=1}^{+ \infty} \frac{j^{j-1}}{j!} (de^{-d})^j$. The Typicality Sampling Algorithm detects\footnote{Setting $\delta>0$ small enough gives the existence of $\e>0$ for detection.} communities in $\sbm(n,k,a,b)$ if
\begin{align}
&  \frac{a \ln a + (k-1) b \ln b}{k}  - \frac{a+(k-1)b}{k} \ln \frac{a+(k-1)b}{k} \\
&\quad  >  \min \left(\frac{1-\tau}{1-\tau k/(a+(k-1)b)} 2 \ln (k), 2\ln (k)- 2\ln(2)e^{-a/k}(1-(1-e^{-b/k})^{k-1}) \right). \label{bound1}
\end{align}
%\begin{align}
%&\frac{1}{2 \ln k} \left( \frac{a \ln a + (k-1) b \ln b}{k}  - \frac{a+(k-1)b}{k} \ln \frac{a+(k-1)b}{k} \right) > \frac{1-\tau}{1-\tau k/(a+(k-1)b)}. \label{bound1}
%\end{align}
%or
%\[\frac{1}{k} \left(  \frac{a+b(k-1)}{2} \ln \frac{k}{(a+(k-1)b)} +  \frac{a}{2} \ln a + \frac{b(k-1)}{2} \ln b \right)+e^{-a/k}(1-(1-e^{-b/k})^{k-1})\ln(2)>\ln(k)\]
%\begin{align}
%&\frac{1}{2 \ln k} \left( \frac{a \ln a + (k-1) b \ln b}{k}  - d \ln d \right) \left(1-\frac{\tau}{d} \right)^2 + \frac{\tau}{d} (1-\tau)  + \tau >1.
%\end{align}
%\begin{align}
%&\frac{1}{k \ln (k)} \left(  \frac{a+b(k-1)}{2} \ln \frac{k}{(a+(k-1)b)} +  \frac{a}{2} \ln a + \frac{b(k-1)}{2} \ln b \right)+ \frac{\tau}{d}\left(1-\frac{\tau}{2}\right) \\
%&+ \frac{1}{\ln(k)} \left( \frac{a}{a+(k-1)b} \ln\left(\frac{a+(k-1)b}{a}\right) +\frac{ (k-1)b}{a+(k-1)b} \ln\left(\frac{a+(k-1)b}{b}\right) \right) \\
%& \phantom{- \frac{1}{\ln(k)}} \cdot \left( \frac{\tau^2}{2d} + (d -\tau)  e^{-(d-\tau)}\right)\\
%&>1.
%\end{align}
\end{theorem}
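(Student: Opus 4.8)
The plan is to analyze the Typicality Sampling Algorithm via a second-moment / union-bound argument on the set of ``typical'' clusterings. Fix the true partition $\sigma$ with communities $\Omega_1,\dots,\Omega_k$. Given $G\sim\sbm(n,k,a,b)$, the algorithm forms the set $\mathcal{T}$ of all partitions $\sigma'$ of $[n]$ into $k$ balanced clusters whose induced edge statistics (number of edges inside clusters and across clusters) match the expected values for the true $\sigma$ up to a small slack $\delta$, and then samples $\sigma'$ uniformly from $\mathcal{T}$. Since $\sigma$ itself lies in $\mathcal{T}$ with high probability (a routine concentration statement for binomials), it suffices to show that, with high probability, the overwhelming majority of the mass of $\mathcal{T}$ is on partitions $\sigma'$ that have nontrivial agreement with $\sigma$ --- equivalently, that the ``bad'' partitions (those with agreement $\le 1/k + \e$) are outnumbered. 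The quantity controlling this is $\E|\mathcal{T}_{\mathrm{bad}}|$ versus the number of good typical partitions; the left-hand side of \eqref{bound1} is exactly the exponential rate at which a single fixed bad partition is \emph{penalized} for being typical (a relative-entropy / large-deviation cost coming from forcing the wrong partition to exhibit the right volumes and cuts), while $2\ln k$ on the right is the entropy rate of the number of bad partitions (the $\ln(k!)\approx$ choices, and more precisely the log of the number of ways to be a ``balanced but maximally wrong'' relabelling). So the first-moment bound succeeds whenever the penalty beats the entropy, which is the first term in the $\min$ of \eqref{bound1}.

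The refinements producing the full statement come from sharpening the count of bad typical partitions by exploiting structure of the sparse random graph. The first improvement replaces the crude ``all of $[n]$ must satisfy the cut constraint'' by the observation that isolated tree-like components (and, more sharply, the tree-like fringe of the giant) contribute essentially no constraint: a bad partition can freely relabel those vertices, so the effective number of constrained vertices is $n$ times the fraction of vertices lying in the $2$-core / non-tree part, which is governed by $\tau=\tau_d$ (the probability a Poisson($d$) branching process survives is $1-\tau/d$ type quantities; the constant $\tau$ is the smallest root of $\tau e^{-\tau}=de^{-d}$, which is precisely the extinction-related parameter for the excess-degree process). This is where the factor $\frac{1-\tau}{1-\tau k/(a+(k-1)b)}$ enters: it rescales the entropy $2\ln k$ by the relevant ``surviving fraction'' ratio. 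The second improvement, giving the alternative term $2\ln k - 2\ln 2\, e^{-a/k}(1-(1-e^{-b/k})^{k-1})$, comes from accounting for \emph{unsaturated} vertices --- vertices whose neighborhoods miss at least one community --- for which there is genuine extra freedom in the bad partition that one must instead \emph{subtract} from the entropy; the probability a given vertex is unsaturated in the relevant sense is $e^{-a/k}(1-(1-e^{-b/k})^{k-1})$ (its own community contributes no edge, factor $e^{-a/k}$ in the Poisson limit, and at least one other community is entirely missed), and each such vertex costs a factor $2$ (a binary choice) in the exponent, i.e.\ subtracts $2\ln 2$ per unit density. Taking the better of the two refinements gives the $\min$ in \eqref{bound1}.

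Concretely the steps are: (1) show $\sigma\in\mathcal{T}$ w.h.p.\ by Bernstein/Chernoff on the $O(k^2)$ edge-count statistics; (2) for a fixed partition $\sigma'$ with a given overlap profile (the $k\times k$ matrix of intersection sizes $|\Omega_i\cap\Omega'_j|$), compute $\pp(\sigma'\in\mathcal{T})$ up to subexponential factors --- this is a local CLT / Stirling computation whose exponent is the relative entropy between the $\sigma'$-induced edge distribution and the true one, and it is maximized (least penalized) over bad profiles at the ``uniform'' overlap matrix, yielding the LHS of \eqref{bound1}; (3) bound the number of bad partitions with each profile by a multinomial coefficient, $\exp(n\cdot\text{(entropy)})$; (4) refine steps (2)--(3) on the tree-like part and on unsaturated vertices as above to replace the naive $2\ln k$ by the two expressions in the $\min$; (5) union bound: if the penalty exponent exceeds the entropy exponent then $\E|\mathcal{T}_{\mathrm{bad}}|=o(1)$ (or more carefully, $o(|\mathcal{T}_{\mathrm{good}}|)$), so the sampled $\sigma'$ is good w.h.p., which gives detection for suitable $\e(\delta)$. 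The main obstacle is step (2)/(4): getting the exponent of $\pp(\sigma'\in\mathcal{T})$ sharp enough --- in particular correctly isolating the contribution of the tree-like fringe of the giant component and of unsaturated vertices, rather than just isolated components --- because a naive union bound over all bad partitions with the crude exponent only crosses at $k=5$ and does not interpolate to the correct giant-component threshold at $b=0$; the delicate combinatorial bookkeeping of which vertices are genuinely ``free'' in a bad typical partition is what pushes the crossing down to $k=4$ and pins the $b=0$ endpoint.
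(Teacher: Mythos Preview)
Your high-level architecture is right --- union-bound the bad typical clusterings in the numerator and control the denominator --- but the mechanism you describe for the two refinements is inverted, and this is a genuine gap.

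In the paper the numerator stays crude throughout: one uses $\E|T_\delta(G)\cap B_\e(\sigma)|\le k^n\exp(-nA/k)$ with $A$ exactly the large-deviation cost you identify (Lemma~\ref{bad_bound}), and \emph{never} refines the count or the penalty of bad partitions. Both improvements enter by \emph{lower-bounding the size of the whole typical set} $|T_\delta(G)|$, i.e.\ by constructing many typical clusterings explicitly. For the $\tau$-term one starts from the true $\sigma$, freezes the labels on the giant except for the planted trees hanging off it, and relabels the vertices on isolated trees and planted trees by broadcasting from a random (resp.\ true) root through the $k$-ary symmetric channel with flip bias $b/(a+(k-1)b)$; an entropy count gives $|T_\delta(G)|\gtrsim k^{\psi n}$ with $\psi$ built out of $\tau$ (Theorem~\ref{size_bound}). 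For the other term one starts from $\sigma$ and, vertex by vertex, swaps each ``unsaturated'' vertex (no same-community neighbor and at least one missing community among its neighbors) to some other allowed community; this yields $|T_\delta(G)|\gtrsim 2^{(e^{-a/k}(1-(1-e^{-b/k})^{k-1}))n}$ (Lemma~\ref{Tbound2}). The final inequality is then simply $k^n e^{-nA/k}/t=o(1)$ with $t$ the better of these two lower bounds, and algebra turns this into \eqref{bound1}.

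Your description has the free vertices acting on the \emph{bad} partitions (``a bad partition can freely relabel those vertices'', ``extra freedom in the bad partition that one must subtract from the entropy''). That direction is backwards: if tree-like or unsaturated vertices were free for bad partitions, that would \emph{increase} $|T_\delta\cap B_\e|$ and hurt you. The point is that the very same freedom lets you manufacture exponentially many \emph{good} (indeed, near-true) typical clusterings, inflating the denominator. Relatedly, your step~(5) claim that $\E|\mathcal{T}_{\mathrm{bad}}|=o(1)$ is too strong: in the regime of Theorem~\ref{main2} the expected number of bad typical clusterings can be exponentially large --- what is small is the \emph{ratio} to $|T_\delta(G)|$. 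Showing $\E|\mathcal{T}_{\mathrm{bad}}|=o(1)$ directly only recovers the weaker Corollary~\ref{cross} (crossing at $k=5$, wrong at $b=0$).
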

\noindent
This bound strictly improves on the KS threshold for $k\ge 4$: 
\begin{corollary}
Conjecture 1 part (ii) holds. 
\end{corollary}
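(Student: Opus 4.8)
The plan is to read the corollary off Theorem~\ref{main2}: for each $k\ge 4$ I want parameters $(a,b)$ with $\snr=\frac{(a-b)^2}{k(a+(k-1)b)}<1$ satisfying \eqref{bound1}, so that Theorem~\ref{main2} furnishes an information-theoretic detection algorithm for $\sbm(n,k,a,b)$; since that model is symmetric, detection in the sense of the theorem is equivalent to the notion appearing in Conjecture~\ref{c1}(ii) (this equivalence for balanced communities is the lemma above), so Conjecture~\ref{c1}(ii) follows. The family to use is $a=0$, for which $\snr=\frac{b}{k(k-1)}$, so $\snr<1$ means exactly $b<k(k-1)$, and (with $0\ln 0=0$) the left-hand side of \eqref{bound1} collapses to
\[
\frac{(k-1)b}{k}\left(\ln b-\ln\frac{(k-1)b}{k}\right)=\frac{(k-1)b}{k}\,\ln\frac{k}{k-1}.
\]
I would check \eqref{bound1} first at the KS threshold $b=k(k-1)$, where $\snr=1$ and $d=\frac{(k-1)b}{k}=(k-1)^2>1$, and then pass just below it by continuity.

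At $b=k(k-1)$ the left side of \eqref{bound1} is $(k-1)^2\ln\frac{k}{k-1}$ and $b/k=k-1$. Because $\min(A,B)\le B$, it is enough to beat the second term of the minimum, i.e.\ to show
\[
(k-1)^2\ln\frac{k}{k-1}\ >\ 2\ln k-2\ln 2\left(1-(1-e^{-(k-1)})^{k-1}\right);
\]
the first, $\tau_d$-dependent, term plays no role here (it is what later sharpens the bound to $b\gtrsim k\ln k$ at $a=0$). For $k\ge 6$ this is elementary: $\ln\frac{k}{k-1}>\frac1k$ makes the left side exceed $\frac{(k-1)^2}{k}=k-2+\frac1k$, the right side is at most $2\ln k$, and $k-2+\frac1k\ge 2\ln k$ already at $k=6$ with the gap increasing thereafter (its derivative $1-\frac1{k^2}-\frac2k$ is positive for $k\ge 3$). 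For $k=4$ and $k=5$ I would verify it by direct evaluation: for $k=4$, left side $9\ln(4/3)\approx 2.589$ versus right side $2\ln 4-2\ln 2\,(1-(1-e^{-3})^3)\approx 2.576$; for $k=5$, left side $16\ln(5/4)\approx 3.570$ versus right side $\approx 3.120$.

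To conclude I would invoke continuity. Both sides of \eqref{bound1} are continuous on $\{a,b\ge 0,\ d>1\}$: the left side extends continuously (using $0\ln 0=0$), and the right side is a minimum of two continuous functions, the only delicate point being that $\tau_d$ is a continuous (in fact smooth) function of $d$ for $d>1$, since $\tau\mapsto\tau e^{-\tau}$ is an increasing bijection of $(0,1)$ onto $(0,1/e)$ and $de^{-d}\in(0,1/e)$ there, and moreover $1-\tau k/(a+(k-1)b)=1-\tau/d>0$ so no denominator vanishes. Thus \eqref{bound1} is an open condition holding at $(0,k(k-1))$; keeping $a=0$ and taking $b=k(k-1)-\delta$ for small $\delta>0$ preserves $d>1$ and \eqref{bound1} while giving $\snr=1-\frac{\delta}{k(k-1)}<1$. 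Theorem~\ref{main2} at this point proves Conjecture~\ref{c1}(ii) for that $k$, and $k\ge 4$ is arbitrary.

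The step I expect to be the real obstacle is the $k=4$ inequality, whose margin ($\approx 0.013$) is genuinely small --- this is exactly why the crossing happens at $k=4$ and not $k=3$: the same computation at $k=3$, $a=0$, $b=6$ gives a left side $4\ln(3/2)\approx 1.622$ lying below the second term $\approx 1.847$ of the right side. So I would make the $k=4$ estimate fully rigorous and, correspondingly, make sure the continuity step needs only an arbitrarily small decrease of $b$ below $k(k-1)$ --- which it does, since $\snr<1$ for every $b<k(k-1)$.
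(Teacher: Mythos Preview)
Your proposal is correct and follows exactly the route the paper intends: the paper's own justification of this corollary is the one-line assertion ``This bound strictly improves on the KS threshold for $k\ge 4$'' immediately preceding it, with no explicit computation, so you are supplying precisely the details the paper omits. Your specialization to $a=0$ and verification at $b=k(k-1)$ (then continuity) mirrors the paper's emphasis on the $a=0$ regime (cf.\ Corollary~\ref{corol_ext} and the abstract), and your use of only the second term of the minimum in \eqref{bound1} via $\min(A,B)\le B$ is legitimate and sufficient; your numerics for $k=4,5$ and the elementary argument for $k\ge 6$ are all sound.
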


%see Figure \ref{cross}.
%\Cnote{We should probably get a new figure for $k=4$.}
%\begin{figure}[H]
%\centering
%  \includegraphics[width=0.5\linewidth]{cross.png}
%\caption{The horizontal axis has $b$ varying, while $a=1/10$ and $k=5$. The red (dashed) curve is the KS threshold, i.e., the values of $b$ for which $(a-b)^2/(k(a+(k-1)b))=1$, and the blue (plain) curve is our IT threshold. When the curves are positive, detection is solvable. In particular, detection is solvable below the KS threshold. }
%\label{cross}
%\end{figure}

\begin{remark}
Note that \eqref{bound1} simplifies to
\begin{align}
&\frac{1}{2 \ln k} \left( \frac{a \ln a + (k-1) b \ln b}{k}  - d\ln d \right) > \frac{1-\tau}{1-\tau /d} =:f(\tau,d),
\end{align}
and since $f(\tau,d)<1$ when $d>1$ (which is needed for the presence of the giant), detection is already solvable in $\sbm(n,k,a,b)$ if 
\begin{align}
\frac{1}{2 \ln k} \left( \frac{a \ln a + (k-1) b \ln b}{k}  - d \ln d \right) >1.
\end{align}
As we shall see in Lemma \ref{bad_bound}, the above corresponds to the regime where there is no bad clustering that is typical with high probability. However, the above bound is not tight in the extreme regime of $b=0$, since it reads $a>2k$ as opposed to $a>k$, and only crosses the KS threshold at $k=5$. 
%An intermediate bound is also obtained by dropping the term \eqref{drop} in the proof of the theorem, corresponding to ignoring planted trees in the giant (see Section \ref{proof_tech2}), which gives 
%\begin{align}
%\frac{1}{2 \ln k} \left( \frac{a \ln a + (k-1) b \ln b}{k}  - d \ln d \right) >1 - \frac{\tau}{d}\left(1-\frac{\tau}{2}\right).
%\end{align}
\end{remark}

Defining $a_k(b)$ as the unique solution of 
\[\frac{1}{2 \ln k} \left( \frac{a \ln a + (k-1) b \ln b}{k}  - d\ln d \right) = \min\left(f(\tau,d),1-\frac{e^{-a/k}(1-(1-e^{-b/k})^{k-1})\ln(2)}{\ln(k)}\right)\]
and simplifying the bound in Theorem \ref{main2} gives the following. 
\begin{corollary}\label{corol_ext}
Detection is solvable  
\begin{align}
&\text{in}\quad \sbm(n,k,0,b) \quad \text{if} \quad b >   \frac{2k \ln k}{(k-1) \ln \frac{k}{k-1}} f(\tau,b(k-1)/k), \label{a0} \\ 
%&\sbm(n,k,0,b) \quad \text{if} \quad b >   k \ln k \cdot 2f(\tau,1/b) + o_k(1), \label{a0} \\
%&\sbm(n,k,\e,b) \quad \text{if} \quad b > 2+ \Delta_2(\e), \quad \text{where} \quad  \lim_{\e \to 0} \Delta_2(\e)=0 \label{a00}, \\
&\text{in}\quad \sbm(n,k,a,b) \quad \text{if} \quad a > a_k(b), \quad \text{where } a_k(0)=k. \label{b0}
\end{align}
\end{corollary}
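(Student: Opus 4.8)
The plan is to obtain Corollary~\ref{corol_ext} as a direct specialization of Theorem~\ref{main2}, working from the reformulation of \eqref{bound1} given in the remark after Theorem~\ref{main2}: detection is solvable in $\sbm(n,k,a,b)$ whenever
\[
\frac{1}{2\ln k}\left(\frac{a\ln a+(k-1)b\ln b}{k}-d\ln d\right)>\min\left(f(\tau,d),\,1-\frac{e^{-a/k}\bigl(1-(1-e^{-b/k})^{k-1}\bigr)\ln 2}{\ln k}\right),
\]
where $d=\frac{a+(k-1)b}{k}$, $\tau=\tau_d$ solves $\tau e^{-\tau}=de^{-d}$ in $(0,1)$, and $f(\tau,d)=\frac{1-\tau}{1-\tau/d}$.

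\textbf{The case $a=0$.} Substituting $a=0$ gives $d=\frac{(k-1)b}{k}$ and, interpreting $a\ln a=0$ by continuity, the quantity in parentheses on the left becomes $\frac{(k-1)b}{k}\bigl(\ln b-\ln\frac{(k-1)b}{k}\bigr)=\frac{(k-1)b}{k}\ln\frac{k}{k-1}$. Since $\min(f(\tau,d),\,\cdot\,)\le f(\tau,d)$, it suffices that $\frac{1}{2\ln k}\cdot\frac{(k-1)b}{k}\ln\frac{k}{k-1}>f(\tau,d)$; multiplying through by the positive factor $\frac{2k\ln k}{(k-1)\ln(k/(k-1))}$ rewrites this as $b>\frac{2k\ln k}{(k-1)\ln(k/(k-1))}\,f(\tau,b(k-1)/k)$, which is exactly \eqref{a0}. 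The conclusion then follows from Theorem~\ref{main2}.

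\textbf{The case $b$ fixed, $a$ variable.} Here I would fix $b$ and study $\Phi(a):=\text{(LHS)}-\text{(RHS)}$ of the displayed inequality on the range $d>1$ required by Theorem~\ref{main2}. Differentiating the left-hand side in $a$ gives $\frac{1}{2k\ln k}\ln(a/d)$, which is positive in the assortative regime $a>b$; combined with the facts that $f(\tau_d,d)$ is nondecreasing in $d$ and bounded above by $1$, and that the second term of the $\min$ is nondecreasing in $a$, one checks that $\Phi$ is strictly increasing, hence has a unique zero. This zero is the threshold $a_k(b)$, and $a>a_k(b)$ forces the strict inequality, giving detection. To identify $a_k(0)$, note that for $b=0$ the constraint $d>1$ reads $a>k$, and at the boundary $a=k$ (so $d=1$) the left-hand side equals $\frac{a}{2k}=\frac12$, the second term of the $\min$ equals $1-\frac{e^{-1}\ln 2}{\ln k}>\frac12$ for every $k\ge2$, and $f(\tau_d,d)\to\frac12$ as $d\to1^+$. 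This last limit is a $0/0$ expression, resolved by expanding $\tau e^{-\tau}=de^{-d}$ near the double point $\tau=d=1$: writing $d=1+\delta$ one gets $\tau_d=1-\delta+\tfrac23\delta^2+\cdots$, hence $f(\tau_d,d)=\tfrac12+\tfrac13\delta+\cdots$. Thus the two sides of the bound agree at $a=k$ while the left side pulls ahead for $a$ slightly larger, so $a_k(0)=k$.

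The step I expect to be the main obstacle is the monotonicity claim underpinning the well-definedness of $a_k(b)$: showing that the left-hand side of the simplified bound grows in $a$ at least as fast as $\min\bigl(f(\tau_d,d),\,\cdot\,\bigr)$, which requires controlling the derivative of $f(\tau_d,d)$ through the implicitly defined $\tau_d$ (and, near $d=1$, the $0/0$ behavior above). Everything else reduces to elementary algebra on the closed-form bound of Theorem~\ref{main2}.
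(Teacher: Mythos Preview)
Your approach is essentially the same as the paper's. For part~\eqref{a0} you perform exactly the algebraic substitution the paper alludes to with ``Part 1 follows from algebraic manipulations.'' For part~\eqref{b0}, the paper works with the slightly weaker but algebraically simpler form $X>1-\tfrac{\tau}{d}\bigl(1-\tfrac{\tau}{2}\bigr)$ (obtained by dropping the planted-tree term and the second branch of the $\min$) rather than $X>f(\tau,d)$, and then observes that $\tfrac{\tau}{d}(1-\tfrac{\tau}{2})\to\tfrac12$ as $b\to 0$, $d\to a/k$, $\tau\to 1$; your expansion $\tau_d=1-\delta+\tfrac23\delta^2+\cdots$ giving $f(\tau_d,d)=\tfrac12+\tfrac{\delta}{3}+\cdots$ reaches the same limit more explicitly and is a nice touch. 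One remark: the monotonicity issue you flag as the main obstacle is indeed a real gap, but the paper does not close it either---it simply \emph{defines} $a_k(b)$ as ``the unique solution'' without proof of uniqueness, so you are already being more careful than the source.
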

\begin{remark}
Note that \eqref{b0} approaches the optimal bound given by the presence of the giant at $b=0$, and we further conjecture that $a_k(b)$ gives the correct first order approximation of the information-theoretic bound for small $b$. 
\end{remark}
\begin{remark}
Note that the $k \ln k$ scaling in \eqref{a0} improves significantly on the KS threshold given by $b>k (k-1)$ at $a=0$. Relatedly, note that the $k$-colorability threshold for Erd\H{o}s-R\'enyi graphs grows as $2k\ln k$ \cite{naor_col}. This coule be used to obtain an information-theoretic bound, but the constant would be looser than the one obtained here. 
\end{remark}
%We can cross the KS threshold for some positive values of $a$ at $k=5$. 
%\begin{lemma}\label{cross}
%Detection is solvable in $\mathrm{SBM}(n,k,a,b)$ if 
%\begin{align}
%&  (a+(k-1)b) \ln \frac{k}{a+(k-1)b}+ a\ln a + (k-1)b \ln b  > 2k\ln k.
%\end{align}
%In particular, detection is solvable in $\mathrm{SSBM}(n,k,0,b/n)$ if 
%\begin{align}
%\frac{b}{k(k-1)} >  \frac{2 \ln k}{(k-1)^2 \ln \frac{k}{k-1}}.
%\end{align}
%\end{lemma}
\begin{remark}
We also believe that the above gives the correct scaling in $k$ for $a=0$, i.e., that for $b< (1-\e)k \ln (k) + o_k(1)$, $\e>0$, detection is information-theoretically impossible. To see this, consider $v\in G$, $b=(1-\epsilon)k \ln(k)$, and assume that we know the communities of all vertices more than $r=\ln(\ln(n))$ edges away from $v$. For each vertex $r$ edges away from $v$, there will be approximately $k^{\epsilon}$ communities that it has no neighbors in. Then vertices $r-1$ edges away from $v$ have approximately $k^{\epsilon}\ln(k)$ neighbors that are potentially in each community, with approximately $\ln(k)$ fewer neighbors suspected of being in its community than in the average other community.  At that point, the noise has mostly drowned out the signal and our confidence that we know anything about the vertices' communities continues to degrade with each successive step towards $v$.  
\end{remark}
In \cite{banks}, a different approach than the one described in previous remark is developed based on a contiguity argument and estimates from \cite{naor_col}, and formally proves that the scaling in $k$ is in fact tight. 

\subsubsection{Typicality Sampling Algorithm}
%We present the algorithm for the symmetric case, but it naturally extends to the general SBM.
Define 
\begin{align}
\mathrm{Bal}(n,k,\e)&=\{x \in [k]^n:  \forall i \in [k], |\{ u \in [n]: x_u = i\}|/n \in [ 1/k -\e, 1/k + \e]  \}
\end{align}
and $\mathrm{Bal}(n,k)=\mathrm{Bal}(n,k,\log n/\sqrt{n})$, the set of vectors in $[k]^n$ with an asymptotically uniform fraction of components in each community.

Given an $n$-vertex graph $G$ and $\delta>0$, the algorithm draws $\hat{\sigma}_{\mathrm{typ}}(G)$ uniformly at random in  
\begin{align*}
T_\delta(G)=&\{   x \in \mathrm{Bal}(n,k,\delta):\\
&  \sum_{i=1}^k |\{  G_{u,v} : (u,v) \in {[n] \choose 2} \text{ s.t. } x_u=i, x_v=i \}|  \geq \frac{an}{2k} (1-\delta),\\
& \sum_{i,j \in [k], i<j} |\{  G_{u,v} : (u,v) \in {[n] \choose 2} \text{ s.t. } x_u=i, x_v=j \}|  \leq \frac{bn(k-1)}{2k} (1+\delta) \},
\end{align*}
where the above assumes that $a>b$; flip the above two inequalities in the case $a<b$.

\subsection{Learning the model}
To learn the parameters, we count cycles of slowly growing length as already done in \cite{Mossel_SBM1} for $k=2$, using non-backtracking walks to approximate the count. 

\begin{lemma}\label{learn}
If $\snr >1$, there exists a consistent and efficient estimator for the parameters $a,b,k$ in $\sbm(n,k,a,b)$.
\end{lemma}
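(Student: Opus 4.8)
# Proof Proposal for Lemma \ref{learn}

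The plan is to estimate $\lambda_1$, $\lambda_2$ and $k$ from $G$ alone and then recover $a,b$ algebraically. For $\mathrm{SBM}(n,k,a,b)$ the matrix $PQ=\tfrac1k Q$ has exactly two distinct eigenvalues: $\lambda_1=d:=\tfrac{a+(k-1)b}{k}$ (simple, eigenvector $\1$) and $\lambda_2=\tfrac{a-b}{k}$ (multiplicity $k-1$); hence $b=\lambda_1-\lambda_2$ and $a=\lambda_1+(k-1)\lambda_2$, so it suffices to produce consistent estimates of $\lambda_1,\lambda_2,k$. Since $\lambda_1=d$ is the common expected degree, $\hat\lambda_1:=2|E(G)|/n$ is consistent and trivially computable ($|E(G)|$ is a sum of $\binom n2$ independent Bernoullis). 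Note also that $\snr>1$ forces $1<\lambda_1=d<\lambda_2^2\le\lambda_1^2$, hence $|\lambda_2|>1$ and $\lambda_1\lambda_2^2>1$; these inequalities are what make the argument below work.

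To extract $\lambda_2$ and $k$ I would count closed non-backtracking walks: let $\mathrm{cnbw}_\ell(G):=\tr(B^\ell)$, where $B$ is the non-backtracking matrix of $G$ \cite{hashimoto}; this is computable in polynomial time (e.g.\ by computing each diagonal entry of $B^\ell$ through $\ell$ sparse matrix--vector products, using that $B$ has $O(n)$ nonzeros w.h.p.). Choose $\ell=\ell_n:=\lfloor \log n/(3\log\hat\lambda_1)\rfloor$, which is well defined, $\omega(1)$ and $\Theta(\log n)$ since $\hat\lambda_1\to\lambda_1>1$. The crux is the estimate
\[
\mathrm{cnbw}_\ell(G)\;=\;\lambda_1^\ell+(k-1)\lambda_2^\ell+O\!\big(\sqrt{\ell}\,\lambda_1^{\ell/2}\big)\qquad\text{with probability }1-o(1).
\]
The idea is that for $\ell$ below $(1-\e)\log n/\log\lambda_1$ the $\ell$-neighborhoods are essentially trees with isolated cycles, so a closed non-backtracking walk is forced to wrap a single cycle an integer number of times; thus $\mathrm{cnbw}_\ell(G)=\sum_{g\mid\ell}2g\cdot(\text{number of }g\text{-cycles of }G)$ up to a contribution of walks whose image has first Betti number $\ge 2$, which is $O(\mathrm{poly}(\ell)\,\lambda_1^\ell/n)$ in expectation. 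The number of $g$-cycles is Poisson-like with mean $\tfrac1{2g}\tr((PQ)^g)$; the term $g=\ell$ yields $\lambda_1^\ell+(k-1)\lambda_2^\ell$ with fluctuation $O(\sqrt{\ell}\,\lambda_1^{\ell/2})$, and all proper divisors $g<\ell$ together contribute only $O(\lambda_1^{\ell/2})$.

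Granting the estimate, compute $\mathrm{cnbw}_\ell(G)$ and $\mathrm{cnbw}_{\ell+1}(G)$ and set
\[
\hat\lambda_2:=\frac{\mathrm{cnbw}_{\ell+1}(G)-\hat\lambda_1^{\,\ell+1}}{\mathrm{cnbw}_\ell(G)-\hat\lambda_1^{\,\ell}},\qquad
\hat k:=\mathrm{round}\!\Big(1+\frac{\mathrm{cnbw}_\ell(G)-\hat\lambda_1^{\,\ell}}{\hat\lambda_2^{\,\ell}}\Big),\qquad
\hat b:=\hat\lambda_1-\hat\lambda_2,\quad\hat a:=\hat\lambda_1+(\hat k-1)\hat\lambda_2 .
\]
Since $|\hat\lambda_1^{\,\ell}-\lambda_1^\ell|=O(\ell\lambda_1^\ell\sqrt{\log n/n})$, the estimate gives
\[
\mathrm{cnbw}_\ell(G)-\hat\lambda_1^{\,\ell}=(k-1)\lambda_2^\ell\Big(1+O\big(\sqrt\ell\,\snr^{-\ell/2}\big)+O\big(\ell\,(\lambda_1/|\lambda_2|)^\ell\sqrt{\log n/n}\big)\Big),
\]
where the first error term vanishes because $\snr>1$, and the second because $\lambda_1\lambda_2^2>1$ (equivalently $3\log\lambda_1>2\log(\lambda_1/|\lambda_2|)$) makes our choice of $\ell$ satisfy $\ell\log(\lambda_1/|\lambda_2|)<\tfrac12\log n$ eventually; the identical bound holds at $\ell+1$. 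Hence $\hat\lambda_2\to\lambda_2$ and $(\mathrm{cnbw}_\ell(G)-\hat\lambda_1^\ell)/\hat\lambda_2^\ell\to k-1$ in probability, so $\hat k=k$ with probability $1-o(1)$, whence $\hat a\to a$ and $\hat b\to b$: the estimator is consistent, and every step runs in polynomial time in $n$.

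The main obstacle is the displayed estimate for $\mathrm{cnbw}_\ell$: one must (i) show that closed non-backtracking walks of length $\ell=\Theta(\log n)$ supported on structures of first Betti number $\ge 2$, and on short multiply-wrapped cycles, are lower order; and (ii) carry out the second-moment method for the $\ell$-cycle count in a window of $\ell$ wide enough that the resulting fluctuation $O(\sqrt\ell\,\lambda_1^{\ell/2})$ is genuinely $o(|\lambda_2|^\ell)$. A related subtlety --- and the only place $\snr>1$ is truly used --- is that $\ell$ must be taken large enough for all relative errors to vanish yet small enough to stay in the locally tree-like regime, and this window has to be located from the crude estimate $\hat\lambda_1$ alone, with no knowledge of $\lambda_2$ or $k$; the inequalities $\lambda_1>1$ and $\lambda_1\lambda_2^2>1$ forced by $\snr>1$ are exactly what guarantee the window is nonempty.
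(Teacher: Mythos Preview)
Your proposal is correct and follows essentially the same route as the paper: estimate $\lambda_1$ from the edge count, estimate $\tr((PQ)^m)=\lambda_1^m+(k-1)\lambda_2^m$ from cycle or closed non-backtracking walk counts, and back out $\lambda_2$, $k$, and hence $a,b$. The only substantive difference is scale: the paper takes $m=o(\log_d^{1/4} n)$ (any $m\to\infty$ works), at which the number of $m$-cycles is approximately Poisson with mean $\frac{1}{2m}\tr((PQ)^m)$, the variance is asymptotic to the mean, and the identification of cycle counts with closed non-backtracking walk counts is immediate because every vertex has at most one cycle in its $m$-neighborhood with high probability. Your choice $\ell=\Theta(\log n)$ works too but forces you to carry the variance bound and the ``Betti $\ge 2$'' control at logarithmic depth, which is exactly the ``main obstacle'' you flag; since your estimator only needs $\ell\to\infty$, you can take $\ell$ much smaller (e.g.\ $\ell=\lfloor\log\log n\rfloor$) and the obstacle disappears.
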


%%%%%%%%%%%%%%%%%%%%%%%%%%%%%%%%%%%%%%%%%%
%%%%%%%%%%%%%%%%%%%%%%%%%%%%%%%%%%%%%%%%%%

%\section*{Acknowledgements}
%We would like to thank Bell Labs for supporting part of this research, as well as Sergio Verd\'u and Imre Csisz\'ar for useful discussions on $f$-divergences. 

%\section{Unknown parameters}
%\Enote{We will discuss this later}

%\newpage

\section{Achieving the KS threshold: proof technique}\label{proof_tech1}
Recall the parameters:  
%\begin{definition}
$k$ and $n$ are positive integers, $p\in (0,1)^k$ with $\sum p_i=1$, and $Q$ is a $k\times k$ symmetric matrix with nonnegative entries. Then $SBM(n,p,Q/n)$ generates $n$-vertex graphs by the following procedure. First, each vertex $v$ is randomly and independently assigned a community $\sigma_v$ such that the probability that $\sigma_v=i$ is $p_i$ for each $i$. Then, each pair of vertices $v$ and $v'$ have an edge put between them with probability $Q_{\sigma_v,\sigma_{v'}}/n$.
%\end{definition}
Also, let $\Omega_1,...,\Omega_k$  be the communities and $P$ be the $k\times k$ diagonal matrix such that $P_{i,i}=p_i$ for each $i$. Now, consider the $2$-community symmetric stochastic block model. In this case, $k=2$, $p=[1/2,1/2]$, $Q_{i,j}$ is $a$ if $i=j$ and $b$ otherwise for some $a,b$. Now, let $\lambda_1=\frac{a+b}{2}$ be the average degree of a vertex in a graph drawn from this model, and $\lambda_2=\frac{a-b}{2}$ be the other eigenvalue of $PQ$. Throught this section we say $f$ is approximately $g$ or $f\approx g$ when $|f-g|=o(|f+g|)$ with probability $1-o(1)$.

Our goal is to determine which of $v$'s vertices are in each community with an accuracy that is nontrivially better than that attained by random guessing. Obviously, the symmetry between communities ensures that we can never tell whether a given vertex is in community $1$ or community $2$, so the best we can hope for is to divide the vertices into two sets such that there is a nontrivial difference between the fraction of vertices from community $1$ that are assigned to the first set and the fraction of vertices from community $2$ that are assigned to the first set.
\subsection{Amplifying luck}

Let $x$ and $n-x$ be the numbers of vertices in community $1$ and community $2$ respectively. If the vertices are assigned sets at random, then the expected numbers of vertices from each community in the first set are $\frac{x}{2}$ and $\frac{n-x}{2}$. However, by the central limit theorem, the probability distribution of the actual number of vertices from a given community in the first set is approximately a gaussian distribution with the mean stated previously and a variance of $\frac{x}{4}$ or $\frac{n-x}{4}$ as appropriate. That means that the probability distribution of the difference between the fraction of the vertices from community $1$ assigned to the first set and the fraction of the vertices from the community $2$ assigned to the first set is also approximately a bell curve. This one has a mean of $\frac{1}{2}-\frac{1}{2}=0$ and a variance of
\[\frac{x}{4}/x^2+\frac{n-x}{4}/(n-x)^2=\frac{1}{4x}+\frac{1}{4(n-x)}\approx \frac{1}{2n}+\frac{1}{2n}=\frac{1}{n}\]
That means that it has a standard deviation of $\approx 1/\sqrt{n}$, and the difference between the fraction of the vertices from community $1$  assigned to the first set and the fraction of the vertices from community $2$ assigned to the first set will typically have a magnitude on the order of $1/\sqrt{n}$. Label the sets $S_1$ and $S_2$ such that the fraction of the vertices from $\Omega_1$ that were assigned to $S_1$ is at least as large as the fraction of the vertices from $\Omega_2$ that were assigned to $S_1$. For the rest of this section, we will consider the difference between these fractions to be fixed.

%\subsection{Amplification}
Consider determining the community of $v$ using belief propagation, assuming some preliminary guesses about the vertices $t$ edges away from it, and assuming that the subgraph of $G$ induced by the vertices within $t$ edges of $v$ is a tree. For any vertex $v'$ such that $d(v,v')<t$, let $C_{v'}$ be the set of the children of $v'$. If we believe based on either our prior knowledge or propagation of beliefs up to these vertices that $v''$ is in community $1$ with probability $\frac{1}{2}+\frac{1}{2}\epsilon_{v''}$ for each $v''\in C_{v'}$, then the algorithm will conclude that $v'$ is in community $1$ with a probability of 
\[\frac{\prod_{v''\in C_{v'}} (\frac{a+b}{2}+\frac{a-b}{2}\epsilon_{v''})}{\prod_{v''\in C_{v'}} (\frac{a+b}{2}+\frac{a-b}{2}\epsilon_{v''})+\prod_{v''\in C_{v'}} (\frac{a+b}{2}-\frac{a-b}{2}\epsilon_{v''})}.\]
If all of the $\epsilon_{v''}$ are close to $0$, then this is approximately equal to
\[\frac{1+\sum_{v''\in C_{v'}}\frac{a-b}{a+b}\epsilon_{v''}}{2+\sum_{v''\in C_{v'}} \frac{a-b}{a+b}\epsilon_{v''}+\sum_{v''\in C_{v'}} -\frac{a-b}{a+b}\epsilon_{v''}}=\frac{1}{2}+\frac{a-b}{a+b}\sum_{v''\in C_{v'}}\frac{1}{2}\epsilon_{v''}.\]

That means that the belief propagation algorithm will ultimately conclude that $v$ is in community $1$ with a probability of approximately $\frac{1}{2}+\frac{1}{2}(\frac{a-b}{a+b})^t\sum_{v'':d(v,v'')=t} \epsilon_{v''}$. If there exists $\epsilon$ such that $E_{v''\in \Omega_1}[\epsilon_{v''}]=\epsilon$ and $E_{v''\in \Omega_2}[\epsilon_{v''}]=-\epsilon$ (recall that $\Omega_i=\{v:\sigma_v=i\}$), then on average we would expect to assign a probability of approximately $\frac{1}{2}+\frac{1}{2}\left(\frac{(a-b)^2}{2(a+b)}\right)^t\epsilon$ to $v$ being in its actual community, which is enhanced as $t$ increases when $\snr>1$.

Equivalently, given a vertex $v$ and a small $t$, the expected number of vertices that are $t$ edges away from $v$ is approximately $(\frac{a+b}{2})^t$, and the expected number of these vertices in the same community as $v$ is approximately $(\frac{a-b}{2})^t$ greater than the expected number of these vertices in the other community. So, if we had some way to independently determine which community a vertex is in with an accuracy of $\frac{1}{2}+\epsilon$ for small $\epsilon$, we could guess that each vertex is in the community that we think that the majority of the vertices $t$ steps away from it are in to determine its community with an accuracy of roughly $\frac{1}{2}+\left(\frac{(a-b)^2}{2(a+b)}\right)^{t/2}\epsilon$.

To obtain initial estimates, we simply guess the vertices' communities at random as described in the previous section, with the expectation that the fractions of the vertices from the two communities assigned to a community will differ by $\theta(1/\sqrt{n})$ by the Central Limit Theorem. 
Once we have even such weak information on which vertex is in which community, we can try to improve our classification of a given vertex by factoring in our knowledge of what communities the nearby vertices are in. For a vertex $v$ and integer $t$, let $N_t(v)$ be the number of vertices $t$ edges away from $v$, $\Delta_t(v)$ be the difference between the number of vertices $t$ edges away from $v$ that are in community $1$ and the number of vertices $t$ edges away from $v$ that are in community $2$, and $\widetilde{\Delta}_t(v)$ be the difference between the number of vertices $t$ edges away from $v$ that are in $S_1$ and the number of vertices $t$ edges away from $v$ that are in $S_2$. For small $t$,

\[E[N_t(v)]\approx \left(\frac{a+b}{2}\right)^t\]
and 
\[E[\Delta_t(v)]\approx \left(\frac{a-b}{2}\right)^t\cdot (-1)^{\sigma_v}\]

For any fixed values of $N_t(v)$ and $\Delta_t(v)$, the probability distribution of $\widetilde{\Delta}_t(v)$ is essentially a Gaussian distribution with a mean of $\Theta(\Delta_t(v)/\sqrt{n})$ and a variance of $\approx N_t(v)$ because it is the sum of $N_t(v)$ nearly independent variables that are approximately equally likely to be $1$ or $-1$. So, $\widetilde{\Delta}_t(v)$ is positive with a probability of $\frac{1}{2}+\Theta(\Delta_t(v)/\sqrt{|N_t(v)|n})$. In other words, if $v$ is in community $1$ then $\widetilde{\Delta}_t(v)$ is positive with a probability of 
\[\frac{1}{2}-\Theta\left(\left( \frac{a-b}{2}\right)^t\cdot\left(\frac{a+b}{2}\right)^{-t/2}/\sqrt{n}\right)\] 
and if $v$ is in community $2$ then $\widetilde{\Delta}_t(v)$ is positive with a probability of 
\[\frac{1}{2}+\Theta\left(\left( \frac{a-b}{2}\right)^t\cdot\left(\frac{a+b}{2}\right)^{-t/2}/\sqrt{n}\right)\] 
If $(a-b)^2\le 2(a+b)$, then this is not improving the accuracy of the classification, so this technique is useless. On the other hand, if $(a-b)^2>2(a+b)$, the classification becomes more accurate as $t$ increases. However, this formula says that to classify vertices with an accuracy of $1/2+\Omega(1)$, we would need to have $t$ such that
\[\left( \frac{a-b}{2}\right)^{2t}=\Omega\left( \left(\frac{a+b}{2}\right)^{t}n\right)\]
However, unless\footnote{If $a=0$ and $k=2$ or $b=0$, then classifying vertices based on the sign of $\widetilde{\Delta}_t(v)$ for suitable $t$ is likely to work, but this is pointlessly complicated because every component of the graph consists of all vertices of one community or vertices of alternating communities.} $a$ or $b$ is $0$, that would imply that
\[\left( \frac{a+b}{2}\right)^{2t}=\omega\left(\left( \frac{a-b}{2}\right)^{2t}\right)=\omega\left( \left(\frac{a+b}{2}\right)^{t}n\right)\]
which means that $(\frac{a+b}{2})^{t}=\omega(n)$. It is obviously impossible for $N_t(v)$ to be greater than $n$, so this $t$ is too large for the approximation to hold. The problem is, the approximation assumes that each vertex at a distance of $t-1$ from $v$ has one edge leading back towards $v$, and that the rest of its edges lead towards new vertices. Once a significant fraction of the vertices are less than $t$ edges away from $v$, a significant fraction of the edges incident to vertices $t-1$ edges away from $v$ are part of loops and thus do not lead to new vertices.

\begin{figure}[H]
\centering
\begin{subfigure}{.5\textwidth}
  \centering
  \includegraphics[width=0.9\linewidth]{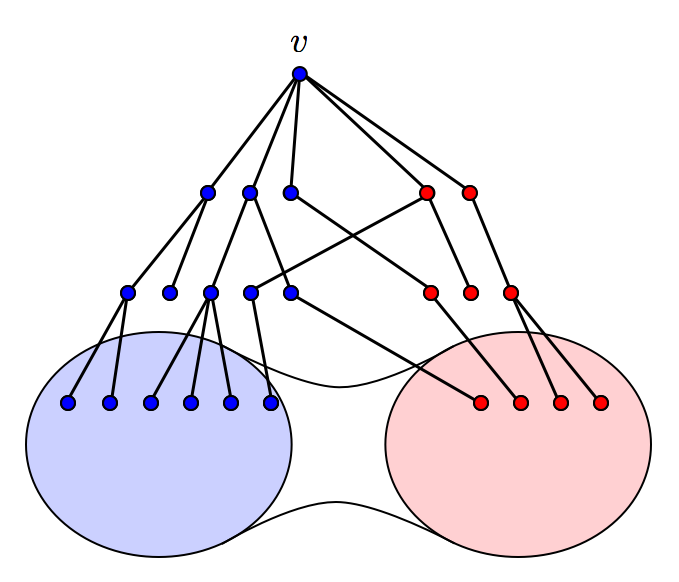}
%  \caption{Random configuration.}
  \label{depth2}
\end{subfigure}%
\begin{subfigure}{.5\textwidth}
  \centering
  \includegraphics[width=0.9\linewidth]{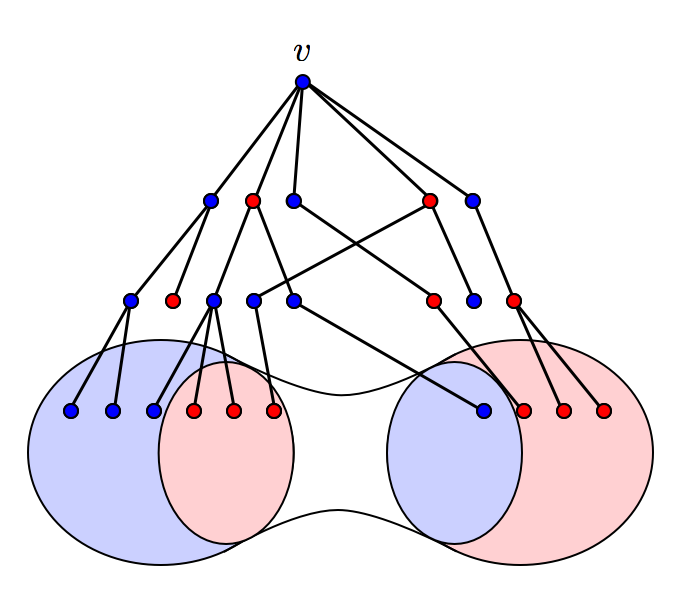}
 % \caption{Clustered configuration.}
  \label{depth2r}
\end{subfigure}
\caption{The left figure shows the neighborhood of vertex $v$ pulled from the SBM graph at depth $c\log_{\lambda_1} n$, $c<1/2$, which is a tree with high probability. If one had an educated guess about each vertex's label, of good enough accuracy, then it would be possible to amplify that guess by considering only such small neighborhoods (deciding with the majority at the leaves). However, we do not have such an educated guess. We thus initialize our labels purely at random, obtaining a small advantage of roughly $\sqrt{n}$ vertices by luck (i.e., the central limit theorem), in either an agreement or disagreement form. This is illustrated in agreement form in the right figure. We next attempt to amplify that lucky guess by exploiting the information of the SBM graph. Unfortunately, the graph is too sparse to let us amplify that guess by considering tree like or even loopy neighborhoods; the vertices would have to be exhausted. This takes us to considering walks.}       
\label{tree}
\end{figure}

\subsection{Nonbacktracking walks}

An obvious way to solve the problem caused by running out of vertices would be to simply count the walks of length $t$ from $v$ to vertices in $S_1$ or $S_2$. Recall that a {\em walk} is a series of vertices such that each vertex in the walk is adjacent to the next, and a {\em path} is a walk with no repeated vertices. The last vertex of such a walk will be adjacent to an average of approximately $a/2$ vertices in its community outside the walk and $b/2$ vertices in the other community outside the walk. However, it will also be adjacent to the second to last vertex of the walk, and maybe some of the other vertices in the walk as well. As a result, the number of walks of length $t$ from $v$ to vertices in $S_1$ or $S_2$ cannot be easily predicted in terms of $v$'s community. So, the numbers of such walks are not useful for classifying vertices.

We could deal with this issue by counting paths\footnote{This type of approach is considered in \cite{bhatt-bickel}.} of length $t$ from $v$ to vertices in $S_1$ and $S_2$. Given a path of length $t-1$, the expected number of vertices outside the path in the same community as its last vertex that are adjacent to it is approximately $a/2$ and the expected number of vertices outside the path in the opposite community as its last vertex that are adjacent to it is approximately $b/2$. So, the expected number of paths of length $t$ from $v$ is approximately $(\frac{a+b}{2})^t$ and the expected difference between the number that end in vertices in the same community as $v$ and the number that end in the other community is approximately $(\frac{a-b}{2})^t$. The problem with this is that counting all of these paths is inefficient.

The compromise we use is to count nonbacktracking walks ending at $v$, i.e. walks that never repeat the same edge twice in a row. We can efficiently determine how many nonbacktracking walks of length $t$ there are from vertices in $S_i$ to $v$ by using the fact that the number of nonbacktracking walks of length $t$ starting at a vertex in $S_i$ and having $v'$ and $v$ as their last two vertices is equal to the sum over all $v''\ne v$ such that $v''$ is adjacent to $v'$ of the number of nonbacktracking walks of length $t-1$ starting at a vertex in $S_i$ and having $v''$ and $v'$ as their last two vertices. Furthermore, most nonbacktracking walks of a given length that is logarithmic in $n$ are paths, so it seems reasonable to expect that counting nonbacktracking walks instead of paths in our algorithm will have a negligible effect on the accuracy.

\begin{figure}[H]
\centering
  \includegraphics[width=0.5\linewidth]{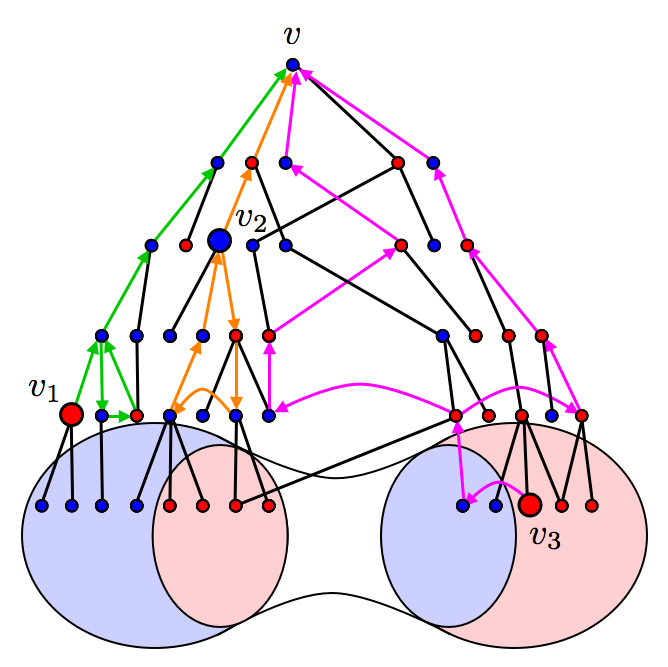}
\caption{This figure extends Figure \ref{tree} to a larger neighborhood. The ABP algorithm amplifies the belief of vertex $v$ by considering all the walks of a given length that end at it. To avoid being disrupted by backtracking or cycling the beliefs on short loops, the algorithm considers only walks that do not repeat the same vertex within $r$ steps, i.e., $r$-nonbacktracking walks. For example, when $r=3$ and when the walks have length $7$, the green walk starting at vertex $v_1$ is discarded, whereas the orange walk starting at the vertex $v_2$ is counted. Note also that the same vertex can lead to multiple walks, as illustrated with the two magenta walks from $v_3$. Since there are approximately equally many such walks between any two vertices, if the majority of the vertices were initially classified as blue, this is likely to classify all of the vertices as blue. We hence need a compensation step to prevent the classification from becoming biased towards one community.}
\label{loops}
\end{figure}

More precisely, that suggests the following approach. Define $y_{v,v'}^{(t)}$ to be the number of nonbacktracking walks of length $t$ that start at vertices in $S_2$ and end in the directed edge $(v',v)$ minus the number of nonbacktracking walks of length $t$ that start at vertices in $S_1$ and end in $(v',v)$. Also, define $y_{v}^{(t)}$ to be the overall difference between the number of nonbacktracking walks of length $t$ from vertices in $S_2$ to $v$ and the number of nonbacktracking walks of length $t$ from vertices in $S_1$ to $v$. Their values can be efficiently computed by means of the following procedure:
\begin{enumerate}
\item For every $(v,v')\in E(G):$

\hspace{1 cm} If $v'\in S_2$, set $y_{v,v'}^{(1)}=1$

\hspace{1 cm} Otherwise, set $y_{v,v'}^{(1)}=-1$

\item For every $1<t\le m$ and $(v,v')\in E(G):$

\hspace{1 cm} Set $y_{v,v'}^{(t)}=\sum_{v'': (v'',v')\in E(G),v''\ne v} y_{v',v''}^{(t-1)}$

\item For every $(v,v')\in E(G)$ and $v\in G$

\hspace{1 cm} Set $y_v^{(m)}=\sum_{v':(v,v')\in E(G)} y_{v,v'}^{(m)}$
\end{enumerate}
One way of viewing this algorithm is that $y_{v,v'}^{(t)}$ represents our current belief about what community $v'$ is in, disregarding any information derived from the fact that it is next to $v$. We start with fairly unconfident beliefs about the vertices' communities, and then derive more and more confident beliefs about the vertices' communities by taking our beliefs about their neighbors' communities into account.

\subsection{Compensation for the average value}

$y_{v,v'}^{(1)}$ has an average value of $\Theta(1/\sqrt{n})$ for $v'$ in community $2$ and $-\Theta(1/\sqrt{n})$ for $v'$ in community $1$. Also, $y_{v,v'}^{(1)}$ has a variance of order $1$. For a random $(v,v')\in E(G)$, $v'$ will have an average of approximately $a/2$ neighbors other than $v$ in its community and $b/2$ neighbors other than $v$ in the other community. So, by induction on $t$, we would expect that $y_{v,v'}^{(t)}$ would have an average value of $\Theta((\frac{a-b}{2})^t/\sqrt{n})$ for v' in community $2$ and $-\Theta((\frac{a-b}{2})^t/\sqrt{n})$ for v' in community $1$. Since $y_{v',v''}^{(t-1)}$ should be approximately independent for different $v''$ adjacent to $v'$, we would also expect that $y_{v,v'}^{(t)}$ would have an empirical variance of approximately $(\frac{a+b}{2})^t$, and thus a standard deviation of approximately $\sqrt{(\frac{a+b}{2})^t}$. So, for $t$ such that $(\frac{a-b}{2})^t/\sqrt{n}>\sqrt{(\frac{a+b}{2})^t}$, we would expect that we could determine the community of $v'$ from $y_{v,v'}^{(t)}$ with accuracy $1/2+\Omega(1)$.

The problem with this reasoning is that the average value over all $(v,v')\in E(G)$ of $y_{v,v'}^{(1)}$ will not be exactly $0$. It will also tend to have an absolute value on the order of $1/\sqrt{n}$. That means that the average value over all $(v,v')\in E(G)$ of $y_{v,v'}^{(t)}$ will have an absolute value of $\Theta((\frac{a+b}{2})^t/\sqrt{n})$. If we hold the average value of $y_{v,v'}^{(t-1)}$ fixed then that means that $E[y_{v,v'}^{(t)}|N_1(v')]$ will have an empirical variance of $\Theta((\frac{a+b}{2})^{2t}/n)$, and thus that $y_{v,v'}^{(t)}$ will also have an empirical variance of at least $\Theta((\frac{a+b}{2})^{2t}/n)$. This implies that the standard deviation of $y_{v,v'}^{(t)}$ will always be much greater than the difference between the average value of $y_{v,v'}^{(t)}$ for $v'$ in community $1$ and  the average value of $y_{v,v'}^{(t)}$ for $v'$ in community $2$, which would render attempts to classify $v'$ based on $y_{v,v'}^{(t)}$ ineffective.

%\Cnote{This is a really convoluted way of reducing the average value of $y_{v,v'}^{(t)}$. I need to do something like this when there is more than one eigenvalue I need to compensate for, but in this case it is unnecessarily complicated, and thus hard to motivate}

\begin{remark}
The simple way to fix this would be to add a step where we subtract the average value of $y^{(t)}$ from every element of $y^{(t)}$ so its sum is $0$ for every $t$ like we did in the version of $ABP$ in Section \ref{ABP}. However, this does not extend easily to the general Stochastic Block Model, and we want a solution that does.
\end{remark}

In order to prevent this, we need to stop the average value of $y_{v,v'}^{(t)}$ from getting too large. It will tend to multiply by roughly $\frac{a+b}{2}$ each time $t$ increases by $1$, so the average value of $y_{v,v'}^{(t)}-\frac{a+b}{2}y_{v,v'}^{(t-1)}$ will probably be much smaller than the average value of $y_{v,v'}^{(t)}$. So, if we pick some $1<i\le m$ and redefine $y_{v,v'}^{(i)}$ so that 
\begin{align}
y_{v,v'}^{(i)}=-\frac{a+b}{2}y_{v,v'}^{(i-1)}+\sum_{v'': (v'',v')\in E(G),v''\ne v} y_{v',v''}^{(i-1)}\label{cancel1}
\end{align}
for all $(v,v')\in E(G)$, the average value of $y_{v,v'}^{(i)}$ will be much smaller than it would have been. Since the difference between the average values of $y_{v,v'}^{(t)}$ over $v'$ in different communities grows as $\Theta((\frac{a-b}{2})^t/\sqrt{n})$, this redefinition will merely change the difference between the average values over $v'$ in different communities of $y_{v,v'}^{(i)}$ from $\Theta((\frac{a-b}{2})^i/\sqrt{n})$ to 
\[\Theta\left(\left(\frac{a-b}{2}\right)^{i}/\sqrt{n}-\frac{a+b}{2}\cdot\left(\frac{a-b}{2}\right)^{i-1}/\sqrt{n}\right)=-\Theta\left(b\left(\frac{a-b}{2}\right)^{i-1}/\sqrt{n}\right).\]
However, the average value of $y_{v,v'}^{(i)}$ will still be nonzero, and if we continue to set $y_{v,v'}^{(t)}=\sum_{v'': (v'',v')\in E(G),v''\ne v} y_{v',v''}^{(t-1)}$ for all $t> i$, the average value of $y_{v,v'}^{(t)}$ would resume increasing in magnitude faster than the average difference between $y_{v,v'}^{(t)}$ for different communities of $v'$. This creates the risk that it would still eventually get too large. So, in order to actually fix the problem, it may be necessary to repeat the step where its magnitude is reduced. More precisely, we may have to choose several indices $t_0,t_1,...t_{m'}$ and redefine $y_{v,v'}^{(t_i)}$ for each $i$ so that   
\begin{align}
y_{v,v'}^{(t_i)}=-\frac{a+b}{2}y_{v,v'}^{(t_i-1)}+\sum_{v'': (v'',v')\in E(G),v''\ne v} y_{v',v''}^{(t_i-1)}\label{cancel2}
\end{align}
for every $(v,v')\in E(G)$.

Once we have made these modifications, it will be the case that for sufficiently large $m$, the average value for $v'$ in community $1$ of $y_{v,v'}^{(m)}$ will differ from the average value for $v'$ in community $2$ of $y_{v,v'}^{(m)}$ by a constant multiple of the standard deviation of $y_{v,v'}^{(m)}$. Then, we define $y_v^{(m)}=\sum_{v':(v,v')\in E(G)} y_{v,v'}^{(m)}$ in order to simplify it to a function of one vertex which is still correlated with the vertex's community. Unfortunately, this does not guarantee that simply dividing the vertices into those with a positive value of $y_v^{(m)}$ and those with a negative value of $y_v^{(m)}$ will give a useful partition. It could be the case that the fraction of $v$ for which $y_v^{(m)}$ is positive is the same for both communities, but $y_v^{(m)}$ is typically more strongly positive or less strongly negative for $v$ in one community than $v$ in the other. So, we randomly assign each vertex to a set with a probability that scales linearly with $y_v^{(m)}$ in order to ensure that having a higher average value of $y_v^{(m)}$ actually leads to having greater representation in one of the proposed communities.\\

\subsection{Vanilla ABP}
We present first a simplified version of our algorithm. Our proof relies on a modified version described below, but this version captures the essence of our algorithm while avoiding technicalities required for the proof.

%Colin: put here the simplest possible form of an ABP-like algorithm that you think should still work. You can also trade some complexity increase with further simplifications (e.g., for the compensation steps). It s fine if you don't know how to prove that this algorithm works, we will make it clear (as in the above statement and section below) that the complete algorithm used in the proof is slightly different. Also, try to write this simple algorithm for the general SBM, but if this requires too many technicalities, we may resign later on to the symmetric SBM. Make sure that the algorithm is an form that someone could potentially implement it if he/she wanted. Ideally, we should do some numerical tests before to favor a version that seems to work best on data...

{\em $\mathrm{ABP}^*(G,m,m',r,\lambda_1)$:}
\begin{enumerate}

\item For each adjacent $v$ and $v'$ in $G$, randomly draw $y^{(1)}_{v,v'}$ from a Gaussian distribution with mean $0$ and variance $1$. Also, consider $y^{(t)}_{v,v'}$ as having a value of $0$ whenever $t<1$.

%\item Find all cycles in $G$ of length $r$ or less.

\item For each $1<t\le m$, and each adjacent $v$ and $v'$ in $G$, set
\[y^{(t)}_{v,v'}=\sum_{v'':(v',v'')\in E(G),v''\ne v} y^{(t-1)}_{v',v''}\]
 unless $(v,v')$ is part of a cycle of length $r$ or less.\footnote{See Remark (2) in Section \ref{ABP} for how to modify the algorithm in the case of multiple cycles.} If it is, then let the other vertex in the cycle that is adjacent to $v$ be $v'''$, and the length of the cycle be $r'$, and set \[y_{v,v'}^{(t)}=\sum_{v'':(v',v'')\in E(G),v''\ne v} y_{v',v''}^{(t-1)}-\sum_{v'':(v,v'')\in E(G),v''\ne v',v''\ne v'''} y_{v,v''}^{(t-r')}\]
unless $t=r'$, in which case, set 
\[y_{v,v'}^{(t)}=\sum_{v'':(v',v'')\in E(G),v''\ne v} y_{v',v''}^{(t-1)}-y^{(1)}_{v''',v}.\]

\item Set $Y$ to be the $n\times m$ matrix such that for all $t$ and $v$, \[Y_{v,t}=\sum_{v':(v',v)\in E(G)} y^{(t)}_{v,v'}\]

\item Set $M$ to be the $m\times m$ matrix such that $M_{i,i}=1$ and $M_{i,i+1}=-\lambda_1$ for all $i$, and all other entries of $M$ are $0$. Also, let $e_m\in\mathbb{R}^m$ be the vector with an $m$-th entry of $1$ and all other entries equal to $0$. Set $$y'=YM^{m'}e_m.$$

\item Return $(\{v:y'_v>0\},\{v: y'_v\le 0\})$.
\end{enumerate}

\begin{remark}In the $r=2$ case, one does not need to find cycles and one can exit step 2.\ after the second line. In general when running this algorithm, one should use variables that are accurate to within a factor of less than $(\lambda_2/\lambda_1)^m$. As mentioned above, we rely on a less compact version of the algorithm to prove the theorem, but expect that the above also succeeds at detection as long as $m>2\ln(n)/\ln(\snr)+\omega(m')$ and $m'>m\ln(\lambda^2_1/\lambda^2_2)/(\ln(n)-\omega(1))$. 
\end{remark}

\begin{remark}
This version of ABP is mostly the same as the one in Section \ref{ABP}. However, the previous version compensates for biases in $y^{(1)}$ by subtracting the average value of $y^{(t)}$ from each entry in $y^{(t)}$ for every $t$ in order to prevent it from ever becoming significantly biased. This version compensates for biases by a variant of the method explained in the previous subsection instead. More specifically, it uses the method explained in the previous subsection except that it calculates all of the $y^{(t)}$ without using any form of compensation and then adjusts the results in order to essentially apply the compensation retroactively.
\end{remark}

\noindent
{\bf Implementation details.} 
%The above sections cover all of the key parts of how our algorithm works. However, the actual algorithm differs from what we have described in a few ways in order to make it easier to prove that it works, or in order to simplify the algorithm. 
Note that $ABP^*$ has several differences from the algorithm outlined in previous sections. First of all, we initialize the $y_{v,v'}^{(0)}$ using a random value that is drawn from a normal distribution because a probability distribution that is a multidimensional normal distribution is easier to analyse than a probability distribution that is evenly distributed over the vertices of an $n$-dimensional hypercube. Secondly, we require that our walks never repeat the same vertex within $r$ steps for some $r$, rather than merely requiring that they not backtrack. This allows us to use the expected number of walks between vertices in our analysis without worrying about the tiny probability that there is a dense tangle in the graph with a huge number of nonbacktracking walks between its vertices. Making this modification to the algorithm requires adding  an extra part to the recursion step where walks that just repeated a vertex are cancelled out, specifically the second half of step $3$ of the algorithm above. The resulting algorithm is called the {\em acyclic belief propagation} algorithm because it counts walks that do not contain any small cycles. Thirdly, we move all of the recursion steps that compensate for the average value to the end of the algorithm. This is possible because the operation that takes $y^{(t-1)}$ as input and outputs a list that has a value of $$-\frac{a+b}{2}y_{v,v'}^{(t-1)}+\sum_{v'': (v'',v')\in E(G),v''\ne v} y_{v',v''}^{(t-1)}$$ for each $(v,v')\in E$ commutes with the one that simply outputs a list that has a value of $\sum_{v'': (v'',v')\in E(G),v''\ne v} y_{v',v''}^{(t-1)}$ for each $(v,v')\in E$. The algorithm generates $y^{(m)}$ by applying these two operations in some sequence to $y^{(1)}$, so we can calculate it by applying the later operation $m-m'$ times and then applying the former operation $m'$ times. Actually, the algorithm takes this one step farther by applying the later operation $m$ times and then calculating how the result would have changed if it had applied the former the appropriate number of times, but it still has the same result. 

The full Acyclic Belief Propagation algorithm also has a few differences from $ABP^*$ that make it easier to prove that it works. In particular, we randomly select a small fraction of the graph's edges at the beginning of the algorithm. Then, we require one specific step of each nonbacktracking walk to use one of the selected edges, and all of their other steps to use edges that have not been selected. Since the selected edges are nearly independent of the rest of the graph, this allows us to more easily prove that the values of $y_{v',v''}^{(t-1)}$ for $v'$ adjacent to $v$ will not become dependent in a way that disrupts the algorithm.

%\Cnote{This section does not fully explain why these modifications are made. Doing so would generally require that I give a lot more details on the proof techniques.}

\subsection{ABP for the symmetric SBM}\label{simplified}

\begin{theorem}
Let $a$ and $b$ be positive real numbers such that $(a-b)^2>2(a+b)$, and $S$ be the $2$-community symmetric stochastic block model with these parameters. There exist constants $\epsilon, l, r,c>0$ and $m=\Theta(\log(n))$ such that when the basic $2$-community symmetric acyclic belief propagation algorithm is run on these parameters and a random $G\in S$, the expected difference between the fraction of vertices from community $1$ that are in $S_1$ and the fraction of vertices from community $2$ that are in $S_1$ is at least $\epsilon$.
\end{theorem}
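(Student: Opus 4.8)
The plan is a second--moment analysis of the vector $y'\in\mathbb R^n$ produced in Step~4, conditioning first on the graph $G$ and the labeling $\sigma$ and then taking expectations over the Gaussian initialization $\xi=(y^{(1)}_{v,v'})$; the conclusion then follows from the sign rule in Step~5. Write $\sigma'\in\{\pm1\}^n$ for the signed community vector and put $\lambda_1=\tfrac{a+b}{2}$, $\lambda_2=\tfrac{a-b}{2}$, so the hypothesis $(a-b)^2>2(a+b)$ reads $\snr=\lambda_2^2/\lambda_1>1$. Since Step~5 returns $S_1=\{v:y'_v>0\}$ and the community sizes are balanced up to $O(\sqrt n\log n)$, the difference between the fraction of community~$1$ and the fraction of community~$2$ in $S_1$ equals $\tfrac1n\sum_v\sigma'_v\,\mathrm{sign}(y'_v)$ up to $o(1)$; after the harmless relabeling of $S_1,S_2$ that makes this nonnegative, it suffices to exhibit, for suitable constants $l,r$ and $m=\Theta(\log n)$, a high--probability (over $G$) decomposition $y'_v=c\,\sigma'_v+B_v$ in which $c$ is a single scalar, $B$ is conditionally on $G$ a centered Gaussian vector, $c^2$ is of the same order as $\rho:=\tfrac1n\sum_v\E_\xi[(y'_v)^2]$, and $\E_\xi[B_v^2]=O(\rho)$ for all but $o(n)$ vertices $v$. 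Granting this, $\mathrm{sign}(y'_v)\,\sigma'_v\stackrel{d}{=}\mathrm{sign}(c+B_v)$, and a one--dimensional Gaussian estimate together with the near--independence of the $B_v$ gives $\tfrac1n\sum_v\sigma'_v\,\mathrm{sign}(y'_v)\ge\epsilon$ with probability $1-o(1)$ for an absolute constant $\epsilon>0$.

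First I would set up the \emph{walk expansion}: on the event that the depth--$O(\log n)$ neighbourhoods are nearly tangle--free, the propagation step expands $y'_v=\sum_{W}\gamma_{|W|}\,\xi_{e(W)}$, where $W$ ranges over $r$--nonbacktracking walks ending at $v$, $e(W)$ is its initial directed edge, and $\gamma_\ell=\binom{m'}{m-\ell}(-\lambda_1)^{m-\ell}$ (supported on $m-m'\le\ell\le m$) are the coefficients applied by $YM^{m'}e_m$. Hence $\E_\xi[y'_v]=0$, so the signal is carried by the covariance $\E_\xi[y'_v y'_{v'}]=\sum_{W\to v,\,W'\to v':\,e(W)=e(W')}\gamma_{|W|}\gamma_{|W'|}$; reversing $W$ and concatenating it with $W'$ identifies almost every such pair with one $r$--nonbacktracking walk from $v$ to $v'$, so $\E_\xi[y'_v y'_{v'}]$ equals, up to lower--order cycle terms, a weighted count $\sum_\ell\tilde\gamma_\ell N^{(r)}_\ell(v,v')$ with $\tilde\gamma=\gamma\ast\gamma$, and the diagonal $v=v'$ is dominated by the pairs $W=W'$.

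Next comes the \emph{transfer--matrix computation}, which is where the SNR enters. Conditioning on $\sigma$, for $\ell=\Theta(\log n)$ one has $\E_G[N^{(r)}_\ell(v,v')]=\tfrac1n\big(\lambda_1^{\ell}+\sigma'_v\sigma'_{v'}\lambda_2^{\ell}\big)(1+o(1))$ for $v\neq v'$, since the community sequence of a walk contributes a product of entries of $Q/n$ and summing over intermediate vertices reproduces powers of the eigenvalues $\lambda_1,\lambda_2$ of $PQ$; likewise $\E_\xi[(y'_v)^2]\approx\sum_\ell\gamma_\ell^2\,\E_G N^{(r)}_\ell(v)=\Theta(\lambda_1^m)$ up to a $\mathrm{const}^{\,m'}$ factor. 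Feeding these into $\E_\xi[L^2]$ with $L=\tfrac1n\sum_v\sigma'_v y'_v$, the $\lambda_1$ part picks up the factor $\big(\sum_\ell\gamma_\ell\lambda_1^{\ell}\big)=\lambda_1^m(1-1)^{m'}=0$ --- this is exactly the role of the compensation matrix $M$, which subtracts $\lambda_1$ times the previous iterate --- whereas the $\lambda_2$ part picks up $\big(\sum_\ell\gamma_\ell\lambda_2^{\ell}\big)=\lambda_2^m(1-\lambda_1/\lambda_2)^{m'}\neq0$; thus $c^2\asymp\E_\xi[L^2]\asymp\tfrac1n\lambda_2^{2m}(1-\lambda_1/\lambda_2)^{2m'}$ while $\rho\asymp\lambda_1^{m}(\mathrm{const})^{m'}$. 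Writing $m'=m/l$ and comparing exponents, $c^2/\rho\asymp n^{-1}A^m$ with $\log A=\log\snr-\tfrac1l\cdot(\mathrm{const})$, so choosing $l$ large enough that $A>1$ (possible because $\snr>1$) and $m$ a large enough multiple of $\log n$ makes $c^2/\rho$ bounded below by a constant; this is precisely the decomposition required above, and the sign rule then finishes the argument.

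I expect the \emph{main obstacle} to be upgrading the expectations above to high--probability statements over $G$. The walk lengths involved are $\Theta(\log n)$, far beyond the radius at which the neighbourhood of a vertex is a tree, so one cannot simply replace $N^{(r)}_\ell(v,v')$ by its mean: rare dense ``tangles'' could in principle blow up the number of $r$--nonbacktracking walks and destroy concentration. This should be handled by a tangle--free decomposition of the walk matrices in the spirit of \cite{bordenave} --- bounding the part of $y'$ supported on tangle--free walks by a trace/second--moment estimate and showing the tangled remainder is negligible because tangles are so sparse --- aided by the fact that nonbacktracking walks of these lengths are almost self--avoiding, so their counts behave like path counts as in \cite{Mossel_SBM2}; the independence needed for the second--moment step is what the edge--splitting device (forcing one step of each counted walk onto a fresh, nearly independent edge) and the short smoothing over vertices at distance $\lfloor\sqrt{\log\log n}\rfloor$ accomplish in the full $\mathrm{ABP}$, while for the basic version one argues directly along the same lines. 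A final bookkeeping point is that the coefficients $\binom{m'}{m-\ell}\lambda_1^{m-\ell}$ can be large, so one must check that the $o(1)$ errors in the transfer--matrix approximation, once multiplied through, still sit below the $\lambda_2^{m}$--scale signal; the choice of $l$ (hence of $m'$ relative to $m$) is exactly what guarantees this.
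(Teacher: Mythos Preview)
Your outline has the right backbone---a second-moment/walk-count analysis showing that the ``signal'' projection $L=\tfrac1n\langle\sigma',y'\rangle$ has variance comparable to the per-vertex variance once $m$ is a large enough multiple of $\log n$---and your cancellation calculation $\sum_\ell\gamma_\ell\lambda_1^\ell=0$, $\sum_\ell\gamma_\ell\lambda_2^\ell=\lambda_2^m(1-\lambda_1/\lambda_2)^{m'}$ is exactly how the compensation matrix enters. But there is a concrete mismatch with the statement you are proving. The ``basic $2$-community symmetric ABP'' algorithm (2CS\nobreakdash-ABP$^\star$) does \emph{not} return $S_1=\{v:y'_v>0\}$; its final step is the randomized linear rule that assigns $v$ to $S_2$ with probability $\min(1,\max(0,\tfrac12+y^{(m)}_v/2c'))$ where $c'=c\sqrt{\tfrac1n\sum_v (y^{(m)}_v)^2}$. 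The paper makes this choice precisely to avoid the failure mode you gloss over: it is possible that $\{v:y'_v>0\}$ has the same density in both communities while only the \emph{magnitudes} of $y'_v$ differ; the linear rule converts a difference in community means of $y^{(m)}$ directly into a difference in assignment probabilities, and the clipping loss is controlled by the elementary bound $\max(|y^{(m)}_v|/c'-\tfrac12,0)\le 2(y^{(m)}_v/c')^2$, whose average is $O(1/c^2)$. With that rule the final step of your argument is actually easier than what you wrote: you only need (i) a per-vertex variance bound $\Var_\xi[y^{(m)}_v]=O(\rho)$ and (ii) that the community means of $y^{(m)}$ differ by $\Theta(\sqrt{\rho})$, and you never need the ``near-independence of the $B_v$'' or the one-dimensional sign estimate. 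A related slip: in 2CS\nobreakdash-ABP$^\star$ the Gaussian noise is one $x_v$ per \emph{vertex} with $y^{(1)}_{v,v'}=x_{v'}$, not one i.i.d.\ draw per directed edge, so your covariance $\E_\xi[y'_vy'_{v'}]$ should count pairs of $r$-nonbacktracking walks with the same starting \emph{vertex}, not the same starting edge.

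On the concentration side your plan is a genuinely different route from the paper's. The paper does not use a Bordenave-style tangle-free decomposition of the nonbacktracking matrix; instead it introduces the quantities $W_{(c_0,\dots,c_m)[r]}((v_0,\dots,v_m))$, applies a deterministic ``path-sum-conversion'' that rewrites each such term as a signed sum of \emph{shards} (terms to which a product formula over fresh segments applies), classifies shards by a combinatorial degree, and proves a Degree Bound Lemma that sums their expected magnitudes. This is what yields the two key estimates you need---the per-vertex variance (Lemma~\ref{varianceLemma}) and the size of the per-community averages (Lemma~\ref{communityAverage})---and the final paragraph of the proof of Theorem~\ref{main1} then finishes exactly via the randomized-linear-assignment arithmetic above, with the edge-splitting and the $\lfloor\sqrt{\log\log n}\rfloor$ smoothing needed only for the high-probability (not expected) version. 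Your Bordenave-style program could in principle reach the same two estimates, but as you note the $\binom{m'}{j}\lambda_1^{j}$ coefficients are large and you must show that the $o(1)$ errors in the transfer-matrix approximation survive multiplication by them; the paper's shard bounds are designed to carry those weights explicitly.
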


The basic $2$-community symmetric acyclic belief propagation algorithm is as follows.

%\begin{algorithm}
$2CS-ABP^\star(G,m,r,l,c,\lambda_1):$
\begin{enumerate}

\item Find all cycles of length $r$ or less in $G$. 

\item For every vertex $v\in G$, randomly assign $x_v$ according to a Normal distribution with mean $0$ and variance $1$. 
For each adjacent $v$ and $v'$, let $y_{v,v'}^{(1)}=x_{v'}$, and $y_{v,v'}^{(t)}=0$ for all $t\le 0$.

\item For each $1\le t\le m$, and each adjacent $(v,v')\in E(G)$, set 
\[y_{v,v'}^{(t)}=\sum_{v'':(v',v'')\in E(G),v''\ne v} y_{v',v''}^{(t-1)}\] unless $v,v'$ is part of a cycle of length $r$ or less. If it is, then let the other vertex in the cycle that is adjacent to $v$ be $v'''$, and the length of the cycle be $r'$. Set \[y_{v,v'}^{(t)}=\sum_{v'':(v',v'')\in E(G),v''\ne v} y_{v',v''}^{(t-1)}-\sum_{v'':(v,v'')\in E(G),v''\ne v',v''\ne v'''} y_{v,v''}^{(t-r')}\]
unless $t=r'$. In that case, set 
\[y_{v,v'}^{(t)}=\sum_{v'':(v',v'')\in E(G),v''\ne v} y_{v',v''}^{(t-1)}-x_v.\]

\item Set $Y$ to be the $n\times m$ matrix such that for all $t$ and $v$, \[Y_{v,t}=\sum_{v':(v',v)\in E(G)} y^{(t)}_{v,v'}\]

\item Set $M$ to be the $m\times m$ matrix such that $M_{i,i}=1$ and $M_{i,i+1}=-\lambda_1$ for all $i$, and all other entries of $M$ are equal to $0$. Also, let $e_m\in\mathbb{R}^m$ be the vector with an $m$-th entry of $1$ and all other entries equal to $0$. Set $$y^{(m)}=Y M^{\lceil \frac{m-3r-1}{l}\rceil}e_m.$$

\item Set $c'=c\cdot \sqrt{\sum_{v\in G} (y^{(m)}_v)^2/n}$. Create sets of vertices $S_1$ and $S_2$ as follows. For each vertex $v$, if $y^{(m)}_v<-c'$, assign $v$ to $S_1$. If $y^{(m)}_v>c'$, then assign $v$ to $S_2$. Otherwise, assign $v$ to $S_2$ with probability $1/2+y^{(m)}_v/2c'$ and $S_1$ otherwise. Return $(S_1,S_2)$.
\end{enumerate}
%\end{algorithm}

We believe that the difference between the fraction of vertices from community $1$ that this algorithm puts in $S_1$ and the fraction of vertices from community $2$ that this algorithm puts in $S_1$ will be at least $\epsilon$ with probability $1-o(1)$. However, in order to prove that we can detect communities reliably we use the following slightly modified form of the algorithm.

\begin{theorem}
Let $a$ and $b$ be positive real numbers such that $(a-b)^2>2(a+b)$, and $S$ be the $2$-community symmetric stochastic block model with these parameters. There exist constants $\epsilon, l, r,c,\gamma>0$ and $m=\Theta(\log(n))$ such that when the $2$-community symmetric acyclic belief propagation algorithm is run on these parameters and a random $G\in S$, the difference between the fraction of vertices from community $1$ that are in $S_1$ and the fraction of vertices from community $2$ that are in $S_1$ is at least $\epsilon$ with probability $1-o(1)$.
\end{theorem}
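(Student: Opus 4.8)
The plan is to analyze the modified algorithm through its weighted-walk-counting representation, following the template of \cite{Mossel_SBM2} but carrying through the extra deflation step, and to proceed in four stages.

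\emph{Stage 1: walk-sum representation, reduction to self-avoiding walks.} Unrolling the propagate recursion, $y^{(t)}_{v,v'}$ is a signed sum over all length-$(t-1)$ walks in the $\Gamma$-subsampled graph $G$ that are $r$-nonbacktracking (never repeat a vertex within $r$ steps — this is exactly what the short-cycle corrections of step~3 enforce) and whose last two vertices are $v',v$, each weighted by the initial Gaussian value $x_{\mathrm{start}}$. Hence $y^{(t)}=(B^{(r)})^{\,t-1}\tilde x$, where $B^{(r)}$ is the $r$-nonbacktracking operator on directed edges and $\tilde x=y^{(1)}$ is the lift of $x$, so the per-vertex statistic $y''$ used for rounding is, up to vertex-aggregation (and, in the variant used, a short local average), $p(B^{(r)})\tilde x$ for the fixed polynomial $p(\mu)=\mu^{\,m-m'-1}(\mu-\lambda_1)^{m'}$ with $m'=\lceil (m-3r-1)/l\rceil$. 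One first records that for $m=\Theta(\log n)$ with a small enough implied constant, $G$ is $r$-tangle-free at this scale with probability $1-o(1)$, so that every $r$-nonbacktracking walk of length $\le m$ occurring above is self-avoiding apart from a remainder all of whose moments are $o(1)$ relative to the leading term; the subsampled edge set $\Gamma$ — by requiring exactly one step of each counted walk to traverse a $\Gamma$-edge and the rest to avoid $\Gamma$ — makes that step independent of the remaining graph, which is what permits treating the weights of distinct walks as decoupled.

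\emph{Stage 2: the signal versus the bulk, and why the deflation works.} Since $\E_x[x_ux_w]=\1\{u=w\}$, the conditional (on $G,\sigma$) second moments $\E_x\big[\langle (-1)^{\sigma},y''\rangle^2\big]$ and $\E_x\big[\|y''\|^2\big]$ reduce, via Stage~1, to sums over pairs of self-avoiding walks sharing a start vertex; averaging the contribution of such a pair over $G$ factors over the (tree-like) union of the two walks and is governed by powers of $PQ$. The component aligned with the community indicator $(-1)^{\sigma}$ accrues a factor $\lambda_2$ per traversed edge, the component along the Perron direction a factor $\lambda_1$ per edge, and the remaining (``bulk'') component a factor of magnitude $\approx\sqrt{\lambda_1}$ per edge. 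Applying $p$: the $\lambda_1$-mode is annihilated identically because $p(\lambda_1)=0$ — this is precisely the retroactive cancellation of the average-value bias, i.e. a deflation of $B^{(r)}$ by $\lambda_1 I$ — while the $\lambda_2$-mode survives with multiplier $p(\lambda_2)=\lambda_2^{\,m-m'-1}(\lambda_2-\lambda_1)^{m'}$ and the bulk with an absolute multiplier at most $\approx\lambda_1^{(m-m'-1)/2}\lambda_1^{m'}$. Their ratio is $\snr^{(m-m'-1)/2}$ times a fixed power (in $m'$) of $|\lambda_2-\lambda_1|/\lambda_1$, and since $\snr=(a-b)^2/(2(a+b))>1$ one checks that, choosing $l$ to be a large enough constant (depending on $a,b$) and then $m$ large, this ratio can be made arbitrarily large. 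One concludes that there is an absolute $c_0>0$ with $\E_x\big[\langle (-1)^{\sigma},y''\rangle^2\big]\ \ge\ c_0\,n\,\E_x\big[\|y''\|^2\big]$.

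\emph{Stage 3: concentration and rounding.} These moment bounds are then upgraded to high-probability statements. For fixed $G,\Gamma$, both $\|y''\|^2$ and $\langle (-1)^{\sigma},y''\rangle$ are low-degree polynomials in the Gaussian vector $x$ (degree controlled by $m$, coefficients bounded by the tangle-free walk-count estimates), so Gaussian hypercontractivity gives concentration about their $x$-means; concentration in $\Gamma$ follows from bounded differences; and the residual randomness of $G$ is handled by a second-moment estimate over $G$, again an expansion over pairs of walks, with the pairs overlapping nontrivially controlled by $r$-tangle-freeness. Combining, with probability $1-o(1)$ one has $\langle (-1)^{\sigma},y''\rangle\ \ge\ \tfrac12\sqrt{c_0 n}\,\|y''\|$ (after relabelling $S_1\leftrightarrow S_2$ if needed so the sign is as stated). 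Finally, in the assignment step with $c'=c\sqrt{\|y''\|^2/n}$ one has $\E[\1\{v\in S_1\}\mid y'']=\tfrac12-\tfrac{y''_v}{2c'}$ whenever $|y''_v|\le c'$, and the clipped vertices contribute a correction bounded by their share of $\|y''\|^2$; hence the expectation over the rounding coins of $\Delta=|\Omega_1\cap S_1|/|\Omega_1|-|\Omega_2\cap S_1|/|\Omega_2|$ equals, up to a bounded factor, $\tfrac{1}{nc'}\langle (-1)^{\sigma},y''\rangle=\Omega(1)$, and a last bounded-difference argument over the $n$ independent rounding coins removes the expectation and yields $\Delta\ge\epsilon$ with probability $1-o(1)$ for a suitable constant $\epsilon>0$.

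\emph{Main obstacle.} The delicate point is Stage~2 together with the $G$-concentration in Stage~3: the polynomial $p$ is forced to cancel the $\lambda_1$-mode exactly, and that mode is exponentially (in $m$) larger than the $\lambda_2$-signal we want to retain, so one needs genuinely sharp control of the expected $r$-nonbacktracking walk counts — not merely their orders of magnitude — together with moment bounds tight enough that the fluctuations produced by this cancellation do not swamp the surviving signal. It is here that the $r$-nonbacktracking rule (removing short cycles exactly, so that only long and hence rare cycles remain to be estimated) and the subsampled edge set $\Gamma$ (injecting the independence needed to decouple walk weights) do the essential work; the self-avoiding-walk reduction, the hypercontractive concentration, and the rounding analysis are by comparison routine adaptations of the arguments in \cite{Mossel_SBM2}.
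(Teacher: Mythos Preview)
Your high-level plan---walk-counting representation, deflation by $\lambda_1$, signal-versus-bulk comparison, concentration, rounding---matches the paper's proof in spirit, and your ``main obstacle'' paragraph correctly locates the difficulty. But Stage~1 contains a real gap. You assert that at depth $m=\Theta(\log n)$ the $r$-nonbacktracking walks are self-avoiding ``apart from a remainder all of whose moments are $o(1)$ relative to the leading term,'' and then treat this remainder as negligible throughout. Even granting that most such walks are paths, this is not enough once you apply the deflation polynomial $p$: the self-avoiding contribution is shrunk by a factor roughly $(|\lambda_2|/\lambda_1)^{m-m'}$ (from order $\lambda_1^m$ down to order $|\lambda_2|^{m-m'}|\lambda_2-\lambda_1|^{m'}$), and there is no reason the non-self-avoiding remainder shrinks by the same factor---$p$ acts on a walk with repeated vertices through a different combinatorial structure, so a remainder that was $o(1)$ relative to $\lambda_1^m$ can easily dominate the surviving $\lambda_2$-signal. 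Neither ``$r$-tangle-freeness'' (which only forbids short cycles, not cycles of length between $r$ and $m$) nor the $\Gamma$-independence trick addresses this; it is precisely the obstacle you flag but do not resolve.

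The paper's mechanism here is the \emph{shard decomposition}. The quantity $W_{m/\{c_i\}[r]}(x,v)$ is written as a sum over walks $v_0,\ldots,v_m$ of $W_{(c_0,\ldots,c_m)[r]}((v_0,\ldots,v_m))$, and each such term is algorithmically rewritten (via ``path-sum-conversion'') as a linear combination of \emph{shards}: terms in which every nonzero $c_i$ sits at least $r$ away from any repeated vertex. On a shard the deflation factors act cleanly along each nonrepeated fresh segment of the walk's standard decomposition, yielding $\prod|\lambda_j-c'_i|$ along those segments and $\lambda_1$ along repeated ones; a \emph{degree bound lemma} then shows that shards whose walk graph has excess $\ge z$ contribute $O(n^{(15-5z)/6}\prod|\lambda_s-c_i|^2)$ in aggregate, so only near-tree shards matter, and those are enumerated explicitly to give the variance bound (Lemma~\ref{varianceLemma}) and the per-community mean (Lemma~\ref{communityAverage}). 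This is the analogue of the SAW decomposition in \cite{Mossel_SBM2}, but it operates on the \emph{deflated} walk sums directly rather than on raw walks, which is exactly what your outline lacks. A secondary point: in the algorithm $\Gamma$ is a single held-out edge layer applied \emph{after} the length-$m$ propagation, followed by a separate depth-$\lfloor\sqrt{\log\log n}\rfloor$ local average; its role in the proof is to supply fresh randomness for the final aggregation conditionally on $G\setminus\Gamma$ and $y^{(m)}$, not to decouple walk weights during propagation.
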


The $2$-community symmetric acyclic belief propagation algorithm is as follows.

%\begin{algorithm}
$2CS-ABP(G,m,r,l,c,\lambda_1,\gamma):$
\begin{enumerate}

\item Randomly and independently select each edge in $G$ with probability $\gamma$. Put all of the selected edges in a set $\Gamma$, and remove them from the graph.

%\item Find all cycles of length $r$ or less in $G$. 

\item For every vertex $v\in G$, randomly assign $x_v$ according to a Normal distribution with mean $0$ and variance $1$.
 For each adjacent $v$ and $v'$, let $y_{v,v'}^{(1)}=x_{v'}$, and $y_{v,v'}^{(t)}=0$ for all $t\le 0$.

\item For each $1\le t\le m$, and each adjacent $(v,v')\in E(G)$, set 
\[y_{v,v'}^{(t)}=\sum_{v'':(v',v'')\in E(G),v''\ne v} y_{v',v''}^{(t-1)}\] unless $v,v'$ is part of a cycle of length $r$ or less. If it is, then let the other vertex in the cycle that is adjacent to $v$ be $v'''$, and the length of the cycle be $r'$. Set \[y_{v,v'}^{(t)}=\sum_{v'':(v',v'')\in E(G),v''\ne v} y_{v',v''}^{(t-1)}-\sum_{v'':(v,v'')\in E(G),v''\ne v',v''\ne v'''} y_{v,v''}^{(t-r')}\]
unless $t=r'$. In that case, set 
\[y_{v,v'}^{(r')}=\sum_{v'':(v',v'')\in E(G),v''\ne v} y_{v',v''}^{(r'-1)}-x_v.\]

\item Set $Y$ to be the $n\times m$ matrix such that for all $t$ and $v$, \[Y_{v,t}=\sum_{v':(v',v)\in E(G)} y^{(t)}_{v,v'}\]

\item Set $M$ to be the $m\times m$ matrix such that $M_{i,i}=1$ and $M_{i,i+1}=-(1-\gamma)\lambda_1$ for all $i$, and all other entries of $M$ are equal to $0$. Also, let $e_m\in\mathbb{R}^m$ be the vector with an $m$-th entry of $1$ and all other entries equal to $0$. Set $$y^{(m)}=Y M^{\lceil \frac{m-3r-1}{l}\rceil}e_m.$$

\item For each $v\in G$, set \[y'_v=\sum_{v':(v,v')\in\Gamma} y^{(m)}_{v'}\]

\item For each $v\in G$, set $y''_v$ to be the sum of $y'_{v'}$ for all vertices $v'$ that are exactly $\lfloor \sqrt{\ln\ln n}\rfloor$ edges away from $v$.

\item Set $c'=c\cdot \sqrt{\sum_{v\in G} (y''_v)^2/n}$. Create sets of vertices $S_1$ and $S_2$ as follows. For each vertex $v$, if $y''_v<-c'$, assign $v$ to $S_1$. If $y''_v>c'$, then assign $v$ to $S_2$. Otherwise, assign $v$ to $S_2$ with probability $1/2+y''_v/2c'$ and $S_1$ otherwise. Return $(S_1,S_2)$.
\end{enumerate}
%\end{algorithm}

\begin{remark}
Note that the last theorem is simply Theorem \ref{main1} in the $2$ community symmetric case with some parts of the initialization step replaced by an assumption that parameters are chosen correctly. Also, the one before it follows from a slightly simplified version of the same proof.
\end{remark}

\begin{remark}
For the $k$-community symmetric stochastic block model, the above is largely unchanged. The key differences are that $\lambda_1=\frac{a+(k-1)b}{k}$, $\lambda_2=\frac{a-b}{k}$, the requirement for the algorithm to work is that $(a-b)^2>k(a+(k-1)b)$, and if the requirements are met the $k$CS-ABP$^*$ algorithm distinguishes between every pair of communities with expected accuracy at least $\epsilon$.
\end{remark}

\begin{remark}
In this algorithm, the initialization step and computing $y^{(t)}$ for a given $t$ both run in $O(n)$ time. $y^{(m)}$ can be computed in $O(n\log n)$ time if it is done the efficient way, $y'$ takes $O(n)$ time to compute, and computing $y''$ takes $O(n\log n)$ time. Determining $S_1$ and $S_2$ from $y''$ can also be done in $O(n)$ time, so this whole algorithm can be run in $O(n\log n)$ time.
\end{remark}

\subsection{Spectral view of ABP}\label{spectral}
An alternative perspective on this algorithm is the following. Assume for the moment that there are exactly $n/2$ vertices in each community, and let $M$ be the expected adjacency matrix, the matrix such that $M_{v,v'}$ is $a/n$ if $v$ and $v'$ are in the same community and $b/n$ if they are not. This matrix has an eigenvector whose entries are all $1$ with eigenvalue $\frac{a+b}{2}$, an eigenvector whose entries are $\pm 1$ with their signs determined by the relevant vertices' communities that has eigenvalue $\frac{a-b}{2}$, and all of its other eigenvalues are $0$. 

Now, let $M'$ be the graph's actual adjacency matrix. The result above suggests that the second eigenvector of $M'$ may have entries that are correlated with the vertices' communities. The problem with this reasoning is that while $M'$ has an expected value of $M$, $(M')^2$ has an expected value of roughly $M^2+\frac{a+b}{2}I$ because for every $i,j$, $M'_{i,j}=1\iff M'_{j,i}=1$, with the result that $E[\sum_{j} M'_{i,j}\cdot M'_{j,i}]$ is very different from $\sum_{j} E[M'_{i,j}]\cdot E[M'_{j,i}]$. In other words, the square of the adjacency matrix counts walks of length $2$ and has an expected value that is significantly different from the square of the expected adjacency matrix due to backtracking.

In order to avoid this issue, we define the graph's nonbacktracking walk matrix $W$ as a matrix over the vector space with an orthonormal basis consisting of a vector for each directed edge in the graph. $W_{(v_1,v_2),(v'_1,v'_2)}$ is defined to be $1$ if $v'_2=v_1$ and $v_2\ne v'_1$ and $0$ otherwise. In other words, it has a $1$ for every case where one directed edge leads to another that is not the same edge in the other direction.

Now, let $w\in R^{2|E(G)|}$ be the vector whose entries are all $1$, and $w'\in R^{2|E(G)|}$ be the vector such that $w'_{(v_0,v_1)}$ is $1$ if $v_0$ is in community $1$ and $-1$ if $v_0$ is in community $2$. As mentioned before, for a small $t$ and a random $(v,v')\in E(G)$, there will be an average of approximately $(\frac{a+b}{2})^t$ directed edges $t$ edges in front of $(v,v')$, and approximately $(\frac{a-b}{2})^t$ more of these edges will have ending vertices in the same community as $v'$ than in the other community on average. So, $w\cdot W^t w\approx 2|E(G)|(\frac{a+b}{2})^t$ and $w'\cdot W^tw'\approx 2|E(G)|(\frac{a-b}{2})^t$. That strongly suggests that $W$ has eigenvectors that are correlated with $w$ and $w'$ that have eigenvalues of approximately $\frac{a+b}{2}$ and $\frac{a-b}{2}$ respectively. It also seems plausible that $W$'s other eigenvalues have relatively small magnitudes.

If this is true, then one can gain information on which vertices of $G$ are in each community from the second eigenvector of $W$. One could simply calculate the second eigenvector of $W$ directly. However, it is significantly faster to pick a random vector $w''$ and then compute $W^m w''$ for some suitable $m$. The resulting vector will be approximately a linear combination of $W$'s main eigenvectors. Unfortunately, it will be much closer to being a multiple of its first eigenvector than its second. If we multiply $(W-\frac{a+b}{2}I)$ by the resulting vector, such as in \eqref{cancel1}, the component of the vector that is proportional to the first eigenvector will be mostly cancelled out. However, since its eigenvalue is not exactly $\frac{a+b}{2}$, it will not be cancelled out completely, and might still be too large. Luckily, if we instead multiply $(W-\frac{a+b}{2}I)^{m'}$ by the resulting vector for suitable $m'$, such as in \eqref{cancel2}, the component of the vector that is proportional to the first eigenvalue will be essentially cancelled out, leaving a vector that is approximately a multiple of $W$'s second eigenvector, and thus correlated with $G$'s communities. We believe that this would succeed in detecting communities in the SBM, but in order to make it easier to prove that our algorithm works we actually use $r$-nonbacktracking walks. This corresponds to using the graph's $r$-nonbacktracking walk matrix, which is defined as follows.
\begin{definition}
For any $r$, the graph's $r$-nonbacktracking walk matrix, $W^{(r)}$, is a matrix over the vector space with an orthonormal basis consisting of a vector for each directed path of length $r-1$ on the graph. $W^{(r)}_{(v_1,v_2,...,v_r),(v'_1,v'_2,...,v'_r)}$ is $1$ if $v'_{i+1}=v_i$ for each $1\le i<r$ and $v'_1\ne v_r$, otherwise it is $0$. In other words, $W^{(r)}$ maps a path of length $r-1$ to the sum of all paths resulting from adding another element to the end of the path and deleting its first element.
\end{definition}

\begin{figure}[H]
\centering
  \includegraphics[width=0.4\linewidth]{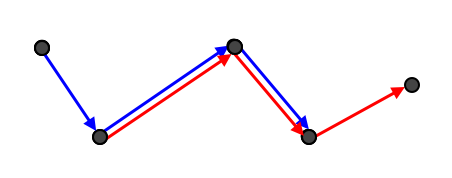}
\caption{These two paths (blue and red) lead to an entry of 1 in the $W^{(4)}$ matrix.}
\label{loops}
\end{figure}

Performing these calculations is essentially what the acyclic belief propagation algorithm does. From this perspective, the algorithm roughly translates to:
%\begin{algorithm}
\begin{enumerate}
\item Choose $y^{(1)}$ randomly such that each element is independently drawn from a Normal distribution.

\item For each $1< t\le m$, let $y^{(t)}=W^{(r)}y^{(t-1)}$.

\item Change $y^{(m)}$ to $(W^{(r)}-\frac{a+b}{2}I)^{m'}y^{(m-m')}$, where $m'=\lceil\frac{m-3r-1}{l}\rceil$

\item Set\footnote{Proceed as in step 6 of 2CS-ABP* for the proof.} $y'_v=\sum_{v':(v',v)\in E(G)} y^{(m)}_{v,v'}$ for every $v\in G$.
 Return $(\{v:y'_v>0\},\{v: y'_v\le 0\})$.

%
%\item For each $v\in G$, assign $v$ to $S_2$ with a probability that scales linearly with the sum over all paths $(v_1,v_2,...,v_r=v)$ of $y^{(m)}_{(v_1,v_2,...,v_r)}$ , and assign it to $S_1$ otherwise\footnote{More precisely, assign it to $S_2$ with a probability of $1/2$ plus the aforementioned sum divided by the root-mean-square of all such sums and then divided by a constant $2c$ }.
%
%\item Return $(S_1, S_2)$.
\end{enumerate}
%\end{algorithm}
Even though $r=2$ might suffice to achieve the KS threshold in the SBM, the use of larger $r$ might help for other graph models, e.g., having more short cycles. 

\subsection{ABP for the general SBM}
Now, consider a graph $G$ drawn from $SBM(n,p,Q/n)$ with arbitrary $p$ and $Q$. Also, let $\lambda_1,\lambda_2,...\lambda_h$ be the distinct eigenvalues of $PQ$ in order of nonincreasing magnitude. If the parameters are such that vertices from different communities have different expected degrees, then one can detect communities by simply dividing its vertices into those with above-average degrees and those with below-average degrees. So, assume that the expected degree of a vertex is independent of its community. Detecting communities in the general case runs into some obstacles that do not apply in the $2$-community symmetric case. First of all, it is much less clear that assigning vertices to sets randomly is a useful start. Also, even if we did have reasonable preliminary guesses of which community each vertex was in, it is not obvious how to determine a vertex's community based on the alleged communities of the vertices a fixed distance from it.

For the moment, assume that for each vertex $v$, we have a vector $x_v$ such that we believe $v$ is in community $i$ with probability $p_i+x_v\cdot e_i$ for each $i$, where all elements of $x_v$ are small. Furthermore, assume that $x_v$ is generated independently of $v$'s neighbors. The correct belief about the probability that $v$ is in each community once its neighbors are taken into account is
\[p+x_v+\frac{1}{\lambda_1}\sum_{v':(v,v')\in E[G]} PQ x_{v'}\]
up to nonlinear terms in the $x$'s. So, given $m$ small enough that the set of vertices within $m$ edges of $v$ is a tree, the correct belief about what community $v$ is in once all of the vertices within $m$ edges of $v$ are taken into account is
\[p+\sum_{0\le m'\le m}\lambda_1^{-m'}\sum_{v':d(v,v')=m'} (PQ)^{m'} x_{v'}\]
up to nonlinear terms in the $x$'s. So, the logical belief about the probability that $v$ is in each community based only on the preliminary guesses concerning the vertices $m$ edges away from $v$ is
\[p+\lambda_1^{-m}\sum_{v':d(v,v')=m} (PQ)^m x_{v'}.\]
 Conveniently, this expression is linear, so if $w$ is an eigenvector of $PQ$ with eigenvalue $\lambda_i$ for $i\ne 1$, then
\begin{align*}
E[w\cdot P^{-1} e_{\sigma_v}]&\approx w\cdot P^{-1}\left(p+\lambda_1^{-m}\sum_{v':d(v,v')=m} (PQ)^m x_{v'}\right)\\
&=\lambda_1^{-m}\lambda_i^m\sum_{v':d(v,v')=m} w\cdot P^{-1}x_{v'}.
\end{align*}
In particular, this means that we only need an initial estimate for $w\cdot P^{-1}e_{\sigma_{v'}}$ for every vertex in the graph, rather than needing a full set of beliefs about the vertices' communities. Any random guesses we make will probably have correlation $\pm\Omega(1/\sqrt{n})$ with $w\cdot P^{-1}e_{\sigma_{v'}}$, so we can use them as a starting point.

Unfortunately, just like in the two-community symmetric case, the graph will run out of vertices before $m$ becomes large enough to amplify our beliefs enough. However, switching from a sum over all vertices $v'$ that are $m$ edges away from $v$ to a sum over all nonbacktracking walks of length $m$ ending in a vertex $v'$ fixes this problem the same way it does in the two-community symmetric case. Likewise, we can still compute this sum by randomly dividing $G$'s vertices between two sets $S_1$ and $S_2$ and then using the following algorithm.
\begin{algorithm}
\begin{enumerate}
\item For every $(v,v')\in E(G):$

\hspace{1 cm} If $v'\in S_2$, set $y_{v,v'}^{(1)}=1$

\hspace{1 cm} Otherwise, set $y_{v,v'}^{(1)}=-1$

\item For every $1<t\le m$ and $(v,v')\in E(G):$

\hspace{1 cm} Set $y_{v,v'}^{(t)}=\sum_{v'': (v'',v')\in E(G),v''\ne v} y_{v',v''}^{(t-1)}$

\item For every $(v,v')\in E(G)$ and $v\in G$

\hspace{1 cm} Set $y_v^{(m)}=\sum_{v':(v,v')\in E(G)} y_{v,v'}^{(m)}$
\end{enumerate}
\end{algorithm}

However, needing to compensate for the average value is a special case of a considerably more complicated phenomenon. The average value over all $(v,v')\in E(G)$ of $y_{v,v'}^{(1)}\cdot w_{\sigma_{v'}}$ will typically have a magnitude of $\Theta(1/\sqrt{n})$, and for general $t$ the average value over all $(v,v')\in E(G)$ of $y_{v,v'}^{(t)}\cdot w_{\sigma_{v'}}$ will typically have a magnitude of $\Theta(|\lambda_i^t|/\sqrt{n})$. Now, let $w'$ be an eigenvector of $PQ$ with an eigenvalue of $\lambda_{i'}$ which has greater magnitude than $\lambda_i$. Then the average value over all $(v,v')\in E(G)$ of $y_{v,v'}^{(t)}\cdot w'_{\sigma_{v'}}$ will typically have a magnitude of $\Theta(|\lambda_{i'}^t|/\sqrt{n})$. Since this grows faster than $y_{v,v'}^{(t)}\cdot w_{\sigma_{v'}}$ does, it will eventually become large enough to disrupt efforts to estimate $w_{\sigma_{v'}}$ using $y_{v,v'}^{(t)}$ the same way the average value of $y^{(t)}$ did in the $2$-community symmetric case. In fact, the issue with the average value is just the subcase of this when $i'=1$. So, in order to deal with this, we need to compensate for each eigenvalue, $\lambda_{i'}$, of $PQ$ with magnitude greater than $\lambda_i$ by choosing several indices $t_{0,i'},t_{1,i'},...t_{m',i'}$ and redefining $y_{v,v'}^{(t_{j,i'})}$ for each $j$ so that   
\[y_{v,v'}^{(t_{j,i'})}=-\lambda_{i'}y_{v,v'}^{(t_{j,i'}-1)}+\sum_{v'': (v'',v')\in E(G),v''\ne v} y_{v',v''}^{(t_{j,i'}-1)}\]
for every $(v,v')\in E(G)$. Assuming that this is done, $y_{v,v'}^{(1)}$ has a variance of approximately $1$, and then $y_{v,v'}^{(t)}$ has a variance of roughly $\lambda_1^t$. It becomes possible to determine which community $v$ is in with accuracy nontrivially greater than that obtained by guessing randomly based on $y_{v,v'}^{(t)}$ when the expected difference between its values for $v$ in different communities is within a constant factor of its standard deviation. In other words, $t$ needs to be large enough that $|\lambda_i^t|/\sqrt{n}$ is significant relative to $\sqrt{\lambda_1^t}$. If $\lambda_1\ge \lambda_i^2$ then this will never happen, so the algorithm requires that $$\lambda_i^2>\lambda_1.$$

The general acyclic belief propagation algorithm is almost the same as the $2$-community symmetric version. However, it takes a list of eigenvectors as input instead of just $\lambda_1$. Also, the step compensating for larger eigenvalues is changed from ``Set $y^{(m)}=Y M^{\lceil \frac{m-3r-1}{l}\rceil}e_m$ where $M$ is the matrix such that $M_{i,i}=1$ for all $i$, $M_{i,i+1}=-(1-\gamma)\lambda_1$ for all $i$ and all other entries of $M$ are $0$" to ``Set $y^{(m)}=Y \prod_{s'<s} M_{s'}^{\lceil \frac{m-r-(2r+1)s'}{l}\rceil}e_m$ where $M_{s'}$ is the matrix such that $(M_{s'})_{i,i}=1$ for all $i$, $(M_{s'})_{i,i+1}=-(1-\gamma)\lambda_{s'}$ for all $i$ and all other entries of $M_{s'}$ are $0$" Its effectiveness is described by the following theorem.

\begin{theorem}
Let $p\in (0,1)^k$ with $\sum p=1$, $Q$ be a symmetric matrix with nonnegative entries, $P$ be the diagonal matrix such that $P_{i,i}=p_i$, and $\lambda_1,...,\lambda_{h}$ be the eigenvalues of $PQ$ in order of nonincreasing magnitude. If $\lambda_2^2>\lambda_1$ then there exist constants $\epsilon, r,c$ and $m=\Theta(\log(n))$ such that when the acyclic belief propagation algorithm is run on these parameters and a random $G\in SBM(n,p,Q/n)$, with probability $1-o(1)$ there exist $\sigma$ and $\sigma'$ such that the difference between the fraction of vertices from community $\sigma$ that are in $S_1$ and the fraction of vertices from community $\sigma'$ that are in $S_1$ is at least $\epsilon$. The algorithm can be run in $O(n\log n)$ time.
\end{theorem}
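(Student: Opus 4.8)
The plan is to analyze $ABP$ through the spectral picture of Section~\ref{spectral}: after the edge-splitting initialization, the propagation step computes $y^{(t)}_{v,v'}$ as a signed sum over $r$-nonbacktracking walks of length $t$ ending at the directed edge $(v',v)$ and weighted by the Gaussian seeds, i.e.\ essentially $\bigl(W^{(r)}\bigr)^{t}x$, and the compensation step then left-multiplies the aggregated $n\times m$ matrix $Y$ by $\prod_{s'<s}M_{s'}^{\lceil(m-r-(2r+1)s')/l\rceil}$, which realizes a large power of $\bigl(W^{(r)}-(1-\gamma)\lambda_{s'}I\bigr)$ for each $s'<s$ and is designed to annihilate the contribution of the eigenvalue $\lambda_{s'}$ of $PQ$ (here $s\in\{2,3\}$ is taken just past any tie at the second eigenvalue). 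First I would decompose the seed vector in a real eigenbasis $w^{(1)},\dots,w^{(h)}$ of $PQ$ (real because $PQ$ is similar to the symmetric matrix $P^{1/2}QP^{1/2}$): writing $x_v=\sum_i c_i\,w^{(i)}_{\sigma_v}+(\text{component orthogonal to the signal})$, the central limit theorem gives $|c_i|=\Theta(1/\sqrt n)$ for each $i$ with probability bounded away from $0$, so the projection of the seed onto the second eigenvector starts at magnitude $\Theta(1/\sqrt n)$, which is the weak signal to be amplified.

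The heart of the argument is a first- and second-moment computation for the $r$-nonbacktracking walk counts, in the spirit of the weighted nonbacktracking-walk and self-avoiding-walk expansions of \cite{Mossel_SBM2} but adapted to arbitrary $PQ$. For the first moment I would couple the depth-$m$ neighborhood of a fixed vertex with a multitype Poisson (Galton--Watson) process with mean matrix $Q$ and show that, conditioned on $\sigma$, the expected walk statistic projected onto $w^{(i)}$ grows like $\lambda_i^{\,t}$ times the initial correlation; the point is that forbidding repetition of a vertex within $r$ steps changes these expected counts only negligibly, since short cycles are rare in the sparse SBM, so the counts obey the same recursion as powers of $PQ$ up to lower-order terms. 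For the second moment I would bound $\Var\!\left(y^{(t)}_{v,v'}\right)=O(\lambda_1^{\,t})$ (after the natural normalization) by counting pairs of $r$-nonbacktracking walks and arguing that all but a negligible fraction of pairs are edge-disjoint, so that the diagonal term dominates; the randomly removed edge set $\Gamma$, together with the requirement that each counted walk use exactly one $\Gamma$-edge and no others, is what decouples the relevant families of walks, while the $r$-nonbacktracking constraint rules out the dense tangles that could otherwise create an exceptional number of walks between a pair of vertices. One must simultaneously verify that the compensation does not spoil these bounds: a large power of $\bigl(W^{(r)}-(1-\gamma)\lambda_{s'}I\bigr)$ kills the $\lambda_{s'}$-component while multiplying both the surviving second-eigenvalue component and the noise by only a bounded factor, provided $l$ is chosen large relative to the separations among the $|\lambda_i|$. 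Making all of these estimates uniform over the possibly many eigenvalues of $PQ$ with magnitude strictly between $\sqrt{\lambda_1}$ and $\lambda_1$ --- exactly the difficulty that \cite{bordenave} sidesteps by an asymmetry hypothesis --- is, I expect, the main obstacle.

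Combining the moments, with $m=\Theta\!\bigl(\log n/\log(\lambda_2^2/\lambda_1)\bigr)=\Theta(\log n)$ the signal $\Theta\!\bigl(|\lambda_2|^{m}/\sqrt n\bigr)$ along $w^{(2)}$ exceeds the standard deviation $\Theta\!\bigl(\lambda_1^{m/2}\bigr)$ by an unboundedly growing factor, precisely because $\lambda_2^2>\lambda_1$; hence, after the averaging over the $\lfloor\sqrt{\log\log n}\rfloor$-neighborhood (which stabilizes the per-vertex estimate against atypical local geometry), $y''_v$ is, with probability $1-o(1)$, correlated with the community structure in the sense that its conditional mean given $\sigma_v=i$ is a fixed nonzero multiple of $w^{(2)}_i$ up to $o(1)$, with bounded variance after rescaling by $c'$. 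The randomized rounding of Step~3 then converts this into a genuine community imbalance: since $\lambda_2\neq\lambda_1$ forces $w^{(2)}$ to be non-constant, there are $i\neq j$ with $w^{(2)}_i\neq w^{(2)}_j$, and a direct computation gives $\bigl|\,|\Omega_i\cap S_1|/|\Omega_i|-|\Omega_j\cap S_1|/|\Omega_j|\,\bigr|>\epsilon$ with probability $1-o(1)$ for a suitable constant $\epsilon>0$, which is detection. Finally, as in the remarks on the $2$-community algorithm, each of the $\Theta(\log n)$ propagation rounds costs $O(n)$, the matrix-polynomial compensation on $Y$ costs $O(n\log n)$ via repeated squaring, and the remaining steps are linear, for a total running time of $O(n\log n)$.
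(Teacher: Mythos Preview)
Your high-level picture is right --- signal of order $|\lambda_s|^{m}/\sqrt n$ versus noise of order $\lambda_1^{m/2}$, with compensation to suppress the $\lambda_1,\dots,\lambda_{s-1}$ directions --- and the running-time accounting is fine. But the two technical steps you lean on would both fail as stated.

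First, the Galton--Watson coupling for the first moment is applied at the wrong depth. With $m=\Theta(\log n)$ the $m$-neighborhood of a vertex has order $\lambda_1^{m}$ vertices, which is polynomial in $n$, so it is not a tree and no tree coupling is available. The paper does not compute first moments by coupling; it computes them algebraically, writing $E[W_{(c_0,\dots,c_m)[r]}((v_0,\dots,v_m))]$ as a product over the fresh segments of the walk's standard decomposition of terms of the form $e_{\sigma_{v_a}}\cdot P^{-1}\prod_{i=a}^{b-1}(PQ-c_iI)e_{\sigma_{v_b}}$. A shallow depth $m'''=\lfloor\sqrt{\log n}\rfloor$ appears only inside the proof of Lemma~\ref{communityAverage}, to reduce to a locally tree-like situation for the lower bound on the signal.

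Second, and more seriously, the variance bound ``most pairs of walks are edge-disjoint'' is too crude once compensation is applied. After compensation $y^{(m)}_v$ equals $W_{m/S[r]}(x,v)=\sum_{T\subseteq S}\bigl(\prod_{y\in T}(-y)\bigr)\sum_{v'}x_{v'}W_{m-|T|[r]}(v',v)$, a signed combination of walk counts of different lengths; its second moment involves cross-terms among walks of varying lengths with alternating signs, and the required bound is $\Var[y^{(m)}_v]=O\bigl(\prod_j(\lambda_s-c_j)^2\bigr)$ (Lemma~\ref{varianceLemma}), not $O(\lambda_1^{m})$. Counting edge-disjoint pairs controls the leading unsigned term but not these cancellations. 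The paper's mechanism for this is the shard decomposition: the path-sum-conversion algorithm rewrites each $W_{(c_0,\dots,c_m)[r]}((v_0,\dots,v_m))$ as a sum of ``shards'' for which every nonzero $c_i$ is at distance more than $r$ from any repeated vertex, so that the expectation factorizes over fresh segments; the Degree Bound Lemma then groups shards by degree (the excess of edges over vertices in the walk graph plus the number of deleted repeated vertices) and shows that the total contribution of high-degree shards is negligible, leaving only a small case analysis (paths, paths-with-one-cycle) that yields the stated variance. Your proposal offers no substitute for this machinery, and the spectral heuristic that $\bigl(W^{(r)}-(1-\gamma)\lambda_{s'}I\bigr)^{m'}$ ``kills the $\lambda_{s'}$-component'' presupposes exactly the kind of eigenvalue separation for $W^{(r)}$ that the paper avoids having to prove.
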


\begin{remark}
Let $s=3$ if $|\lambda_2|=|\lambda_3|$ and $s=2$ otherwise. This theorem could alternately have stated that there exists $\epsilon$ such that for any two communities $\sigma$ and $\sigma'$ such that there exists an eigenvector $w$ of $PQ$ with eigenvalue $\lambda_s$ such that $w_\sigma\ne w_{\sigma'}$, the expected difference between the fraction of vertices from community $\sigma$ that are in $S_1$ and the fraction of vertices from community $\sigma'$ that are in $S_1$ is at least $\epsilon$.
\end{remark}

\subsection{Alternatives}
There are also a couple of other variants of these ideas that may be useful for community detection. For instance, if we pick $r\approx \ln(\ln (n))$ and then define $\Sigma$ to be the $n\times n$ symmetric matrix such that $\Sigma_{v,v'}$ is the number of nonbacktracking walks of length $r$ between $v$ and $v'$, we suspect that $\Sigma$'s eigenvector of second largest magnitude will have entries that are correlated with the corresponding vertices' communities. Like in the standard case, we expect that we could get an approximation of this eigenvector by taking a random vector $w$ and then computing $(\Sigma-\lambda'_1I)^{m'}\Sigma^{m-m'}w$ for suitable $m$ and $m'$ where $\lambda'_1$ is an estimate of $\Sigma$'s largest eigenvalue. 

We can compute $\Sigma$ as follows. First, let $\Sigma^{(t)}$ be the $n\times n$ matrix such that  $\Sigma^{(t)}_{v,v'}$ is the number of nonbacktracking walks of length $t$ between $v$ and $v'$. Then $\Sigma^{(0)}=I$, $\Sigma^{(1)}$ is the graph's adjacency matrix, and $\Sigma^{2}_{v,v'}$ is equal to the number of shared neighbors $v$ and $v'$ have for all $v$ and $v'$. For every $t>2$, we have that $\Sigma^{(t)}=\Sigma^{(1)}\cdot \Sigma^{(t-1)}-D\cdot \Sigma^{(t-2)}$, where $D$ is the diagonal matrix such that $D_{v,v}$ is one less than the degree of $v$ for all $v$. This can be used to efficiently compute $\Sigma=\Sigma^{(r)}$.

Also, instead of prohibiting repeating a vetex within $r$ steps, we could address the issue of tangles by dividing $G$'s edges between sets $E_0,...,E_{m'}$ for suitable $m'$ such that most of the edges are assigned to $E_0$ and the rest are assigned to one of the others at random. Then we count nonbacktracking walks with the restriction that edge $r$ of the walk must be from $E_1$, edge $2r$ must be from $E_2$ and so on, while all other edges must be from $E_0$ for suitable $r$. The periodic prohibitions on using edges from $E_0$ would force the walk to leave any tangle it had been in, while the fact that most of the edges are chosen from $E_0$ prevents the restriction from reducing the number of walks too severely.

\section{Crossing the KS threshold: proof technique}\label{proof_tech2}
Recall that the algorithm samples a typical clustering uniformly at random in the typical set 
\begin{align*}
T_\delta(G)=&\{   x \in \mathrm{Bal}(n,k,\delta):\\
&  \sum_{i=1}^k |\{  G_{u,v} : (u,v) \in {[n] \choose 2} \text{ s.t. } x_u=i, x_v=i \}|  \geq \frac{an}{2k} (1-\delta),\\
& \sum_{i,j \in [k], i<j} |\{  G_{u,v} : (u,v) \in {[n] \choose 2} \text{ s.t. } x_u=i, x_v=j \}|  \leq \frac{bn(k-1)}{2k} (1+\delta) \},
\end{align*}
where the previous two inequalities apply to the case $a>b$, and are flipped if $a<b$.
A first question is to estimate the likelihood that a bad clustering, i.e., one that has an overlap that is close to $1/k$, belongs to the typical set. This means the probability that a clustering which splits each of the true clusters into $k$ groups belonging to each community still manages to keep the right proportions of edges inside and across the clusters. This is unlikely to take place, but we care about the exponent of this rare event probability. 

\begin{figure}[h]
\centering
  \includegraphics[width=.9\linewidth]{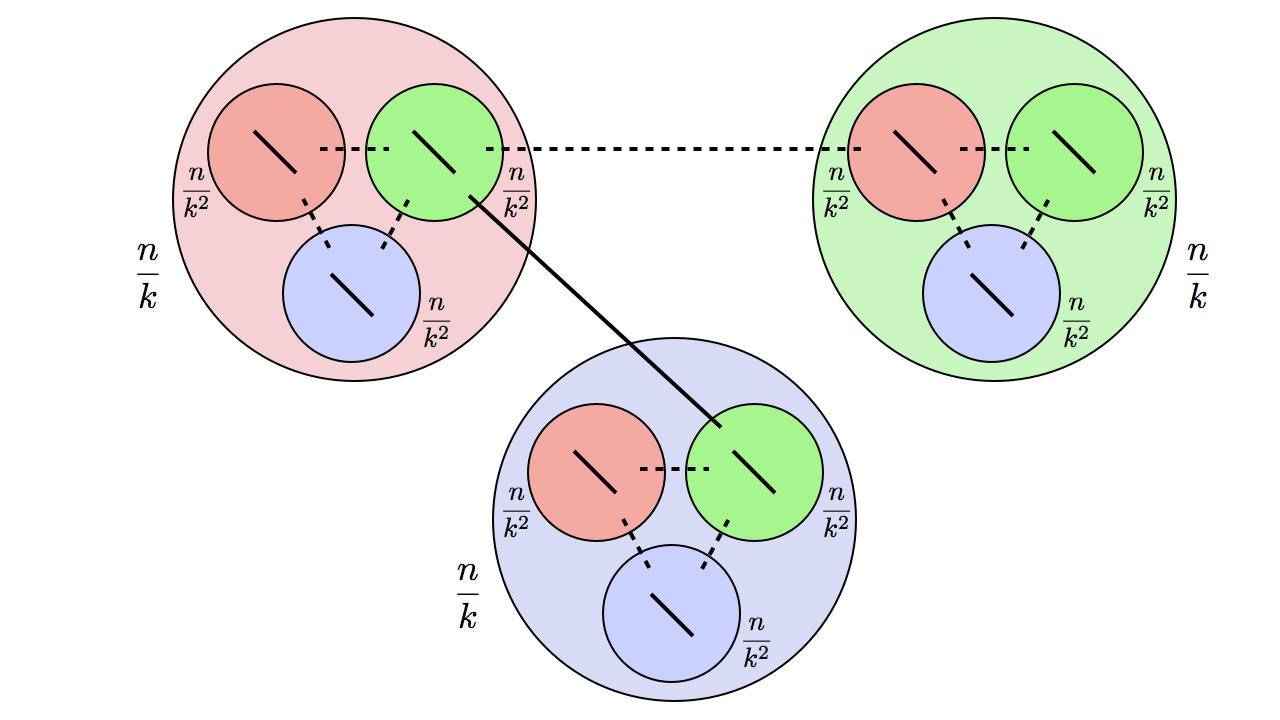}
  \caption{A bad clustering roughly splits each community equally among the $k$ communities. Each pair of nodes connects with probability $a/n$ among vertices of same communities (i.e., same color groups, plain line connections), and $b/n$ across communities (i.e., different color groups, dashed line connections). Only some connections are displayed in the Figure to ease the visualization.}
  \label{bad_cluster}
\end{figure}

As illustrated in Figure \ref{bad_cluster}, the number of edges that are contained in the clusters of a bad clustering is roughly distributed as the sum of two Binomial random variables, 
\begin{align}
E_\mathrm{in} \stackrel{\cdot}{\sim} \bin \left(\frac{n^2}{2k^2},\frac{a}{n}\right) + \bin \left(\frac{(k-1)n^2}{2k^2},\frac{b}{n}\right),
\end{align}
where we use $\stackrel{\cdot}{\sim}$ to emphasize that this is an approximation since the bad clustering may not be perfectly balanced. Note that the expectation of the above distribution is $\frac{n}{2k} \frac{a+(k-1)b}{k}$. In contrast, the true clustering would have a distribution given by $\bin(\frac{n^2}{2k},\frac{a}{n})$, which would give an expectation of $\frac{an}{2k}$.
In turn, the number of edges that are crossing the clusters of a bad clustering is roughly distributed as
\begin{align}
E_\mathrm{out} \stackrel{\cdot}{\sim}  \bin \left(\frac{n^2(k-1)}{2k^2} ,\frac{a}{n}\right) + \bin \left(\frac{n^2(k-1)^2}{2k^2} ,\frac{b}{n}\right),
\end{align}
which has an expectation of $\frac{n(k-1)}{2k} \frac{a+(k-1)b}{k}$.
In contrast, the true clustering would have the above replaced by $\bin(\frac{n^2(k-1)}{2k},\frac{b}{n})$, and an expectation of $\frac{bn(k-1)}{2k}$. 

Thus, we need to estimate the rare event that a Binomial sum deviates from its expectation. While there is a large list of bounds on Binomial tail events, the number of trials here is quadratic in $n$ and the success bias decays linearly in $n$, which requires particular care to ensure tight bounds. We derive these by hand in Lemma \ref{bad_bound}, which gives for a bad clustering $x$, 
\begin{align}
&\pp\{ x \text{ is typical}\} \approx \exp \left(- \frac{n}{k} A \right) 
\end{align}
where  
\begin{align}
&A =  \frac{a+b(k-1)}{2} \ln \frac{k}{(a+(k-1)b)} +  \frac{a}{2} \ln a + \frac{b(k-1)}{2} \ln b .
\end{align}
One can then use a union bound, since there are at most $k^n$ bad clusterings, to obtain a first regime where no bad clustering is typical with high probability. This already allows us to cross the KS threshold in some regime of the parameters when $k\geq 5$. However, this does not interpolate the correct behavior of the information-theoretic bound in the extreme regime of $b=0$, nor crosses at $k=4$. In fact, for $b=0$, the union bound requires $a>2k$ to imply no bad typical clustering with high probability, whereas as soon as $a>k$, an algorithm that simply separates the giants in $\sbm(n,k,a,0)$ and assigns communities uniformly at random for the other vertices solves detection. Thus when $a \in (k,2k]$, the union bound is loose. To remediate to this, we next take into account the topology of the SBM graph. 

Since the algorithm samples a typical clustering, we only need the number of bad and typical clusterings to be small compared to the total number of typical clusterings with high probability. Thus, we seek to better estimate the total number of typical clusterings. The first topological property of the SBM graph that we exploit is the large fraction of nodes that are in tree-like components outside of the giant. 
Conditioned on being on a tree, the SBM labels are distributed as in a broadcasting problem on a (Galton-Watson) tree. Specifically, for a uniformly drawn root node $X$, each edge in the tree acts as a symmetric channel, producing the output 
\begin{align}
Y=X + Z \mod k,
\end{align}  
where 
\begin{align}
Z \sim \nu:=\left(\frac{a}{a+(k-1)b}, \frac{b}{a+(k-1)b}, \dots, \frac{b}{a+(k-1)b} \right), 
\end{align}
and this propagates down the tree. Thus, labelling the nodes in the trees according to the above distribution and freezing the giant to the correct labels leads to a typical clustering with high probability. 

\begin{figure}[h]
\centering
  \includegraphics[width=1\linewidth]{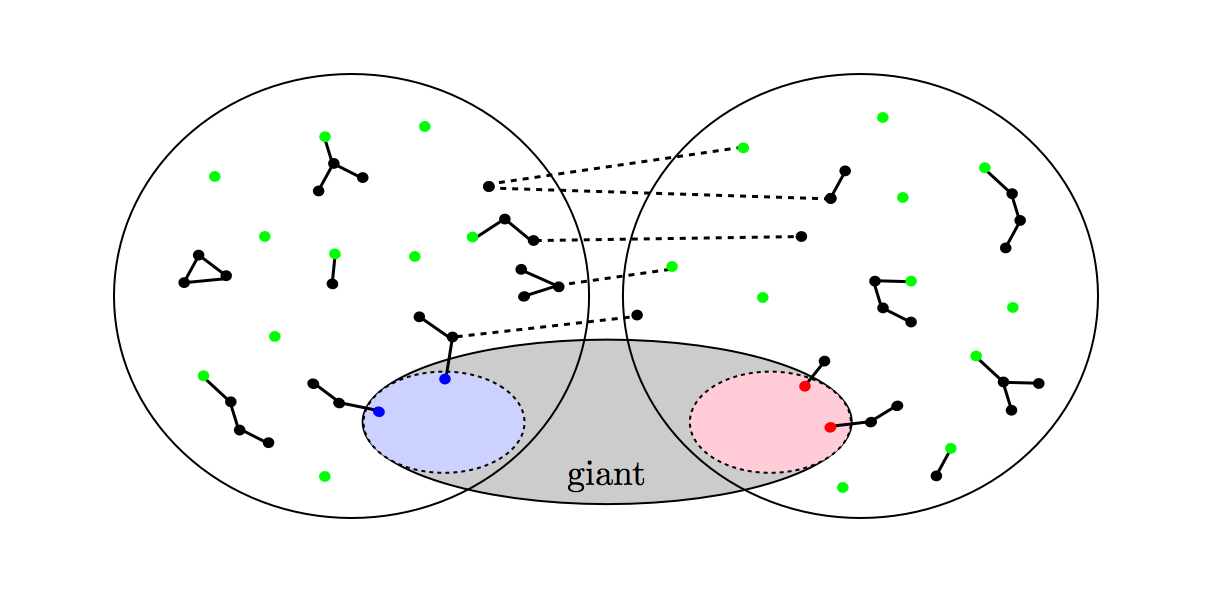}
  \caption{Illustration of the topology of $\sbm(n,k,a,b)$ for $k=2$. A giant component covering the two communities takes place when $d=\frac{a+(k-1)b}{k}>1$; a linear fraction of vertices belong to isolated trees (including isolated vertices), and a linear fraction of vertices in the giant are on planted trees. The following is used to estimate the size of the typical set in Section \ref{size}. For isolated trees, sample a bit uniformly at random for a vertex (green vertices) and propagate the bit according to the symmetric channel with flip probability $b/(a+(k-1)b)$ (plain edges do not flip whereas dashed edges flip). For planted trees, do the same but freeze the root bit to its true value.}
  \label{topo}
\end{figure}

We hence need to count the number of nodes $T$ and edges $M$ that belong to such trees in the SBM graph. This is done in a series of lemmas in Section \ref{size}, and requires combinatorial estimates similar to those carried for the Erd\H{o}s-R\'enyi case \cite{ER-seminal2}. The main part is to show that the fraction of such nodes and edges concentrates, i.e.,  
\begin{align}
&T/n  \in  \left[\frac{\tau}{d}\left(1-\frac{\tau}{2}\right) - \e,\frac{\tau}{d}\left(1-\frac{\tau}{2}\right)+\e \right] \\
&M/n  \in \left[\frac{\tau^2}{2d} - \e, \frac{\tau^2}{2d} + \e \right] ,
\end{align}
with high probability for any $\e>0$, where $\tau$ is the unique solution in $(0,1)$ of 
\begin{align}
\tau e^{-\tau}=de^{-d} \label{tau_def}
\end{align}
or equivalently $\tau = \sum_{j=1}^{+ \infty} \frac{j^{j-1}}{j!} (de^{-d})^j$.

Using next entropic bounds for these random variables, i.e., the fact that there are approximately $2^{N H(\nu)}$ typical sequences for the product distribution $\mu^N$, we can turn previous estimates into a bound on the typical set size (see Theorem \ref{size_bound}). The resulting bound gives that a sampled clustering is good with high probability when $a>k$ for $b=0$, i.e., it achieves the right bound in that extreme. However, this is unlikely to behave correctly for small values of $b$. To further improve on this, we next take into account the vertices in the giant that belong to trees, which we call planted tress, and follow the same program as above except that the root node (in the giant) is now frozen to the correct label rather than being uniformly drawn. This addition gives a bound conjectured to capture the correct behavior for $b$ small. We finish by tightening our estimates on the typical set's size by taking into account vertices that are not saturated, i.e., whose neighbors do not cover all communities and who can thus be swapped without affecting typicality. 

The final bound in Theorem \ref{main2} crosses the KS threshold at $k=4$, interpolates the optimal threshold at $a=0$, and is conjectured to be tight  in the scaling of $b$ for small $b$ and fixed $k$, as well as in the scaling of large $k$ for fixed $a,b$.

\section{Open problems}\label{open}
{\bf Impossibility statements.} Further conjectures were made in \cite{decelle} concerning impossibility statements: 
\begin{conjecture}
Under the same model as in Conjecture 1, 
%\begin{enumerate}
%\item[(i)] 
irrespective of the value of $k$, if $\snr<1$, it is impossible to detect communities in polynomial time.
%\item[(ii)] If $k \in \{2,3,4\}$, $a>b$, and $\snr \leq 1$, it is impossible to detect communities information-theoretically. 
%\end{enumerate}
\end{conjecture}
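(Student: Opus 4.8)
An unconditional proof is almost certainly out of reach: it would separate average-case polynomial time from a problem that, for $k\ge 4$, is \emph{information-theoretically solvable}, and no such unconditional separation is known for any natural planted model. The realistic plan is to prove the conjecture \emph{within a concrete restricted model of computation} believed to capture the power of all efficient algorithms for planted problems, namely the low-degree polynomial framework (equivalently, a bound on the low-degree likelihood ratio), backed where possible by statistical-query and sum-of-squares lower bounds; this is the rigorous incarnation of the cavity-method prediction of \cite{decelle}. Note that for $k=2$ and $k=3$ detection below the KS threshold is in fact \emph{information-theoretically} impossible (the $k=2$ case being the contiguity theorem of \cite{Mossel_SBM1}), so the genuinely computational content of the conjecture lies in the gap regime $k\ge 4$.

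First I would reduce detection to the hypothesis-testing problem of distinguishing $G\sim\sbm(n,p,Q/n)$ from the Erd\H{o}s--R\'enyi graph $G(n,d/n)$ with matched average degree $d$ (their degree sequences being indistinguishable by symmetry of $Q$), and bound the degree-$D$ likelihood ratio $L^{\le D}$ for $D=n^{\delta}$, $\delta>0$ small. Expanding $L^{\le D}$ in the Fourier--Walsh basis of centered edge indicators, the coefficient attached to a subgraph $H$ is a product over the edges of $H$ of per-edge correlations governed by the spectrum of $PQ$; the usual self-cancellation of non-closed pieces leaves the dominant contribution coming from disjoint unions of cycles, with a cycle of length $\ell$ contributing roughly $\sum_{i\ge 2}(\lambda_i^2/\lambda_1)^{\ell}$, i.e.\ essentially $\snr^{\ell}$. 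Summing this geometric-type series over $\ell\le D$ yields $\|L^{\le D}\|^2=O(1)$ exactly when $\snr<1$, which is the desired certificate: no polynomial of degree $n^{\delta}$ separates the two ensembles, and under the low-degree heuristic no polynomial-time algorithm does, hence none detects communities.

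Several points need care. (i) In the sparse regime the full, untruncated likelihood ratio has infinite second moment because of rare dense ``seed'' subgraphs; the degree truncation is precisely what tames this, but one must bound the contribution of short closed non-backtracking (or self-avoiding) walks in the SBM, a delicate combinatorial estimate close in spirit to, though more demanding than, the walk counts used to \emph{establish} the positive side of the KS threshold in this paper. (ii) Passing from ``no low-degree test'' to ``no low-degree weak-recovery estimator'' requires an extra step, e.g.\ a low-degree estimation lower bound of Schramm--Wein type, or exploiting the equivalence of detection and max-detection in the symmetric case together with a reduction of recovery to testing. (iii) One should cross-check against the complementary rigorous lower bounds --- a statistical-query lower bound at query tolerance $n^{-1/2+o(1)}$, and a sum-of-squares degree-$n^{\Omega(1)}$ lower bound via pseudo-calibration against the same planted distribution --- both of which are expected to exhibit the transition exactly at $\snr=1$.

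The main obstacle is conceptual rather than technical: each of these routes yields only a conditional statement, ruling out a broad but restricted class of algorithms (low-degree polynomials, SQ, SOS, local/message-passing schemes), and the meta-conjecture that this class is as powerful as all polynomial-time algorithms for planted problems is itself unproven. Turning the conjecture into an unconditional theorem would require a genuine complexity-theoretic breakthrough --- for instance an average-case reduction from a worst-case-hard problem, or from planted clique at its conjectured threshold, to SBM detection that preserves the exact constant in the KS threshold --- and no such reduction is currently known. Within the low-degree approach itself, the residual hurdle is the sparse-graph combinatorics of point (i): controlling, uniformly over $D$ up to a polynomial in $n$, the number of closed walks avoiding short backtracks, so that the cycle series genuinely converges for every $\snr<1$ and every $k$.
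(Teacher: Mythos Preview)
The paper does not prove this statement. It is explicitly presented as a \emph{conjecture} in the open-problems section (Section~\ref{open}), where the authors write that ``proving formally such a conjecture would require computational lower bounds that seem currently out of reach,'' noting only that the cases $k=2$ (and plausibly $k=3$) follow from information-theoretic impossibility via the tree-reconstruction results of \cite{Mossel_SBM1,sly-potts}, while for $k\ge 4$ one can at best hope for evidence in restricted models along the lines of \cite{rigollet,chen-xu,vempala_LB}. There is therefore no proof in the paper to compare against.

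Your proposal is not a proof either, and you say so yourself: it is a research plan, and a sensible one, broadly aligned with what the paper gestures at as possible evidence (restricted computational models). Two comments nonetheless. First, your reduction of detection to distinguishing $\sbm(n,k,a,b)$ from $G(n,d/n)$ is not innocuous in the gap regime $k\ge 4$: below the KS threshold but above the information-theoretic threshold the two models are \emph{not} contiguous (typical-clustering counts differ), so the full likelihood ratio blows up, and you must argue carefully that the low-degree truncation is what tames this rather than merely asserting it; the ``rare dense seeds'' you mention are not the only issue, since global cycle statistics already separate the models at the level of unbounded moments. Second, even a tight low-degree bound would yield only a conditional statement under the low-degree heuristic, which is precisely the situation the paper describes as ``currently out of reach'' for a genuine theorem; your plan does not, and cannot, close that gap.
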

This is already proved for $k=2$ in \cite{Mossel_SBM1}, using a reduction to the reconstruction problem on tree \cite{MosselRec}. The same program as in \cite{Mossel_SBM1} is likely to extend to $k=3$, at least for large degrees, as in this case it is shown in \cite{sly-potts} that it is impossible to detect below the KS threshold for the reconstruction problem on tree. For $k \ge 4$, the problem is much harder.
While \cite{decelle} provides evidences towards this conjecture, proving formally such a conjecture would require computational lower bounds that seem currently out of reach. Similar statements apply to related models such as planted clique or planted community  \cite{yash_clique,am_1comm,hajek_1comm} problems. 
To back up the current ``evidences'' on the impossibility of detecting efficiently below the KS threshold, one may rely on further reductions such as done in \cite{rigollet,chen-xu}, or subclasses of algorithms such as done in \cite{vempala_LB} with statistical query algorithms. Finding the exact expression of the IT threshold at all SNRs seem to be also an interesting problem to pursue. \\

\begin{comment}
\noindent
{\bf Accuracy.} We conjecture that ABP gives the optimal accuracy for the reconstruction of the communities. Note also that this paper treats ABP at the first order expansion of BP, i.e., in a linearized form. It would be interesting to study ABP at higher order expansions, for example in view of robustness to model variations.  \\ 
\end{comment}

\noindent
{\bf Accuracy.} As discussed in Section \ref{results}, since ABP solves detection as soon as $\snr >1$, it can be enhanced to achieve better accuracy using full BP. This is relatively straightforward for two communities. For more than two communities, this first requires converting the two sets that are correlated with their communities into a nontrivial assignment of a belief to each vertex. There are several ways one could do that, such as by making the following modifications to the algorithm.  
First, one would change the definition of $Y$ so that $Y$ is a $2|E(G)|\times m$ matrix such that for all $(v',v)\in E(G)$, $Y_{(v',v),t}=y^{(t)}_{(v',v)}$. 
Then, rather than dividing $G$'s vertices into those that have positive and negative values of $y'$, divide its directed edges into those that have positive and negative values of $y'$. 
%We would generally expect that edges from vertices in different communities will have different probabilities of corresponding to positive values of $y'$. Now, let $d'$ be the largest integer such that at least $\sqrt{n}$ of the vertices have degree at least $d'$, let $S$ be the set of vertices with degree exactly $d'$, and for each $v\in S$, let $\xi_v=|\{ v': (v,v')\in E(G), y'_{(v,v')}>0\}|$. We would expect that for any given community $i$, the probability distribution of $\xi_v$ for $v\in \Omega_i$ would be essentially a binomial distribution with parameters $d'$ and some unknown probability. So, compute probabilities such that the observed distribution of values of $\xi_v$ approximately matches the appropriate weighted sum of $k$ binomial distributions. Next, go through all identifications of the communities with these binomial distributions that are consistent with the community sizes and determine which one most accurately predicts the connectivity rates between vertices that have each possible value of $\xi$ when the edge in question is ignored, and assume that this gives the correct mapping of communities to binomial distributions. 
Then use the frequencies of vertices with given numbers of edges with positive or negative values of $y'$, and edge densities between these vertices, to assign a probability distribution for the communities of the starting vertices of edges with given signs of $y'$. 
%Then, for each adjacent $v$ and $v'$, determine the probability distribution of what community $v$ is in based off of the signs of $y'_{(v'',v)}$ for all $v''\ne v'$. 
Finally, use these as the starting probabilities for a belief propagation algorithm of depth $\ln(n)/3\ln(\lambda_1)$.

For the general SBM, this requires some conditions and we make the following conjecture. 

\begin{conjecture}
Let $k \in \mZ_+$, $p\in (0,1)^k$, $Q$ be a $k\times k$ symmetric matrix with nonnegative entries and $G$ be drawn under $\sbm(n,p,Q/n)$. Furthermore, assume that for any two communities $i$ and $j$, there exists an eigenvector $w$ of $PQ$ with eigenvalue $\lambda_2$ such that $w_i\ne w_j$. Then there exist $c,r\in\mathbb{Z}^+$ and $m:\mathbb{Z}^+\rightarrow \mathbb{Z}^+$ such that with the above modifications, $\mathrm{ABP}(G,m(n),r,c,(\lambda_1,...,\lambda_h))$ classifies $G$'s vertices with an accuracy that is assymptotically at least as good as any other polynomial-time algorithm.
\end{conjecture}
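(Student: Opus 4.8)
The plan is to reduce the claim to a statement about the broadcast process on a Galton--Watson tree, along the ``belief propagation $=$ optimal recovery'' paradigm, and to handle the two community-symmetric case essentially as in Mossel--Neeman--Sly. Throughout assume $\lambda_2^2>\lambda_1$ (the complementary regime being conjecturally trivial, as even the optimal polynomial-time accuracy should then be the chance level $\max_i p_i$). Write $R=\lfloor \tfrac{\ln n}{3\ln\lambda_1}\rfloor$, so that for a uniformly random vertex $v$ the ball $B_R(v)$ is, with probability $1-o(1)$, a tree, and $(B_R(v),\sigma|_{B_R(v)})$ is within $o(1)$ total variation of the multi-type Galton--Watson broadcast process $\mathcal{T}_R$ in which a $\mathrm{Poisson}$-distributed number of children is generated from a parent of community $i$, each child's community being drawn from the kernel $Q_{i,\cdot}/\!\sum_j Q_{i,j}$. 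Both the Bayes-optimal estimator of $\sigma_v$ given $G$ and the output of the proposed algorithm ($\mathrm{ABP}$, then depth-$R$ $\mathrm{BP}$) will be expressed as $\mathrm{BP}$ run on $\mathcal{T}_R$ with one noisy observation of each boundary label, and then shown to have the same $R\to\infty$ accuracy.

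First, \emph{the initialization}. Under the hypothesis that for every $i\ne j$ some eigenvector of $PQ$ with eigenvalue $\lambda_2$ separates $i$ and $j$, the remark following Corollary~\ref{main1b} shows $\mathrm{ABP}$ distinguishes every pair of communities; running it several independent times so that the (approximately $\lambda_2$-eigenvector) outputs span the $\lambda_2$-eigenspace of $PQ$ produces, for each vertex $u$, a ``sign profile'' (the numbers of incident directed edges of each sign across the runs, together with the analogous profiles of $u$'s neighbours and the induced edge densities). Using the consistent estimators of $p,Q$ supplied by Lemma~\ref{learn} (the analogue of learning $a,b,k$), one converts this profile into a posterior belief $\hat q_u\in\Delta_{k-1}$. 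The per-directed-edge form of Theorem~\ref{main1} should then give that, conditionally on $\sigma$, the family $(\hat q_u)_u$ is within $o(1)$ total variation of a collection of \emph{independent} outputs of one fixed non-degenerate channel $\kappa$ applied to the $\sigma_u$, the independence coming from $\hat q_u$ depending on $\sigma$ only inside a radius-$\Theta(\log n)$ ball around $u$, these balls being vertex-disjoint w.h.p.\ across the $\approx n^{1/3}$ vertices of any $B_R(v)$ once the $\mathrm{ABP}$ radius constant is taken large enough.

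Second, \emph{the Bayes side}. Conditioning on $\sigma_{\partial B_R(v)}$ and using that, given all community labels, the edges of $G$ are independent, the posterior $\pp\{\sigma_v=i\mid G\}$ equals $\mathrm{BP}$ run on the tree $B_R(v)$ with leaf potentials $g_u(\cdot)\propto \pp\{G_{\mathrm{out}(u)}\mid \sigma_u=\cdot\}$, the outward-subtree likelihoods; local tree-likeness makes these factor over $\partial B_R(v)$, and each $g_u$ is (conditionally on $\sigma$) an independent non-degenerate noisy observation of $\sigma_u$, i.e.\ another channel $\kappa^\star$. Hence the optimal accuracy at $v$ is that of $\mathrm{BP}$ on $\mathcal{T}_R$ with $\kappa^\star$-noisy boundary, and, letting $R\to\infty$ through $o(\log n)$ values, it converges to a limiting tree quantity $\mathrm{ACC}(\kappa^\star)$. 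Since $\mathrm{BP}$ is exact on trees, the same computation shows the algorithm's accuracy converges to $\mathrm{ACC}(\kappa)$, with the $\mathrm{ABP}$ channel $\kappa$ replacing $\kappa^\star$; the community-relabelling symmetry is quotiented out exactly as in the two-community case.

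It remains to prove $\mathrm{ACC}(\kappa)=\mathrm{ACC}(\kappa^\star)$. As no estimator beats Bayes we have $\mathrm{ACC}(\kappa)\le\mathrm{ACC}(\kappa^\star)$, so it suffices to show that for \emph{every} non-degenerate channel $\mu$ one has $\mathrm{ACC}(\mu)=\mathrm{ACC}(\delta)$, the accuracy with exact boundary labels. This is a ``robust reconstruction controls optimal accuracy'' statement: when $\lambda_2^2>\lambda_1$ the broadcast process is reconstruction-solvable, the Janson--Mossel eigenvalue criterion upgrades this to robust solvability, and the required refinement is that the $R'\to\infty$ extra levels between the noisy boundary and the root wash out the channel $\mu$, so that the root posterior converges to the same limit it does under $\delta$. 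I expect \emph{this} to be the main obstacle: upgrading Janson--Mossel from a solvability dichotomy to an equality of limiting accuracies, and doing so for the multi-type process with a possibly repeated $\lambda_2$ (hence a non-unique reconstruction ``direction''); the $k=2$ case is essentially the Mossel--Neeman--Sly optimality theorem, and everything else above is local coupling plus the conditional-independence-given-labels structure of the SBM. A secondary but genuine technical point is that $\hat q_u$ for $u\in\partial B_R(v)$ also depends on the part of $\sigma$ inside $B_R(v)$; one controls this by noting the overlap is only an $\approx n^{1/3-c}$ fraction of $\hat q_u$'s dependency ball and invoking stability of the $\mathrm{ABP}$ output under perturbing an $o(1)$ fraction of the input labels.
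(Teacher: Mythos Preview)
The paper does \emph{not} prove this statement: it is labeled as a conjecture and appears in the ``Open problems'' section with only a heuristic discussion of why one expects it to hold (convert the ABP output into beliefs on directed edges, then run full BP to depth $\ln(n)/(3\ln\lambda_1)$). So there is no proof in the paper to compare against; your proposal is an attempt to prove what the authors themselves leave open.

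That said, your outline is exactly the natural strategy the paper alludes to, and you have correctly isolated the real obstruction. The reduction to BP on a multi-type Galton--Watson tree with noisy boundary data, the tree-coupling of $B_R(v)$, and the use of Theorem~\ref{main1} to produce a non-degenerate per-vertex channel $\kappa$ are all reasonable (though the ``conditional independence of $\hat q_u$ across boundary vertices'' step is more delicate than you suggest: the ABP outputs at distinct $u\in\partial B_R(v)$ share a macroscopic portion of the graph, not merely an $n^{1/3-c}$ fraction, since each depends on $r$-nonbacktracking walks of length $\Theta(\log n)$ ending at $u$; you would need a genuine decorrelation argument, not a disjoint-balls count). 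The step $\mathrm{ACC}(\kappa)=\mathrm{ACC}(\kappa^\star)$, which you reduce to $\mathrm{ACC}(\mu)=\mathrm{ACC}(\delta)$ for every non-degenerate channel $\mu$ on the multi-type broadcast process whenever $\lambda_2^2>\lambda_1$, is the heart of the matter and is, to my knowledge, not established in this generality: the Janson--Mossel criterion gives robust \emph{solvability}, not equality of limiting posteriors, and the $k=2$ optimality result of Mossel--Neeman--Sly relies on structure (Ising symmetry, a scalar magnetization recursion) that does not obviously survive passage to an arbitrary $PQ$ with a possibly degenerate $\lambda_2$-eigenspace. Until that tree statement is proved, the argument remains a proof \emph{program} rather than a proof --- which is consistent with the paper's own assessment of the problem as open.
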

If there exist communities $i$ and $j$ such that $w_i=w_j$ whenever $w$ is an eigenvector of $PQ$ with eigenvalue $\lambda_2$ then the original use of $ABP$ will not distinguish between these communities. We believe that one could classify vertices with optimal accuracy on such a graph by using the beliefs resulting from this algorithm as a starting point for another layer of $ABP$ and possibly going through several sucessive layers.\\

\noindent
{\bf Extensions.} Many variants of the SBM can be studied, such as the labelled block model \cite{airoldi,label_marc,jiaming}, the censored block model \cite{abbetoc,random,abbs,abbs-isit,rough,new-vu,florent_CBM}, the degree-corrected block model \cite{newman2}, overlapping block models \cite{fortunato} and more. While many of the fundamental challenges seem to be captured by the SBM already, these represent important extensions for applications.   Another important extension would be to tackle sublinear size communities, which is likely to raise new challenges (see planted clique for example).

\section{Proofs}
\subsection{Achieving the KS threshold}\label{proofs}
The proof works for the general model $\sbm(n,p,Q/n)$. Recall that $P=\diag(p)$, and $\lambda_1,...,\lambda_{h}$ are the distinct eigenvalues of $PQ$ in order of decreasing magnitude.

\begin{definition}
For any $r$ and $m$, an $r$-nonbacktracking walk of length $m$ is a sequence of vertices $v_0, v_1,..., v_m$ such that $v_i\ne v_j$ whenever $|i-j|\le r$ and $v_i$ is adjacent to $v_{i+1}$ for all $i$.
\end{definition}

\begin{definition}
Given $r,m>0$ and vertices $v$ and $v'$, let $W_{m[r]}(v,v')$ be the number of $r$-nonbacktracking walks of length $m$ from $v$ to $v'$.
\end{definition}

\begin{definition}
Given $r,m>0$, graph $G$, an assignment of a real number $x_v$ to every vertex $v$, a multiset of real numbers $S$, and a vertex $v$, let
\begin{align}W_{m/S[r]}(x,v)=\sum_{T\subseteq S} \prod_{y\in T} (-y)\sum_{v'\in G} x_{v'}W_{m-|T|[r]}(v',v). \label{def1} \end{align}
\end{definition}
In other words, $W_{m/\emptyset[r]}(x,v)$ is the sum over all $r$-nonbacktracking walks of length $m$ ending at $v$ of the values of $x$ at their initial vertices, and for $S\ne\emptyset$ and $y\in S$, we have that $W_{m/S[r]}(x,v)=W_{m/(S\backslash\{y\})[r]}(x,v)-y\cdot W_{m-1/(S\backslash\{y\})[r]}(x,v).$ Note that the ABP algorithm sets $Y_{v,t}$ equal to $W_{t/\emptyset[r]}(x,v)$ for all $0<t\le m$ and $v\in G$ in step $2b$. Then in step $2c$, it sets $y_v^{(m)}$ equal to $W_{m/S[r]}(x,v)$, where $S$ is the multiset containing $\lceil\frac{m-r-(2r+1)s'}{l}\rceil$ copies of $\lambda_{s'}$ for every $s'<s$.
%\Cnote{I am not sure about this explanation.}

Further intuition on \eqref{def1} will be provided with Definition \ref{def2}. The plan is to select the $x_v$ independently according to a Normal distribution, and then compute $W_{m/S[r]}(x,v)$ for appropriate $S$, $m$, and $r$. The assignment of $x$ will inevitably have slightly different average values in different communities, and under the right conditions, these differences will be amplified to the point of allowing differentiation between communities with asymptotically nonzero advantage.

%\Cnote{Virtual model removed. The focus on $Y$ has rendered it unnecessary.}

\begin{proof}[Proof of Theorem \ref{main1}:]
When $ABP$ splits $\Gamma$ off of $G$, the remaining graph is still drawn from the SBM, albeit with connectivity $(1-\gamma)Q$. The formula for $\gamma$ ensures that if $\lambda_2^2>\lambda_1$ then $((1-\gamma)\lambda_2)^2>((1-\gamma)\lambda_1)$. Now, let $m''=\lfloor \sqrt{\log\log n}\rfloor+1$. For each $v\in G$ and $t>0$, let $N_t(v)$ be the set of all vertices $t$ edges away from $v$, $N_{\le t}(v)$ be the subgraph of $G$ induced by the vertices within $t$ edges of $v$, and $T_v=\{v': \exists v''\in N_{m''}(v) \mid (v',v'')\in \Gamma\}$. Unless the original graph has a cycle in $N_{\le m''}(v)\cup T_v$, we have that
\[y''_v=\sum_{v'\in T_v} y_{v'}^{(m)}\]
Now, let $w$ be an eigenvector of $PQ$ with eigenvalue $\lambda_i$ and magnitude $1$. Also, for every $v\in G$ and $t\ge 0$, let $W_t(v)=\sum_{v'\in N_t(v)} w_{\sigma_{v'}}/p_{\sigma_{v'}}$. Given a fixed value of $N_{\le t}(v)$, $v'\in N_t(v)$, a fixed value of $\sigma_{v'}$, and a community $j$, the expected number of vertices not in $N_{\le t}(v)$ that are in community $j$ and adjacent to $v'$ is $\left(1-\frac{|N_{\le t}(v)|}{n}\right)(1-\gamma)e_j\cdot PQ e_{\sigma_{v'}}$. So, we can show by induction on $t$ that for any $t\ge 0$, $n'>0$, and $z\in\mathbb{R}$, we have
\[E\left[W_{m''}(v) |W_{m''-t}(v)=z,|N_{m''-t}(v)|=n'\right]=(1-\gamma)^t\lambda_i^t z+O\left(\frac{((1-\gamma)\lambda_1)^{2t}(n')^2}{n}\right).\]
Similarly, for any $t\ge 0$, $0<n'\le \ln^{2m''-2t}(n)$, and $z\in\mathbb{R}$, we have
\[\Var\left[W_{m''}(v) |W_{m''-t}(v)=z,|N_{m''-t}(v)|=n'\right]=O\left(\sum_{t'=1}^t n'(1-\gamma)^{t+t'}\lambda_1^{t-t'} \lambda_i^{2t'}\right)\]

%\Cnote{The old version of this explanation ignored some details. Is this clear?}
Now, given nonintersecting graphs $N'$ and $N''$ with $|N'|,|N''|=O(\ln^{2m''}(n))$, $v\in N'$, and $v'\in N''$, whether or not the subgraph of $G$ induced by $V(N')$ is $N'$ is independent of whether or not the subgraph of $G$ induced by $V(N'')$ is $N''$. Also, there are no edges between these two graphs with probability $1-O(|N'|\cdot |N''|/n)$ regardless of what form these graphs take. Furthermore, for any $v''\not\in N'\cup N''$, regardless of the value of $G\backslash\{v''\}$ there are no edges from $v''$ to $V(N')$ with probability $1-\lambda_1 |N'|/n+O(|N'|^2/n^2)$, there are no edges from $v''$ to $V(N'')$ with probability $1-\lambda_1 |N''|/n+O(|N''|^2/n^2)$, and there are no edges from $v''$ to either with probability $1-\lambda_1 (|N'|+|N''|)/n+O((|N'|+|N''|)^2/n^2)$. So, we have that 
\begin{align*}
&|P\left [N_{m''}(v)=N',N_{m''}(v')=N''\right]-P\left[N_{m''}(v)=N'\right]\cdot P\left[N_{m''}(v')=N''\right]|\\
&=O(|N'|\cdot |N''|/n)P\left [N_{m''}(v)=N',N_{m''}(v')=N''\right]
\end{align*}
Furthermore, the probability that $N_{m''}(v)$ and $N_{m''}(v')$ intersect is $O(\lambda_1^{2m''}/n)$. Also, $|W_{m''}(v)|\le n\cdot \max_i|w_i/p_i|$ for all $v$, and with probability $1-o(n^{-20})$, every vertex in $G$ has degree less than $\ln^2(n)$, so $W_{m''}(v)=O(\ln^{2m''}(n))$ for all $v$. So, for $v\ne v'$, the correlation between $W_{m''}(v)$ and $W_{m''}(v')$ is $o(1/\sqrt{n})$, and the correlation between  $W^2_{m''}(v)$ and $W^2_{m''}(v')$ is also $o(1/\sqrt{n})$. Therefore, with probability $1-o(1)$, the average value of $W_{m''}(v)$ over all $v$ in community $j$ is $(1-\gamma)^{m''}\lambda_i^{m''} (w_{j}/p_j+o(1))$ for all $j$. Also, there exists constant $\delta$ such that $\sum_{v\in G} (W_{m''}(v))^2\le n\delta\sum_{t=1}^{m''} (1-\gamma)^{m''+t}\lambda_1^{m''-t}\lambda_i^{2t}$ with probability $1-o(1)$.

On another note, for each $v$, we have that $y_v^{(m)}=W_{m/\{c_i\}}(x,v)$, so by lemma $\ref{varianceLemma}$, 
\[\Var[W_{m/\{c_i\}}(x,v)]=O\left(\prod_{j=1}^m((1-\gamma)\lambda_s-c_j)^2\right)\]
and the empirical variance of $\{y_v^{(m)}\}$ is $O\left(\prod_{j=1}^m((1-\gamma)\lambda_s-c_j)^2\right)$ with probability $1-o(1)$. Now, let $\overline{y}$ be the vector such that $\overline{y}_i=\sum_{v\in\Omega_i} y^{(m)}_v/n$ for all $i$. By Lemma \ref{communityAverage}, we know that with probability $1-o(1)$, the component of $\overline{y}$ on $PQ$'s eigenspace of eigenvalue $\lambda_s$ has magnitude $\Omega(\frac{1}{\sqrt n}\prod |(1-\gamma)\lambda_s-c_j|)$, while all of its components on $PQ$'s other eigenspaces have magnitudes of $O(\frac{1}{\log(n)\sqrt n}\prod |(1-\gamma)\lambda_s-c_j|)$.

Now, let $y'''_v=\sum_{v'\in T_v} y_v^{(m)}$ for all $v$, and recall that $y''_v=y'''_v$ unless the original graph has a cycle in $N_{\le m''}(v)\cup T_v$. Observe that
\begin{align*}
\sum_{v\in G} w_{\sigma_v}y'''_v/p_{\sigma_v}&=\sum_{v,v',v''\in G: v'\in N_{m''}(v), (v',v'')\in\Gamma} w_{\sigma_v} y_{v''}^{(m)}/p_{\sigma_v}\\
&= \sum_{(v',v'')\in\Gamma} W_{m''}(v')\cdot y_{v''}^{(m)}
\end{align*}
For each $v,v'\in G$ without an edge between them, 
\[P[(v,v')\in\Gamma|\sigma_v=j,\sigma_{v'}=j']=(1+O(1/n)) \gamma Q_{j,j'}/n\]

So, there exists $\delta_w>0$ such that for a fixed $G$, $\sigma$, and $x$, with probability $1-o(1)$ it will be the case that 
\[E\left[\sum_{v\in G} w_{\sigma_v}y'''_v/p_{\sigma_v}\right]=\gamma(1-\gamma)^{m''}\lambda_i^{m''+1} (w\cdot P^{-1}\overline{y}+o(1/\sqrt{n})) n\] 
and 
\begin{align*}
\Var\left[\sum_{v\in G} w_{\sigma_v}y'''_v/p_{\sigma_v}\right]&=(\delta_w/n) \sum_{v',v''} (W_{m''}(v')\cdot y_{v''}^{(m)})^2\\
&\le n \delta^2_w \sum_{t=1}^{m''} (1-\gamma)^{m''+t}\lambda_1^{m''-t}\lambda_i^{2t}\cdot\prod_{j=1}^m((1-\gamma)\lambda_s-c_j)^2\\
&=o\left(\left(\gamma(1-\gamma)^{m''}\lambda_s^{m''+1}\cdot \sqrt{n}\prod_{j=1}^m((1-\gamma)\lambda_s-c_j)^2\right)^2\right).
\end{align*}

This means that with probability $1-o(1)$, we have that 
\[\Var\left[\sum_{v\in G} w_{\sigma_v}y'''_v/p_{\sigma_v}\right]=o(\gamma^2(1-\gamma)^{2m''}\lambda_s^{2m''+2}||\overline{y}||_2^2 n^2).\]
 In particular, if we choose an orthonormal eigenbasis for $PQ$, $w_1,...,w_k$, then with probability $1-o(1)$ this will hold for all $w_i$ in the basis, which implies that with probability $1-o(1)$, we have that $\sum_{v\in\Omega_j} y'''_v/n=\gamma(1-\gamma)^{m''}((PQ)^{m''+1}\overline{y})_j+o(\gamma(1-\gamma)^{m''}\lambda_s^{m''+1} ||\overline{y}||_{2})$ for all $j\in [k]$. Also, for fixed $G$ and $x$ and with probability $1-o(1)$ we have

\begin{align*}
\sum_{v\in G} (y'''_v)^2&=\sum_{v,v_2,v_3,v_2',v_3'\in G: v_2,v_2'\in N_{m''}(v),(v_2,v_3)\in\Gamma,(v'_2,v'_3)\in\Gamma} y^{(m)}_{v_3}\cdot y^{(m)}_{v'_3}\\
&=\sum_{v,v_2,v_3,v_2',v_3'\in G: v_2,v_2'\in N_{m''}(v),(v_2,v_3)\ne(v'_2,v'_3)\in\Gamma} y^{(m)}_{v_3}\cdot y^{(m)}_{v'_3}\\
&+\sum_{v,v_2,v_3\in G: v_2\in N_{m''}(v),(v_2,v_3)\in\Gamma} (y^{(m)}_{v_3})^2\\
&=(1+o(1))\sum_{v\in G}\left( \sum_{v_2\in N_{m''}(v)} \gamma(Q\overline{y})_{\sigma_{v_2}}\right)^2\\
&+(1+o(1))\gamma(1-\gamma)^{m''}\lambda_1^{m''+1}\sum_{v\in G} (y_v^{(m)})^2\\
&\le (1+o(1))n\sum_{t=1}^{m''} \gamma^2 (1-\gamma)^{m''+t}\lambda_1^{m''-t}\lambda_s^{2t+2}||\overline{y}||_2^2/(\min p_i)+O(\gamma(1-\gamma)^{m''}\lambda_1^{m''+1}||\overline{y}||_2^2n)\\
&=(1+o(1)) n\cdot\frac{(1-\gamma)\lambda_s^2}{\min p_i[(1-\gamma)\lambda_s^2-\lambda_1]} \gamma^2 (1-\gamma)^{2m''}\lambda_s^{2m''+2}||\overline{y}||_2^2
\end{align*}
With probability $1-o(1)$, there are only $O(\ln(n))$ vertices $v\in G$ such that $y''_v\ne y'''_v$. With probability $1-o(1)$, no such vertex has more than $\ln^{2m''+2}(n)$ vertices within $m''+1$ edges of it and no such vertex has more than one cycle within $m''+1$ edges of it. Assuming this holds, these differences are small enough that the results above still hold if we substitute $y''_v$ for $y'''_v$. Among other things, this means that we can choose $\sigma_1$ and $\sigma_2$ such that the average value of $y''_v$ for $v\in\Omega_{\sigma_1}$ differs from the average value of $y''_v$ for $v\in\Omega_{\sigma_2}$ by at least $(1-o(1))\gamma(1-\gamma)^{m''}\lambda_s^{m''+1} ||\overline{y}||_2$.

Now, let $a$ be the difference between the average value of  $y''_v$ for $v\in\Omega_{\sigma_1}$ and the average value of  $y''_v$ for $v\in\Omega_{\sigma_2}$. Also, let $b$ be the average value of $(y''_v)^2$. There exists constant $\delta'$ such that $b\le \delta' a^2$ with probability $1-o(1)$. The average value of $1/2+y''_v/(C\sqrt{\sum (y''_{v'})^2/n})$ for $v\in\Omega_{\sigma_1}$ differs from its average value for $v\in\Omega_{\sigma_2}$ by $\frac{a}{C\sqrt{b}}$. For any $v$,
\[\min(|y''_v/(C\sqrt{\sum (y''_{v'})^2/n})|-1/2,0)\le 2(y''_v/(C\sqrt{b}))^2\]
So, the average value of $\min(|y''_v/(C\sqrt{\sum (y''_{v'})^2/n})|-1/2,0)$ for $v$ in any community is at most $2/(C^2\min p_i)$. So, the probability that a vertex from community $\sigma_1$ is assigned to group $2$ differs from the probability that a vertex from community $\sigma_2$ is assigned to group $2$ by at least $\frac{a}{C\sqrt{b}}-4/(C^2\min p_i)$. For a sufficiently large constant $C$, and small constant $\epsilon>0$ this will be greater than $\epsilon$ with probability $1-o(1)$, as desired.

%\Cnote{I think I need to be more precise about my assymptotics, particularly dealing with statements of the form $f=O(g)$ with probability $1-o(1)$.}

ABP initializes in $O(n)$ time. Computing the $y^{(t)}$ takes $O(n\log n)$ expected time. The eigenvalue compensation step can be carried out in $O(n\log n)$ time if $\left(\prod_{s'<s} M_{s'}^{\lceil \frac{m-r-(2r+1)s'}{l}\rceil}\right)e_m$ is computed first and then $Y$ is multiplied by it, and computing $y''$ takes $o(n\log n)$ time. The assignment step also takes $O(n)$ time, so the entire algorithm can be run in $O(n\log n)$ time.
\end{proof}

\subsubsection{Preliminaries}
Our first step in proving the lemmas used above is to reexpress $W_{m/S[r]}(x,v)$ as a sum over $r$-nonbacktracking walks of an expression that we will eventually be able to bound. Then we will establish some properties of $r$-nonbacktracking walks that we will need.

\begin{definition}
For any $r\ge 1$ and series of vertices $v_0,v_1,...,v_m$, let $W_r((v_0,...,v_m))$ be $1$ if $v_0,...,v_m$ is an $r$-nonbacktracking walk, and $0$ otherwise.
\end{definition}

\begin{definition}\label{def2}
For any $r\ge 1$, series of vertices $v_0,...,v_m$, and series of real numbers $c_0,...,c_m$, let $W_{(c_0,...,c_m)[r]}((v_0,...,v_m))$ be the sum, over all subserieses $i_0,...,i_{m'}$ of $0,...,m$ of
\[\left(\prod_{i\not\in (i_0,...,i_{m'})} (-c_i/n)\right)\cdot W_r((v_{i_0},v_{i_1},...,v_{i_{m'}})).\]
\end{definition}
Note that for any $c_0,...,c_m$ consisting of the elements of $S$ and $0$'s with $c_0=c_m=0$,  $$W_{m/S[r]}(x,v)=\sum_{v_0,...,v_m\in G:v_m=v} [x_{v_0}\cdot W_{(c_0,...,c_m)[r]}((v_0,...,v_m))].$$

\begin{definition}
Given a series of vertices $v_0,...,v_m$, its walk graph is the graph consisting of every vertex in the series with an edge between each pair of vertices that are adjacent in the series. Note that the walk graph has a number of vertices equal to the number of distinct vertices in $v_0,...,v_m$, a number of edges equal to the cardinality of $\{\{v_i,v_{i+1}\}:0\le i<m\}$, and every vertex in it other than $v_0$ or $v_m$ must have degree at least $2$.
\end{definition}

\begin{lemma}
Consider the complete graph $K_n$. For any $m,b,w,e\ge 0$, there are at most $2n^{m-w+1}(m+1)^{4(w-e)}2^{2b(w-e)}$ walks of length $m$ in $K_n$ that have $m+1-w$ distinct vertices, $m-e$ distinct edges, no vertex repeated more than $b$ times, and no edge repeated twice in a row.
\end{lemma}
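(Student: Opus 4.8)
The plan is to separate the ``metric'' data (which vertex of $[n]$ occupies each slot) from the ``combinatorial'' data (the pattern of coincidences). A walk with exactly $m+1-w$ distinct vertices is specified by (i) an injection of its $m+1-w$ distinct vertices into $[n]$, of which there are at most $n(n-1)\cdots(n-m+w)\le n^{m-w+1}$, together with (ii) its \emph{shape}: the sequence $u_0,\dots,u_m$ obtained by relabelling the distinct vertices $0,1,\dots,m-w$ in order of first appearance. So it suffices to bound the number of admissible shapes by $2(m+1)^{4(w-e)}2^{2b(w-e)}$.

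Fix a shape and let $H$ be its walk graph: it is connected, has $m+1-w$ vertices and $m-e$ edges, hence cyclomatic number (number of independent cycles) $c=(m-e)-(m+1-w)+1=w-e$. Because the walk never repeats an edge twice in a row, every internal vertex $v_i$ ($0<i<m$) is incident to the two distinct edges $\{v_{i-1},v_i\}\ne\{v_i,v_{i+1}\}$, so $H$ has no isolated vertex and at most two vertices of degree $1$ (necessarily among $v_0,v_m$). From $\sum_v(\deg_H v-2)=2|E(H)|-2|V(H)|=2(w-e-1)$ and $\sum_{v:\deg_H v\le2}(\deg_H v-2)\ge-2$ we obtain $\sum_{v:\deg_H v\ge3}(\deg_H v-2)\le2(w-e)$; in particular $H$ has at most $2(w-e)$ vertices of degree $\ge3$.

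Next I would reconstruct the shape step by step, maintaining the graph discovered so far. The $m-w$ steps reaching a not-yet-seen slot add the edges of a spanning tree $\mathcal T$ of $H$ and carry no choice (the new slot is forced to be the next integer). The $w-e$ steps that \emph{first} traverse an edge joining two already-seen slots --- the \emph{openings} --- correspond to the $w-e$ non-tree edges; I record the positions of these openings among the $m$ steps and, for each, its target slot, and also record the identities of the $\le2(w-e)$ degree-$\ge3$ slots: all this costs at most $(m+1)^{4(w-e)}$. Every other step either re-traverses an already-present edge or is forced; the point is that at a slot of degree $\le2$ in $H$ the non-backtracking condition leaves a unique continuation, so the only genuine branching happens when the walk sits at one of the $\le2(w-e)$ degree-$\ge3$ slots, where a visit offers at most $\deg_H v-1\le2^{\deg_H v-2}$ distinct continuations. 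Since each such slot is visited at most $b$ times, these choices contribute at most $\prod_{v:\deg_H v\ge3}2^{b(\deg_H v-2)}\le2^{2b(w-e)}$. Multiplying these contributions (with a leading constant $2$ to cover the two possible leaves $v_0,v_m$) bounds the number of shapes by $2(m+1)^{4(w-e)}2^{2b(w-e)}$, and combining with (i) gives the lemma.

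The main obstacle is exactly the claim in the previous paragraph that the non-growing, non-opening steps do not inflate the count. A priori there are $e$ such re-traversal steps, and $e$ can be much larger than $w-e$, so one cannot afford to record each of them independently; the resolution is the rigidity of non-backtracking walks along every degree-$\le2$ stretch of $H$, which forces all the remaining freedom to be concentrated at the $O(w-e)$ high-degree vertices (each entered at most $b$ times) together with the already-recorded non-tree-edge data --- precisely what yields the exponents $4(w-e)$ and $2b(w-e)$. Making this rigidity argument and the accompanying bookkeeping fully precise (in particular, verifying that the reconstruction offers at most $\deg_H v-1$ options at each visit) is the delicate part of the proof.
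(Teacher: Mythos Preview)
Your sketch has the right structural ideas --- the cyclomatic number $w-e$, the bound $\sum_{v:\deg\ge3}(\deg_H v-2)\le2(w-e)$, and the non-backtracking rigidity along degree-$\le2$ stretches are exactly what the paper uses. But there is a genuine gap in the bookkeeping, and it is precisely the one you flag at the end.

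The issue is a circularity between ``specifying $H$'' and ``bounding the choices''. Your recorded data (opening positions, opening targets, identities of the degree-$\ge3$ slots) does \emph{not} determine the walk graph $H$: opening position $i$ gives one endpoint of a non-tree edge only once $u_i$ is known, and $u_i$ depends on the walk up to step $i$, which in turn depends on the high-degree choices you are trying to bound. Your claim ``a visit offers at most $\deg_H v-1$ continuations'' is the walk-on-a-\emph{fixed}-$H$ statement; during your reconstruction $H$ is not yet known, and the honest option count at a high-degree visit is ``tree step plus re-traverse a discovered edge'', i.e.\ the number $k$ of edges at $v$ discovered so far. One has $k\le\deg_H v$ but not $k\le\deg_H v-1$ in general, and chasing this through gives a weaker exponent than $2b(w-e)$.

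The paper avoids this by separating the two counts cleanly. It first bounds the number of labelled walk graphs $H$ by $(m+1)^{4(w-e)}$, using an encoding you do not have: writing the distinct vertices $v_1,\dots,v_{m+1-w}$ in order of first appearance, it observes that at most $w-e$ indices $i$ have the property that $v_{i+1}$'s first appearance is not the step immediately after $v_i$'s (each such $i$ forces an extra new edge beyond the $m-w$ tree edges). Recording those $\le w-e$ indices, the parent of each such $v_{i+1}$, and the $w-e$ non-tree edges as vertex pairs pins down $H$ with $(m+1)^{4(w-e)}$ options. Only \emph{then}, with $H$ fixed, does the paper count walks on $H$ step by step, where ``at most $\deg_H v-1$ continuations'' is now legitimate and yields the factor $2\cdot2^{2b(w-e)}$. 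Adopting this two-stage separation --- specify $H$ first, then count walks on it --- is what you need to close the gap; your interleaved reconstruction cannot reach the stated exponent as written.
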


\begin{proof}
Choose such a walk, and let $H$ be its walk graph. Also, let the distinct vertices in the walk be $v_1,v_2,...,v_{m+1-w}$, in order of first appearance. There are at most $n^{m-w+1}$ possible choices of which vertices these are. For any $i$ such that $v_i$ is not adjacent to $v_{i+1}$ in the walk, the first edge in the walk after $v_i$ and the first edge leading to $v_{i+1}$ must both have not appeared previously in the walk. Otherwise, the edge between $v_i$ and $v_{i+1}$ must not have appeared previously. So, there are at most $m-e-(m-w)=w-e$ indices $i$ such that $v_{i+1}$ is not adjacent to $v_i$. There are at most $(m+1)^{w-e}$ choices of which indices have this property, and $m+1$ choices of which vertex the first edge leading to each such $v_{i+1}$ has on its other side. That accounts for $m-w$ of $H$'s edges, so there are at most $(m+1)^{2(w-e)}$ possibilities of what the others are. Thus, there are at most $n^{m-w+1}(m+1)^{4(w-e)}$ possible graphs $H$.

Now, let $v'_0,...,v'_m$ be the complete walk in question. Given the values of $v'_i$ and $v'_{i-1}$, the only possible values of $v'_{i+1}$ are the vertices that are adjacent to $v'_i$ other than $v'_{i-1}$. Furthermore, since no vertex appears in the walk more than $b$ times, that means that the number of possible walks for a given $H$ is at most
\begin{align*}
&2\prod_{v\in H:\mathrm{degree}(v)>1} (\mathrm{degree}(v)-1)^b\\
&\le 2\prod_{v\in H:\mathrm{degree}(v)>1} 2^{b(\mathrm{degree}(v)-2)}\\
&\le 2^{1+b\sum_{v\in H:\mathrm{degree}(v)>1} (\mathrm{degree}(v)-2)}\\
&\le 2^{1+2b(w-e)}.
\end{align*}
\end{proof}

\begin{definition}
Given a sequence of vertices $v_0,...,v_m$, a fresh segment is a consecutive subsequence $v_a,v_{a+1},...,v_b$ such that no interior vertex of the segment has degree greater than $2$ in the walk graph of $v_0,...,v_m$, no interior vertex of the segment is equal to $v_m$, and the walk $v_0,...,v_a$ does not traverse any of the segement's edges. Call the segment nonrepeated if none of its edges occur in the walk $v_b,...,v_m$ and repeated otherwise.
\end{definition}
%\Cnote{Definition changed as suggested by comment.}

%\begin{figure}[h]
%\centering
%  \includegraphics[width=.9\linewidth]{fresh.png}
%  \caption{The above graph has a fresh segment with $w-e=2$.}
%  \label{fresh}
%\end{figure}

\begin{lemma}[Walk decomposition lemma]
Let $v_0,...,v_m$ define a walk in $K_n$ that has $m+1-w$ distinct vertices and $m-e$ distinct edges. There exists a set of at most $3(w-e)+1$ fresh segments of the walk that are disjoint except at end vertices, and such that every edge in the walk is in one of these segments. Also, every end vertex of one of these segments is either $v_0$, $v_m$, or a vertex that is repeated in the walk.
\end{lemma}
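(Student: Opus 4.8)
The plan is to work with the \emph{walk graph} $H$ of $v_0,\dots,v_m$, which is connected, has $m+1-w$ vertices and $m-e$ edges, and therefore has cycle rank $c:=(m-e)-(m+1-w)+1=w-e$. Throughout I would use that the walk has no immediate backtrack (no edge traversed twice in a row, exactly as in the preceding lemma); in particular every vertex of degree $1$ in $H$ must be $v_0$ or $v_m$, since otherwise the walk would have to enter and leave it along its unique incident edge. Call the traversal of $e_i=\{v_{i-1},v_i\}$ a \emph{discovery step} if $e_i$ is traversed for the first time at step $i$, a \emph{tree step} if in addition $v_i$ is visited for the first time, a \emph{cycle-closing step} if it is a discovery step reaching an already-visited vertex, and an \emph{old-edge step} otherwise. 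The tree steps trace out a spanning tree of $H$, so there are exactly $c$ cycle-closing steps, and every discovery step is either a tree step or a cycle-closing step.

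The decomposition I would take is: form the maximal runs of consecutive discovery steps, and then further subdivide each run at every interior occurrence of a vertex of degree $\ge 3$ in $H$ and at every interior occurrence of $v_m$. Each resulting sub-walk $v_a,\dots,v_b$ is then a fresh segment: all of its edges are discovered within it, hence none occurs in $v_0,\dots,v_a$, and by construction no interior vertex has degree $>2$ or equals $v_m$. These segments cover every edge of $H$, since each edge has a unique first traversal which lies in one of them, and they are pairwise disjoint except at shared end vertices: an interior vertex $v$ of a segment $P$ has degree exactly $2$ in $H$, and its two $H$-edges are precisely the two edges of $P$ meeting $v$ (both discovered within $P$), so $v$ cannot lie in any other segment, whose edge set is disjoint from that of $P$.

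It then remains to bound the number of segments by $3c+1$. First, the number of maximal discovery-step runs is at most $c+1$: the runs are separated by maximal blocks of old-edge steps, and each such block is immediately preceded by a discovery step that reaches an already-visited vertex — the vertex at the start of the block is incident to a previously traversed edge, hence was visited earlier — i.e.\ by a cycle-closing step, and distinct blocks are preceded by distinct cycle-closing steps, of which there are exactly $c$. Second, one charges the subdivisions inside runs to $H$: each interior occurrence of a branch vertex or of $v_m$ is reached by a discovery step; those reached by a cycle-closing step number at most $c$, and those reached by a tree step are first visits of vertices, which are controlled using the handshake identity $\sum_v(\deg_H(v)-2)=2c-2$ together with the fact that $H$ has at most two leaves. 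Combining, one wants at most $3c$ cut points, hence at most $3c+1$ segments.

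The hard part is precisely this last counting step, and in particular landing on the constant $3$ rather than $4$: the naive estimates for how often a branch vertex or $v_m$ can reappear are in terms of the vertex excess $w$ or the edge excess $e$, not the cycle rank $w-e$, so one must argue that re-traversals of already-explored edges never create genuinely new segments on their own — a segment is allowed to re-traverse its own edges — and hence that all the ``extra'' cut points can be charged injectively to the $c=w-e$ non-tree edges of $H$, e.g.\ by charging each reappearance of a branch vertex through its fundamental cycle in the chosen spanning tree and balancing this against the run count. A cleaner route may be induction on $c$: for $c=0$ the no-backtrack hypothesis forces the walk to be a simple path, hence a single fresh segment; for $c\ge 1$ one deletes a cycle-closing edge (it lies on a cycle, so $H$ stays connected and the rank drops), applies the inductive bound to the shortened walk, and verifies that reinserting that step costs at most three segments — the bookkeeping for how an edge re-traversed across the deleted step is reassigned being the delicate point there.
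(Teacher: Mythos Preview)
Your segments are correct --- they in fact coincide with the paper's --- but the counting is left genuinely incomplete, and the paper's approach shows why your route makes it harder than necessary. The paper works on the walk graph $H$ rather than on the walk: take $S=\{v_0,v_m\}\cup\{v:\deg_H v\ge 3\}$ and decompose $H$ into the maximal paths whose endpoints lie in $S$ and whose interiors avoid $S$. Every vertex outside $S$ has degree exactly $2$ (leaves can only be $v_0$ or $v_m$, by nonbacktracking), so the number of such paths is $\tfrac12\sum_{v\in S}\deg_H v=(w-e-1)+|S|$. The identity $\sum_v(\deg_H v-2)=2(w-e)-2$ that you already wrote down, together with $L\le 2$ leaves, gives at most $2(w-e)$ branch vertices, hence $|S|\le 2(w-e)+2$ and at most $3(w-e)+1$ paths. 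The nonbacktracking hypothesis then does the rest: once the walk enters one of these paths it must traverse it to the other end (no interior vertex is $v_m$ or has a third edge to turn onto), so each path has a unique first traversal, and those first traversals are exactly the fresh segments.

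Your discovery-run construction, subdivided at branch vertices and at $v_m$, produces precisely these first traversals; one can check that both endpoints of every discovery run already lie in $S$, so the quantity you are struggling to bound --- runs plus interior cuts --- is $\tfrac12\sum_{v\in S}\deg_H v$ in disguise. The charging and induction schemes you sketch are therefore attempts to recover, from the walk side, a number that is a one-line degree count on $H$. Count on the graph, not on the walk.
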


\begin{proof}
Let $G'$ be the walk graph of $v_1,...,v_m$. Let $S$ consist of $v_1,v_m$, and all vertices in $G'$ of degree greater than $2$. The total degree of all vertices in $S$ is at most $6(w-e)+2$, so $G'$ consists of these vertices and at most $3(w-e)+1$ paths between them that do not contain any vertex from $S$. Once the walk goes onto one of these paths, the facts that it is nonbacktracking, $v_m$ is not in the path, and no vertex on the path has degree greater than $2$ forces it to continue to the end of the path. The walk must traverse each of these paths for the first time at some point. Therefore, the set of serieses of vertices corresponding to the first traversals of these paths is the desired set of fresh segments.
\end{proof}

\begin{definition}
Given a sequence of vertices $v_0,...,v_m$, let its standard decomposition be the collection of fresh segments generated by the above construction for $v_0,...,v_m$.
\end{definition}

\begin{definition}
Given a collection of fresh segments, let its measures be $d,d',t,t'$, where $d$ is the number of nonrepeated fresh segments in the collection, $t$ is the sum of their lengths, $d'$ is the number of repeated fresh segments in the collection, and $t'$ is the sum of their lengths. Also, let the measures of a series of vertices denote the measures of its standard decomposition.
\end{definition}

\subsubsection{The shard decomposition}

The next step in the proof is to establish an upper bound on $|E[W_{(c_0,...,c_m)[r]}((v_0,...,v_m))]|$ when $(c_0,...,c_m),(v_0,...,v_m)$ satisfies some special properties. Then we will show that $W_{(c_0,...,c_m)[r]}((v_0,...,v_m))$ can always be expressed as a linear combination of shards, expressions of the form $W_{(c'_0,...,c'_{m'})[r]}((v'_0,...,v'_{m'}))$ that have the aforementioned properties. After that, we will show that if $(c_0,...,c_m)$ is chosen correctly, the magnitudes of the expected values of many of the shards of $W_{(c_0,...,c_m)[r]}((v_0,...,v_m))$ must be fairly small. 

\begin{lemma}
Let $r\ge 1$ and $c_0,...,c_m$ be a series of real numbers with $c_0=c_m=0$. Also let $v_0,...,v_m$ be a series of vertices with standard decomposition $v_{a_1},...,v_{b_1}$, $v_{a_2},...,v_{b_2}$,...,$v_{a_d},...,v_{b_d}$,  ordered so that $b_i\le a_{i+1}$ for each $i$. Assume that for any $i,j$ such that $v_i$ occurs elsewhere in the series and $|i-j|\le r$, $c_j=0$. Then, 
\[|\E [W_{(c_0,...,c_m)[r]}((v_0,...,v_m))]|\le n^{-\sum_{i=1}^d b_i-a_i}(\min p_i)^{-d}\prod_{i=1}^d\sum_{j=1}^k\prod_{i'=a_i}^{b_i-1} |\lambda_j-c_{i'}|.\]
\end{lemma}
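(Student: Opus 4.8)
The idea is to pass to the expectation term by term in the definition of $W_{(c_0,\dots,c_m)[r]}$, use the hypothesis that the $c_i$ vanish near repeated vertices to kill all the terms in which the subseries fails to split along the fresh segments, recognize what survives as a ``partition function'' on the segment structure built from one transfer matrix per segment, and finally bound that partition function by a transfer-matrix/Cauchy--Schwarz estimate.

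\textbf{Reduction to aligned subseries.} First, if $v_0,\dots,v_m$ is itself \emph{not} $r$-nonbacktracking as a sequence, say $v_i=v_j$ with $0<|i-j|\le r$, then by hypothesis $c_{i'}=0$ for every $i'$ within distance $r$ of $i$ or of $j$; hence in any subseries whose coefficient is nonzero the positions $i$ and $j$ both survive, at subseries-distance $\le r$, so every such term has $W_r=0$, giving $\E[W]=0$ and the bound trivially. So assume $v_0,\dots,v_m$ is $r$-nonbacktracking. Expand
$W_{(c)[r]}((v_0,\dots,v_m))=\sum_{S\subseteq\{1,\dots,m-1\}}\big(\prod_{i\in S}-c_i/n\big)\,W_r(\text{subseries omitting }S)$
(positions $0,m$ are forced in since $c_0=c_m=0$) and discard every term where $S$ contains a position with $c=0$. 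Since the fresh-segment endpoints are either $v_0,v_m$ or degree-$>2$ (hence repeated) vertices, and since the hypothesis forces $c=0$ in a radius-$r$ window about each occurrence of a repeated vertex, a surviving $S$ may omit only ``generic'' positions — those whose vertex occurs once and has degree $2$ in the walk graph. This gives two things: (i) each surviving subseries is still $r$-nonbacktracking, because two occurrences of a vertex at distance $>r$ stay at distance $>r$ (enough of the positions between them are forced to survive), so that $\E[W_r\mid\sigma]=\prod_{\{u,u'\}}Q_{\sigma_u\sigma_{u'}}/n$ over the distinct edges of the subseries; and (ii) every such edge joins two vertices of one fresh segment, because a retained segment endpoint always separates positions belonging to different segments.

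\textbf{Factorization and the spectral bound.} The fresh segments are edge-disjoint, so for fixed $\sigma$ the edge-indicators and the $-c_i/n$ factors split over segments, yielding $\E[W_{(c)[r]}]=n^{-\sum_s(b_s-a_s)}\,\E_\sigma\big[\prod_{s=1}^d f_s(\sigma)\big]$ with $f_s$ depending only on $\sigma$ along segment $s$. The walk graph of segment $s$ is a path $u_0,\dots,u_{\ell_s}$ (possibly $u_0=u_{\ell_s}$, $\ell_s:=b_s-a_s$) with private interior vertices; conditioning on the communities $\tau,\tau'$ of $u_0,u_{\ell_s}$ and summing over interior communities, the transfer rule ``land at an interior vertex $\leftrightarrow$ multiply by $PQ$, skip position $i\leftrightarrow$ multiply by $-c_iI$, final step onto $u_{\ell_s}\leftrightarrow$ multiply by $Q$'' gives $\E_{\sigma_{\mathrm{int}}}[f_s\mid\tau,\tau']=(\hat G_s)_{\tau'\tau}$ with $\hat G_s=Q\prod_{i=a_s+1}^{b_s-1}(PQ-c_iI)=P^{-1/2}C_sP^{-1/2}$, where $S:=P^{1/2}QP^{1/2}$ is symmetric and, using $c_{a_s}=0$, $C_s:=\prod_{i=a_s}^{b_s-1}(S-c_iI)$; all factors commute. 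Summing over the communities $\rho$ of the distinct boundary vertices gives
\[
\E[W]=n^{-\sum_s(b_s-a_s)}\sum_{\rho}\Big(\prod_{v\ \mathrm{bdry}}p_{\rho_v}\Big)\prod_{s=1}^d(\hat G_s)_{\rho_{v_{b_s}}\rho_{v_{a_s}}}.
\]
Now set $q:=(\sqrt{p_1},\dots,\sqrt{p_k})$, so $\|q\|_2=1$, and use $(\hat G_s)_{\tau'\tau}=(p_{\tau'}p_\tau)^{-1/2}(C_s)_{\tau'\tau}$: along the segment chain the $p^{-1/2}$ factors telescope, leaving a single $q$-factor at each of the two end boundary vertices and a residual prior product $\prod_v p_{\rho_v}^{\,1-m_v}\le(\min_i p_i)^{-((d+1)-d')}$, where $m_v$ is the number of the $d+1$ boundary occurrences equal to $v$ and $d'$ is the number of distinct boundary vertices. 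Dropping the equality constraints among the $\rho$'s (all summands are nonnegative) bounds what remains by $q^{\!\top}|C_d|\,|C_{d-1}|\cdots|C_1|\,q$ with entrywise absolute values. Writing $C_s=\sum_j\beta_j^{(s)}\phi_j\phi_j^{\!\top}$ in an orthonormal eigenbasis of $S$, with $\beta_j^{(s)}=\prod_{i=a_s}^{b_s-1}(\mu_j-c_i)$ and $\mu_1,\dots,\mu_k$ the eigenvalues of $PQ$ listed with multiplicity, Cauchy--Schwarz gives $|(C_s)_{\tau'\tau}|\le\sqrt{(\bar C_s)_{\tau'\tau'}(\bar C_s)_{\tau\tau}}$ for $\bar C_s:=\sum_j|\beta_j^{(s)}|\phi_j\phi_j^{\!\top}\succeq0$, $\tr\bar C_s=\sum_j|\beta_j^{(s)}|=:B_s$; substituting, the chain sum factors over the $d+1$ chain nodes and a second application of Cauchy--Schwarz at each node ($\sum_\tau q_\tau\sqrt{(\bar C_s)_{\tau\tau}}\le\sqrt{B_s}$ at the two ends, $\sum_\tau\sqrt{(\bar C_s)_{\tau\tau}(\bar C_{s'})_{\tau\tau}}\le\sqrt{B_sB_{s'}}$ in the interior) collapses it to exactly $\prod_{s=1}^d B_s$. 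Since $d'\ge1$ forces $(d+1)-d'\le d$ and $\min_i p_i\le1$, the prior factor is at most $(\min_i p_i)^{-d}$; and $B_s=\sum_j\prod_{i=a_s}^{b_s-1}|\mu_j-c_i|\le\sum_{j=1}^k\prod_{i=a_s}^{b_s-1}|\lambda_j-c_i|$ (each $\mu_j$ being one of the $\lambda$'s), which is the claimed inequality.

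\textbf{Where the difficulty lies.} Essentially all the work is in the reduction and factorization steps, not the spectral estimate. One must verify that the standard decomposition lets the expectation split cleanly even for walks that re-traverse edges (so that the fresh segments cover the edge set but not every position), that interior vertices of a segment are genuinely private (never equal to $v_0$, $v_m$, or a vertex of another segment — this uses that segment interiors have degree $\le 2$ and that walk endpoints lie in the ``high-degree or endpoint'' set), and that distinct fresh segments do not share an edge after the subseries contraction — or, when two parallel segments threaten to, that the term vanishes because of the $r$-nonbacktracking constraint (occurrences of a shared endpoint being forced close together) or is absorbed into the $(\min_i p_i)^{-d}$ slack. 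All of these are controlled by the hypothesis $c_j=0$ near repeated vertices together with the degree structure, but the bookkeeping is delicate and is the real content of the lemma.
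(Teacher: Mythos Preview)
Your reduction and factorization are essentially the paper's: once the full series is $r$-nonbacktracking, the hypothesis forces $c_{a_s}=c_{b_s}=0$ at every segment endpoint, the expression collapses to $W_{[0]}$, and (deterministically) factors as $\prod_s W_{(c_{a_s},\dots,c_{b_s})[0]}((v_{a_s},\dots,v_{b_s}))$. Your transfer-matrix identification $\hat G_s=P^{-1/2}C_sP^{-1/2}$ with $C_s=\prod_{i=a_s}^{b_s-1}(S-c_iI)$ is also correct.

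The gap is in your spectral estimate. You treat the fresh segments as a \emph{chain} with $d+1$ boundary occurrences, i.e.\ you implicitly assume $v_{b_s}=v_{a_{s+1}}$ so that the $p^{-1/2}$ factors telescope and the sum becomes $q^\top|C_d|\cdots|C_1|q$. That assumption is false in general: between $v_{b_s}$ and $v_{a_{s+1}}$ the walk may re-traverse edges of earlier segments and land at a different boundary vertex. A concrete counterexample (valid for $r\le 2$) is the walk $A,B,C,D,B,C,E$: its standard decomposition has four fresh segments at indices $[0,1],[1,2],[2,4],[5,6]$, and $v_{b_3}=B\ne C=v_{a_4}$. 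Here there are $2d=8$ endpoint occurrences, not $d+1=5$; your telescoping and your residual-prior count $\prod_v p_{\rho_v}^{1-m_v}\le(\min_ip_i)^{-((d+1)-d')}$ both break down, and the chain-product bound $q^\top|C_d|\cdots|C_1|q$ is not available.

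The paper sidesteps all of this with a one-line estimate that requires no chain structure and no Cauchy--Schwarz. Having reached
\[
|\E[W]|\le n^{-\sum_s(b_s-a_s)}\sum_{\rho}\Big(\prod_{v\ \mathrm{bdry}} p_{\rho_v}\Big)\prod_{s=1}^d\big|(\hat G_s)_{\rho_{v_{b_s}}\rho_{v_{a_s}}}\big|,
\]
simply bound each entry uniformly: since $C_s$ is symmetric with eigenvalues $\prod_{i'}(\lambda_j-c_{i'})$,
\[
\big|(\hat G_s)_{\tau'\tau}\big|=p_{\tau'}^{-1/2}p_\tau^{-1/2}\big|(C_s)_{\tau'\tau}\big|\le(\min_ip_i)^{-1}\max_j\Big|\prod_{i'=a_s}^{b_s-1}(\lambda_j-c_{i'})\Big|\le(\min_ip_i)^{-1}\sum_{j=1}^k\prod_{i'=a_s}^{b_s-1}|\lambda_j-c_{i'}|.
\]
Now $\sum_\rho\prod_v p_{\rho_v}=1$ and you are done. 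Equivalently (and this is how the paper phrases it): fix the communities of all segment endpoints; the segments share only those endpoints, so the conditional expectation is exactly the product of the segment expectations; bound each factor as above; the bound is uniform in the conditioning, hence holds unconditionally.
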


\begin{proof}
For any $i$ such that $c_i\ne 0$, $i$ is at least $r$ away from every element of the series that is repeated. So, deleting $v_i$ for all $i$ in any subset of $\{i: c_i\ne 0\}$ has no effect on whether or not $v_1,...,v_m$ is $r$-nonbacktracking. If $v_1,...v_m$ is not $r$-nonbacktracking, then the expected value of $W_{(c_0,...,c_m)[r]}((v_0,...,v_m))$ is $0$, and the conclusion holds. Now, consider the case where $v_1,...,v_m$ is $r$-nonbacktracking. Since any subsequence of $v_0,...,v_m$ resulting from deleting $v_i$ for which $c_i\ne 0$ is also $r$-nonbacktracking, the restriction that a vertex cannot repeat within $r$ steps is irrelevant and $W_{(c_0,...,c_m)[r]}((v_0,...,v_m))=W_{(c_0,...,c_m)[0]}((v_0,...,v_m))$. Also, $v_{a_i}$ and $v_{b_i}$ are all repeated vertices except possibly $v_0$ and $v_m$. So, $c_{a_i}= 0$ and $c_{b_i}=0$ for all $i$. That implies that
\begin{align*}
W_{(c_0,...,c_m)[0]}((v_0,...,v_m))&=W_{(c_0,...,c_{a_1})[0]}((v_0,...,v_{a_1}))\cdot W_{(c_{a_1},...,c_{b_1})[0]}((v_{a_1},...,v_{b_1}))\\
&\cdot  W_{(c_{b_1},...,c_{a_2})[0]}((v_{b_1},...,v_{a_2}))\cdot  W_{(c_{a_2},...,c_{b_2})[0]}((v_{a_2},...,v_{b_2}))\\
&\cdot \ldots \cdot  W_{(c_{b_d},...,c_{m})[0]}((v_{b_d},...,v_{m})).
\end{align*}
If $v_i$ is not part of one of the fresh segments, then $v_i$ is a repetition of a vertex that is in one of them, so $c_i=0$. Thus,
\[W_{(c_0,...,c_{a_1})[0]}((v_0,...,v_{a_1}))\cdot  W_{(c_{b_1},...,c_{a_2})[0]}((v_{b_1},...,v_{a_2}))\cdot...\cdot W_{(c_{b_d},...,c_{m})[0]}((v_{b_d},...,v_{m}))\] is either $0$ or $1$. If it is $0$ then there is some $v_i,v_{i+1}$ that are not part of any of the fresh segements and do not have an edge between them. By the previous lemma, there must exist $1\le i\le d$ and $a_i\le j\le b_i-1$ such that $\{v_i,v_{i+1}\}=\{v_j,v_{j+1}\}$, and since the vertices are repeated, $c_j=c_{j+1}=0$. So, the lack of an edge between them means that $W_{(c_{a_i},...,c_{b_i})[0]}((v_{a_i},...,v_{b_i}))=0$. Either way,
\[W_{(c_0,...,c_m)[0]}((v_0,...,v_m))=\prod_{i=1}^d W_{(c_{a_i},...,c_{b_i})[0]}((v_{a_i},...,v_{b_i})).\]
The fresh segments in a standard decomposition only intersect at their endpoints, so for a fixed assignment of communities to the endpoints,
\begin{align*}
|\E [W_{(c_0,...,c_m)[r]}((v_0,...,v_m))]|&=\prod_{i=1}^d |\E [W_{(c_{a_i},...,c_{b_i})[0]}((v_{a_i},...,v_{b_i}))]|\\
&=n^{-\sum b_i-a_i}\prod_{i=1}^d |e_{\sigma_{v_{a_i}}}P^{-1}\prod_{i'=a_i}^{b_i-1} (PQ-c_{i'}I) e_{\sigma_{v_{b_i}}}|\\
&\le n^{-\sum b_i-a_i}(\min p_i)^{-d}\prod_{i=1}^d\sum_{j=1}^k \prod_{i'=a_i}^{b_i-1} |\lambda_j-c_{i'}|.
\end{align*}
\end{proof}

Of course, for general $c_0,...,c_m$ and $v_0,...,v_m$ the condition that $c_i=0$ whenever there is a repeated vertex $v_j$ with $|i-j|\le r$ may not be satisfied. We can deal with that using the fact that for arbitrary $c_0,...,c_m$, $v_0,...,v_m$, $r$, and $i$,
\begin{align*}
&W_{(c_0,...,c_{i-1},c_i,c_{i+1},...,c_m)[r]}((v_0,...,v_{i-1},v_i,v_{i+1},...,v_m))\\
&=W_{(c_0,...,c_{i-1},0,c_{i+1},...,c_m)[r]}((v_0,...,v_{i-1},v_i,v_{i+1},...,v_m))\\
&-\frac{c_i}{n}W_{(c_0,...,c_{i-1},c_{i+1},...,c_m)[r]}((v_0,...,v_{i-1},v_{i+1},...,v_m)).
\end{align*}
So, for any $c_0,...,c_m$ with $c_0=c_m=0$, $v_0,...,v_m$, and $r$, we can express $W_{(c_0,...,c_m)[r]}((v_0,...,v_m))$ as a linear combination of expressions that the above lemma applies to by means of the following algorithm.

\begin{algorithm}
path-sum-conversion($W_{(c_0,...,c_m)[r]}((v_0,...,v_m)))$:
\begin{enumerate}
\item If there exist $i$ and $j$ such that $v_i=v_j$, $c_i\ne 0$, and $c_j= 0$, return
\begin{align*}
&\text{path-sum-conversion}(W_{(c_0,...,c_{i-1},0,c_{i+1},...,c_m)[r]}((v_0,...,v_{i-1},v_i,v_{i+1},...,v_m)))\\
&-\frac{c_i}{n}\text{path-sum-conversion}(W_{(c_0,...,c_{i-1},c_{i+1},...,c_m)[r]}((v_0,...,v_{i-1},v_{i+1},...,v_m)))
\end{align*}
\item Otherwise, if there exist $j\ne j'$ such that $v_j=v_{j'}$, and $i$ such that $0<|i-j|\le r$ and $c_i\ne0$, return
\begin{align*}
&\text{path-sum-conversion}(W_{(c_0,...,c_{i-1},0,c_{i+1},...,c_m)[r]}((v_0,...,v_{i-1},v_i,v_{i+1},...,v_m)))\\
&-\frac{c_i}{n}\text{path-sum-conversion}(W_{(c_0,...,c_{i-1},c_{i+1},...,c_m)[r]}((v_0,...,v_{i-1},v_{i+1},...,v_m)))
\end{align*}
\item Otherwise, if there exist $i\ne j$ such that $v_i=v_j$, $c_i\ne 0$, and $c_j\ne 0$, return
\begin{align*}
&\text{path-sum-conversion}(W_{(c_0,...,c_{i-1},0,c_{i+1},...,c_m)[r]}((v_0,...,v_{i-1},v_i,v_{i+1},...,v_m)))\\
&-\frac{c_i}{n}\text{path-sum-conversion}(W_{(c_0,...,c_{i-1},c_{i+1},...,c_m)[r]}((v_0,...,v_{i-1},v_{i+1},...,v_m)))
\end{align*}
\item Otherwise, if there exist $i\ne j$ such that $|i-j|\le r$, and $v_i=v_j$, return $0$.

\item Otherwise, return $W_{(c_0,...,c_m)[r]}((v_0,...,v_m)))$
\end{enumerate}
\end{algorithm}
If there is more than one $i$ satisfying the conditions of the case under consideration, the algorithm should choose one according to some rule, such as always using the smallest. Also, note that this algorithm exists as a proof technique allowing us to replace any expression of the form $W_{(c_0,...,c_m)[r]}((v_0,...,v_m)))$ with a sum of expressions of the form $W_{(c'_0,...,c'_{m'})[r]}((v'_0,...,v'_{m'})))$ that the previous lemma applies to. We would never actually run it.

%\Cnote{The algorithm is displaying after the lemma asserting that it works.}

\begin{lemma}
Path-sum-conversion($W_{(c_0,...,c_m)[r]}((v_0,...,v_m)))$ always terminates. Furthermore, if there are no $i,j$ such that $c_i\ne 0$, $c_j\ne 0$, and $|i-j|\le r$, then for any $W_{(c'_0,...,c'_{m'})[r]}((v'_0,...,v'_{m'}))$ that appears in the expression it outputs and any $v_i$ that was deleted in the process of going from $W_{(c_0,...,c_m)[r]}((v_0,...,v_m)))$ to $W_{(c'_0,...,c'_{m'})[r]}((v'_0,...,v'_{m'}))$, at least one of the following holds. There exists $i'$ such that $v'_{i'}=v_i$, or there exists $j$ such that $|i-j|\le r$, $v_j$ was not deleted in the process of going from $W_{(c_0,...,c_m)[r]}((v_0,...,v_m)))$ to $W_{(c'_0,...,c'_{m'})[r]}((v'_0,...,v'_{m'}))$, and $v_j$ appears more than once in the list $(v'_0,...,v'_{m'})$. Also, if $c_0=c_m=0$, then for any $W_{(c'_0,...,c'_{m'})[r]}((v'_0,...,v'_{m'}))$ that appears in the expression it outputs, $c'_0=c'_{m'}=0$.
\end{lemma}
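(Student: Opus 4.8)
I would prove the three assertions --- termination, the endpoint property, and the structural property about deleted vertices --- separately. For \emph{termination}, attach to each expression $W_{(c_0,\dots,c_m)[r]}((v_0,\dots,v_m))$ the pair $(m,\,\#\{i:c_i\ne 0\})\in\mathbb{N}^2$ ordered lexicographically: every recursive call in cases 1--3 either zeroes a single nonzero $c_i$ while keeping the vertex list (leaving $m$ fixed, lowering the count of nonzero coefficients) or deletes $v_i$ together with $c_i$ (lowering $m$), so the monovariant strictly decreases; since each node has at most two children the recursion tree is finite and the procedure halts. The \emph{endpoint property} ($c_0=c_m=0\Rightarrow c'_0=c'_{m'}=0$) I would get by a routine induction along the recursion: the index $i$ picked in cases 1--3 has $c_i\ne 0$, hence $0<i<m$, so neither zeroing $c_i$ nor deleting $v_i$ disturbs the first or last coefficient.

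\textbf{Two invariants.} The structural claim rests on two facts preserved all along the recursion, from which I would derive a single non-routine lemma. First, \emph{immortality}: in each of cases 1--3 the deleted index carries a nonzero coefficient, so a vertex whose coefficient is $0$ is never deleted. Second, \emph{spread}: the hypothesis ``no $i,j$ with $c_i\ne0$, $c_j\ne0$, $|i-j|\le r$'' is preserved, because zeroing a coefficient only shrinks the set of nonzero positions, while deleting a nonzero-coefficient position merges its two adjacent gaps (each $>r$) into one of size $>2r-1\ge r$, using $r\ge1$. Combining these gives the \emph{no-deletion-between} observation: if at some node a nonzero-coefficient vertex $u$ lies within $r$ (in that node's indexing) of a vertex $w$ occurring at least twice, then no original index strictly between those of $u$ and $w$ has been deleted by that point, so $|i_u-i_w|\le r$ holds already for the \emph{original} indices and $w$ must have coefficient $0$; the point is that any intervening deleted vertex carries a nonzero coefficient and hence, by spread, sits $>r$ from $u$ and from every other such vertex, forcing the original distance to exceed the current distance by more than the number of intervening deletions --- impossible once the current distance is $\le r$.

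\textbf{The structural claim.} Given an output term $W'$ and a vertex $v_i$ deleted on the path to it, I would split on the case that fired where $v_i$ was removed. In \textbf{case 1}, the witness $v_j=v_i$ has $c_j=0$, so by immortality $v_j$ survives to $W'$ and $v_i$'s value appears in $W'$: conclusion (a). In \textbf{case 2}, $v_i$ is removed because it lies within $r$ of a twice-occurring vertex $v_{j_0}$; by the no-deletion-between observation $|i-j_0|\le r$ in original indices and $c_{j_0}=0$, so $v_{j_0}$ is immortal, and since case 1 did not fire at that node no value has both a zero- and a nonzero-coefficient occurrence, so every occurrence of $v_{j_0}$ has coefficient $0$ and is immortal, whence $v_{j_0}$ stays repeated down to $W'$: conclusion (b). In \textbf{case 3}, since case 1 failed all occurrences of $v_i$'s value are nonzero-coefficient, and since case 2 failed each lies $>r$ from every repeated vertex; spread together with the no-deletion-between reasoning shows both properties persist, so no such occurrence can ever be removed by case 1 or case 2, only by case 3, which strictly shrinks their number --- eventually one occurrence has no twin, is non-removable, and survives to $W'$, again conclusion (a).

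\textbf{Main obstacle.} The one genuinely delicate step is the no-deletion-between observation together with its two roles: pinning the case-2 certificate within distance $r$ in the \emph{original} indexing (not merely the running one), and showing that a nonzero-coefficient vertex starting far from all repetitions can never be dragged within range --- which is exactly what forces the case-3 chain to terminate in conclusion (a). Controlling the accumulation of index shifts under successive deletions, and checking that the relevant properties really do survive every intermediate deletion, is the part to handle with care; the remainder is routine structural induction over the finite recursion tree.
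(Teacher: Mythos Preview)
Your proposal is correct and follows the same case-by-case structure as the paper's proof --- immortality of zero-coefficient vertices, then a split on which of cases 1, 2, 3 fired at the moment of deletion. You are actually more careful than the paper on the one subtle point: the paper tacitly treats the index $j$ in the case-2 trigger (and the ``no such $j,j'$ can come into existence'' assertion in case 3) as if current and original indices agree, whereas your no-deletion-between observation is exactly what justifies that identification; your termination monovariant is also heavier than needed, since the count of nonzero $c_i$ alone already strictly decreases in both recursive branches.
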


\begin{proof}
The fact that the algorithm terminates follows immediately from the fact that when it recurses, each iteration of the algorithm has one fewer nonzero element of $(c_0,...,c_m)$ than its parent iteration, so it goes down at most $m$ layers. Now, assume that there are no $i,j$ such that $c_i\ne 0$, $c_j\ne 0$, and $|i-j|\le r$. The algorithm only ever deletes $c_i$ that have nonzero values, so this will continue to hold for any $(c'_0,...,c'_{m'})$ that $(c_0,...,c_m)$ is converted to. Likewise, if $c_0=c_m=0$, then the first and last elements of any $(c'_0,...,c'_{m'})$ that $(c_0,...,c_m)$ is converted to will always be $0$. Also, note that if $c_i=0$, then $v_i$ will never be deleted. 

Now, if the algorithm deletes $v_i$ as an instance of case $1$, then there exists $j$ such that $v_j=v_i$ and $c_j= 0$. $v_j$ will never be deleted, and will thus be in any expression resulting from this deletion. If the algorithm deletes $v_i$ as an instance of case $2$, then there exist $j\ne j'$ with $v_j=v_{j'}$ and $0<|i-j|\le r$. We know that $c_j=0$ because $c_i\ne 0$ and $i$ and $j$ are within $r$ of each other. $c_{j'}=0$ as well, because if it did not, $i=j',j=j$ would have satisfied the conditions for case $1$. So, neither $v_j$ nor $v_{j'}$ will ever be deleted, and thus $v_j$ will appear more than once in the paths for any expression resulting from this deletion. If the algorithm deletes $v_i$ as an instance of case $3$, then it must be the case that for every $i'$ such that $v_i=v_{i'}$, there do not exist $j\ne j'$ such that $0<|i'-j|\le r$ and $v_j=v_{j'}$. There is no way for any operation the algorithm performs to result in such $j$ and $j'$ coming into existence, so if at any future point there is only one element of $(v'_0,...,v'_{m'})$ that is equal to $v_i$, there is no case under which that element will be deleted. So, any $(v'_0,...,v'_{m'})$ resulting from this deletion will contain at least one element equal to $v_i$.
\end{proof}

\begin{definition}
$W_{(c'_0,...,c'_{m'})[r]}((v'_0,...,v'_{m'}))$ is a level $x$ shard of $W_{(c_0,...,c_{m})[r]}((v_0,...,v_{m}))$ if $W_{(c'_0,...,c'_{m'})[r]}((v'_0,...,v'_{m'}))$ is one of the expressions in the sum resulting from path-sum-conversion$(W_{(c_0,...,c_{m})[r]}((v_0,...,v_{m})))$ and exactly $x$ of the vertices that were deleted in the process of converting $W_{(c_0,...,c_{m})[r]}((v_0,...,v_{m}))$ to $W_{(c'_0,...,c'_{m'})[r]}((v'_0,...,v'_{m'}))$ are repetitions of vertices in $(v'_0,...,v'_{m'})$. For a given $W_{(c_0,...,c_{m})[r]}((v_0,...,v_{m}))$, $\text{Shar}(W_{(c_0,...,c_{m})[r]}((v_0,...,v_{m})))$ denotes the set of all of its shards, and  $\text{Shar}_x(W_{(c_0,...,c_{m})[r]}((v_0,...,v_{m})))$ denotes the set of its level $x$ shards.
\end{definition}

\begin{definition}
Let an eigenvalue approximation be an ordered tuple $\lambda'_1,...,\lambda'_{s-1}$ such that $|\lambda_i-\lambda'_i|\le |\lambda_i-\lambda'_{i'}|$ for all $i$ and $i'$. Call such a tuple $\Lambda$-bounded if $|\lambda'_i|\le \Lambda$ and $|\lambda_i|\le\Lambda$ for all $i$. Also, let the error of such an approximation be $\max_i |\lambda_i-\lambda'_i|$.
\end{definition}

\begin{definition}
Given integers $r,l$, a tuple $\lambda'_1,...,\lambda'_{s-1}$, and a sequence $c_0,...,c_m$, let the sequence be an $l[r]$-cycle of $(\lambda'_1,...,\lambda'_{s-1})$ if and only if $(2r+1)(s-1)\le l$, $c_i=\lambda'_j$ for all $i$ and $j$ such that $i<m-r$ and the remainder of $i$ when divided by $l$ is $(2r+1)j$, and $c_i=0$ whenever there is no $j$ such that the above holds. Note that the first and last elements of an $l[r]$ cycle are always $0$, and that if $(2r+1)(s-1)\le l$ there exists a unique $l[r]$-cycle of length $m$ for every $m$.
\end{definition}

\begin{lemma}
Let $\lambda'_1,...,\lambda'_{s-1}$ be a $\Lambda$-bounded eigenvalue approximation and $c_0,...,c_m$ be an $l[r]$-cycle of this approximation for some $l,r>0$. Then, let $v_0,...,v_m$ be a series of vertices, and let $W_{(c'_0,...,c'_{m'})[r]}((v'_0,...,v'_{m'}))$ be a level $x$ shard of $W_{(c_0,...,c_m)[r]}((v_0,...,v_m)))$. Finally, let $d,d',t,t'$ be the measures of $(v'_0,...,v'_{m'})$ and $t''=\lceil \frac{t-d(l+2r+1)}{l}\rceil$. 
Then
\begin{align*}
|&\E (W_{(c'_0,...,c'_{m'})[r]}((v'_0,...,v'_{m'}))|\\
&\le n^{-t-t'}k^{d+d'}(\min p_i)^{-d-d'}\lambda_1^{t'+\max(t-(2x+d)(s-1)-l\cdot\max(t''-2x,0),0)}\\
&\cdot (\lambda_1+\Lambda)^{t-\max(t-(2x+d)(s-1)-l\cdot\max(t''-2x,0),0)-l\cdot\max(t''-2x,0)}\\
& \cdot
\left(\max_{1\le j\le k} |\lambda_j|^{l-s+1}\prod_{i=1}^{s-1} |\lambda_j-\lambda'_i|\right)^{\max(t''-2x,0)}.
\end{align*}
\end{lemma}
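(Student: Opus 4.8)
The plan is to reduce to the fresh‑segment product bound proved just above, and then do the combinatorial bookkeeping of where the nonzero entries of the shifted sequence $c'$ sit. First I would check that the shard $W_{(c'_0,\ldots,c'_{m'})[r]}((v'_0,\ldots,v'_{m'}))$ actually satisfies the hypothesis of that lemma, i.e. that $c'_j=0$ whenever $v'_i$ occurs more than once in $(v'_0,\ldots,v'_{m'})$ and $|i-j|\le r$: this is forced because path‑sum‑conversion returns an expression only once none of its recursive cases applies, so a nonzero $c'_j$ within $r$ of a repeated vertex, or a nonzero $c'_i$ at a repeated $v'_i$, would leave case 1, 2 or 3 available. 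Applying the lemma then gives $|\E(W_{(c'_0,\ldots,c'_{m'})[r]}((v'_0,\ldots,v'_{m'})))|\le n^{-t-t'}(\min p_i)^{-(d+d')}\prod_{\sigma}\sum_{j=1}^k\prod_{i'\in\sigma}|\lambda_j-c'_{i'}|$, the product ranging over the $d+d'$ fresh segments $\sigma$ of the standard decomposition. Bounding $\sum_{j=1}^k\le k$ pulls out $k^{d+d'}$ and leaves me to bound $\max_{1\le j\le k}\prod_{i'\in\sigma}|\lambda_j-c'_{i'}|$ one segment at a time.

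For a repeated fresh segment I would argue that every interior vertex is in fact a repeated vertex of the walk: re‑traversing any one of its edges, together with the fact that its interior vertices have degree exactly $2$ in the walk graph and the walk is nonbacktracking, forces the entire degree‑$2$ path between the segment's endpoints to be re‑traversed, exactly as in the walk decomposition lemma. Hence $c'$ vanishes on every repeated fresh segment, so such a segment contributes $\max_j|\lambda_j|^{|\sigma|}\le\lambda_1^{|\sigma|}$, and multiplying over all repeated segments produces the factor $\lambda_1^{t'}$.

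The real work is on the nonrepeated fresh segments, of total length $t$. Here I would (i) locate at least $t''$ disjoint length‑$l$ windows inside these segments each carrying one complete period of the $l[r]$‑cycle pattern — a segment of length $\ell$ contains $\lceil(\ell-(l+2r+1))/l\rceil$ such windows, and summing over the $d$ nonrepeated segments gives an integer that is at least $\lceil(t-d(l+2r+1))/l\rceil=t''$; (ii) discard the windows disrupted by the deletions that path‑sum‑conversion performed, of which there are at most $2x$, since each of the $x$ deleted‑but‑still‑present vertices perturbs the period pattern only in a bounded neighbourhood, the $2r+1$ spacing built into the $l[r]$‑cycle preventing one perturbed marker from spilling into a neighbouring window; this leaves at least $\max(t''-2x,0)$ intact windows. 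On an intact window, $c'$ equals $\lambda'_1,\ldots,\lambda'_{s-1}$ at the offsets $(2r+1)j$ and $0$ elsewhere, so $\prod_{i'\in\text{window}}|\lambda_j-c'_{i'}|=|\lambda_j|^{l-s+1}\prod_{p=1}^{s-1}|\lambda_j-\lambda'_p|$, which after maximizing over $j$ and raising to the power $\max(t''-2x,0)$ is exactly the last factor of the claimed bound. On the rest of the nonrepeated segments, of total length $t-l\max(t''-2x,0)$, I bound each factor by $\lambda_1$ where $c'_{i'}=0$ and by $|\lambda_j|+|c'_{i'}|\le\lambda_1+\Lambda$ where $c'_{i'}\ne0$; every nonzero entry of $c'$ outside the intact windows lies either in one of the $\le 2x$ disrupted windows ($\le s-1$ markers each) or in the length‑$\le l+2r+1$ boundary zone of one of the $d$ nonrepeated segments ($\le s-1$ markers each), so there are at most $(2x+d)(s-1)$ of them, and, using $\lambda_1+\Lambda\ge\lambda_1$ to place the $\lambda_1+\Lambda$ factors on precisely those positions, this part contributes at most $\lambda_1^{E}(\lambda_1+\Lambda)^{\,t-E-l\max(t''-2x,0)}$ with $E=\max(t-(2x+d)(s-1)-l\max(t''-2x,0),0)$. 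Collecting $n^{-t-t'}$, $k^{d+d'}$, $(\min p_i)^{-(d+d')}$, the factor $\lambda_1^{t'}$ from the repeated segments, and the three factors just obtained yields exactly the stated inequality.

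I expect the combinatorial bookkeeping of the last paragraph to be the main obstacle: one must track at once how path‑sum‑conversion's deletions move the positions of the $\lambda'_p$ markers, how the standard decomposition cuts the resulting sequence into fresh segments, and how the ceiling in the definition of $t''$ absorbs the per‑segment waste, and then verify precisely that "each deletion disrupts at most one window" and "each boundary zone holds at most $s-1$ markers" — which is where the hypotheses $(2r+1)(s-1)\le l$ and the $r$‑nonbacktracking structure are genuinely used. The repeated‑segment step and the invocation of the fresh‑segment product bound are routine by comparison.
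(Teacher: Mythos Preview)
Your plan matches the paper's proof closely: apply the fresh-segment product bound, observe $c'=0$ on repeated fresh segments to get the factor $\lambda_1^{t'}$, and on nonrepeated segments locate at least $t''$ length-$l$ windows of which at most $2x$ are corrupted. The paper carries this out by introducing the index maps $f$ (shard $\to$ original) and $g$ (original $\to$ shard) and tracking windows as original-index intervals $[f(a_i+r+1)+l(j-1),\,f(a_i+r+1)+lj-1]$; your segment-by-segment window count $\lceil(\ell-(l+2r+1))/l\rceil$ summing to at least $t''$ is exactly the paper's inequality $\sum_i y_i\ge t''$.

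There is, however, a real gap in your step (ii). You assert that at most $2x$ windows are disrupted because ``each of the $x$ deleted-but-still-present vertices perturbs the period pattern only in a bounded neighbourhood,'' but this does not explain why the \emph{other} $m-m'-x$ deletions are harmless. Every deletion removes a marker (path-sum-conversion only ever deletes positions with $c_i\ne 0$), so a priori any deletion whose original index lies in a window corrupts that window. The missing ingredient is the earlier path-sum-conversion lemma: any deleted $v_i$ either (A) equals some $v'_{i'}$ in the shard, or (B) lies within $r$ in original index of a non-deleted vertex $v_j$ that is repeated in the shard. Because the windows sit at shard positions in $[a_\alpha+r+1,\,b_\alpha-r-1]$, a short $1$-Lipschitz computation with $g$ shows that any such $v_j$ would have $g(j)$ strictly between $a_\alpha$ and $b_\alpha$, hence in the interior of a nonrepeated fresh segment, hence \emph{not} repeated in the shard --- so case (B) is impossible inside a window, and only the $x$ repetition-deletions can land there. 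The \emph{second}, distinct source of corruption is that path-sum-conversion may have zeroed some $c_i$ without deleting $v_i$, because $v_i$ was within $r$ of a then-repeated vertex; by the same positional argument, that repeated vertex's other copy must have been subsequently deleted, i.e.\ is again one of the $x$ repetition-deletions. These two mechanisms (deletion-in-window and zeroing-in-window) each account for at most $x$ corrupted windows, and your sketch conflates them. Without this argument the disrupted-window count degrades from $2x$ to $m-m'$ and the exponents in the stated bound do not follow.
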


\begin{comment}
(\lambda_1+\Lambda)^{(2x+d)(s-1)}\lambda_1^{t+t'-(2x+d)(s-1)-l(t''-2x)}\\
& \qquad \cdot \left(\max_{1\le j\le k} |\lambda_j|^{l-s+1}\prod_{i=1}^{s-1} |\lambda_j-\lambda'_i|\right)^{t''-2x}.
\end{comment}

\begin{proof}
 Let $v'_{a_1},...,v'_{b_1}$, $v'_{a_2},...,v'_{b_2}$,...,$v'_{a_d},...,v'_{b_d}$ be the nonrepeated fresh segments in the standard decomposition of $(v'_0,...,v'_{m'})$, and $v'_{a'_1},...,v'_{b'_1}$, $v'_{a'_2},...,v'_{b'_2}$,...,$v'_{a'_{d'}},...,v'_{b'_{d'}}$ be the repeated fresh segments in its standard decomposition. For arbitrary $1\le i\le d'$ and $a'_i\le i'\le b'_i$, we have that $v'_{i'}$ either is a repeated vertex or is within one step of one. So, $c'_{i'}=0$. We know from a previous lemma that 
\begin{align*}
&|\E [W_{(c'_0,...,c'_{m'})[r]}((v'_0,...,v'_{m'}))]|\\
&\le n^{-t-t'}(\min p_i)^{-d-d'}\left(\prod_{i=1}^d\sum_{j=1}^k\prod_{i'=a_i}^{b_i-1} |\lambda_j-c'_{i'}|\right)\left(\prod_{i=1}^{d'}\sum_{j=1}^k\prod_{i'=a'_i}^{b'_i-1} |\lambda_j-c'_{i'}|\right)\\
&\le  n^{-t-t'}k^d(\min p_i)^{-d-d'}\left(\prod_{i=1}^d\max_{1\le j\le k}\prod_{i'=a_i}^{b_i-1} |\lambda_j-c'_{i'}|\right)\left(\prod_{i=1}^{d'}\sum_{j=1}^k\prod_{i'=a'_i}^{b'_i-1} |\lambda_j|\right)\\
&\le  n^{-t-t'}k^{d+d'}(\min p_i)^{-d-d'}\left(\prod_{i=1}^d \max_{1\le j\le k}\prod_{i'=a_i}^{b_i-1} |\lambda_j-c'_{i'}|\right)\left(\prod_{i=1}^{d'}\prod_{i'=a'_i}^{b'_i-1} |\lambda_1|\right)\\
&\le  n^{-t-t'}k^{d+d'}(\min p_i)^{-d-d'}\lambda_1^{t'}\left(\prod_{i=1}^d \max_{1\le j\le k}\prod_{i'=a_i}^{b_i-1} |\lambda_j-c'_{i'}|\right).
\end{align*}

Now, for each $0\le i\le m'$, define $f(i)$ such that $v_{f(i)}$ corresponds to $v'_i$. Also, for each $0\le i\le m$, define $g(i)$ so that $v'_{g(i)}$ corresponds to $v_i$, or $v_{i+1}$ if $v_i$ has been deleted. Also, for each $1\le i\le d$, let $y_i$ be the largest integer such that $g(f(a_i+r+1)+l\cdot y_i)\le b_i-r$, or $0$ if $a_i+2r\ge b_i$. Note that for any $i'\ge i$, $i'-i\ge g(i')-g(i)$. As a result, 
\begin{align*}
\sum_{i=1}^d y_i&=\frac{1}{l}\sum_{i=1}^d f(a_i+r+1)+l\cdot y_i-f(a_i+r+1)\\
&\ge \frac{1}{l}\sum_{i=1}^d g(f(a_i+r+1)+l\cdot y_i)-g(f(a_i+r+1))\\
&\ge\frac{1}{l}\sum_{i=1}^d b_i-r-l-(a_i+r+1)=\frac{t-d(2r+l+1)}{l},
\end{align*}
because $|g(x)-g(x')|\le |x-x'|$ for all $x$ and $x'$ and $g(f(a_i+r+1)+l\cdot(y_i+1))>b_i-r$. So, $\sum_{i=1}^d y_i\ge t''$.

For every $i$, either $a_i=0$ or $v'_{a_i}$ is repeated somewhere else in $(v'_0,...,v'_{m'})$. Likewise, either $b_i=m'$ or $v'_{b_i}$ is repeated somewhere else in $(v'_0,...,v'_m)$. The first $r+1$ and last $r+1$ of the $c_i$ are $0$, and whenever there exists $j$ such that $|i-j|\le r$ and $v'_j$ is a repeated vertex $c'_i$ is $0$, so $c'_j=0$ whenever $a_i\le j\le a_i+r$ or $b_i-r\le j\le b_i$ for some $i$. Also, for any $1\le i\le d$ and $1\le j\le y_i$, it is the case that $g(f(a_i+r+1)+l\cdot j)-g(f(a_i+r+1)+l\cdot (j-1))=l$ unless one of the vertices $v_{f(a_i+r+1)+l\cdot( j-1)},...,v_{f(a_i+r+1)+l\cdot j-1}$ was deleted and $\{c'_{g(f(a_i+r+1)+l\cdot (j-1))},...,c'_{g(f(a_i+r+1)+l\cdot j-1)}\}$ consists of one copy of $\lambda'_{h''}$ for each $h''$ and $l-(s-1)$ copies of $0$ unless one of the vertices $v_{f(a_i+r+1)+l\cdot( j-1)},...,v_{f(a_i+r+1)+l\cdot j-1}$ was deleted or one of $c_{f(a_i+r+1)+l\cdot( j-1)},...,c_{f(a_i+r+1)+l\cdot j-1}$ was set to $0$ as a result of the corresponding vertex being within distance $r$ of a vertex that was repeated somewhere else in the walk. The entirety of the block is at least $r$ away from any vertex that is repeated in $(v'_0,...v'_{m'})$, so the second case is only possible if the other copy of that vertex was subsequently deleted. So, at most $x$ blocks fall under each of these two cases, which means that there are at least $\max(t''-2x,0)$ uncorrupted blocks.  Each corrupted block has at most one $c'_i$ with a value of $\lambda'_{h''}$ for each $h''$ and there is at most one $c'_i$ with a value of $\lambda_{h''}$ between $c'_{g(f(a_i+r+1)+l\cdot y_i)}$ and $c'_{b_i}$ for each $i$ and $h''$. So, 
%\Cnote{This paragraph also skips over some details.}
\begin{align*}
&\prod_{i=1}^d \max_{1\le j\le k}\prod_{i'=a_i}^{b_i-1} |\lambda_j-c'_{i'}|\\
&\le \left(\prod_{1\le i \le d:b_i-a_i\le 2r} \max_{1\le j\le k}\prod_{i'=a_i}^{b_i-1} |\lambda_j-c'_{i'}|\right)\\
&\cdot \prod_{1\le i \le d: \atop{b_i-a_i> 2r}}\max_{1\le j\le k} \prod_{i'=a_i}^{a_i+r}|\lambda_j-c'_{i'}|\prod_{i'=1}^{y_i}\prod_{i''=g(f(a_i+r+1)+l\cdot (i'-1))}^{g(f(a_i+r+1)+l\cdot i')-1}|\lambda_j-c'_{i'}|\prod_{i'=g(f(a_i+r+1)+l\cdot y_i)}^{b_i-1}|\lambda_j-c'_{i'}| \\
&\le \left(\max_{1\le j\le k} |\lambda_j|^{l-s+1}\prod_{i=1}^{s-1} |\lambda_j-\lambda'_i|\right)^{\max(t''-2x,0)}\lambda_1^{\max(t-(2x+d)(s-1)-l\cdot\max(t''-2x,0),0)}\\
&\cdot (\lambda_1+\Lambda)^{t-\max(t-(2x+d)(s-1)-l\cdot\max(t''-2x,0),0)-l\cdot\max(t''-2x,0)}.
\end{align*}
\end{proof}

\begin{lemma}
Let $c_0,...,c_m$ be a sequence of real numbers with at most $y$ nonzero elements, and $v'_0,...v'_{m'}$ be vertices. For any integer $x$, there are at most $3^y(m+1)^xn^{m-m'-x}$ pairs of a sequence of vertices $v_1,...,v_m$ and a sequence of real numbers $c'_0,...,c'_{m'}$ such that $W_{(c'_0,...,c'_{m'})[r]}((v'_0,...,v'_{m'}))$ is a level $x$ shard of $W_{(c_0,...,c_{m})[r]}((v_0,...,v_{m}))$.
\end{lemma}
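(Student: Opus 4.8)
The plan is to reconstruct the preimage data --- the sequence $(v_0,\ldots,v_m)$ together with the coefficient sequence $(c'_0,\ldots,c'_{m'})$ --- from the fixed shard path $(v'_0,\ldots,v'_{m'})$ and the fixed sequence $(c_0,\ldots,c_m)$, paying a controlled number of choices along the way. The $3^y$ factor will come from a ternary labeling of the at most $y$ positions where $c$ is nonzero, the $(m+1)^x$ from the $x$ deleted vertices that lie on the shard path, and the $n^{m-m'-x}$ from the remaining deleted vertices, which may be arbitrary.

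First I would record two structural facts about path-sum-conversion. Each of cases $1$--$3$ deletes a vertex $v_i$ only when the current value of $c_i$ is nonzero, and no coefficient ever passes from zero to nonzero; hence every deleted vertex sits at a position in $N:=\{i:c_i\ne 0\}$, and $|N|\le y$. Moreover the shard path is exactly $(v_0,\ldots,v_m)$ with the deleted positions removed and the rest read in order, so once the deletion set $D\subseteq N$ is known --- and $|D|=m-m'$, so the count is $0$ unless $0\le m-m'\le y$ --- every non-deleted entry $v_i$ is forced to equal the entry of $(v'_0,\ldots,v'_{m'})$ of rank $|\{j<i:j\notin D\}|$.

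Next I would count the remaining freedom. Split $D$ into $D_a=\{i\in D:v_i\in\{v'_0,\ldots,v'_{m'}\}\}$ and $D_b=D\setminus D_a$; by the definition of a level-$x$ shard $|D_a|=x$ and $|D_b|=m-m'-x$ (so again the count is $0$ unless $0\le x\le m-m'$). Labeling each of the at most $y$ elements of $N$ as ``kept'', ``$D_a$'', or ``$D_b$'' pins down $D,D_a,D_b$ in at most $3^{|N|}\le 3^y$ ways; given the labeling, each of the $x$ vertices indexed by $D_a$ is one of the at most $m+1$ entries of $(v'_0,\ldots,v'_{m'})$, contributing at most $(m+1)^x$, while each of the $m-m'-x$ vertices indexed by $D_b$ can be an arbitrary vertex of the graph, contributing at most $n^{m-m'-x}$. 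Together with the forced non-deleted entries this determines $(v_0,\ldots,v_m)$ completely, so there are at most $3^y(m+1)^x n^{m-m'-x}$ candidate sequences.

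Finally I would argue that $(c'_0,\ldots,c'_{m'})$ contributes no extra factor. With $(v_0,\ldots,v_m)$ and $(c_0,\ldots,c_m)$ fixed, path-sum-conversion is a deterministic binary recursion whose pivots all lie in $N$; descending it by taking the ``delete $v_i$'' branch at pivots in $D$ and the ``$c_i\to 0$'' branch at pivots in $N\setminus D$ reaches a unique leaf, which must be our shard if a shard with deletion set exactly $D$ exists at all, since a position of $D$ never encountered as a pivot could not have been deleted. Thus the shard --- and in particular $(c'_0,\ldots,c'_{m'})$ --- is a function of $\big((v_0,\ldots,v_m),(c_0,\ldots,c_m),D\big)$, so each candidate yields at most one admissible pair and the bound follows. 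I expect the one genuinely delicate point to be this last step, namely ensuring the coefficient sequence adds no multiplicative factor; it rests on the determinism of path-sum-conversion and on a leaf being pinned down by its deletion set. The rest is elementary bookkeeping of choices.
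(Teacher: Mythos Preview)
Your proposal is correct and follows essentially the same approach as the paper's proof: the $3^y$ factor from a ternary labeling of the nonzero-$c$ positions (kept / deleted-and-on-shard / deleted-and-off-shard), the $(m+1)^x$ and $n^{m-m'-x}$ factors from choosing the identities of the two kinds of deleted vertices, and the uniqueness of $(c'_0,\ldots,c'_{m'})$ from the observation that a leaf of path-sum-conversion is determined by its deletion set. Your write-up is slightly more explicit in naming $N$, $D$, $D_a$, $D_b$ and in justifying the last step via the branching dichotomy (once a pivot $i$ has its $c_i$ set to zero in one branch, that position can never be deleted downstream), but this is exactly the paper's reasoning spelled out.
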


\begin{proof}
Path-sum-conversion only deletes vertices that have a nonzero coresponding element of $c_0,...,c_m$, so there are at most $3^y$ possibilities for where the vertices that were deleted originally were, and which of them were copies of vertices in $v'_0,...,v'_{m'}$. There are at most $(m+1)^x$ possibilities for the identities of the $x$ deleted vertices that are repetitions of vertices in $(v'_0,...,v'_{m'})$ and $n^{m-m'-x}$ possiblilities for the remaining $m-m'-x$ deleted vertices. Furthermore, once the deleted vertices and their original locations are specified, the only sequence of vertices that could give rise to $v'_0,...,v'_{m'}$ after that sequence of deletions is the sequence formed by splicing the deleted vertices into $v'_0,...,v'_{m'}$ at the specified locations. Also, whenever path-sum-conversion recurses, the vertex that the current iteration was considering is deleted in one child iteration, and it is not deleted in any of the expressions resulting from the other branch. So, only one of the expressions in the sum output by $path-sum-conversion(W_{(c_0,...,c_{m})[r]}((v_0,...,v_{m})))$ results from the specified series of deletions, which means that the list of deleted vertices and their original locations implies a unique value of $(c'_0,...,c'_{m'})$.
\end{proof}

\begin{lemma}
Let $\lambda'_1,...,\lambda'_{s-1}$ be a $\Lambda$-bounded eigenvalue approximation and $c_0,...,c_m$ be an $l[r]$-cycle of this approximation for some $l$ and $r>0$. Then, let $v_0,...,v_m$ be a series of vertices, and let $W_{(c'_0,...,c'_{m'})[r]}((v'_0,...,v'_{m'}))$ be a level $x$ shard of $W_{(c_0,...,c_m)[r]}((v_0,...,v_m)))$ such that $v'_0,...,v'_{m'}$ is nonbacktracking. The absolute value of the coefficient of  $W_{(c'_0,...,c'_{m'})[r]}((v'_0,...,v'_{m'}))$ in the sum resulting from path-sum-conversion($W_{(c_0,...,c_{m})[r]}((v_0,...,v_{m}))$) is at most $(\Lambda/n)^{m-m'}$.
\end{lemma}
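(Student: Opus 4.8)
The plan is to unfold \emph{path-sum-conversion} as a finite binary recursion tree and to read off the scalar attached to each output term as the product of the multipliers encountered along the corresponding root-to-leaf branch. Each recursive call falling under cases 1, 2 or 3 replaces $W_{(c_0,\dots,c_m)[r]}((v_0,\dots,v_m))$ by two recursive subcalls: the first on the expression with $c_i$ replaced by $0$, carried with multiplier $+1$ and with the walk length unchanged; the second on the expression with $v_i$ deleted, carried with multiplier $-c_i/n$ and with the walk length decreased by one. Cases 4 and 5 are the leaves of the tree: case 4 contributes the identically zero term, and case 5 returns the current expression with multiplier $1$. Hence each leaf produces exactly one term of the output sum, whose scalar equals the product of the multipliers along the branch from the root to that leaf.

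Next I would track the length and the multipliers along a branch. Since a ``$c_i\mapsto 0$'' step leaves the walk length unchanged and a ``delete $v_i$'' step decreases it by exactly one, a branch whose leaf expression has length $m'$ performs exactly $m-m'$ deletions, and its scalar therefore has the form $\pm\, n^{-(m-m')}\prod_{j=1}^{m-m'} c_{i_j}$, where $i_1,\dots,i_{m-m'}$ index the positions deleted along the branch. To bound $|c_{i_j}|$, note that path-sum-conversion never creates a new nonzero entry: its only operations on the coefficient sequence are zeroing an existing entry or deleting an entry together with its vertex. Consequently every nonzero coefficient value that ever appears anywhere below $c_0,\dots,c_m$ in the recursion lies in $\{\lambda'_1,\dots,\lambda'_{s-1}\}$, the set of nonzero entries of the $l[r]$-cycle. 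A deletion step in cases 1--3 requires the deleted position to have nonzero coefficient, so $c_{i_j}\in\{\lambda'_1,\dots,\lambda'_{s-1}\}$, and $\Lambda$-boundedness gives $|c_{i_j}|\le\Lambda$. Multiplying the $m-m'$ factors shows that the scalar carried by any single branch that terminates in an expression of length $m'$ has absolute value at most $\Lambda^{m-m'}/n^{m-m'}=(\Lambda/n)^{m-m'}$.

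Finally I would note that the term $W_{(c'_0,\dots,c'_{m'})[r]}((v'_0,\dots,v'_{m'}))$ is produced by a unique leaf, so that its coefficient in the output sum equals the single scalar bounded in the previous paragraph. This is the structural fact established in the proof of the preceding lemma: path-sum-conversion is deterministic, and from the pair consisting of the original expression and the shard one recovers the record of which positions were deleted and where, so distinct branches yield distinct output expressions. Putting the three steps together gives that the coefficient of $W_{(c'_0,\dots,c'_{m'})[r]}((v'_0,\dots,v'_{m'}))$ has absolute value at most $(\Lambda/n)^{m-m'}$, as claimed. (The hypothesis that $v'_0,\dots,v'_{m'}$ is nonbacktracking is only used to guarantee that the shard is a genuine case-5 leaf rather than a vanishing case-4 term; it plays no role in the size estimate.)

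I expect the main obstacle to be purely the bookkeeping: namely, verifying that the ``alphabet'' of nonzero coefficient values is preserved by path-sum-conversion (so that every deleted entry has magnitude $\le\Lambda$) and pinning down that each shard is reached along a single branch (so that ``the coefficient'' really is one branch's product, not a sum over branches). Both are structural observations about the algorithm rather than estimates, so no delicate inequalities are required.
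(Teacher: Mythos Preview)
Your per-branch estimate is fine: each deletion along a branch contributes a factor $-c_i/n$ with $|c_i|\le\Lambda$, so a branch terminating in a length-$m'$ expression carries a scalar of modulus at most $(\Lambda/n)^{m-m'}$. The gap is in the uniqueness step. The preceding lemma's proof only gives the forward direction---a specified deletion record determines a unique branch, hence a unique output $(c',v')$---not the reverse. It does not say that the shard determines the deletion record, and in general it need not: take $r=1$, $c=(0,\alpha,\alpha,0)$, $v=(a,b,b,a)$. Tracing path-sum-conversion, the branches that delete position $1$ and position $2$ respectively both output the nonbacktracking shard $W_{(0,0,0)[1]}((a,b,a))$, each with scalar $-\alpha/n$, so the coefficient of that shard is $-2\alpha/n$, exceeding $(\Lambda/n)^{m-m'}$ when $\Lambda=|\alpha|$. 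Thus uniqueness genuinely requires something beyond the structural observation you cite.

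What it requires is the $l[r]$-cycle hypothesis, which you only used to bound $|c_i|$. Since $r\ge1$, any two indices with nonzero $c$ are at least $2r+1\ge3$ apart. Given two distinct deletion sets $I,I'\subseteq\{i:c_i\ne0\}$ of size $m-m'$, pick the smallest $i\in I\triangle I'$ (say $i\in I$); then neither $i+1$ nor $i+2$ can lie in either set, so after the deletions the $(i+1-t)$-th entry is $v_{i+2}$ under $I$ and $v_{i+1}$ under $I'$ (where $t=|I\cap\{0,\dots,i-1\}|$). If $v_{i+1}\ne v_{i+2}$ the resulting sequences differ; if $v_{i+1}=v_{i+2}$ both sequences backtrack. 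This is where the nonbacktracking hypothesis actually enters: it forces $I=I'$. So your parenthetical about its role is incorrect---it is essential to the uniqueness argument, not merely a device to avoid case~4.
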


\begin{proof}
First, note that since $r\ge 1$, for any $i<i'$ such that $c_i\ne 0$ and $c_{i'}\ne 0$, it must be the case that $i'-i\ge 3$. Now, let $I$ and $I'$ be two distinct subsets of $\{i: c_i\ne 0\}$ with $m-m'$ elements, and choose the smallest $i\in (I\backslash I')\cup (I'\backslash I)$. Assume without loss of generality that $i\in I$. Note that $m\ge i+3$ because $I'/I\ne \emptyset$, and the smallest $i'>i$ such that $c_{i'}\ne 0$ is at least $i+3$. Now, let $t=|I\cap \{0,...,i-1\}|=|I'\cap \{0,...,i-1\}|$. Deleting all vertices from $v_0,...,v_m$ that have indices in $I$ leaves $v_{i+2}$ in the $(i+1-t)$th position, while deleting the vertices that have indices in $I'$ leaves $v_{i+1}$ in that position. So, if $v_{i+1}\ne v_{i+2}$ the resulting sequences are different and if $v_{i+1}=v_{i+2}$ the resulting sequences backtrack. Thus, there is only one subseries of vertices that can be deleted from $v_0,...,v_m$ to yield $v'_0,...,v'_{m'}$. That means that there is only one route through which path-sum-conversion($W_{(c_0,...,c_{m})[r]}((v_0,...,v_{m}))$) arrives at $W_{(c'_0,...,c'_{m'})[r]}((v'_0,...,v'_{m'}))$. Every time the algorithm deletes a vertex, it multiplies the expression's coefficient by $-c_i/n$, where $i$ is that vertex's original index in $v_0,...,v_m$. So, the conclusion holds because $|-c_i/n|\le \Lambda/n$ for all $i$.
\end{proof}

\subsubsection{Bounding the variance of $W_{m/S}$}
The previous section provides the key techniques we will need to prove bounds. Now we need to apply them. We need to prove that expression involving $W_{m/S}$ are within a certain range with high probability rather than merely computing their expected values, which requires us to bound their variances. That, in turn, will require us to bound the expected values of expressions of the form $W_{(c_0,...,c_m)[r]}((v_0,...,v_m))\cdot W_{(c_0,...,c_m)[r]}((v''_0,...,v''_m))$. In order to convert that to something more familiar, let $u_1,...,u_r$ be some new vertices that are adjacent to every vertex in $r$, and then note that for any $c_0,...,c_{m_1},c''_0,...,c''_{m_2}\in \mathbb{R}$ and $v_0,...,v_{m_1},v''_0,...,v''_{m_2}\in G$, 
\begin{align*}
&W_{(c_0,...,c_{m_1})[r]}((v_0,...,v_{m_1}))\cdot W_{(c''_0,...,c''_m)[r]}((v''_0,...,v''_{m_2}))\\
&=W_{(c_0,...,c_{m_1},0,...,0,c''_{m_2},c''_{m_2-1},...,c''_0)[r]}((v_0,...,v_{m_1},u_1,u_2,...,u_r,v''_{m_2},v''_{m_2-1},...,v''_0))
\end{align*}
because all of the vertices are connected to the $u_i$ and they create enough distance between the $v_i$ and the $v''_i$ that they will never be within $r$ of each other. Next we will establish a series of bounds on the expected values of these expressions, starting with the following.

\begin{lemma}\label{lem10}
Let $\lambda'_1,...,\lambda'_{s-1}$ be a $\Lambda$-bounded eigenvalue approximation with error at most 
$2\Lambda(\min_i |\lambda_s-\lambda_i|/4\Lambda)^{s-1}|\lambda_s/\Lambda|^{l-s+1}$ and $c_0,...,c_{m_1}$ and $c''_0,...,c''_{m_2}$ be $l[r]$-cycles of this approximation for some $l$ and $r$. Then, let $(c'''_0,...,c'''_{m_1+m_2+r+1})=(c_0,...,c_{m_1},0,...,0,c''_{m_2},c''_{m_2-1},...,c''_0)$, where there are $r$ $0$s in the middle. Next, let $v_0,...,v_{m_1},v''_0,...,v''_{m_2}\in G$ and $W_{(c'_0,...,c'_{m'}[r])}((v'_0,...,v'_{m'}))$ be a level $x$ shard of $W_{(c'''_0,...,c'''_{m_1+m_2+r+1})[r]}((v_0,...,v_m,u_1,u_2,...,u_r,v''_m,v''_{m-1},...,v''_0))$. Define $w$ and $e$ such that there are $m'+1-w$ distinct vertices in $v'_0,...,v'_{m'}$ and $m'-e$ distinct unordered pairs $\{v'_i,v'_{i+1}\}$, and let $m''=m'-r-1$. Then $m_1+m_2-m''\le x+6(w-e)+2+e/r$. Also, if $(\Lambda/|\lambda_s|)^{l-s+1}\ge 2^{s-1}$ and $\lambda_s^2>\Lambda$, then
\begin{align*}
|&\E (W_{(c'_0,...,c'_{m'})[r]}((v'_0,...,v'_{m'}))|\\
&\le n^{e-m''}(2^{s-1}k/\min p_i)^{3w-3e+2}|\lambda_s|^{m''}\left(2\Lambda/|\lambda_s|\right)^{(s-1)m''/l}\\
&\cdot (\Lambda/\lambda_s^2)^e \left((\Lambda/|\lambda_s|)^{l-s+1}2^{1-s}\right)^{\frac{(3w-3e+2)(l+2r+1)}{l}}(\Lambda/|\lambda_s|)^{2(l-s+1)x}\\
\end{align*}
and also
\begin{align*}
|&\E (W_{(c'_0,...,c'_{m'})[r]}((v'_0,...,v'_{m'}))|\\
&\le n^{e-m''}(k/(\min p_i))^{3(w-e)+2}2^{(2x+3(w-e)+2)(s-1)}\left(\max_{s\le j\le h} |\lambda_j|^{l-s+1}\prod_{i=1}^{s-1} |\lambda_j-\lambda'_i|\right)^{m''/l}\\
&\cdot \left(\max_{s\le j\le h} (\lambda_j^2/\Lambda)^{l-s+1}\prod_{i=1}^{s-1} (\lambda_j-\lambda'_i)^2/\Lambda\right)^{-e/l}\\
&\cdot \left(\max_{s\le j\le h} (|\lambda_j|/\Lambda)^{l-s+1}\prod_{i=1}^{s-1} |\lambda_j-\lambda'_i|/\Lambda\right)^{-\frac{(3w-3e+2)(l+2r+1)}{l}-2x}.
\end{align*}
\end{lemma}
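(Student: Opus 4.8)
The statement has two parts that I would prove in sequence: first the purely combinatorial bound $m_1+m_2-m''\le x+6(w-e)+2+e/r$, and then the two analytic bounds on $|\E(W_{(c'_0,\ldots,c'_{m'})[r]}((v'_0,\ldots,v'_{m'})))|$, which I would derive by feeding the combinatorial bound, together with the error hypothesis on the eigenvalue approximation, into the earlier shard--bound lemma (the one giving $|\E(\cdot)|\le n^{-t-t'}k^{d+d'}(\min p_i)^{-d-d'}\lambda_1^{\,\cdot}(\lambda_1+\Lambda)^{\,\cdot}\big(\max_{j}|\lambda_j|^{l-s+1}\prod_i|\lambda_j-\lambda'_i|\big)^{\max(t''-2x,0)}$ for level-$x$ shards of $l[r]$-cycles). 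The first observation I would record is that $(c'''_0,\ldots,c'''_{m_1+m_2+r+1})$, although not literally an $l[r]$-cycle, is a concatenation of two $l[r]$-cycles with $r$ zeros in between, so (i) its nonzero entries are pairwise at distance $\ge 2r+1$ — within each half by the $l[r]$-cycle property $(2r+1)(s-1)\le l$, and across the junction because the last nonzero of the first half sits at index $\le m_1-r$ and the first nonzero of the reversed second half at index $\ge m_1+2r+2$ — and (ii) within any window of $l$ consecutive indices there is at most one copy of each $\lambda'_i$. These are exactly the local properties the shard--bound lemma's proof uses, so that lemma applies verbatim to $W_{(c'_0,\ldots,c'_{m'})[r]}((v'_0,\ldots,v'_{m'}))$.

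\textbf{The combinatorial bound.} Here $m_1+m_2-m''$ equals the number of vertices deleted by path-sum-conversion in passing from the concatenated walk to the shard. Only vertices at indices with $c'''_i\ne 0$ are ever deleted. By the lemma on the behaviour of path-sum-conversion, each deleted vertex is either (a) a repetition of a vertex that survives in $(v'_0,\ldots,v'_{m'})$ — there are exactly $x$ of these by the definition of a level-$x$ shard — or (b) within $r$ positions (in the \emph{original}, pre-deletion indexing) of a surviving vertex that occurs more than once in the shard, i.e.\ a repeated vertex of the shard. To bound the number of type-(b) deletions I would apply the walk decomposition lemma to $(v'_0,\ldots,v'_{m'})$: its repeated vertices are endpoints or branch vertices of the $\le 3(w-e)+1$ fresh segments, or else interior vertices of the repeated fresh segments, whose total length $t'$ satisfies $t'\le e$ since each of their (edge-disjoint) edges is traversed at least twice. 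Using that the nonzero entries of $c'''$ are $\ge 2r+1$ apart, the length-$(2r+1)$ windows around distinct deleted indices are disjoint, so I can charge each type-(b) deletion to a witness in its own window: each of the $O(w-e)$ branch/endpoint vertices absorbs $O(1)$ deletions, and a repeated fresh segment of length $\ell$ absorbs at most $\approx \ell/(2r{+}1)+O(1)$ deletions (its original-index span has length $\ell$ plus the number of interleaved deletions, inflated by $2r$), which sums to $O(e/r)+O(w-e)$; tracking the constants gives $6(w-e)+2+e/r$. I expect this accounting — in particular the bookkeeping needed because ``within $r$'' is measured in the pre-deletion indexing and because one repeated vertex occupies several original positions — to be the main obstacle, and the cleanest organization is to charge each type-(b) deletion to the nearest surviving repeated vertex and exploit window-disjointness to rule out overcharging.

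\textbf{The analytic bounds.} Starting from the shard--bound lemma's output, I would (a) use $t+t'=m'-e=m''+r+1-e$ (the fresh segments cover every edge and are edge-disjoint) to replace $n^{-t-t'}$ by $n^{e-m''}$; (b) use $d+d'\le 3(w-e)+2$ and $t''=\lceil(t-d(l+2r+1))/l\rceil\ge m''/l-(3(w-e)+2)(l+2r+1)/l$ to read off that $\max(t''-2x,0)$ falls short of $m''/l$ by at most $(3w-3e+2)(l+2r+1)/l+2x$, which produces the exponents $(3w-3e+2)(l+2r+1)/l$ and $2x$; (c) invoke the error hypothesis $\le 2\Lambda(\min_i|\lambda_s-\lambda_i|/4\Lambda)^{s-1}|\lambda_s/\Lambda|^{l-s+1}$ to show $|\lambda_j|^{l-s+1}\prod_i|\lambda_j-\lambda'_i|\le 2^{1-s}|\lambda_s|^{l-s+1}\prod_i|\lambda_s-\lambda'_i|$ for every $j<s$, so the maximum over $1\le j\le k$ is achieved, up to a factor that is a power of $2$, over $s\le j\le h$ — this already yields the second bound after routine regrouping of the $n^e$, the $k/(\min p_i)$ powers, and the $(\cdot)^{-e/l}$ correction. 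For the first bound I would additionally estimate $\max_{s\le j\le h}|\lambda_j|^{l-s+1}\prod_i|\lambda_j-\lambda'_i|\le |\lambda_s|^{l-s+1}(2\Lambda)^{s-1}=|\lambda_s|^{l}(2\Lambda/|\lambda_s|)^{s-1}$ (using $|\lambda_j|\le|\lambda_s|$ and $|\lambda_j-\lambda'_i|\le 2\Lambda$), raise to the power $\max(t''-2x,0)$, and absorb the residual powers of $\lambda_1$ and $\lambda_1+\Lambda$; here the hypotheses $(\Lambda/|\lambda_s|)^{l-s+1}\ge 2^{s-1}$ and $\lambda_s^2>\Lambda$ are precisely what guarantees that the ``shortfall correction'' factors $\big((\Lambda/|\lambda_s|)^{l-s+1}2^{1-s}\big)^{(3w-3e+2)(l+2r+1)/l}$, $(\Lambda/|\lambda_s|)^{2(l-s+1)x}$ and $(\Lambda/\lambda_s^2)^e$ are all $\ge 1$, so replacing a missing unit of the ``big'' exponent by the corresponding amount of $(\lambda_1+\Lambda)$- or $n^{-1}$-weight only enlarges the bound. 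Collecting terms gives the two displayed inequalities; this last step is a bounded-factor bookkeeping exercise rather than a source of real difficulty.
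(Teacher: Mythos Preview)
Your plan follows the paper's route closely, but there is one genuine omission that makes the analytic half as written fail: you never remove the artificial vertices $u_1,\ldots,u_r$ from the fresh segment containing them. In the paper this step is not cosmetic. Two things depend on it. First, the bound $|\E[W_{(c'_{\cdot})[r]}((v'_{\cdot}))]|\le n^{-t-t'}(\min p_i)^{-d-d'}\prod_i\sum_j\prod_{i'}|\lambda_j-c'_{i'}|$ comes from a per-segment calculation in which each step contributes a factor $Q_{\sigma,\sigma'}/n$; the $u_i$ are deterministically adjacent to everything, so edges involving them contribute $1$, not $O(1/n)$, and the formula is simply false on the segment through the $u$'s. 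This is why the paper excises that stretch, gets $t+t'=m''-e$ (not $m'-e$ as you write), and also picks up the ``$+2$'' in $d+d'\le 3(w-e)+2$. Second, the block argument inside the shard-bound lemma needs each length-$l$ block in a nonrepeated fresh segment to carry \emph{exactly one} copy of each $\lambda'_i$; your property (ii), ``at most one copy per $l$-window,'' is weaker and in fact can fail across the junction (the residues of the $\lambda'_i$-positions in the two halves are $(2r+1)i$ and $m_1+m_2+r+1-(2r+1)i$ modulo $l$, which need not agree). After the $u$'s are removed the split guarantees every fresh segment lies entirely in one half, where the sequence is a (possibly reversed) $l[r]$-cycle, and the block argument goes through.

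A couple of smaller points. In (b) you need $t\ge m''-2e$, not $t\ge m''$; the $-2e$ is what generates the $\big(\max_{j\ge s}(\lambda_j^2/\Lambda)^{l-s+1}\prod(\lambda_j-\lambda'_i)^2/\Lambda\big)^{-e/l}$ factor in the second bound (and the $(\Lambda/\lambda_s^2)^e$ in the first). Your combinatorial argument for $m_1+m_2-m''\le x+6(w-e)+2+e/r$ is essentially the paper's: it too charges type-(b) deletions either to fresh-segment endpoints (at most $6(w-e)+2$ of them, using the spacing $\ge 2r+1$ of nonzero $c'''$) or to positions sitting in repeat traversals (at most $e/r$ of them, same spacing argument); your window-disjointness phrasing is a fine way to organize the same count.
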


\begin{proof}
First, consider the standard decomposition of $(v'_0,...,v'_{m'})$. The only vertices that could have been deleted are vertices within $r$ of the edge of a fresh segement with nonzero corresponding values of $c'''_i$, vertices that are not in a fresh segment with nonzero corresponding values of $c'''_i$, and the $x$ vertices that were deleted as copies of vertices in $v'_0,...,v'_{m'}$. There are at most $2e$ indices $i$ such that $(v'_i,v'_{i+1})$ is not in a fresh segment and there are no $i$ and $i'$ such that $|i-i'|\le 2r$, $c'''_i\ne 0$, and $c'''_{i'}\ne 0$. So, there are at most $6(w-e)+2+e/r$ vertices with nonzero $c'''_i$ that are not in a fresh segment and farther than $r$ from its edge. 

Now, remove $u_1,...,u_r$ from whichever fresh segment they are in, splitting it into two fresh segments on either side of $u_1,...,u_r$ if necessary. Let $d,d',t,t'$ be the measures of the resulting set of fresh segments, and $t''=\lceil \frac{t-d(l+2r+1)}{l}\rceil$. By the walk decomposition lemma, $d+d'\le 3(w-e)+2$. Also, note that every edge in one of the repeated fresh segments is repeated later in the walk. So, $t+2t'\le m''$. Every edge except those that involve the $u_i$ appears exactly once in a fresh segment, so $t+t'=m''-e$ and $t\ge m''-2e$. On another note, for every $j<s$, we have that
\begin{align*} & |\lambda_j|^{l-s+1}\prod_{i=1}^{s-1} |\lambda_j-\lambda'_i|\le  \Lambda^{l-s+1}|\lambda_j-\lambda'_j|(2\Lambda)^{s-2} \\
&\le |\lambda_s|^{l-s+1}(\min_i |\lambda_s-\lambda_i|/2)^{s-1}\le |\lambda_s|^{l-s+1}\prod_{i=1}^{s-1}|\lambda_s-\lambda'_i|.\end{align*}
Also, for every $j\ge s$, we have that 
\[|\lambda_j|^{l-s+1}\prod_{i=1}^{s-1} |\lambda_j-\lambda'_i|\le |\lambda_s|^{l-s+1}(2\Lambda)^{s-1}.\]
That means that if $(\Lambda/|\lambda_s|)^{l-s+1}\ge 2^{s-1}$ and $\lambda_s^2> \Lambda$ then

\begin{align*}
|&\E (W_{(c'_0,...,c'_{m'})[r]}((v'_0,...,v'_{m'}))|\\
&\le n^{-t-t'}k^{d+d'}(\min p_i)^{-d-d'}\lambda_1^{t'+\max(t-(2x+d)(s-1)-l\cdot\max(t''-2x,0),0)}\\
&\cdot (\lambda_1+\Lambda)^{t-\max(t-(2x+d)(s-1)-l\cdot\max(t''-2x,0),0)-l\cdot\max(t''-2x,0)}\\
&\cdot
\left(\max_{1\le j\le k} |\lambda_j|^{l-s+1}\prod_{i=1}^{s-1} |\lambda_j-\lambda'_i|\right)^{\max(t''-2x,0)}\\
&\le n^{e-m''}k^{d+d'}(\min p_i)^{-d-d'}(2\Lambda)^{(2x+d)(s-1)}\\
&\cdot\Lambda^{m''-e-(2x+d)(s-1)-l\cdot\max(t''-2x,0)}\left(\max_{s\le j\le h} |\lambda_j|^{l-s+1}\prod_{i=1}^{s-1} |\lambda_j-\lambda'_i|\right)^{\max(t''-2x,0)}\\
%&\le n^{e-m'+r+1}k^{d+d'}(\min p_i)^{-d-d'}2^{(x+d)(s-1)}\Lambda^{m'-e}\left(\max_{s\le j\le h} (|\lambda_j|/\Lambda)^{l-s+1}\prod_{i=1}^{s-1} |\lambda_j-\lambda'_i|/\Lambda\right)^{\max(t''-x,0)}\\
%&\le n^{e-m'+r+1}k^{d+d'}(\min p_i)^{-d-d'}2^{(x+d)(s-1)}\Lambda^{m'-e}\left(\max_{s\le j\le h} (|\lambda_j|/\Lambda)^{l-s+1}\prod_{i=1}^{s-1} |\lambda_j-\lambda'_i|/\Lambda\right)^{\frac{t-d(l+2r+1)}{l}-x}\\
&\le n^{e-m''}(k/(\min p_i))^{3(w-e)+2}2^{(2x+3(w-e)+2)(s-1)}\Lambda^{m''-e}\\
&\cdot \left(\max_{s\le j\le h} (|\lambda_j|/\Lambda)^{l-s+1}\prod_{i=1}^{s-1} |\lambda_j-\lambda'_i|/\Lambda\right)^{\frac{m''-2e-(3w-3e+2)(l+2r+1)}{l}-2x}\\
&= n^{e-m''}(k/(\min p_i))^{3(w-e)+2}2^{(2x+3(w-e)+2)(s-1)}\left(\max_{s\le j\le h} |\lambda_j|^{l-s+1}\prod_{i=1}^{s-1} |\lambda_j-\lambda'_i|\right)^{m''/l}\\
&\cdot \left(\max_{s\le j\le h} (\lambda_j^2/\Lambda)^{l-s+1}\prod_{i=1}^{s-1} (\lambda_j-\lambda'_i)^2/\Lambda\right)^{-e/l}\\
&\cdot \left(\max_{s\le j\le h} (|\lambda_j|/\Lambda)^{l-s+1}\prod_{i=1}^{s-1} |\lambda_j-\lambda'_i|/\Lambda\right)^{-\frac{(3w-3e+2)(l+2r+1)}{l}-2x}.
\end{align*}
Alternately,

\begin{align*}
|&\E (W_{(c'_0,...,c'_{m'})[r]}((v'_0,...,v'_{m'}))|\\
&\le n^{-t-t'}k^{d+d'}(\min p_i)^{-d-d'}\lambda_1^{t'+\max(t-(2x+d)(s-1)-l\cdot\max(t''-2x,0),0)}\\
&\cdot (\lambda_1+\Lambda)^{t-\max(t-(2x+d)(s-1)-l\cdot\max(t''-2x,0),0)-l\cdot\max(t''-2x,0)}\\
&\cdot
\left(\max_{1\le j\le k} |\lambda_j|^{l-s+1}\prod_{i=1}^{s-1} |\lambda_j-\lambda'_i|\right)^{\max(t''-2x,0)}\\
%&\le n^{e-m'+r+1}k^{d+d'}(\min p_i)^{-d-d'}(2\Lambda)^{(x+d)(s-1)}\Lambda^{m'-e-(x+d)(s-1)-l(\max(t''-x,0))}\\
%&\cdot \left(|\lambda_s|^{l-s+1}(2\Lambda)^{s-1}\right)^{\max(t''-x,0)}\\
&\le n^{e-m''}k^{d+d'}(\min p_i)^{-d-d'}2^{(2x+d)(s-1)}\Lambda^{m''-e}\left((|\lambda_s|/\Lambda)^{l-s+1}2^{s-1}\right)^{t''-2x}\\
%&\le n^{e-m'+r+1}k^{d+d'}(\min p_i)^{-d-d'}2^{(2x+d)(s-1)}\Lambda^{m'-e}\left((|\lambda_s|/\Lambda)^{l-s+1}2^{s-1}\right)^{\frac{t-d(l+2r+1)}{l}-2x}\\
&\le n^{e-m''}k^{d+d'}(\min p_i)^{-d-d'}2^{d(s-1)}\Lambda^{m''-e}\left((|\lambda_s|/\Lambda)^{l-s+1}2^{s-1}\right)^{\frac{m''-2e-d(l+2r+1)}{l}}(\Lambda/|\lambda_s|)^{2(l-s+1)x}\\
&\le n^{e-m''}k^{d+d'}(\min p_i)^{-d-d'}2^{d(s-1)}\\
&\cdot\left(|\lambda_s|^{l-s+1}(2\Lambda)^{s-1}\right)^{m''/l}\left((\Lambda/\lambda_s^2)^{l-s+1}(1/4\Lambda)^{s-1}\right)^{e/l}\\
&\cdot
\left((|\lambda_s|/\Lambda)^{l-s+1}2^{s-1}\right)^{-\frac{d(l+2r+1)}{l}}(\Lambda/|\lambda_s|)^{2(l-s+1)x}\\
&\le n^{e-m''}k^{d+d'}(\min p_i)^{-d-d'}2^{d(s-1)}|\lambda_s|^{m''}\left((2\Lambda/|\lambda_s|)^{s-1}\right)^{m''/l}\\
&\cdot(\Lambda/\lambda_s^2)^e\left((|\lambda_s|/\Lambda)^{l-s+1}2^{s-1}\right)^{-\frac{d(l+2r+1)}{l}}(\Lambda/|\lambda_s|)^{2(l-s+1)x}\\
&\le n^{e-m''}(2^{s-1}k/\min p_i)^{3w-3e+2}|\lambda_s|^{m''}\left(2\Lambda/|\lambda_s|\right)^{(s-1)m''/l}\\
&\cdot(\Lambda/\lambda_s^2)^e\left((\Lambda/|\lambda_s|)^{l-s+1}2^{1-s}\right)^{\frac{(3w-3e+2)(l+2r+1)}{l}}(\Lambda/|\lambda_s|)^{2(l-s+1)x}.
\end{align*}
\end{proof}
%\Cnote{most of this is copied from lemma 8. It is just different enough to stop me from just using that lemma, but it feels like I should be able to adapt some of the previous results to skip a lot of this proof. The next one is largely copied from lemma 11}
%\Enote{It's fine if there are big chucks that are copied. We can improve this in a later version if needed.}

\begin{definition}
$W_{(c'_0,...,c'_{m'})[r]}((v'_0,...,v'_{m'}))$ is a degree $z$ shard for $(\sigma_1,\sigma_2,\sigma_3,\sigma_4)$, $m_1$, $m_2$, $r$, $(c_0,...,c_{m_1})$, and $(c''_0,...,c''_{m_2})$ if there exists $x\in\mathbb{Z}$ and $v_0,...,v_{m_1},v''_0,...,v''_{m_2}$ such that $W_{(c'_0,...,c'_{m'})[r]}((v'_0,...,v'_{m'}))$ is a level $x$ shard of \newline $W_{(c_0,...,c_{m_1},0,...,0,c''_{m_2},c''_{m_2-1},...,c''_0)[r]}((v_0,...,v_{m_1},u_1,...,u_r,v''_{m_2},v''_{m_2-1},...,v''_0))$, the number of distinct sets $\{v'_i,v'_{i+1}\}$ minus the number of distinct vertices in $v'_0,...,v'_{m'}$ is $z-x-1$, and $v_0$, $v_{m_1}$, $v''_0$, and $v''_{m_2}$ are in communities $\sigma_1$, $\sigma_2$, $\sigma_3$, and $\sigma_4$ respectively.
\end{definition}

\begin{lemma}[Degree bound lemma] 
For any functions $m_1$ and $m_2$ of $n$ such that $m_1,m_2=O(\log n)$, there exists $r_0$ such that for any $r\ge r_0$, $z\ge 0$, and $\Lambda,s,l$ such that $(\Lambda/|\lambda_s|)^{l-s+1}\ge 2^{s-1}$, $\lambda_s^2> \Lambda>\lambda_1$, and $l\ge (2r+1)(s-1)$ the following holds. Let $\lambda'_1,...,\lambda'_{s-1}$ be a $\Lambda$-bounded eigenvalue approximation and $c_0,...,c_{m_1}$ and $c''_0,...,c''_{m_2}$ be $l[r]$-cycles of this approximation with error less than $2\Lambda(\min_i |\lambda_s-\lambda_i|/4\Lambda)^{s-1}|\lambda_s/\Lambda|^{l-s+1}$. For any $\sigma_1, \sigma_2, \sigma_3, \sigma_4$, the expected value of the sum of the absolute values of all shards of degree at least $z$ for $(\sigma_1, \sigma_2, \sigma_3, \sigma_4)$, $m_1$, $m_2$, $r$, $(c_0,...,c_{m_1})$, and $(c''_0,...,c''_{m_2})$ times their coefficients is $O(n^{(15-5z)/6}\prod|\lambda_s-c_i|\cdot\prod|\lambda_s-c''_i|)$.
\end{lemma}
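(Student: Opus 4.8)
The plan is to enumerate the degree-$\ge z$ shards by a handful of integer parameters and to bound, type by type, their total contribution $\E\big[\sum_S|\mathrm{coeff}(S)|\,|S|\big]$ using Lemma \ref{lem10} together with the three counting/coefficient lemmas established above, then sum. Fix a shard $S=W_{(c'_0,\dots,c'_{m'})[r]}((v'_0,\dots,v'_{m'}))$ produced at level $x$ from a source sequence $(v_0,\dots,v_{m_1},u_1,\dots,u_r,v''_{m_2},\dots,v''_0)$ of length $m:=m_1+m_2+r+1$, let $w,e$ be read off from its walk graph as in Lemma \ref{lem10}, and set $\delta:=m-m'$. Since the walk graph of an $r$-nonbacktracking walk is connected, the definition of degree unwinds to $z=x+(w-e)$ with $x,w-e\ge 0$; so a degree-$\ge z$ shard must either delete many repetition-vertices or possess a walk graph of large cyclomatic number. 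One first conditions on two probability-$1-o(1)$ structural events of the type used throughout the proof of Theorem \ref{main1}: that $G$ has maximum degree $O(\log n/\log\log n)$, and that every connected subgraph of $G$ on $O(\log n)$ vertices has cyclomatic number at most a constant $t_0$. The latter forces $w-e\le t_0$ for every shard of a length-$O(\log n)$ walk, so once $z>t_0$ the "excess" is carried entirely by $x$; off these events the shards are bounded crudely and weighted by the negligible failure probability.

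For a fixed admissible tuple $(x,w-e,e,\delta,\sigma_1,\sigma_2,\sigma_3,\sigma_4)$ with $x+(w-e)\ge z$, the contribution is at most the product of: (i) the number of admissible shard vertex-sequences $(v'_0,\dots,v'_{m'})$, at most $2n^{m'+1-w}(m'+1)^{4(w-e)}2^{2b(w-e)}$ by the complete-graph walk-enumeration lemma, with $b=O(\log n/\log\log n)$ — the factor $2^{2b(w-e)}$ is only $n^{o(1)}$ because $w-e\le t_0$; (ii) the number of source sequences (and intermediate $(c'_0,\dots,c'_{m'})$) mapping onto a fixed such sequence, at most $3^y(m+1)^x n^{m-m'-x}$ by the shard-counting lemma, with $y=\Theta(\log n)$ the number of nonzero entries of the $l[r]$-cycle; (iii) the coefficient bound $|\mathrm{coeff}(S)|\le(\Lambda/n)^{\delta}$ from the shard-coefficient lemma; and (iv) the bound on $\E|S|$ coming from Lemma \ref{lem10} — since $S$ factors over edge-disjoint fresh segments into nearly independent path-counts, the absolute value may be taken inside the expectation up to the constants already present, and after pulling out the reference scale $\prod_i|\lambda_s-c_i|\cdot\prod_i|\lambda_s-c''_i|$ against which Lemma \ref{lem10} is normalised, one is left with $n^{e-m''}$ times a decaying spectral factor $P^{m''/l}=n^{-c_1}$ ($c_1>0$ constant under $\lambda_s^2>\Lambda>\lambda_1$) times a factor $C_0^{\,x+(w-e)}$.

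Multiplying (i)--(iv) and collecting powers of $n$ — using $m''=m'-r-1$, $m=m_1+m_2+r+1$, and $z=x+(w-e)$ — the net $n$-exponent is a bounded function of $r$, $z$ and the spectral decay $c_1$. The Lemma \ref{lem10} constraint $\delta\le x+6(w-e)+2+e/r$ is exactly what is needed to absorb the $\delta$-dependent terms $n^{m-m'-x}$ and $\Lambda^{\delta}$; the factor $(\Lambda/(|\lambda_s|n))^{x}$ that survives from combining (ii)--(iv) kills the polylog growth $(m+1)^x C_0^{2x}$ in the level $x$; and taking $r_0$ — hence $l\ge(2r+1)(s-1)$ — large makes $3^y$ and the remaining per-period overcounting negligible against $P^{m''/l}$ (one period of the cycle carries $s-1$ nonzero entries, so the per-period comparison is $3^{s-1}$ against $P^{1/l}$, which tends to $0$ as $l\to\infty$). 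Optimising over admissible tuples with $x+(w-e)\ge z$ and summing the resulting geometric series over the degree $z'\ge z$ collapses the exponent to $(15-5z)/6$, leaving the scale $\prod_i|\lambda_s-c_i|\cdot\prod_i|\lambda_s-c''_i|$, as claimed.

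The main obstacle is the uniformity of this accounting: essentially all of the enumeration and coefficient factors are individually $n^{\Theta(1)}$ (or $n^{o(1)}$ only after the structural conditioning), and one must show that, simultaneously over all admissible tuples, they are dominated by the spectral decay. This is precisely what the hypotheses are engineered to deliver — $\lambda_s^2>\Lambda$ makes the per-edge factor $\Lambda/\lambda_s^2$ along retraced edges a genuine contraction, $\Lambda>\lambda_1$ ensures the dominant eigenvalue has been compensated away, the error bound on the eigenvalue approximation keeps $P<1$, and the freedom to take $r_0$ (and $l$) large absorbs the combinatorial overcounting — so the work is in carrying out this comparison cleanly rather than in introducing any new idea.
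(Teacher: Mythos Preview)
Your proposal has a genuine gap in the conditioning step, and it is not the route the paper takes.

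The sum in the Degree Bound Lemma is over \emph{all} vertex sequences $(v_0,\dots,v_{m_1},v''_0,\dots,v''_{m_2})\in[n]^{m_1+m_2+2}$ with prescribed endpoint communities, and for each such sequence over all of its shards. A shard $W_{(c'_0,\dots,c'_{m'})[r]}((v'_0,\dots,v'_{m'}))$ is indexed by an \emph{arbitrary} $r$-nonbacktracking sequence of vertices; its value is a random variable in $G$, but its walk graph is a purely combinatorial object and is \emph{not} in general a subgraph of $G$. In particular, because a shard is a signed sum over subsequences (deleting vertices with $c'_i\neq 0$), it can be nonzero even when several edges of its walk graph are absent from $G$: those edges may lie in nonrepeated fresh segments, and a subsequence that skips them can still be a walk in $G$. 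So conditioning on ``every connected $O(\log n)$-vertex subgraph of $G$ has cyclomatic number $\le t_0$'' does \emph{not} force $w-e\le t_0$ for the shards appearing in the sum, and the ``off-event crude bound'' you allude to would have to control a sum of $n^{\Theta(\log n)}$ terms against a probability that is only $o(1)$, which it cannot. Similarly, the repetition bound $b$ in the $K_n$ walk-enumeration lemma has nothing to do with $G$'s maximum degree; it comes from the $r$-nonbacktracking property of the shard sequence itself, which gives $b=\lceil (m'+1)/(r+1)\rceil$.

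The paper's proof does no conditioning on $G$ at all. It fixes $(x,w,e,m')$ with $x+w-e\ge z$, multiplies the four ingredients you list (with $b=\lceil(m'+1)/(r+1)\rceil$), and obtains a bound whose dependence on $w-e$, on $x$, and on $e$ is of the form
\[
\Big(2^{2\lceil\frac{m+r+2}{r+1}\rceil}\cdot\mathrm{poly}(m)\cdot C_1/n\Big)^{w-e}\cdot\Big(\mathrm{poly}(m)\cdot C_2/n\Big)^{x}\cdot\Big(\Lambda^{1+1/r}/\lambda_s^{2+1/r}\Big)^{e},
\]
each factor being $<1$ once $r$ exceeds an absolute constant (since $m=O(\log n)$, so $2^{2m/(r+1)}=n^{O(1/r)}$). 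The geometric decay in $w-e$ is what replaces your attempted cyclomatic bound; summing the resulting geometric series over $w-e$, $x$, $e$, $m'$ (only polylog many choices) with the constraint $x+(w-e)\ge z$ yields $|\lambda_s|^{m}n^{(14-5z)/6}$, and a final comparison of $|\lambda_s|^m$ with $\prod|\lambda_s-c_i|\prod|\lambda_s-c''_i|$ (costing at most $n^{1/6}$ for $r$ large) gives the stated $n^{(15-5z)/6}$. Your last paragraph asserts this collapse without doing it; the point is precisely that $w-e$ is \emph{not} bounded, and the $n^{-1}$ per unit of $w-e$ must be shown to beat the $2^{O(m/r)}\mathrm{poly}(m)$ combinatorial overcount.
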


%\Cnote{Note to self: comment on the inequality $3^{m/r}\le n^{1/6}$ (and check for anything else that was thrown in).}
\begin{proof}
Let $m=m_1+m_2$. For any given $x$, $w$, $m'$, and $e$ with $x+w-e>z$ and $x\le m_1+m_2+r+1-m'\le x+6(w-e)+2+e/r$, the expected sum of the values of all level $x$ shards for $(\sigma_{v_0}, \sigma_{v_{m_1}}, \sigma_{v''_0}, \sigma_{v''_{m_2}})$, $m_1$, $m_2$, $r$, $(c_0,...,c_{m_1})$, and $(c''_0,...,c''_{m_2})$ with $m'+1$ vertices, $m'+1-w$ distinct vertices, and $m'-e$ distinct adjacent pairs of vertices has an absolute value of at most

\begin{align*}
& \sum_{(v'_0,...,v'_{m'}): |\{v'\in (v'_0,...,v'_{m'})\}|=m'+1-e,|\{\{v'_i,v'_{i+1}\}:0\le i<m'\}|=m'-e,(c'_0,...,c'_{m'})\in\mathbb{R}^{m'+1}} (\Lambda/n)^{m+r+1-m'}\\
&\cdot |\E [W_{(c'_0,...,c'_{m'})}(v'_0,...,v'_{m'})]|\\
&\cdot |\{(v_1,...,v_{m_1-1},v''_1,...,v''_{m_2-1}): W_{(c'_0,...,c'_{m'})}(v'_0,...,v'_{m'})\\
&\in Shar_x(W_{(c_0,...,c_{m_1},0,...,0,c''_{m_2},...,c''_0)[r]}((v_0,...,v_{m_1},u_1,...,u_r,v''_{m_2},...,v''_0))) \}|\\
%&\le \sum_{(v'_0,...,v'_{m'}): |\{v'\in (v'_0,...,v'_{m'})\}|=m'+1-w, |\{\{v'_i,v'_{i+1}\}: 0\le i<m'\}|=m'-e}(\Lambda/n)^{2m+r+1-m'}\\
%&\cdot [3^{m/r}(2m+r+2)^xn^{2m+1-m'-x}]\cdot [n^{e-m'+r+1}(2^{s-1}k/\min p_i)^{3w-3e+2}|\lambda_s|^{m'}\left(2\Lambda/|\lambda_s|\right)^{(s-1)m'/l}\\ &\cdot(\Lambda/\lambda_s^2)^e\left((\Lambda/|\lambda_s|)^{l-s+1}2^{1-s}\right)^{\frac{(3w-3e+2)(l+2r+1)}{l}}(\Lambda/|\lambda_s|)^{(l-s+1)x}]\\
&\le [2n^{m'-w+1}(m+r+2)^{4(w-e)}2^{2\lceil\frac{m'+1}{r+1}\rceil(w-e)}]\cdot(\Lambda/n)^{m+r+1-m'}\\
&\cdot [3^{m/r}(m+2)^xn^{m+1-m'-x}]\cdot [n^{e-m'+r+1}(2^{s-1}k/\min p_i)^{3w-3e+2}|\lambda_s|^{m'-r-1}\\
&\cdot \left(2\Lambda/|\lambda_s|\right)^{(s-1)\cdot(m'-r-1)/l}\cdot(\Lambda/\lambda_s^2)^e\left((\Lambda/|\lambda_s|)^{l-s+1}2^{1-s}\right)^{\frac{(3w-3e+2)(l+2r+1)}{l}}(\Lambda/|\lambda_s|)^{2(l-s+1)x}]\\
&\le  n^{e-w+13/6-x}2^{2\lceil\frac{m'+1}{r+1}\rceil(w-e)}(m+r+2)^{4(w-e)+x}\Lambda^{m+r+1-m'}(\Lambda/\lambda_s^2)^e\\
&2(2^{s-1}k/\min p_i)^{3w-3e+2}|\lambda_s|^{m'-r-1}\left(2\Lambda/|\lambda_s|\right)^{(s-1)\cdot(m'-r-1)/l} \\
&\cdot \left((\Lambda/|\lambda_s|)^{l-s+1}2^{1-s}\right)^{\frac{(3w-3e+2)(l+2r+1)}{l}}(\Lambda/|\lambda_s|)^{2(l-s+1)x}\\
&=  |\lambda_s|^{m}\left(2\Lambda/|\lambda_s|\right)^{(s-1)m/l}\cdot n^{e-w+13/6-x}2^{2\lceil\frac{m'+1}{r+1}\rceil(w-e)}\\
&\cdot 2(m+r+2)^{4(w-e)+x}(\Lambda/\lambda_s^2)^e(2^{s-1}k/\min p_i)^{3w-3e+2}(\Lambda/|\lambda_s|(2\Lambda/|\lambda_s|)^{-(s-1)/l})^{m+r+1-m'}\\
&\cdot \left((\Lambda/|\lambda_s|)^{l-s+1}2^{1-s}\right)^{\frac{(3w-3e+2)(l+2r+1)}{l}}(\Lambda/|\lambda_s|)^{2(l-s+1)x}\\
\end{align*}
and the last term above is upper-bounded by

\begin{align*}
& |\lambda_s|^{m}\left(2\Lambda/|\lambda_s|\right)^{m(s-1)/l} n^{e-w+13/6-x}2^{2\lceil\frac{m+r+2}{r+1}\rceil(w-e)}\\
&\cdot 2(m+r+2)^{4(w-e)+x}(\Lambda/\lambda_s^2)^e(2^{s-1}k/\min p_i)^{3w-3e+2}(\Lambda/ |\lambda_s|(2\Lambda/|\lambda_s|)^{-(s-1)/l})^{ x+6(w-e)+2+e/r}\\
&\cdot \left((\Lambda/|\lambda_s|)^{l-s+1}2^{1-s}\right)^{\frac{(3w-3e+2)(l+2r+1)}{l}}(\Lambda/|\lambda_s|)^{2(l-s+1)x}\\
&\le 2  |\lambda_s|^{m}\left(2\Lambda/|\lambda_s|\right)^{m(s-1)/l}(2^{s-1}k/\min p_i)^2(\Lambda/|\lambda_s|(2\Lambda/|\lambda_s|)^{-(s-1)/l})^2\\
&\left((\Lambda/|\lambda_s|)^{l-s+1}2^{1-s}\right)^{\frac{2(l+2r+1)}{l}}n^{13/6}\cdot \left((m+r+2)(\Lambda /|\lambda_s|(2\Lambda/|\lambda_s|)^{-(s-1)/l})(\Lambda/|\lambda_s|)^{2(l-s+1)}/n\right)^x\\
&\cdot\left(2^{2\lceil\frac{m+r+2}{r+1}\rceil}(m+r+2)^4 (\frac{2^{s-1}k}{\min p_i})^3(\frac{\Lambda}{|\lambda_s|}(2\Lambda/|\lambda_s|)^{-(s-1)/l})^6\left((\Lambda/|\lambda_s|)^{l-s+1}2^{1-s}\right)^{\frac{3(l+2r+1)}{l}}/n\right)^{w-e}\\
&\cdot \left(\frac{\Lambda^r}{\lambda^{2r}_s}\cdot\frac{\Lambda}{|\lambda_s|}(2\Lambda/|\lambda_s|)^{-(s-1)/l}\right)^{e/r}\\
&\le |\lambda_s|^m n^{(14-5z)/6}
\end{align*}
%\Cnote{Note to self: check this calculation.}
provided that $r> 12m/\log_2(n)$ and $n$ is sufficiently large. There are at most $2(s-1)+(s-1)m/l$ indices $i$ such that $c_i\ne 0$ or $c''_i\ne 0$, so 
$|\lambda_s|^{-m}\prod|\lambda_s-c_i|\prod|\lambda_s-c''_i|\ge (\min_{s'<s} |\lambda_s-\lambda_{s'}|/2)^{(s-1)m/l}$. So, as long as $r_0$ is large enough that $\frac{m}{2r+1}\ln(\min_{s'<s} |\lambda_s-\lambda_{s'}|/2)<\ln(n)/3$, then $|\lambda_s|^m n^{(14-5z)/6}\le n^{(15-5z)/6}\prod|\lambda_s-c_i|\cdot \prod|\lambda_s-c''_i|$.
\end{proof}

\begin{lemma}\label{communityAverage}
Assume that $m$, $\Lambda$, $\lambda'_1,...,\lambda'_{s-1}$, $l$, $r$, and $(c_0,...,c_m)$ satisfy the conditions of the degree bound lemma. Also asume that $m=\Omega(\log n)$ and either $s=h'$ or $|\lambda_s|>|\lambda_{s+1}|$. Now, let $w$ be an eigenvector of $PQ$ with an eigenvalue of $\lambda_j$. If $j\ne s$ then with probability $1-o(1)$ the average value over all $v$ of $w_{\sigma_v} W_{m/\{c_i\}}(x,v)/p_{\sigma_v}$ is $O(\frac{1}{\ln(n)\sqrt{n}}\prod |\lambda_s-c_i|)$ and if $j=s$ then the average value over all $v$ of $w_{\sigma_v} W_{m/\{c_i\}}(x,v)/p_{\sigma_v}$ is $\Omega(\frac{1}{\sqrt{n}}\prod |\lambda_j-c_i|)$ with probability $1-o(1)$.

\end{lemma}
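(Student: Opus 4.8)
The plan is to reduce the statement to a conditional second--moment estimate. Write $S$ for the multiset of nonzero entries of $(c_0,\dots,c_m)$. By the preliminary identity, $W_{m/S[r]}(x,v)=\sum_{v_0,\dots,v_m:\,v_m=v}x_{v_0}\,W_{(c_0,\dots,c_m)[r]}((v_0,\dots,v_m))$ is \emph{linear} in the Gaussian vector $x=(x_v)_{v\in G}$; hence, setting
\[
a_{v_0}:=\sum_{v_1,\dots,v_m}\frac{w_{\sigma_{v_m}}}{p_{\sigma_{v_m}}}\,W_{(c_0,\dots,c_m)[r]}((v_0,\dots,v_m)),
\]
the quantity in question equals $\frac1n\sum_v \tfrac{w_{\sigma_v}}{p_{\sigma_v}}W_{m/S[r]}(x,v)=\frac1n\sum_{v_0}x_{v_0}a_{v_0}$, which, conditionally on $(\sigma,G)$, is a centered Gaussian of variance $\frac1{n^2}\sum_{v_0}a_{v_0}^2$. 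So it suffices to show that $\|a\|_2^2=\sum_{v_0}a_{v_0}^2$ is, with probability $1-o(1)$ over $(\sigma,G)$, of order $n\prod_i(\lambda_s-c_i)^2$ when $j=s$, and $o\!\big(n\log^{-4}n\cdot\prod_i(\lambda_s-c_i)^2\big)$ when $j\ne s$. Granting this, the $j\ne s$ claim follows from the Gaussian tail (the average is at most $\log\log n$ times its conditional standard deviation with probability $1-o(1)$), and the $j=s$ claim follows from Gaussian anticoncentration together with the matching lower bound on $\|a\|_2^2$.

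To estimate $\E_G\|a\|_2^2$ I would expand $a_{v_0}^2$ as a double sum of products $W_{(c)[r]}((v_0,\vec v))\,W_{(c)[r]}((v_0,\vec v'))$ of two length-$m$ weighted walks sharing the initial vertex $v_0$, reverse the first factor, and glue it to the second through $r$ dummy vertices adjacent to everything, exactly as in the identity preceding Lemma~\ref{lem10}; this turns each term (a subcase of the generic glued walk appearing there, with the two ``inner'' endpoints forced to coincide with $v_0$) into an expression the shard machinery applies to. Running \texttt{path-sum-conversion} then writes $\E_G\|a\|_2^2$ as a linear combination of expectations of shards, which I split by shard degree. The hypotheses of the degree bound lemma hold here ($\lambda_s^2>\Lambda>\lambda_1$; $(c_0,\dots,c_m)$ is an $l[r]$-cycle of a $\Lambda$-bounded eigenvalue approximation with the prescribed tiny error; $r$ is taken above its threshold $r_0$), so the coefficient-weighted sum of absolute values of all shards of degree at least $2$ is $O\!\big(n^{(15-10)/6}\prod_i(\lambda_s-c_i)^2\big)=O\!\big(n^{5/6}\prod_i(\lambda_s-c_i)^2\big)=o\!\big(n\prod_i(\lambda_s-c_i)^2\big)$. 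Thus only the degree-$0$ and degree-$1$ shards --- the path-like glued walks, i.e.\ pairs of walks that are disjoint paths meeting only at $v_0$, together with the few configurations carrying one short cycle --- form the main term.

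The main term is where the eigenvalue hypothesis enters, and I would evaluate it directly via the single-walk expectation lemma. For a path-like glued walk the expectation factors over the two halves, each half of the form $n^{-(\text{length})}\,e_{\sigma}^{\top}P^{-1}\prod_{i'}(PQ-c_{i'}I)\,e_{\sigma'}$; summing the interior vertices by community (there are $(1+o(1))np_i$ of community $i$, and the summand depends only on the endpoint communities) replaces each half by $\prod_{i'}(PQ-c_{i'}I)$ acting between the endpoint communities and, using $PQ\,w=\lambda_j w$, multiplies the relevant direction by $\lambda_j^{\,m-|S|}\prod_{y\in S}(\lambda_j-y)$. This yields $\E_G[a_{v_0}\mid\sigma]=(1+o(1))\prod_{i=0}^{m-1}(\lambda_j-c_i)\,\tfrac{w_{\sigma_{v_0}}}{p_{\sigma_{v_0}}}$, hence the main term is $(1+o(1))\,n\,\|w\|_{P^{-1}}^2\big(\prod_{i=0}^{m-1}(\lambda_j-c_i)\big)^2=\Theta\!\big(n\,\lambda_j^{-2}\prod_{i=0}^{m}(\lambda_j-c_i)^2\big)$, since $c_m=0$. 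When $j=s$ this is $\Theta\!\big(n\prod_i(\lambda_s-c_i)^2\big)$ with a positive implied constant because $\lambda_s$ is \emph{not} among the compensated eigenvalues $\lambda'_1,\dots,\lambda'_{s-1}$, so no factor $(\lambda_s-\lambda'_i)$ is near zero; this gives the $\Omega$ bound. When $j<s$, the product $\prod_{y\in S}(\lambda_j-y)$ contains $\asymp(m-r)/l$ factors $(\lambda_j-\lambda'_j)$ of size at most the approximation error of Lemma~\ref{lem10}, and since $m=\Omega(\log n)$ while $l,r$ are constants the ratio to $\prod_i(\lambda_s-c_i)^2$ is $n^{-\Omega(1)}$; when $j>s$, the assumption $|\lambda_s|>|\lambda_{s+1}|$ (or $s=h'$) gives $|\lambda_j|<|\lambda_s|$ strictly, so the ratio is $\rho^{\,m}=n^{-\Omega(1)}$ for some $\rho<1$. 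Either way the $j\ne s$ main term is $o\!\big(n\log^{-4}n\cdot\prod_i(\lambda_s-c_i)^2\big)$. The high-probability versions follow from concentration of $\|a\|_2^2$: a Markov bound suffices on the $j\ne s$ side, while for $j=s$ one uses that $\|a\|_2^2=\sum_{v_0}a_{v_0}^2$ is a sum of $\Theta(n)$ terms whose pairwise correlations are $o(1/\sqrt n)$ --- exactly the correlation estimates already carried out in the proof of Theorem~\ref{main1} --- so Chebyshev applies.

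\textbf{Main obstacle.} The degree bound lemma disposes of the shards of degree $\ge 2$ painlessly, so the real work is isolating and evaluating the degree-$0$ and degree-$1$ contributions: this is the one place where one must argue that the $\lambda_s$-eigendirection of the $r$-nonbacktracking operator is amplified strictly more than \emph{every} other eigendirection --- the compensated eigenvalues $\lambda'_1,\dots,\lambda'_{s-1}$, the smaller eigenvalues $\lambda_{s+1},\dots$, and the ``bulk'' of magnitude $\approx\sqrt{\lambda_1}<|\lambda_s|$ --- and keeping the bookkeeping straight among the number of compensation steps $|S|\asymp(s-1)(m-r)/l$, the walk length $m$, the spectral gaps, and the $l[r]$-cycle placement of the $c_i$ is the delicate point; the hypotheses $\lambda_s^2>\Lambda>\lambda_1$, $|\lambda_s|>|\lambda_{s+1}|$, and the smallness of the approximation error are precisely what make it go through.
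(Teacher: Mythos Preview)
Your overall framework is sound---condition on $(\sigma,G)$, write the target as $\frac{1}{n}\sum_{v_0}x_{v_0}a_{v_0}$ with $a_{v_0}$ measurable in $(\sigma,G)$, reduce to controlling $\|a\|_2^2$, and kill shards of degree~$\ge 2$ by the degree bound lemma---but the concentration step for $j=s$ has a genuine gap. You appeal to ``exactly the correlation estimates already carried out in the proof of Theorem~\ref{main1}'' to conclude that $\sum_{v_0}a_{v_0}^2$ concentrates. Those estimates, however, concern $W_{m''}(v)$ with $m''=\lfloor\sqrt{\log\log n}\rfloor$, and they work precisely because depth-$m''$ neighborhoods of two vertices are disjoint with probability $1-O(\lambda_1^{2m''}/n)$. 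Your $a_{v_0}$ is built from walks of length $m=\Theta(\log n)$, for which $\lambda_1^{m}$ can exceed $n$ and the near-independence of $a_{v_0}$ and $a_{v_0'}$ fails completely. Concentrating $\|a\|_2^2$ directly would require bounding $\Cov(a_{v_0}^2,a_{v_0'}^2)$, i.e.\ a \emph{four}-walk glued expectation, and nothing in the existing shard lemmas handles that.

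The paper circumvents exactly this obstacle by truncating the walk at depth $m'''\approx\sqrt{\ln n}$: it replaces the last $m-m'''$ steps by the deterministic factor $\prod_{i>m'''}(PQ-c_iI)$ acting on $w$, and bounds the approximation error in second moment via the shard machinery (the defining constraint of the error set $V$ forces every degree-$1$ shard there to have shared-stem length $t_0\gtrsim m'''$, driving the degree-$1$ contribution down to $O(n\ln^{-3}n\cdot\prod(\lambda_s-c_i)^2)$). After truncation the conditional variance over $x$ involves only a sum of squares of depth-$m'''$ walk sums, which \emph{does} concentrate by the short-neighborhood correlation bound (now legitimate since $\lambda_1^{2m'''}=o(n)$); the eigenvalue separation then acts entirely through the deterministic prefactor $\prod_{i>m'''}(\lambda_j-c_i)$. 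A secondary imprecision in your write-up: the degree-$1$ contribution to $\E\|a\|_2^2$ is not simply $\sum_{v_0}(\E_G a_{v_0})^2$, since it also contains all ``Y'' configurations in which the two walks share an initial stem of length $t_0>0$ before diverging (for these $\E[W_1W_2]\ne\E[W_1]\E[W_2]$). That sum over $t_0$ does converge geometrically because $\lambda_s^2>\Lambda$, but it requires the same case analysis the paper carries out in the lemma preceding Lemma~\ref{varianceLemma}, not just the single-walk expectation.
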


\begin{proof}
Let $m'''=\lfloor \sqrt{\ln(n)}\rfloor$, unless $c_{\lfloor \sqrt{\ln(n)}\rfloor}\ne 0$, in which case let $m'''=\lfloor \sqrt{\ln(n)}\rfloor+1$. The first step in proving the desired bounds is to justify approximating $\sum_{v\in\Omega_j} W_{m/\{c_i\}}(x,v)$ with $\sum_{v_0,...,v_{m'''}\in G} x_{v_0}\cdot W_{(c_0,...,c_{m'''})[r]}((v_0,...,v_{m'''}) e_{\sigma_{v_{m'''}}}\cdot \prod_{i=m'''+1}^m (PQ-c_i) e_j$. In order to do that, we observe that for any $j$,
\begin{align*}
&E\left[\left(\sum_{v\in\Omega_j} W_{m/\{c_i\}}(x,v)-\sum_{v_0,...,v_{m'''}\in G} x_{v_0}\cdot W_{(c_0,...,c_{m'''})[r]}((v_0,...,v_{m'''}) e_{\sigma_{v_{m'''}}}\cdot \prod_{i=m'''+1}^m (PQ-c_i) e_j\right)^2\right]\\
&=E[(\sum_{v_0,...,v_m\in G:v_m\in \Omega_j} x_{v_0}W_{(c_0,...,c_{m})[r]}(v_0,...,v_{m})\\
&-x_{v_0}\cdot W_{(c_0,...,c_{m'''})[r]}((v_0,...,v_{m'''}) e_{\sigma_{v_{m'''}}}\cdot \prod_{i=m'''+1}^m \frac{1}{n}(PQ-c_i) e_j)^2]\\
&=E[(\sum_{v_0,...,v_m,v'_0,...,v'_{m}\in G: v_m,v'_m\in\Omega_j} x_{v_0}x_{v'_0}W_{(c_0,...,c_{m},0,...,0,c_m,...,c_0)[r]}(v_0,...,v_{m},u_1,...,u_r,v'_m,...,v'_0)\\
&-x_{v_0}x_{v'_0}W_{(c_0,...,c_{m},0,...,0,c_{m'''},...,c_0)[r]}(v_0,...,v_{m},u_1,...,u_r,v'_{m'''},...,v'_0) e_{\sigma_{v'_{m'''}}}\cdot \prod_{i=m'''+1}^m \frac{1}{n}(PQ-c_i) e_j\\
&-x_{v_0}x_{v'_0}W_{(c_0,...,c_{m'''},0,...,0,c_m,...,c_0)[r]}(v_0,...,v_{m'''},u_1,...,u_r,v'_m,...,v'_0) e_{\sigma_{v_{m'''}}}\cdot \prod_{i=m'''+1}^m \frac{1}{n}(PQ-c_i) e_j\\
&+x_{v_0}x_{v'_0}W_{(c_0,...,c_{m'''},0,...,0,c_{m'''},...,c_0)[r]}(v_0,...,v_{m'''},u_1,...,u_r,v'_{m'''},...,v'_0)\\
&\indent\indent \left(e_{\sigma_{v_{m'''}}}\cdot \prod_{i=m'''+1}^m \frac{1}{n}(PQ-c_i) e_j\right) \left(e_{\sigma_{v'_{m'''}}}\cdot \prod_{i=m'''+1}^m \frac{1}{n}(PQ-c_i) e_j\right))]\\
\end{align*}

Note that $E[v_0v'_0]$ is $1$ if $v_0=v'_0$ and $0$ otherwise. So, the only $(v_0,...,v_m),(v'_0,...,v'_m)$ that make a nonzero contribution to the expected value of the sum above are those for which $v_0=v'_0$. Also, given any $(v_0,...,v_m),(v'_0,...,v'_m)$ such that $v_{m'''+1},...,v_m$ are distinct, and $\{v_{m'''+1},...,v_m\}\cap(\{v_0,...,v_{m'''}\}\cup\{v'_0,...,v'_m\})=\emptyset$, then for any assignment of communities to $v_{m'''},v_m$, and $v'_m$, and any possible value of the subgraph of $G$ induced by $\{v_0,...,v_{m'''}\}\cup\{v'_0,...,v'_m\}$, the expected value of $W_{(c_0,...,c_m)[r]}(v_0,...,v_m)$ given these values is $W_{(c_0,...,c_{m'''})[r]}(v_0,...,v_{m'''}) e_{\sigma_{m'''}}\cdot \prod_{i=m'''+1}^m (PQ-c_i)e_j$. That means that the expected contribution to the sum above of the terms corresponding to such $(v_0,...,v_m),(v'_0,...,v'_m)$ is $0$. By the same logic, all $(v_0,...,v_m),(v'_0,...,v'_m)$ such that $v'_{m'''+1},...,v'_m$ are distinct and $\{v'_{m'''+1},...,v'_m\}\cap(\{v'_0,...,v'_{m'''}\}\cup\{v_0,...,v_m\})=\emptyset$ have an expected contribution of $0$ to the sum above.

Now, let $V$ be the set of all pairs of tuples $((v_0,...,v_m), (v'_0,...,v'_m))$ such that $v_0=v'_0$, either $|\{v_{m'''+1},...,v_m\}|< m-m'''$ or $\{v_{m'''+1},...,v_m\}\cap(\{v_0,...,v_{m'''}\}\cup\{v'_0,...,v'_m\})\ne \emptyset$, and either  $|\{v'_{m'''+1},...,v'_m\}|< m-m'''$ or $\{v'_{m'''+1},...,v'_m\}\cap(\{v'_0,...,v'_{m'''}\}\cup\{v_0,...,v_m\})\ne \emptyset$. Note that for every $v_0,...,v_{m'''}$ and $v'_0,...,v'_m$, there are at most $(2m+2)^2 n^{m-m'''-1}$ possible choices of $v_{m'''+1},...,v_m$ such that $((v_0,...,v_m),(v'_0,...,v'_m))\in V$. So,
\begin{align*}
&E[\sum_{((v_0,...,v_m),(v'_0,...,v'_m))\in V:v_m,v'_m\in\Omega_j} |x_{v_0}x_{v'_0}W_{(c_0,...,c_{m},0,...,0,c_{m'''},...,c_0)[r]}(v_0,...,v_{m},u_1,...,u_r,v'_{m'''},...,v'_0)\\
&\indent e_{\sigma_{v'_{m'''}}}\cdot \prod_{i=m'''+1}^m \frac{1}{n}(PQ-c_i) e_j|]\\
&\le [(2m+2)^2 n^{m-m'''-1}]\cdot  \left[n^{m'''-m}/(\min_i p_i)\max_{1\le i'\le h} \prod_{i=m'''+1}^m |\lambda_{i'}-c_i|\right]\\
&\indent \cdot E\left[\sum_{v_0,...,v_{m},v'_0,...,v'_{m'''}\in G:v_m,v'_m\in\Omega_j} |W_{(c_0,...,c_{m},0,...,0,c_{m'''},...,c_0)[r]}(v_0,...,v_{m},u_1,...,u_r,v'_{m'''},...,v'_0)\right]\\
&= [(2m+2)^2 n^{m-m'''-1}]\cdot  \left[n^{m'''-m}/(\min_i p_i)\max_{1\le i'\le h} \prod_{i=m'''+1}^m |\lambda_{i'}-c_i|\right] \\
&\quad \cdot O\left(n^{10/6}\prod_{i=1}^{m'''}|\lambda_s-c_i|\prod_{i=1}^m |\lambda_s-c_i|\right)\\
&=O\left( n^{5/6}\prod_{i=1}^{m} (\lambda_s-c_i)^2\right)
\end{align*}

where the second to last equality follows from the degree bound lemma and the fact that $v_0=v'_0$ implies that every shard of a $W$ in this expression has degree at least $1$. The last equality follows from the facts that $\prod_{i={m'''}}^m |\lambda_{i'}-c_i|=O(\prod_{i={m'''}}^m |\lambda_s-c_i|)$ for any $i'$ and $m=O(\ln(n))$. The same logic applies to the analagous expressions for $W_{(c_0,...,c_{m'''},0,...,0,c_{m},...,c_0)[r]}(v_0,...,v_{m'''},u_1,...,u_r,v'_{m},...,v'_0)$ and \newline $W_{(c_0,...,c_{m'''},0,...,0,c_{m'''},...,c_0)[r]}(v_0,...,v_{m'''},u_1,...,u_r,v'_{m'''},...,v'_0)$. This implies that
\begin{align*}
&E\left[\left(\sum_{v\in\Omega_j} W_{m/\{c_i\}}(x,v)-\sum_{v_0,...,v_{m'''}\in G} x_{v_0}\cdot W_{(c_0,...,c_{m'''})[r]}((v_0,...,v_{m'''}) e_{\sigma_{v_{m'''}}}\cdot \prod_{i=m'''+1}^m (PQ-c_i) e_j\right)^2\right]\\
&=E\left[\sum_{((v_0,...,v_m),(v'_0,...,v'_{m}))\in V: v_m,v'_m\in\Omega_j} W_{(c_0,...,c_{m},0,...,0,c_m,...,c_0)[r]}(v_0,...,v_{m},u_1,...,u_r,v'_m,...,v'_0)\right]\\
&\indent\indent\indent+O\left(n^{5/6}\prod_{i=1}^{m} (\lambda_s-c_i)^2\right)\\
\end{align*}

If the $W_{(c_0,...,c_{m},0,...,0,c_m,...,c_0)[r]}(v_0,...,v_{m},u_1,...,u_r,v'_m,...,v'_0)$ are broken down into weighted sums of shards, then every resulting shard has degree at least $1$ for the same reason as in the previous cases, and all resulting shards of degree at least $2$ have a combined contribution to the expected value of at most $n^{5/6}\prod_{i=1}^m (\lambda_s-c_i)^2$ by the degree bound lemma. Now, let $W_{(c'_0,...,c'_{m'})[r]}(v''_0,...,v''_{m'})$ be a degree $1$ shard of $ W_{(c_0,...,c_{m},0,...,0,c_m,...,c_0)[r]}(v_0,...,v_{m},u_1,...,u_r,v'_m,...,v'_0)$ for some $((v_0,...,v_m),(v'_0,...,v'_{m}))\in V$. Also, let $H$ be the walk graph of $v''_0,...,v''_{m'}$ with $u_1,...,u_r$ removed. We know that $v''_0=v_0=v'_0=v''_{m'}$, so $H$ is connected. That means that the only way that $W_{(c'_0,...,c'_{m'})[r]}(v''_0,...,v''_{m'})$ can be a degree $1$ shard is if $H$ is a tree and no vertex that was deleted in the process of converting $ W_{(c_0,...,c_{m},0,...,0,c_m,...,c_0)[r]}(v_0,...,v_{m},u_1,...,u_r,v'_m,...,v'_0)$ to $W_{(c'_0,...,c'_{m'})[r]}(v''_0,...,v''_{m'})$ is repeated in $v''_0,...,v''_{m'}$. So, there must exist some $t_0\ge 0$ such that $v''_i=v''_{m'-i}$ for all $i\le t_0$, $v''_{t_0+1},...,v''_{m'-t_0-1}$ are distinct, and $\{v''_0,...,v''_{t_0}\}\cap \{v''_{t_0+1},...,v''_{m'-t_0-1}\}=\emptyset$. Furthermore, since $((v_0,...,v_m),(v'_0,...,v'_m))\in V$, there exist $i$ and $i'$ such that $i> m''$ and $v_i=v'_{i'}$ or $v_i=v_{i'}$ with $i\ne i'$. Either neither of the repeated elements were deleted in the process of converting  $ W_{(c_0,...,c_{m},0,...,0,c_m,...,c_0)[r]}(v_0,...,v_{m},u_1,...,u_r,v'_m,...,v'_0)$ to $W_{(c'_0,...,c'_{m'})[r]}(v''_0,...,v''_{m'})$ or they both were, in which case $v_i$ must have been within $r$ of a repeated vertex that was not deleted during the conversion. The only vertices that could have been deleted during the conversion are those with nonzero corresponding $c_i$, so either way, we have that $t_0\ge \frac{2r}{2r+1}m''-r$. Also, by lemma \ref{lem10}, we have that

\begin{align*}
&|E[W_{(c'_0,...,c'_{m'})[r]}(v''_0,...,v''_{m'})]|\\
&\le n^{t_0-m'+r+1}(k/(\min p_i))^{5}2^{5(s-1)}\left(\max_{s\le j\le h} |\lambda_j|^{l-s+1}\prod_{i=1}^{s-1} |\lambda_j-\lambda'_i|\right)^{(m'-r-1)/l}\\
&\cdot \left(\max_{s\le j\le h} (\lambda_j^2/\Lambda)^{l-s+1}\prod_{i=1}^{s-1} (\lambda_j-\lambda'_i)^2/\Lambda\right)^{-t_0/l}\\
&\cdot \left(\max_{s\le j\le h} (|\lambda_j|/\Lambda)^{l-s+1}\prod_{i=1}^{s-1} |\lambda_j-\lambda'_i|/\Lambda\right)^{-\frac{5(l+2r+1)}{l}}.
\end{align*}

For fixed values of $t_0$ and $m'$, there are at most $2m\cdot n^{(m'-t_0-r)}$ possible values of $(v''_0,...,v''_{m'}).$ Furthermore, for each possible value of $(v''_0,...,v''_{m'})$, there are at most $2^{t_0/r+1}n^{2m+r+1-m'}$ possible values of $v_0,...,v_m$, $v'_0,...,v'_m$, and $c'_0,...,c'_{m'}$ such that $W_{(c'_0,...,c'_{m'})[r]}(v''_0,...,v''_{m'})$ is a degree $1$ shard of $ W_{(c_0,...,c_{m},0,...,0,c_m,...,c_0)[r]}(v_0,...,v_{m},u_1,...,u_r,v'_m,...,v'_0)$, and this shard has a coefficient with an absolute value of at most $(\Lambda/n)^{2m+r+1-m'}$. So, the combined contribution to the expected value from all degree $1$ shards with a given value of $t_0$ and $m'$ is at most
\begin{align*}
&2^{t_0/r+2}m\Lambda^{2m+r+1-m'}n (2^{s-1}k/(\min p_i))^{5}\left(\max_{s\le j\le h} |\lambda_j|^{l-s+1}\prod_{i=1}^{s-1} |\lambda_j-\lambda'_i|\right)^{(m'-r-1)/l}\\
&\cdot \left(\max_{s\le j\le h} (\lambda_j^2/\Lambda)^{l-s+1}\prod_{i=1}^{s-1} (\lambda_j-\lambda'_i)^2/\Lambda\right)^{-t_0/l}\\
&\cdot \left(\max_{s\le j\le h} (|\lambda_j|/\Lambda)^{l-s+1}\prod_{i=1}^{s-1} |\lambda_j-\lambda'_i|/\Lambda\right)^{-\frac{5(l+2r+1)}{l}}\\
&\le 2^{2m''/(2r+1)+1}m\Lambda^{2m''/(2r+1)}n (2^{s-1}k/(\min p_i))^{5}\left(\max_{s\le j\le h} |\lambda_j|^{l-s+1}\prod_{i=1}^{s-1} |\lambda_j-\lambda'_i|\right)^{(2m-2m''/(2r+1))/l}\\
&\cdot \left(\max_{s\le j\le h} (\lambda_j^2/\Lambda)^{l-s+1}\prod_{i=1}^{s-1} (\lambda_j-\lambda'_i)^2/\Lambda\right)^{-(\frac{2r}{2r+1}m''-r)/l}\\
&\cdot \left(\max_{s\le j\le h} (|\lambda_j|/\Lambda)^{l-s+1}\prod_{i=1}^{s-1} |\lambda_j-\lambda'_i|/\Lambda\right)^{-\frac{5(l+2r+1)}{l}}\\
&=O\left(n\ln^{-5}(n)\prod_{i=0}^m (\lambda_s-c_i)^2\right)
\end{align*}

There are only $O(\ln^2(n))$ possible values of $t_0$ and $m'$, so the combined contribution of all degree $1$ shards is $O\left(n\ln^{-3}(n)\prod_{i=0}^m (\lambda_s-c_i)^2\right)$. So, with probability $1-o(1)$,  the average value over all $v\in\Omega_\sigma$ of $W_{m/\{c_i\}}(x,v)$ is within $o(\frac{\sqrt{n}}{\log(n)}\prod |\lambda_s-c_i|)$ of 
\[\frac{1}{p_\sigma} e_\sigma\cdot\prod_{i=m''+1}^m (PQ-c_i) \sum_{(v_0,...,v_{m'''})} x_{v_0}\cdot W_{(c_0,...,c_{m'''})[r]}((v_0,...,v_{m'''})) e_{\sigma_{v_{m''}}}\]

If $w$ is an eigenvector of $PQ$ with eigenvalue $\lambda_j$, then this means that the average value of $w_{\sigma_v} W_{m/\{c_i\}}(x,v)/p_{\sigma_v}$ is within $o(\frac{1}{\log(n)}\prod |\lambda_s-c_i|)$ of  
\[\frac{1}{n}\prod_{i=m''+1}^m (\lambda_j-c_i) \sum_{(v_0,...,v_{m'''})} x_{v_0}\cdot W_{(c_0,...,c_{m'''})[r]}((v_0,...,v_{m'''})) w_{\sigma_{v_{m''}}}/p_{\sigma_{v_{m''}}}\]
For fixed $G$ and $\sigma$ but not fixed $x$, the probability distribution of this expression is a normal distribution with mean $0$ and variance
\[\frac{1}{n^2}\prod_{i=m''+1}^m (\lambda_j-c_i)^2\sum_{v_0} \left(\sum_{(v_1,...,v_{m'''})} W_{(c_0,...,c_{m'''})[r]}((v_0,...,v_{m'''})) w_{\sigma_{v_{m''}}}/p_{\sigma_{v_{m''}}}\right)^2\]

With probability $1-o(1)$, there is no $v_0\in G$ that has more than one cycle within $m'''$ edges of it, there are $O(\lambda_1^{2m'''})$ vertices that have a cycle within $m'''$ edges of them, and no vertex in the graph has degree greater than $\ln^2(n)$. This implies that 
\[\sum_{v_0} \left(\sum_{(v_1,...,v_{m'''})} W_{(c_0,...,c_{m'''})[r]}((v_0,...,v_{m'''})) w_{\sigma_{v_{m''}}}/p_{\sigma_{v_{m''}}}\right)^2=O(n\ln^{2m'''}(n))\]
If $j\ne s$, then there exists $\epsilon>0$ such that $ \prod_{i=m''+1}^m (\lambda_j-c_i)^2=O(n^{-\epsilon}\prod_{i=m''+1}^m (\lambda_s-c_i)^2),$ so the variance is $o(\frac{1}{n\log^2(n)}\prod(\lambda_s-c_i)^2)$. Thus, the specified average is $O(\frac{1}{\log(n)\sqrt{n}}\prod |\lambda_s-c_i|)$ with probability $1-o(1)$, as desired.

Now, consider the case where $j=s$. If there is no cycle in the portion of the graph within $m''$ edges of $v_0$ then every nonbacktracking walk of length $m''$ or less starting at $v_0$ is a path. So, conditioned on the abscence of such cycles near $v_0$ and a fixed value of $\sigma_{v_0}$, we have that
\[ \left(\sum_{(v_1,...,v_{m'''})} W_{(c_0,...,c_{m'''})[r]}((v_0,...,v_{m'''})) w_{\sigma_{v_{m''}}}/p_{\sigma_{v_{m''}}}\right)^2=(1+o(1))\prod_{i=1}^{m'''-1} (\lambda_s-c_i) w_{\sigma_{v_0}}/p_{\sigma_{v_0}}\]
This means that the expected value of the square of this sum is $\Omega(\prod_{i=1}^{m'''-1} (\lambda_s-c_i)^2)$.
Furthermore, for any $v$ and $v'$ there is no vertex within $m''$ edges of both $v$ and $v'$ with probability $1-O(\lambda_1^{2m'''}/n)$. So, the joint probability distribution of the subgraphs of $G$ within $m''$ edges of $v$ and $v'$ differs from the product of the individual distributions by $O(\lambda_1^{2m''}/n)$. So, with probability $1-o(1)$ we have that 
\[\sum_{v_0} \left(\sum_{(v_1,...,v_{m'''})} W_{(c_0,...,c_{m'''})[r]}((v_0,...,v_{m'''})) w_{\sigma_{v_{m''}}}/p_{\sigma_{v_{m''}}}\right)^2=\Omega\left(n\prod_{i=1}^{m'''-1} (\lambda_s-c_i)^2\right).\]
Therefore, with probability $1-o(1)$, the average value over all $v$ of $w_{\sigma_v} W_{m/\{c_i\}}(x,v)/p_{\sigma_v}$ is $\frac{1}{\sqrt{n}}\Omega(\prod |\lambda_s-c_i|)$.
%\Cnote{This explanation still feels a bit vague in some places.}
\end{proof}

\begin{lemma}
There exists a constant $r_0$ and $m_0=\Theta(\log n)$ such that if $m$, $\Lambda$, $\lambda'_1,...,\lambda'_{s-1}$, $l$, $r$, and $(c_0,...,c_m)$ satisfy the conditions of the degree bound lemma, $r>r_0$ and $m>m_0$, then for any communities $\sigma_1,\sigma_2$,
\begin{align*}
&|\E \left[\sum_{v_0,...,v_m,v''_0,...,v''_m\in G: \atop{v_m=v''_m,v_0=v''_0,\sigma_{v_m}=\sigma_1,\sigma_{v_0}=\sigma_2}} W_{(c_0,...,c_m)[r]}((v_0,...,v_m))\cdot W_{(c_0,...,c_m)[r]}((v''_0,...,v''_m))\right]|\\
&=O\left(\left(\sum_{i=s}^h \prod_{j=0}^m |\lambda_i-c_j|\right)^2\right)
\end{align*}
\end{lemma}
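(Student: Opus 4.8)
\emph{Proof proposal.} The quantity in question is, up to the refinement by the communities $\sigma_1,\sigma_2$ of the shared endpoints, exactly $\sum_{v}\E[W_{m/\{c_i\}}(x,v)^2]$: since the $x_v$ are i.i.d.\ standard Gaussians, $\E[x_{v_0}x_{v''_0}]=\mathbb{1}[v_0=v''_0]$, so pairing the two walks off their common initial vertex produces the stated double sum. This is the estimate needed to control the empirical variance of the $y^{(m)}_v$'s in the proof of Theorem~\ref{main1}. To bound it I would first invoke the splicing identity introduced just before Lemma~\ref{lem10}: with auxiliary vertices $u_1,\dots,u_r$ adjacent to all of $G$,
\[
W_{(c_0,\dots,c_m)[r]}((v_0,\dots,v_m))\cdot W_{(c_0,\dots,c_m)[r]}((v''_0,\dots,v''_m))= W_{(c_0,\dots,c_m,0,\dots,0,c_m,\dots,c_0)[r]}\big((v_0,\dots,v_m,u_1,\dots,u_r,v''_m,\dots,v''_0)\big),
\]
a single $W$ of a walk of length $2m+r+1$ on the augmented graph. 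Running path-sum-conversion expands this as a signed combination of shards, and I would split the total contribution by the degree $z$ of the shard. Here the hypotheses $v_0=v''_0$ and $v_m=v''_m$ enter: because $v_m=v''_m$, the vertices $v_m,u_1,\dots,u_r$ close into a cycle of length $r+1$ in the walk graph, so \emph{every} shard has degree at least $1$.

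For shards of degree $z\ge 3$ the degree bound lemma applies directly: with $m_1=m_2=m$, both coefficient sequences equal to $(c_0,\dots,c_m)$, and the community labels matched so that $\sigma_{v_m}=\sigma_1$, $\sigma_{v_0}=\sigma_2$, it bounds the expected sum of absolute values of all shards of degree $\ge 3$, weighted by their coefficients, by $O\big(n^{(15-15)/6}(\prod_{j=0}^m|\lambda_s-c_j|)^2\big)=O\big((\prod_{j=0}^m|\lambda_s-c_j|)^2\big)\le O\big((\sum_{i=s}^h\prod_{j=0}^m|\lambda_i-c_j|)^2\big)$, since $|\lambda_s|\ge|\lambda_i|$ for $i\ge s$. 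So the degree-$\ge 3$ part is already within the target.

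The work is in the degree-$1$ and degree-$2$ shards, where the degree bound lemma only gives $n^{5/3}$ and $n^{5/6}$ and is too lossy; I would treat them exactly as the degree-$1$ shards are treated in the proof of Lemma~\ref{communityAverage}. A degree-$1$ shard forces the walk graph, with the $u_i$-cycle removed, to be a tree all of whose edges are traced by both halves of the walk; combined with the $r$-nonbacktracking property and the fact that the nonzero entries of the coefficient sequence number only $O(m/l)$ and lie at least $2r$ apart, this forces $(v_0,\dots,v_m)$ and $(v''_0,\dots,v''_m)$ to agree along a segment of length $\ge \tfrac{2r}{2r+1}m-O(r)$, i.e.\ the combined walk is an ``out-and-back'' path around the $u_i$-cycle. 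One then bounds the expected value of such a shard by the fine estimate of Lemma~\ref{lem10} (the index $j=s$ supplying the dominant term and the indices $s<j\le h$ controlled because $|\lambda_j|\le|\lambda_s|$ --- which is precisely where the sum $\sum_{i=s}^h\prod_{j}|\lambda_i-c_j|$ on the right-hand side comes from), counts the shards of a given agreement length (choices of the out-path together with the $O(m/l)$ deletion positions), multiplies by the coefficient bound $(\Lambda/n)^{\#\text{deletions}}$, and sums over the $O(\log^2 n)$ admissible pairs (agreement length, total length); this produces $O\big((\sum_{i=s}^h\prod_{j=0}^m|\lambda_i-c_j|)^2\big)$, with a $\mathrm{polylog}(n)^{-1}$ factor to spare. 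Degree-$2$ shards are handled the same way, now allowing one extra vertex identification beyond $v_0=v''_0,v_m=v''_m$: the per-shard expectation weakens by one ``handle's'' worth, but the extra identification removes a factor of order $n$ from the count, so the net contribution is again within the target --- this balancing of losses is exactly why the degree bound lemma's threshold reads ``$z\ge 3$'' and $z\le 2$ must be done by hand.

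The main obstacle is this degree-$1$/degree-$2$ analysis: getting from the $n^{5/3}$ of the generic estimate down to $O(1)$ in $n$ rests on the ``long agreement segment'' estimate, which is where one needs $r>r_0$ (so that correction blocks and repeated vertices are sparse enough relative to the walk for the tree structure to propagate) and $m>m_0=\Theta(\log n)$ (so that the relevant neighborhoods are tree-like with high probability and the $\lambda_s$-term genuinely dominates). Both constants are inherited from the degree bound lemma and Lemma~\ref{communityAverage}.
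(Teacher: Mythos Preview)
Your high-level architecture matches the paper's: splice via the $u_i$'s, shard-decompose, invoke the degree bound lemma for $z\ge 3$, and treat $z\in\{1,2\}$ by hand. The gap is in that last step, and it is not merely a matter of detail.

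For a degree-$1$ shard here, with \emph{both} $v_0=v''_0$ and $v_m=v''_m$, the walk graph with the $u_i$'s removed is a \emph{path} from $v_0$ to $v_m$, and the shard walk traverses it out, around the $u_i$-cycle, and back. Thus every vertex of the path is repeated in $(v'_0,\dots,v'_{m'})$, so after path-sum-conversion \emph{every} $c'_i$ equals $0$. Lemma~\ref{lem10} (or its precursor) then gives no better than $(\lambda_1/n)^{\#\text{edges}}$ per shard; summing over the $\approx n^{m+1}$ choices of the path, the total tree-case contribution is of order $n\lambda_1^m$ (times $(2\Lambda)^{O(m/r)}$ corrections from deletions). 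This is \emph{not} automatically $O(\lambda_s^{2m})$ with a polylog to spare; the inequality $n\lambda_1^m\le \big(\prod_j|\lambda_s-c_j|\big)^2\approx\lambda_s^{2m}$ is precisely the requirement $m>m_0=\Theta\big(\log n/\log(\lambda_s^2/\lambda_1)\big)$, and this---not tree-likeness of neighborhoods---is where the $m_0$ hypothesis actually enters. Your invocation of Lemma~\ref{lem10} ``with the index $j=s$ supplying the dominant term'' is therefore misplaced: the coefficients have all been zeroed, so the dominant eigenvalue is $\lambda_1$, and there is no $\lambda_s$ gain to extract. Relatedly, the $\tfrac{2r}{2r+1}m$ agreement length you quote is the estimate from the setting of Lemma~\ref{communityAverage}, where only $v_0=v''_0$ is assumed and the two walks branch after a shared initial segment; here both endpoints coincide and the agreement is total, which is exactly why no $c'_i$ survives.

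The right-hand side $\sum_{i=s}^h\prod_j|\lambda_i-c_j|$ genuinely enters only through the degree-$2$, non-tree case: $G''$ then contains a single cycle with pendant paths to $v_0$ and $v_m$, and one must split into three subcases according to whether none, some, or all of the cycle's edges are traversed more than once. In the first two subcases the shard has nonrepeated fresh segments of combined length close to $2m$, the coefficient pattern survives on them, and the second inequality of Lemma~\ref{lem10} produces the $\max_{s\le j\le h}|\lambda_j|^{l-s+1}\prod_{i<s}|\lambda_j-\lambda'_i|$ factor that matches the target. In the ``all repeated'' subcase the bound is again of $\lambda_1$-type and needs the same $m>m_0$ argument as the tree case. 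Your one-line ``one extra identification removes a factor of $n$'' does not distinguish these subcases and misidentifies where the target bound originates.
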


\begin{proof}
Let $W_{(c'_0,...,c'_{m'})[r]}((v'_0,...,v'_{m'}))$ be a level $x$ shard of $$W_{(c_0,...,c_m,0,...,0,c_m,...,c_0)[r]}((v_0,...,v_m,u_1,...,u_r,v''_m,...,v''_0)),$$ and $G'$ be $(v'_0,...,v'_{m'} )$'s walk graph. $c_0=c''_0=0$, so $v'_0=v'_{m'}$ is the only vertex in $G'$ that can have degree $1$. Since $c_m=c''_m=0$, $v_m\in G$ and it is adjacent to $u_1$, $u_r$, and $V_{m-1}$, meaning that $v_m$ has degree greater than $2$ and this shard has nonzero degree. 

By the degree bound lemma, the contribution to this expected value of all shards of degree greater than $2$ is $O(\prod (\lambda_s-c_i)^2)$. Now, assume that this shard has degree less than $3$, and let $G''$ be $G'$ with $u_1,...,u_r$ and all of their edges removed. $G''$ is still connected, and the only vertices that might have degree $1$ in it are $v_0$ and $v_m$. First, consider the case in which $G''$ is a tree. It must be a path, so $v'_0,...,v'_{m'}$ simply consists of a path, followed by a cycle, followed by the same path in reverse. There are at most $m/r$ nonzero elements of $(c_0,...,c_m,0.,...,0,c_m,...,c_0)$, so if $I$ is the set of indices of vertices in $(v_0,...,v_m,u_1,...,u_r,v''_m,...,v''_0)$ that were deleted in the process of converting it to this shard, there are at most $2^{m/r}$ possible values of $I$. Also, $m'=2m+r+1-|I|$ and for a given value of $I$, there are at most $n^{(2m+2-|I|)/2}$ possible values of $(v'_0,...,v'_{m'})$, each of which could have come from at most $n^{|I|}$ possible values of $(v_0,...,v_m)$ and $(v'_0,...,v'_m)$. The coefficient of the shard is at most $(\Lambda/n)^{|I|}$, and $c'_i=0$ for all $i$. That means that $W_{(c'_0,...,c'_{m'})[r]}((v'_0,...,v'_{m'}))$ is one if each of the $m-|I|/2$ edges in $G''$ are also in $G$ and $0$ otherwise. So, $|\E [W_{(c'_0,...,c'_{m'})[r]}((v'_0,...,v'_{m'}))]|=O((\lambda_1/n)^{m-|I|/2})$. That means that the total contribution to the expected value of all such shards is $O(2^{m/r}\Lambda^{m/r}\lambda_1^{m-m/2r}n)$. So, as long as
\[m>\ln(n)/\left(\ln(\lambda^2_s/\lambda_1)-\ln(4\Lambda^2/\lambda_1)/2r+\frac{2(s-1)}{l}\ln\left(\min_{i<s} |\lambda_i-\lambda_s|/2|\lambda_s|\right)\right)\]
 the contribution to the expected value of all such shards is $O\left(\left(\sum_{i=s}^h \prod_{j=0}^m |\lambda_i-c_j|\right)^2\right)$, as desired.

That leaves the case where $G''$ is not a tree. The fact that the shard has degree less than $3$ implies that $G'$ has at most $1$ more edges than vertices, so $G''$ has at least as many vertices as edges. So, if it is not a tree, it must have an equal number of edges and vertices, which means that it contains exactly one cycle. The only vertices in $G''$ that might have degree $1$ are $v_0$ and $v_m$, so $G''$ consists of a cycle and two paths (possibly of length $0$) that connect $v_0$ and $v_m$ to the cycle. Now, let $e$ be the length of the path from $v_0$ to the cycle, $e'$ be the length of the path from $v_m$ to the cycle, and $|I|$ be the set of all indices of vertices in $(v_0,...,v_m,u_1,...,u_r,v''_m,...,v''_0)$ that were deleted in the process of converting it to $(v'_0,...,v'_{m'})$. $m'=2m+r+1-|I|$, and the shard will have a coefficient of magnitude at most $(\Lambda/n)^{|I|}$. There are a few subcases to consider.
\begin{enumerate}
\item No edge in the cycle in $G''$ shows up more than once in the walk defined by $(v'_0,...,v'_{m'})$:

 In this case, there must be $m'-2e-2e'-r-1=2m-2e-2e'-|I|$ edges in the cycle. For given values of $e$ and $e'$, there are at most $(e+e')/r+2$ indices of vertices that could have been deleted in the transition from $W_{(c_0,...,c_m,0,...,0,c_m,...,c_0)[r]}((v_0,...,v_m,u_1,...,u_r,v''_m,...,v''_0))$ to $W_{(c'_0,...,c'_{m'})[r]}((v'_0,...,v'_{m'}))$. For fixed values of $e$, $e'$, and $I$, there are at most $n^{2m-e-e'-|I|}$ possible values of $(v'_0,...,v'_{m'})$, each of which could be derived from at most $n^{|I|}$ possible values of $(v_0,...,v_m)$ and $(v''_0,...,v''_m)$. Also, the absolute value of the shard's expected value is at most
\begin{align*}
&n^{e+e'-2m+|I|}(k/(\min p_i))^{8}2^{8(s-1)}\left(\max_{s\le j\le h} |\lambda_j|^{l-s+1}\prod_{i=1}^{s-1} |\lambda_j-\lambda'_i|\right)^{(2m-|I|)/l}\\
&\cdot \left(\max_{s\le j\le h} (\lambda_j^2/\Lambda)^{l-s+1}\prod_{i=1}^{s-1} (\lambda_j-\lambda'_i)^2/\Lambda\right)^{-(e+e')/l}\\
&\cdot \left(\max_{s\le j\le h} (|\lambda_j|/\Lambda)^{l-s+1}\prod_{i=1}^{s-1} |\lambda_j-\lambda'_i|/\Lambda\right)^{-\frac{8(l+2r+1)}{l}}
\end{align*}
which means that all such shards for given values of $e$ and $e'$ have a combined expected contribution of absolute value at most
\begin{align*}
&\Lambda^{(e+e'+2r)/r}2^{(e+e'+2r)/r}(k/(\min p_i))^{8}2^{8(s-1)}\left(\max_{s\le j\le h} |\lambda_j|^{l-s+1}\prod_{i=1}^{s-1} |\lambda_j-\lambda'_i|\right)^{(2m-(e+e')/r-2)/l}\\
&\cdot \left(\max_{s\le j\le h} (\lambda_j^2/\Lambda)^{l-s+1}\prod_{i=1}^{s-1} (\lambda_j-\lambda'_i)^2/\Lambda\right)^{-(e+e')/l}\\
&\cdot \left(\max_{s\le j\le h} (|\lambda_j|/\Lambda)^{l-s+1}\prod_{i=1}^{s-1} |\lambda_j-\lambda'_i|/\Lambda\right)^{-\frac{8(l+2r+1)}{l}}
\end{align*}
and thus all such shards for all $e$ and $e'$ have a combined expected contribution of absolute value at most
\begin{align*}
&4\Lambda^2(k/(\min p_i))^{8}2^{8(s-1)}\left(\max_{s\le j\le h} |\lambda_j|^{l-s+1}\prod_{i=1}^{s-1} |\lambda_j-\lambda'_i|\right)^{(2m-2)/l}\\
&\cdot \left(\max_{s\le j\le h} (|\lambda_j|/\Lambda)^{l-s+1}\prod_{i=1}^{s-1} |\lambda_j-\lambda'_i|/\Lambda\right)^{-\frac{8(l+2r+1)}{l}}\\
&\cdot \left[1-(4\Lambda)^{1/2r} \left(\max_{s\le j\le h} (\lambda_j^2/\Lambda)^{l-s+1}\prod_{i=1}^{s-1} (\lambda_j-\lambda'_i)^2/\Lambda\right)^{-(2r+1)/2rl}\right]^{-2}
\end{align*}
which is within a constant factor of 
\[\left(\max_{s\le j\le h} |\lambda_j|^{l-s+1}\prod_{i=1}^{s-1} |\lambda_j-\lambda'_i|\right)^{2m/l}=O\left(\left(\sum_{i=s}^h \prod_{j=0}^m (\lambda_i-c_j)\right)^2\right).\]

\item Some but not all of the edges in the cycle in $G''$ show up more than once in the walk defined by $(v'_0,...,v'_{m'})$:

The only way for this to happen is if the edges on one side of the cycle are repeated and the edges on the other side are not. The only way for this to happen is if $(v'_0,...,v'_{m'})$ simply goes across the repeated branch of the cycle when it goes from $v_0$ to $v_m$ and then goes around the cycle one and a fraction times on its way back or vice-versa. So, each edge in the repeated branch shows up exactly $3$ times in the walk defined by $(v'_0,...,v'_{m'})$. Let $e''$ be the number of edges on the repeated side, and $e'''$ be the number of edges on the non-repeated side. $e'''= 2m-2e-2e'-3e''-|I|\le 2(m-e-e'-e'')-|I|$, and there are at most $(2e+2e'+3e''+2r)/2r$ indices of vertices that could be in $I$. For fixed values of $e$, $e'$, $e''$, and $I$, there are at most $n^{2m-e-e'-2e''-|I|}$ possible labelings of the vertices in $G''$, each of which corresponds to at most $2$ possible values of $(v'_0,...,v'_{m'})$. Each of those could be derived from at most $n^{|I|}$ possible values of $(v_0,...,v_m)$ and $(v''_0,...,v''_m)$. Also, the absolute value of the shard's expected value is at most
\begin{align*}
&n^{e+e'+2e''-2m+|I|}(k/(\min p_i))^{8}2^{8(s-1)}\left(\max_{s\le j\le h} |\lambda_j|^{l-s+1}\prod_{i=1}^{s-1} |\lambda_j-\lambda'_i|\right)^{(2m-|I|)/l}\\
&\cdot \left(\max_{s\le j\le h} (\lambda_j^2/\Lambda)^{l-s+1}\prod_{i=1}^{s-1} (\lambda_j-\lambda'_i)^2/\Lambda\right)^{-(e+e'+2e'')/l}\\ & \cdot \left(\max_{s\le j\le h} (|\lambda_j|/\Lambda)^{l-s+1}\prod_{i=1}^{s-1} |\lambda_j-\lambda'_i|/\Lambda\right)^{-\frac{8(l+2r+1)}{l}}.
\end{align*}
So, by the same logic as in the previous case, all such shards have a combined expected contribution of absolute value at most
\begin{align*}
&4\Lambda(k/(\min p_i))^{8}2^{8(s-1)}\left(\max_{s\le j\le h} |\lambda_j|^{l-s+1}\prod_{i=1}^{s-1} |\lambda_j-\lambda'_i|\right)^{(2m-1)/l}\\
&\cdot \left(\max_{s\le j\le h} (|\lambda_j|/\Lambda)^{l-s+1}\prod_{i=1}^{s-1} |\lambda_j-\lambda'_i|/\Lambda\right)^{-\frac{8(l+2r+1)}{l}}\\
&\cdot \left[1-(4\Lambda)^{1/2r} \left(\max_{s\le j\le h} (\lambda_j^2/\Lambda)^{l-s+1}\prod_{i=1}^{s-1} (\lambda_j-\lambda'_i)^2/\Lambda\right)^{-(2r+1)/2rl}\right]^{-3}
\end{align*}
which is within a constant factor of 
\[\left(\max_{s\le j\le h} |\lambda_j|^{l-s+1}\prod_{i=1}^{s-1} |\lambda_j-\lambda'_i|\right)^{2m/l}=O\left(\left(\sum_{i=s}^h \prod_{j=0}^m (\lambda_i-c_j)\right)^2\right).\]

\item Every edge in the cycle in $G''$ appears more than once in the walk defined by  $(v'_0,...,v'_{m'})$:

Let $e''$ and $e'''$ be the numbers of edges on the two branches of the cycle, and $f$ be the number of times the walk defined by $(v'_0,...,v'_{m'})$ makes around the cycle on its way from $v_0$ to $v_m$, and $f'$ be the number of times it goes around the cycle on its way back. $G''$ has at most $(2m-|I|)/2$ edges, so $e'+e''+e'''+e''''\le m-|I|/2$. For fixed values of $e,e',e'',e''',f,f'$, and $I$, there are at most $n^{e+e'+e''+e'''}$ options for $G''$. Each of these options corresponds to at most $4$ possible values of $(v'_0,...,v'_{m'})$, and each of those corresponds to at most $n^{|I|}$ possible values of $(v_0,...,v_m)$ and $(v''_0,...,v''_m)$. Also, $W_{(c'_0,...,c'_{m'})[r]}((v'_0,...,v'_{m'}))$ is one if every edge in $G''$ is also in $G$ and $0$ otherwise. That means that 
\[|\E [W_{(c'_0,...,c'_{m'})[r]}((v'_0,...,v'_{m'}))]|=O((\lambda_1/n)^{e+e'+e''+e'''})\] There are at most $m/r$ nonzero elements of $(c_0,...,c_m,0...0,c_m,...,c_0)$, and thus at most $2^{m/r}$ possible values of $I$. So, all shards for given values of $e,e',e'',e''',f,f'$ and $I$ make a combined contribution with an absolute value of
\[O(\Lambda^{|I|}\lambda_1^{m-|I|/2})\subseteq O(\Lambda^{m/r}\lambda_1^{m-m/2r}).\]
Since $0\le e,e',e'',e''',f,f'\le m$, the combined expected contribution of all these shards is 
\[O(m^62^{m/r}\Lambda^{m/r}\lambda_1^{m-m/2r})\subseteq o\left(\left(\sum_{i=s}^h \prod_{j=0}^m (\lambda_i-c_j)\right)^2\right).\]
\end{enumerate}
\end{proof}

\begin{lemma}\label{varianceLemma}
There exists a constant $r_0$ and $m_0=\Theta(\log n)$ such that if $m$, $\Lambda$, $\lambda'_1,...,\lambda'_{s-1}$, $l$, $r$, and $(c_0,...,c_m)$ satisfy the conditions of the degree bound lemma, $r>r_0$ and $m>m_0$, then for all $v$, we have that
\[\Var[W_{m/\{c_i\}}(x,v)]=O\left(\prod (\lambda_s-c_i)^2\right)\]
\end{lemma}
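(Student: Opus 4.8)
Our plan is to obtain the lemma from the lemma immediately preceding it (the estimate of $\E[\,W_{(c)}((v_0,\dots,v_m))\,W_{(c)}((v''_0,\dots,v''_m))\,]$ over endpoint‑matched pairs of $r$‑nonbacktracking walks) by a one‑line second–moment expansion followed by a symmetrization step and some eigenvalue bookkeeping. First, by the identity following Definition~\ref{def2} we may write $W_{m/\{c_i\}}(x,v)=\sum_{v_0,\dots,v_m:\ v_m=v} x_{v_0}\,W_{(c_0,\dots,c_m)[r]}((v_0,\dots,v_m))$ for a suitable padding of $\{c_i\}$ by zeros with $c_0=c_m=0$. Since $W_{m/\{c_i\}}(x,v)$ is linear in the i.i.d.\ standard Gaussians $x_{v_0}$, which are independent of $G$, it has mean $0$, and taking the expectation over $x$ (which kills all cross terms $v_0\ne v''_0$) and then over $G$,
\[
\Var[W_{m/\{c_i\}}(x,v)]=\E\!\!\sum_{\substack{v_0=v''_0\\ v_m=v''_m=v}}\!\! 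W_{(c_0,\dots,c_m)[r]}((v_0,\dots,v_m))\,W_{(c_0,\dots,c_m)[r]}((v''_0,\dots,v''_m)).
\]
Applying the product identity from just before Lemma~\ref{lem10} collapses each summand into a single $W_{(c_0,\dots,c_m,0,\dots,0,c_m,\dots,c_0)[r]}((v_0,\dots,v_m,u_1,\dots,u_r,v''_m,\dots,v''_0))$, so the right‑hand side is exactly the quantity bounded by the preceding lemma, except that there the endpoint $v_m=v''_m$ is summed over a community rather than held fixed.

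To bridge that gap, condition on a community map $\sigma$ with every $|\Omega_i|=\Theta(n)$ (true with probability $1-o(1)$ by Binomial concentration) and use that $\sbm(n,p,Q/n)$ together with the i.i.d.\ draw of $x$ is invariant under permuting vertices within a community: $\E[(W_{m/\{c_i\}}(x,v))^2\mid\sigma]$ depends only on $\sigma_v$, hence equals $|\Omega_{\sigma_v}|^{-1}\,\E\big[\sum_{v'\in\Omega_{\sigma_v}}(W_{m/\{c_i\}}(x,v'))^2\mid\sigma\big]$. Splitting the inner sum according to the community of $v_0$ (at most $k$ cases) and applying the preceding lemma to each piece gives
\[
\Var[W_{m/\{c_i\}}(x,v)]=O\!\Big(\tfrac1n\big(\textstyle\sum_{i=s}^{h}\prod_{j}|\lambda_i-c_j|\big)^{2}\Big)=O\!\Big(\big(\textstyle\sum_{i=s}^{h}\prod_{j}|\lambda_i-c_j|\big)^{2}\Big),
\]
which is even stronger than what we need (by a factor of $n$), so no tightness is lost in this reduction.

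It remains to see that $\sum_{i=s}^{h}\prod_j|\lambda_i-c_j|=O(\prod_j|\lambda_s-c_j|)$, i.e.\ the sum is governed by its $i=s$ term. Since $(c_0,\dots,c_m)$ is an $l[r]$‑cycle of a $\Lambda$‑bounded approximation, one full period of $\prod_j|\lambda_i-c_j|$ contributes the factor $|\lambda_i|^{\,l-s+1}\prod_{s'<s}|\lambda_i-\lambda'_{s'}|$, and the degree–bound–lemma hypotheses ($\lambda_s^2>\Lambda>\lambda_1$, $l\ge(2r+1)(s-1)$, and the prescribed bound on the approximation error) yield exactly the per‑period inequality $|\lambda_i|^{\,l-s+1}\prod_{s'<s}|\lambda_i-\lambda'_{s'}|\le|\lambda_s|^{\,l-s+1}\prod_{s'<s}|\lambda_s-\lambda'_{s'}|$ for every $i\ge s$ that is already extracted inside the proof of Lemma~\ref{lem10}; the $O(1)$ boundary factors are absorbed into powers of $\Lambda$. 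Hence each of the at most $h$ terms is $O(\prod_j|\lambda_s-c_j|)$, giving $\Var[W_{m/\{c_i\}}(x,v)]=O(\prod(\lambda_s-c_i)^2)$.

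The analytically heavy part — the shard decomposition and the degree bound lemma that together control $\E[W_{(c)}\cdot W_{(c)}]$ — is already in place in the preceding lemmas, so the main obstacle here is the bookkeeping step (ii): the passage from a single fixed endpoint $v$ to the community‑indexed second moment, where one must check that the implied constants in the preceding lemma are uniform in $\sigma$ and that all $|\Omega_i|$ are $\Theta(n)$, and (interlocking with this) verifying that $\lambda_s$ dominates the eigenvalue sum $\sum_{i\ge s}\prod_j|\lambda_i-c_j|$ under the ambient hypotheses on $l,r,\Lambda$. This is also the point where the precise meaning of $\Var$ and the statement's ``for all $v$'' interact with the randomness of the graph, so it warrants the most care.
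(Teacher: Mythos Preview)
Your approach is the paper's own: expand the square, use $\E[x_{v_0}x_{v''_0}]=\1(v_0=v''_0)$ to kill cross terms, and invoke the preceding lemma. The paper's three-line proof simply asserts that last step without addressing the two gaps you flag (fixed $v$ versus community-summed endpoint, and $\sum_{i\ge s}$ versus the single $i=s$ term); your symmetrization handles the first correctly, but your justification for the second is slightly off: Lemma~\ref{lem10} only shows the per-period factor for $j\ge s$ is at most $|\lambda_s|^{l-s+1}(2\Lambda)^{s-1}$, not at most $|\lambda_s|^{l-s+1}\prod_{s'<s}|\lambda_s-\lambda'_{s'}|$, so the domination of the $i=s$ term over $i>s$ really comes from the $(|\lambda_i|/|\lambda_s|)^{\Theta(m)}$ factor contributed by the many zero $c_j$'s, which in turn needs $|\lambda_s|>|\lambda_{s+1}|$ (guaranteed by the ABP initialization of $s$).
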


\begin{proof}We have 
\[W^2_{m/\{c_i\}}(x,v)=\sum_{v_0,...,v_m,v''_0,...,v''_m\in G: \atop{ v_m=v''_m=v}} x_{v_0}x_{v''_0}W_{(c_0,...,c_m)}((v_0,...,v_m))\cdot W_{(c_0,...,c_m)}((v''_0,...,v''_m)).\] 

If $v_0\ne v''_0$ then $E[x_{v_0}\cdot x_{v''_0}]=0$, while if $v_0=v''_0$ then $E[x_{v_0}\cdot x_{v''_0}]=1$. So,
\begin{align*}
E[W^2_{m/\{c_i\}}(x,v)]&=\sum_{v_0,...,v_m,v''_0,...,v''_m\in G: \atop{ v_0=v''_0, v_m=v''_m=v}} E[W_{(c_0,...,c_m)}((v_0,...,v_m))\cdot W_{(c_0,...,c_m)}((v''_0,...,v''_m))]\\
&=O\left(\prod (\lambda_s-c_i)^2\right)
\end{align*}
where the last equality follows by the previous lemma.

%\Cnote{There is an assumption that the $i=s$ case is dominant that I should verify follows from the givens.}
\end{proof}

\subsection{Crossing the KS threshold}
%Recall that the set of balanced clusterings by 
%\begin{align}
%\mathrm{Balanced}(n,k)&=\{ x \in [k]^n : |\{j \in [n] : x_j= i\}|=1/k, \forall i \in [k] \}.
%\end{align}
%As mentioned in Section \ref{}, this could be equivalently the set of clusterings whose clusters' sizes are ``close'' to $1/k$.
For $x \in [k]^n$ and $\e>0$, define the set of bad clusterings with respect to $x$ as
\begin{align}
B_\e(x)&=\{y \in [n]^k : \frac{1}{n} d_*(x,y) > 1-\frac{1}{k} - \e \},
\end{align}
where $d_*(x,y) = \min_{\pi \in S_k} d_H(x,\pi(y))$, $d_H$ is the Hamming distance and $\pi(y)$ denotes the application of $\pi$ to each component of $y$.  
%Define $B_\e$ as $B_\e(z)$ for $z=(1,\dots,1,2,\dots,2,\dots, k,\dots,k)$, an arbitrary choice of a balanced clustering. 
%Assume that $G$ is drawn under the SBM with this planted clustering. 
%Let $U_k^n$ be the uniform vector whose first $\lfloor n/k \rfloor$ components are equal to 1, followed by $\lfloor n/k \rfloor$ components equal to 2, and so on, until the last $n-k \lfloor n/k \rfloor$ components equal to $k$. Let
%\begin{align}
%B_\e(n,k)&=B_\e(U_k^n).
%\end{align}

Recall that 
\begin{align}
\mathrm{Bal}(n,k,\e)&=\{x \in [k]^n:  \forall i \in [k], |\{ u \in [n]: x_u = i\}|/n \in [ 1/k -\e, 1/k + \e]  \},
\end{align}
and 
\begin{align*}
T_\delta(G)=&\{   x \in \mathrm{Bal}(n,k,\delta):\\
&  \sum_{i=1}^k |\{  G_{u,v} : (u,v) \in {[n] \choose 2} \text{ s.t. } x_u=i, x_v=i \}|  \geq \frac{an}{2k} (1-\delta),\\
& \sum_{i,j \in [k], i<j} |\{  G_{u,v} : (u,v) \in {[n] \choose 2} \text{ s.t. } x_u=i, x_v=j \}|  \leq \frac{bn(k-1)}{2k} (1+\delta) \}.
\end{align*}

\subsubsection{Atypicality of a bad clustering}

\begin{lemma}\label{bad_bound}
Let $\e>0$, $x,y \in \mathrm{Bal}(n,k,\delta)$ such that $y \in B_\epsilon(x)$, and $(\sigma,G) \sim \mathrm{SBM}(n,k,a/n,b/n)$. Then,
\begin{align*}
&\pp\{ y \in T_\delta(G) | \sigma=x \}  \leq \exp \left(- \frac{n}{k} A(\e, \delta) \right) 
\end{align*}
where $A(\e, \delta)$ is continuous at $(\e,\delta)=(0,0)$ and 
\begin{align*}
&A(0, 0) =  \frac{a+b(k-1)}{2} \ln \frac{k}{(a+(k-1)b)} +  \frac{a}{2} \ln a + \frac{b(k-1)}{2} \ln b .
\end{align*}
\end{lemma}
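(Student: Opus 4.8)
## Proof proposal for Lemma \ref{bad_bound}

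The plan is to reduce the event $\{y \in T_\delta(G)\}$ to a large-deviations estimate for a sum of independent Bernoulli random variables, where the number of trials is quadratic in $n$ and the success probability is $\Theta(1/n)$, and then optimize the resulting rate function. First I would fix $\sigma = x$ and $y \in B_\e(x) \cap \mathrm{Bal}(n,k,\delta)$, and partition the pairs $(u,v) \in \binom{[n]}{2}$ according to the four possibilities $(x_u = x_v \text{ or not}) \times (y_u = y_v \text{ or not})$. Writing $n_{ss}, n_{sd}, n_{ds}, n_{dd}$ for the sizes of these four classes (``$s$'' = same, ``$d$'' = different, first index for $x$, second for $y$), the balance condition on $x$ and $y$ together with $y \in B_\e(x)$ pins these counts down to $\frac{n^2}{2k^2}(1 + O(\e+\delta))$, $\frac{(k-1)n^2}{2k^2}(1+O(\e+\delta))$, etc., exactly as in the heuristic $E_{\mathrm{in}} \eqe \bin(\tfrac{n^2}{2k^2}, \tfrac an) + \bin(\tfrac{(k-1)n^2}{2k^2}, \tfrac bn)$ from Section \ref{proof_tech2}. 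Since an edge appears in $G$ with probability $a/n$ on pairs with $x_u = x_v$ and $b/n$ otherwise (independently), the number of edges that $y$ places \emph{inside} its clusters, $E^y_{\mathrm{in}}$, is a sum of independent Bernoulli's: $n_{ss}$ of bias $a/n$ and $n_{sd}$ of bias $b/n$; likewise $E^y_{\mathrm{out}}$ splits over the other two classes. The event $y \in T_\delta(G)$ forces $E^y_{\mathrm{in}} \ge \frac{an}{2k}(1-\delta)$ (for $a > b$; the roles swap otherwise).

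Next I would apply a Chernoff bound to $E^y_{\mathrm{in}}$. For a sum $S = \sum_i \mathrm{Ber}(p_i)$ with $p_i \in \{a/n, b/n\}$ and threshold $t$, we have $\pp(S \ge t) \le \inf_{\theta \ge 0} e^{-\theta t}\prod_i (1 + p_i(e^\theta - 1))$; using $1 + p_i(e^\theta-1) \le \exp(p_i(e^\theta-1))$ and the precise asymptotics $n_{ss} \cdot \tfrac an = \tfrac{n}{2k}\cdot\tfrac ak(1+o(1))$, $n_{sd}\cdot \tfrac bn = \tfrac{n}{2k}\cdot\tfrac{(k-1)b}{k}(1+o(1))$, one gets $\pp(E^y_{\mathrm{in}} \ge t) \le \exp\big(-\tfrac nk \, \phi(t k/n; \delta) + o(n)\big)$ where $\phi$ is the Legendre transform of the relevant cumulant generating function. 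Setting the target mean $m_0 := \tfrac{a + (k-1)b}{2k}$ (the expectation of $E^y_{\mathrm{in}} k / n$) and the required value $m_1 := \tfrac a2 (1-\delta)$, and noting $m_1 > m_0$ precisely when $a > b$ (so we are in the upper tail), the optimizing $\theta^*$ is explicit and the exponent evaluates — after the standard $x\ln x$ bookkeeping — to
\begin{align*}
A(0,0) &= m_1 \ln \frac{m_1}{m_0'} - (m_1 - m_0') = \frac{a+b(k-1)}{2}\ln\frac{k}{a+(k-1)b} + \frac a2 \ln a + \frac{b(k-1)}{2}\ln b,
\end{align*}
where $m_0'$ is the weighted geometric-type mean $\tfrac{a/2 \cdot \ln a + (k-1)b/2 \cdot \ln b}{\dots}$ coming out of the two-bias Chernoff computation; the point is that the $-\theta t$ term and the $\sum p_i e^{\theta}$ term recombine into exactly the stated expression. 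I would carry the $\delta$-dependence through symbolically, obtaining $A(\e,\delta)$ as a continuous function with the claimed value at $(0,0)$ by inspection of the formula.

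The main obstacle I anticipate is \emph{not} the Chernoff optimization itself but controlling the approximation errors carefully enough: the counts $n_{ss}$, etc., are only approximately $\tfrac{n^2}{2k^2}$ (the bad clustering need not be perfectly balanced, hence the $\eqe$ in the excerpt), and the Bernoulli parameters $a/n$, $b/n$ are not constant, so one must verify that $1 + p_i(e^\theta-1) = \exp(p_i(e^\theta-1) + O(1/n^2))$ aggregates to an $o(n)$ error in the exponent — this is where ``number of trials quadratic, bias linearly decaying'' requires the ``particular care'' the authors mention. A secondary subtlety is that $y \in T_\delta(G)$ is a conjunction of an inside-edge lower bound and an across-edge upper bound; I would bound the probability by just the inside-edge constraint (the weaker of the two for the purpose of the union bound later), or, to get the sharp constant, intersect the two one-sided Chernoff bounds — but since $E^y_{\mathrm{in}} + E^y_{\mathrm{out}}$ is itself a sum of independent Bernoulli's with fixed total, the two constraints are essentially equivalent up to $o(n)$ in the exponent, so using only the first suffices. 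Finally, continuity of $A(\e,\delta)$ at the origin follows because all the ingredients ($n_{ss}/n^2$, the threshold $m_1$, the Legendre transform) depend continuously on $\e,\delta$ and the $x\ln x$ function is continuous on $[0,\infty)$.
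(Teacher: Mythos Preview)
Your overall setup is sound and matches the paper's: partition the pairs according to the $x$- and $y$-labels, recognize $E^y_{\mathrm{in}}$ and $E^y_{\mathrm{out}}$ as independent binomial sums with two biases, and estimate the tail. Using Chernoff rather than the paper's direct Stirling/point-mass computation (their Lemma~\ref{tech}) is a cosmetic difference; both produce the same Poisson-type rate function.

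The genuine gap is in your last paragraph. You assert that ``using only the first [the inside-edge constraint] suffices'' because ``$E^y_{\mathrm{in}}+E^y_{\mathrm{out}}$ \ldots\ has fixed total, so the two constraints are essentially equivalent.'' This is false. The pairs with $y_u=y_v$ and those with $y_u\neq y_v$ are disjoint, so $E^y_{\mathrm{in}}$ and $E^y_{\mathrm{out}}$ are \emph{independent} random variables; their sum is not fixed, it merely concentrates. Independence means the probability of the conjunction is the product, and the exponents \emph{add}. Concretely, the Chernoff bound on $\{E^y_{\mathrm{in}}\ge \tfrac{an}{2k}\}$ alone yields the exponent
\[
\frac{a+(k-1)b}{2k}+\frac{a}{2}\ln\frac{ak}{e(a+(k-1)b)},
\]
which is exactly the paper's \eqref{part1} and is strictly smaller than $A(0,0)$. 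The missing piece,
\[
\frac{(k-1)(a+(k-1)b)}{2k}+\frac{b(k-1)}{2}\ln\frac{bk}{e(a+(k-1)b)},
\]
comes from the \emph{independent} event $\{E^y_{\mathrm{out}}\le \tfrac{bn(k-1)}{2k}\}$ (the paper's \eqref{part2}). Only the sum of the two gives the stated $A(0,0)$. So your displayed computation of $A(0,0)$ from the inside constraint alone cannot be right; the hand-wave about ``$m_0'$ is the weighted geometric-type mean'' is where the error is hiding. The fix is simply to run the same Chernoff argument on $E^y_{\mathrm{out}}$ (a lower-tail this time), multiply the two bounds using independence, and add the exponents.
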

Recall that $d:=\frac{a+(k-1)b}{k}$.
\begin{corollary}\label{cross}
Detection is solvable in $\mathrm{SBM}(n,k,a/n,b/n)$ if 
\begin{align*}
\frac{1}{2 \ln k} \left( \frac{a \ln a + (k-1) b \ln b}{k}  - d \ln d \right) >1.
\end{align*}
%In particular, detection is solvable in $\mathrm{SSBM}(n,k,0,b/n)$ if 
%\begin{align}
%\frac{b}{k(k-1)} >  \frac{2 \ln k}{(k-1)^2 \ln \frac{k}{k-1}}.
%\end{align}
\end{corollary}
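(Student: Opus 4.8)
The plan is to combine Lemma~\ref{bad_bound} with a union bound and a concentration estimate showing that the planted assignment $\sigma$ itself lies in the typical set. (We may assume $d>1$: this is necessary for a giant and, as the crude estimates below show, is in fact forced by the hypothesis.) Since the Typicality Sampling Algorithm outputs a uniform element of $T_\delta(G)\subseteq\mathrm{Bal}(n,k,\delta)$, it suffices to show that with high probability (i) $T_\delta(G)$ is nonempty and (ii) $T_\delta(G)$ contains no ``bad'' clustering, i.e.\ no $y\in B_\e(\sigma)\cap\mathrm{Bal}(n,k,\delta)$.

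\textbf{Step 1: the planted clustering is typical.} First I would show $\pp\{\sigma\in T_\delta(G)\}=1-o(1)$ for every fixed $\delta>0$. A Chernoff bound on the multinomial community sizes gives $\sigma\in\mathrm{Bal}(n,k)\subseteq\mathrm{Bal}(n,k,\delta)$ w.h.p. Conditioned on $\sigma$, the number of intra-community edges is $\bin\!\big(\sum_i\binom{|\Omega_i|}{2},a/n\big)$, whose number of trials is $(1+o(1))\frac{n^2}{2k}$, hence whose mean is $(1+o(1))\frac{an}{2k}$ and standard deviation $O(\sqrt n)$; a second Chernoff bound puts it above $\frac{an}{2k}(1-\delta)$ w.h.p. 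The cross-community count is handled identically, so $\sigma\in T_\delta(G)$ and $T_\delta(G)\neq\emptyset$ w.h.p.

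\textbf{Step 2: union bound and the explicit threshold.} There are at most $k^n$ clusterings in $\mathrm{Bal}(n,k,\delta)$, so Lemma~\ref{bad_bound} and a union bound give
\begin{align*}
\pp\{\exists\, y\in B_\e(\sigma)\cap\mathrm{Bal}(n,k,\delta):\, y\in T_\delta(G)\}\;\leq\; k^n\exp\!\left(-\tfrac nk A(\e,\delta)\right)=\exp\!\left(n\big(\ln k-\tfrac1k A(\e,\delta)\big)\right),
\end{align*}
which is $o(1)$ as soon as $\tfrac1k A(\e,\delta)>\ln k$. Using $a+(k-1)b=kd$ and $\tfrac{k}{a+(k-1)b}=\tfrac1d$, the constant of Lemma~\ref{bad_bound} simplifies to $A(0,0)=\tfrac k2\big(\tfrac{a\ln a+(k-1)b\ln b}{k}-d\ln d\big)$, so $\tfrac1k A(0,0)=\tfrac12\big(\tfrac{a\ln a+(k-1)b\ln b}{k}-d\ln d\big)$, and the hypothesis of the corollary is exactly $\tfrac1k A(0,0)>\ln k$. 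Since $A(\e,\delta)$ is continuous at $(0,0)$, there exist $\e,\delta>0$ with $\tfrac1k A(\e,\delta)>\ln k$; fix them.

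\textbf{Step 3: conclude detection.} With this $\e,\delta$, Steps 1--2 give that w.h.p.\ $\hat\sigma_{\mathrm{typ}}(G)\in T_\delta(G)\setminus B_\e(\sigma)$, i.e.\ $\tfrac1n d_*(\sigma,\hat\sigma_{\mathrm{typ}})\leq 1-\tfrac1k-\e$, so after relabelling by the optimal $\pi\in S_k$ the clustering $\hat\sigma_{\mathrm{typ}}$ agrees with $\sigma$ on at least $\tfrac1k+\e$ of the vertices. This is max-detection, which for the symmetric model is equivalent to detection in our two-set sense (as in the lemma on Decelle's criterion and its proof): merge the $k$ classes into two sets and apply the pigeonhole/averaging argument used there to exhibit communities $i,j$ with $|\Omega_i\cap S|/|\Omega_i|-|\Omega_j\cap S|/|\Omega_j|\geq\e'$ for some $\e'>0$.

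\textbf{Main obstacle.} Granting Lemma~\ref{bad_bound}, the only nontrivial point is Step~1: one must control binomial edge counts with $\Theta(n^2)$ trials and success probability $\Theta(1/n)$, while simultaneously absorbing the $O(\sqrt n\log n)$ slack in the community sizes allowed by $\mathrm{Bal}(n,k,\log n/\sqrt n)$, so that the tolerance $\delta$ in Step~2 can be made as small as the continuity argument demands. The deeper analytic content — the sharp Binomial-sum tail estimate behind Lemma~\ref{bad_bound}, where off-the-shelf Chernoff bounds are too lossy to reach the stated exponent $A(0,0)$ — is exactly what that lemma packages and what we are assuming.
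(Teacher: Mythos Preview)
Your proof is correct and follows essentially the same route as the paper: use that $\sigma\in T_\delta(G)$ with high probability so the typical set is nonempty, then apply a $k^n$ union bound against the exponent $A(\e,\delta)$ of Lemma~\ref{bad_bound} and invoke continuity at $(\e,\delta)=(0,0)$. The only cosmetic difference is that the paper bounds $\pp\{\hat\sigma_\delta(G)\in B_\e(\sigma)\}=\E\big[|T_\delta(G)\cap B_\e(\sigma)|/|T_\delta(G)|\big]$ by replacing the denominator with $1$ and then bounding $\E|T_\delta(G)\cap B_\e(\sigma)|$, whereas you directly bound the probability that \emph{some} bad clustering is typical; both collapse to the same estimate $k^n\exp(-\tfrac{n}{k}A(\e,\delta))$, and your algebraic identification $\tfrac1k A(0,0)=\tfrac12\big(\tfrac{a\ln a+(k-1)b\ln b}{k}-d\ln d\big)$ is exactly the simplification the paper relies on.
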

Note that for $a=0$, the above bound for $b$ is $\Theta(\ln k/k)$ times the bound given by the KS threshold. Further, this improves on the KS threshold for $k\ge 5$. However, for $b=0$, the above bound gives $a>2k$, which is worse than the KS threshold $a>k$. For the same reasons, it is loose for $k=2$ and $a=0$. In the next section, we improve the bound to capture the right behaviour at the extremal regimes.  

\begin{proof}[Proof of the Corollary \ref{cross}]
Let $(\sigma,G) \sim \sbm(n,k,a/n,b/n)$ where $k, a,b$ satisfy \begin{align}
\frac{1}{2 \ln k} \left( \frac{a \ln a + (k-1) b \ln b}{k}  - d \ln d \right) >1. \label{hypo}
\end{align}
Let $\hat{\sigma}_\delta(G)$ be uniformly drawn in $T_\delta(G)$. We have 

\begin{align}
\pp\{ \hat{\sigma}_\delta(G) \in B_\epsilon(\sigma) \} &= \pp\{ \hat{\sigma}_\delta(G) \in B_\epsilon(\sigma), \sigma\in  \mathrm{Bal}(n,k,\delta) \}+o(1)\label{typ}\\
&=\E \frac{|T_\delta(G) \cap B_\epsilon(\sigma)|\1(\sigma \in \mathrm{Bal}(n,k,\delta) )}{|T_\delta(G)|} +o(1)\\
&\le \E  |T_\delta(G) \cap B_\epsilon(\sigma)| \1(\sigma \in \mathrm{Bal}(n,k,\delta) ) + o(1)
%\\ &\leq k^n  \cdot \, \mathbb{P} \{ x_{\mathrm{bad}} \in T_\delta(G) | x_{\mathrm{bad}} \in B_\epsilon  \} +o(1) 
\end{align}

where we use the fact that $|T_\delta(G)| \geq 1$ with high probability, since $\sigma \in T_\delta(G)$ with high probability, and $\sigma \in \mathrm{Bal}(n,k,\delta)$ with high probability. 
%We also have $\sigma \in \mathrm{Bal}(n,k,\delta)$ with high probability, and 
%Since $y\in T_\delta(G)$ implies $y\in \mathrm{Bal}(n,k,\delta)$, and $\mathrm{Bal}(n,k,\delta) \le k^n$ surely, we have    
Moreover, 
\begin{align}
\pp\{ \hat{\sigma}_\delta(G) \in B_\epsilon(\sigma) \} 
&\leq \E  \sum_{y \in [k]^n } \1(y \in T_\delta(G), y \in B_\epsilon(\sigma), \sigma \in \mathrm{Bal}(n,k,\delta))   + o(1)  \\ 
&\leq   \sum_{y \in [k]^n } \sum_{x \in \mathrm{Bal}(n,k,\delta)} \pp\{y \in T_\delta(G), y \in B_\epsilon(\sigma)| \sigma=x \} \pp\{\sigma=x \}   + o(1) \\
&=    \sum_{y \in [k]^n } \sum_{x \in \mathrm{Bal}(n,k,\delta): y \in B_\epsilon(x)} \pp\{y \in T_\delta(G)| \sigma=x \} \pp\{\sigma=x \}   + o(1).
%\\
%&\leq k^n  \cdot \max_{y \in  [k]^n} \, \mathbb{P} \{ y \in T_\delta(G), y \in B_\epsilon(\sigma) \} +o(1)\\
%&\leq k^n  \cdot \max_{y \in  [k]^n } \, \mathbb{P} \{ y \in T_\delta(G) | y\in  B_\epsilon(\sigma) \} +o(1). \label{drive}
\end{align}
Letting $\tau$ be the bound obtained from Lemma \ref{bad_bound} on $\pp\{y \in T_\delta(G)| \sigma=x \}$, we have
\begin{align}
\pp\{ \hat{\sigma}_\delta(G) \in B_\epsilon(\sigma) \} \label{drive}
&\le    k^n \tau   + o(1).
%\\
%&\leq k^n  \cdot \max_{y \in  [k]^n} \, \mathbb{P} \{ y \in T_\delta(G), y \in B_\epsilon(\sigma) \} +o(1)\\
%&\leq k^n  \cdot \max_{y \in  [k]^n } \, \mathbb{P} \{ y \in T_\delta(G) | y\in  B_\epsilon(\sigma) \} +o(1). \label{drive}
\end{align}
%\begin{align}
%\pp\{ \hat{\sigma}_\delta(G) \in B_\epsilon(\sigma) \} 
%%&\leq \E ( |T_\delta(G) \cap B_\epsilon(\sigma)| | \sigma \in \mathrm{Bal}(n,k,\delta) ) + o(1)  \\ 
%&\leq \E  \sum_{y \in  B_\epsilon(\sigma) } \1(y \in T_\delta(G))  + o(1)  \\ 
%&= \E \sum_{y \in B_\epsilon(\sigma) \cap \mathrm{Bal}(n,k,\delta)  } \1(y \in T_\delta(G))  + o(1)  \\
%&\leq k^n  \cdot \max_{y \in  \mathrm{Bal}(n,k,\delta) } \, \mathbb{P} \{ y \in T_\delta(G)  \} +o(1) . \label{drive}
%\end{align}
We can now take $\delta>0$ such that for a strictly positive $\e$, \eqref{hypo} implies that \eqref{drive} vanishes. 
\end{proof}

\begin{proof}[Proof of Lemma \ref{bad_bound}.] 
Assume that $a\ge b$ (the other case is treated similarly). 
Let $(\sigma,G) \sim \sbm(n,k,a,b)$ and $x,y \in \mathrm{Bal}(n,k,\delta)$ such that $y \in B_\e(x)$. 
Denote by $T^{(in)}_\delta(G)$ and $T^{(out)}_\delta(G)$ the sets of clusterings having typical fraction of edges inside and across clusters respectively. We have 
\begin{align}
\pp\{ y \in T_\delta(G) | \sigma=x \} = \pp\{ y  \in T^{(in)}_\delta(G) | \sigma=x \} \pp\{ y  \in T^{(out)}_\delta(G) |  \sigma=x \}, \label{merge}
\end{align}
with 
\begin{align*}
&\pp\{ y  \in T^{(in)}_\delta(G) | \sigma=x \} \\
&= \pp \left\{  \sum_{i=1}^k |\{  G_{u,v} : (u,v) \in {[n] \choose 2} \text{ s.t. } y_u=i, y_v=i  | \}|  \geq \frac{an}{2k} (1-\delta)|  \sigma=x \right\} \\
&\le \pp \left\{ \mathrm{Bin}\left(\frac{n(n-1)}{2k^2}(1+\xi),\frac{a}{n}\right) + \mathrm{Bin}\left(\frac{(k-1)n(n-1)}{2k^2}(1+\xi),\frac{b}{n}\right) \geq \frac{an}{2k} (1-\delta) \right\}
\end{align*}
\begin{align*}
&\pp\{ y  \in T^{(out)}_\delta(G) |  \sigma=x \}\\ 
 &= \pp \left\{  \sum_{i,j \in [k], i<j} |\{  G_{u,v} : (u,v) \in {[n] \choose 2} \text{ s.t. } y_u=i, y_v=j \}|  \leq \frac{bn(k-1)}{2k} (1+\delta) |  \sigma=x \right\} \\
&\le \pp \left\{ \mathrm{Bin}\left(\frac{n^2(k-1)}{2k^2}(1-\xi),\frac{a}{n}\right) + \mathrm{Bin}\left(\frac{n^2(k-1)^2}{2k^2} (1-\xi),\frac{b}{n}\right) \leq  \frac{bn(k-1)}{2k} (1+\delta) \right\},
\end{align*}
where $\mathrm{Bin}(N,p)$ denotes a Binomial random variable with $N$ trials of bias $p$, all Binomial random variables are independent, and $\xi \le 2k^2 (\e + \delta)$ (the latter gives a loose bound on the offset of the trial counts in the Binomials due to the fact that $x, y$ are not exactly but approximately balanced). Since the dependence in $\e,\delta$ is continuous, we next proceed without the $\xi$ terms to prove the lemma. We have 
\begin{align*}
&\pp \left\{ \mathrm{Bin}\left(\frac{n(n-1)}{2k^2},\frac{a}{n}\right) + \mathrm{Bin}\left(\frac{(k-1)n(n-1)}{2k^2},\frac{b}{n}\right) \geq \frac{an}{2k}  \right\} \\
&\leq O(n^2) \max_{ c n \in [0,\frac{n^2}{2k^2}]} \pp \left\{ \mathrm{Bin}\left(\frac{n^2}{2k^2},\frac{a}{n}\right) = cn \right\} \pp\left\{\mathrm{Bin}\left(\frac{(k-1)n^2}{2k^2},\frac{b}{n}\right) = \frac{an}{2k} - cn  \right\}
\end{align*}
and from \eqref{binap},
\begin{align*}
& \pp \left\{ \mathrm{Bin}\left(\frac{n^2}{2k^2},\frac{a}{n}\right) = cn \right\} =\exp\left( - \frac{n}{k} \left(\frac{a}{2k} + kc \ln \frac{2k^2c}{ea} \right)  +o(n)\right),\\
& \pp\left\{\mathrm{Bin}\left(\frac{(k-1)n^2}{2k^2},\frac{b}{n}\right) = \frac{an}{2k} - cn  \right\}\\
&=\exp\left( - \frac{n}{k} \left( \frac{(k-1)b}{2k} + (a/2-ck) \ln \frac{2k(a/2-ck)}{eb(k-1)} \right) +o(n)\right)
\end{align*}
and 
\begin{align*}
& \pp \left\{ \mathrm{Bin}\left(\frac{n^2}{2k^2},\frac{a}{n}\right) = cn \right\} \pp\left\{\mathrm{Bin}\left(\frac{(k-1)n^2}{2k^2},\frac{b}{n}\right) = \frac{an}{2k} - cn  \right\} \\&=\exp\left( - \frac{n}{k} \left(\frac{a+(k-1)b}{2k}+ \frac a2 \ln \frac{2k(a/2-ck)}{eb(k-1)}  + kc \ln \frac{bc(k-1)k}{a(a/2-ck)} \right)+o(n)  \right)\\
&=\exp\left( - \frac{n}{k} \left(\frac{a+(k-1)b}{2k}+ \frac a2 \ln \frac{2k}{eb(k-1)} +  \left(\frac a2-ck \right)\ln \left(\frac a2-ck\right) + kc \ln \frac{bc(k-1)k}{a} \right)+o(n)  \right).
\end{align*}
Note that the function 
\begin{align*}
c \to (a/2-ck)\ln (a/2-ck) + kc \ln \frac{bc(k-1)k}{a}
\end{align*}
has a single minimum at $\frac{a^2}{2k(a+(k-1)b)}$. Plugging in this value for $c$ gives 
\begin{align}
& \pp \left\{ \mathrm{Bin}\left(\frac{n^2}{2k^2},\frac{a}{n}\right) = cn \right\} \pp\left\{\mathrm{Bin}\left(\frac{(k-1)n^2}{2k^2},\frac{b}{n}\right) = \frac{an}{2k} - cn  \right\} \\
&\leq  
\exp\left( - \frac{n}{k} \left(  \frac{a+(k-1)b}{2k} + \frac{a}{2} \ln \frac{ak}{e(a+(k-1)b)}\right) \right). \label{part1}
\end{align}

In addition, 
\begin{align*}
&\pp \left\{ \mathrm{Bin}\left(\frac{n^2(k-1)}{2k^2} ,\frac{a}{n}\right) + \mathrm{Bin}\left(\frac{n^2(k-1)^2}{2k^2},\frac{b}{n}\right) \leq  \frac{bn(k-1)}{2k} \right\},\\
&\leq O(n^2) \\
& \cdot \max_{ c n \in [0,\frac{n^2 (k-1)}{2k^2}]} \pp \left\{ \mathrm{Bin}\left(\frac{n^2(k-1)}{2k^2},\frac{a}{n}\right) = cn \right\} \pp\left\{\mathrm{Bin}\left(\frac{n^2(k-1)^2}{2k^2},\frac{b}{n}\right) = \frac{bn(k-1)}{2k} - cn  \right\}
\end{align*}
and from \eqref{binap},
\begin{align*}
& \pp \left\{ \mathrm{Bin}\left(\frac{n^2(k-1)}{2k^2},\frac{a}{n}\right) = cn \right\} =\exp\left( - \frac{n}{k} \left(\frac{a(k-1)}{2k} + kc \ln \frac{2k^2c}{ea(k-1)} \right)+o(n)  \right),\\
& \pp\left\{\mathrm{Bin}\left(\frac{n^2(k-1)^2}{2k^2},\frac{b}{n}\right) = \frac{bn(k-1)}{2k} - cn  \right\}\\
&=\exp\left( - \frac{n}{k} \left( \frac{(k-1)^2b}{2k} + (b(k-1)/2-ck) \ln \frac{(b(k-1)-2ck)k}{eb(k-1)^2} \right)+o(n) \right)
\end{align*}
and 
\begin{align*}
& \pp \left\{ \mathrm{Bin}\left(\frac{n^2(k-1)}{2k^2},\frac{a}{n}\right) = cn \right\} \pp\left\{\mathrm{Bin}\left(\frac{n^2(k-1)^2}{2k^2},\frac{b}{n}\right) = \frac{bn(k-1)}{2k} - cn  \right\}\\
&=\exp ( - \frac{n}{k} [ (\frac{a(k-1)}{2k} + kc \ln \frac{2k^2c}{ea(k-1)}  +  \frac{(k-1)^2b}{2k}\\
&\qquad  + (b(k-1)/2-ck) \ln \frac{(b(k-1)-2ck)k}{eb(k-1)^2} ) ] +o(n))\\
&=\exp ( - \frac{n}{k} [ (\frac{(k-1)(a+(k-1)b)}{2k} + \frac{b(k-1)}{2}\ln \frac{(b(k-1)-2ck)k}{eb(k-1)^2} \\
& \qquad + kc \ln \frac{2cb(k-1)k}{a(b(k-1)-2ck)}  ) ] +o(n))\\
&=\exp ( - \frac{n}{k} [ (\frac{(k-1)(a+(k-1)b)}{2k}+ \frac{b(k-1)}{2}\ln \frac{2k}{eb(k-1)^2}  \\
&\quad + (b(k-1)/2 - ck)  \ln (b(k-1)/2-ck) + kc \ln \frac{cb(k-1)k}{a}  ) ]+o(n) ).
\end{align*}
Note that the function 
\begin{align*}
c \to (b(k-1)/2 - ck)  \ln (b(k-1)/2-ck) + kc \ln \frac{cb(k-1)k}{a} 
\end{align*}
has a single minimum at $\frac{(k-1) ab}{2k(a+(k-1)b)}$, thus 
\begin{align}
&\pp \left\{ \mathrm{Bin}\left(\frac{n^2}{2k^2},\frac{a}{n}\right) = cn \right\} \pp\left\{\mathrm{Bin}\left(\frac{(k-1)n^2}{2k^2},\frac{b}{n}\right) = \frac{an}{2k} - cn  \right\} \\
&\leq 
\exp\left( - \frac{n}{k} \left( \frac{(k-1)(a+(k-1)b)}{2k} + \frac{b(k-1)}{2} \ln \frac{bk}{e(a+(k-1)b)} \right)+o(n) \right).  \label{part2}
\end{align}
Combining \eqref{part1} and \eqref{part2}, 
%we obtain
%\begin{align}
%&\pp\{ y \in T_\delta(G)\leq  \\
%& \exp\left( - \frac{n}{k} \left(  \frac{a+(k-1)b}{2k} + \frac{a}{2} \ln \frac{ak}{e(a+(k-1)b)} \right) (1-\xi_1(\e,\delta))  \right) \\
%& \cdot \exp\left( - \frac{n}{k} \left( \frac{(k-1)(a+(k-1)b)}{2k} + \frac{b(k-1)}{2} \ln \frac{bk}{e(a+(k-1)b)} \right) (1-\xi_2(\e,\delta))  \right)
%\end{align}
%where $\xi_1(\e,\delta)$ and $\xi_2(\e,\delta)$ vanish with $\e$ and $\delta$. 
the result follows from algebraic manipulations. 
\end{proof}

\begin{lemma}[Binomial estimates]\label{tech}
Let $N$ be a positive integer and $t,s >0$. Then, 
\begin{align}
\pp\{ \mathrm{Bin}(N^2,s/N) = \lfloor t N \rfloor \} &= \exp\left( - \left(s  -t + t \ln \frac{t}{s} \right) N  + O(\ln N) \right), \label{binap} \\
\pp\{ \mathrm{Bin}(N^2,s/N) \leq  t N  \} &= \exp\left( - \left(s  -t + t \ln \frac{t}{s} \right) N  + O(\ln N) \right), \quad \text{if } s>t.
\end{align}
\end{lemma}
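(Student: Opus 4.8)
# Proof Proposal for Lemma (Binomial estimates)

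The plan is to prove both estimates by a single computation: a sharp (up to polynomial factors) local limit estimate for $\mathrm{Bin}(N^2, s/N)$ at the point $\lfloor tN\rfloor$, followed by the observation that the tail probability $\pp\{\mathrm{Bin}(N^2,s/N)\le tN\}$ is, up to a factor of at most $N^2$ (the number of integer values in $[0,tN]$) and at least $1$, equal to the maximal point probability over that range — which, when $s>t$, is attained at the endpoint $\lfloor tN\rfloor$ since the mode of $\mathrm{Bin}(N^2,s/N)$ is near $sN > tN$ and the point probabilities are increasing on $[0,sN]$. So the whole lemma reduces to \eqref{binap}.

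For \eqref{binap} itself, I would start from the exact expression
\[
\pp\{\mathrm{Bin}(N^2,s/N)=j\}=\binom{N^2}{j}\left(\frac{s}{N}\right)^{j}\left(1-\frac{s}{N}\right)^{N^2-j},\qquad j=\lfloor tN\rfloor,
\]
and take logarithms. The first step is to handle $\binom{N^2}{j}$ with Stirling's formula: $\ln\binom{N^2}{j} = N^2\ln N^2 - j\ln j - (N^2-j)\ln(N^2-j) + O(\ln N)$. Since $j=\Theta(N)$ while $N^2-j = N^2(1+O(1/N))$, I would expand $\ln(N^2-j) = 2\ln N + \ln(1-j/N^2) = 2\ln N - j/N^2 + O(1/N^2)$, so that $(N^2-j)\ln(N^2-j) = 2N^2\ln N - j + O(\ln N)$. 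Combining, $\ln\binom{N^2}{j} = j\ln N^2 - j\ln j + j + O(\ln N) = j\ln(N^2/j) + j + O(\ln N)$.

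The second step is to assemble the remaining terms. We have $j\ln(s/N) = j\ln s - j\ln N$, and $(N^2-j)\ln(1-s/N) = (N^2-j)(-s/N - s^2/(2N^2) + O(1/N^3)) = -sN + sj/N - s^2/2 + O(1/N) = -sN + O(1)$ since $j/N = t + O(1/N)$. Adding everything and writing $j = tN + O(1)$:
\[
\ln\pp\{\cdots\} = tN\ln\frac{N^2}{tN} + tN + tN\ln s - tN\ln N - sN + O(\ln N) = tN\ln\frac{N}{tN\cdot} \cdots
\]
more carefully, $j\ln(N^2/j) - j\ln N + j\ln s = j\ln(N^2/(jN)) + j\ln s = j\ln(N/j) + j\ln s = j\ln(s N/j) = tN\ln(s/t) + O(\ln N)$, using $j/N\to t$. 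Hence the log equals $tN\ln(s/t) + tN - sN + O(\ln N) = -\big(s - t + t\ln(t/s)\big)N + O(\ln N)$, which is exactly the claimed exponent. Throughout one must check that $s/N < 1$ (true for $N$ large) and that $0 < j < N^2$ so all logarithms are defined; for small $N$ the $O(\ln N)$ error term absorbs any discrepancy.

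The main obstacle — really the only place requiring care — is bookkeeping the error terms: one must verify that every Taylor expansion ($\ln(1-s/N)$ to second order, $\ln(1-j/N^2)$ to first order, Stirling to the $\tfrac12\ln(2\pi j)$ term) contributes only $O(\ln N)$ or smaller, and in particular that the $-s^2/2$ and $sj/N$ pieces from $(N^2-j)\ln(1-s/N)$ are genuinely $O(1)$ and not secretly of order $N$. Since $j=\Theta(N)$ and $N^2-j=\Theta(N^2)$ these all check out, but it is the step where a sign error or a dropped factor would propagate into the exponent. The passage from \eqref{binap} to the tail bound is then immediate from monotonicity of the binomial pmf below its mode together with the union bound over the at most $N^2$ relevant values.
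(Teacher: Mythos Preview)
Your proposal is correct and follows essentially the same route as the paper: Stirling on the binomial coefficient, direct expansion of $(s/N)^j(1-s/N)^{N^2-j}$ to extract the exponent $-(s-t+t\ln(t/s))N$, and then unimodality of the binomial pmf (mode near $sN>tN$) to pass from the point estimate to the tail bound at the cost of a polynomial factor absorbed into $O(\ln N)$. One bookkeeping slip to clean up: your displayed line $(N^2-j)\ln(N^2-j)=2N^2\ln N - j + O(\ln N)$ drops the $-2j\ln N$ term, though you recover the correct $\ln\binom{N^2}{j}=j\ln(N^2/j)+j+O(\ln N)$ on the next line, so the argument itself is unaffected.
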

\begin{proof}
Stirling's approximation gives
\begin{align}
N! = e^{N \ln N -N +O(\ln N)} 
\end{align}
thus
\begin{align}
{N^2 \choose \lceil t N \rceil } = \exp\left( t N \ln N + tN  \ln \frac{e}{t}  + O(\ln N) \right)
\end{align}
and
\begin{align}
&\pp\{ \mathrm{Bin}(N^2,s/N) =  \lceil t N \rceil \} \\
&= {N^2 \choose \lceil t N \rceil } \left( \frac{s}{N} \right)^{ \lceil t N \rceil } \left(1- \frac{s}{N} \right)^{N^2- \lceil t N \rceil } \\
&= \exp\left( t N \ln N + tN  \ln \frac{e}{t}  + O(\ln N) \right) \exp \left( -t N \ln N + t N \ln s - sN  + O(\ln N) \right)\\
&= \exp\left(  \left(t  \ln \frac{e s}{t} - s \right) N  + O(\ln N) \right) ,
\end{align}
and the second statement follows from the fact that the Binomial distribution is unimodal. 
\end{proof}

\subsubsection{Size of the typical set}\label{size}
The goal of this section is to lower bound the size of the typical set $T_\delta(G)$.
Recall that the expected node degree in $\sbm(n,k,a,b)$ is
\begin{align*}
d:=\frac{a+(k-1)b}{k}.
\end{align*}

\begin{theorem}\label{size_bound}
Let $T_\delta(G)$ denote the typical set of clusterings for $G$ drawn under $\sbm(n,k,a,b)$. Then, for any $\e>0$, 
\begin{align*}
\pp\{ |T_\delta(G)| < \max(k^{(\psi - \e) n},2^{(e^{-a/k}(1-(1-e^{-b/k})^{k-1})-\epsilon)n})\}=o(1),
\end{align*}
where 
\begin{align*}
\psi &:=\frac{\tau}{d}\left(1-\frac{\tau}{2}\right) \\
&+ \frac{1}{\ln(k)} \left( \frac{a}{a+(k-1)b} \ln\left(\frac{a+(k-1)b}{a}\right) +\frac{ (k-1)b}{a+(k-1)b} \ln\left(\frac{a+(k-1)b}{b}\right) \right) \\
& \phantom{- \frac{1}{\ln(k)}} \cdot \left( \frac{\tau^2}{2d} + (d -\tau)  e^{-(d-\tau)}\right),
\end{align*}
and $\tau=\tau_d$ is the unique solution in $(0,1)$ of 
\begin{align}
\tau e^{-\tau}=de^{-d} \label{tau_def}
\end{align}
or equivalently $\tau = \sum_{j=1}^{+ \infty} \frac{j^{j-1}}{j!} (de^{-d})^j$.
\end{theorem}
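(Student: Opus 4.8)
The plan is to prove the two lower bounds separately — since the asserted estimate is their maximum and each will hold with probability $1-o(1)$ — by exhibiting, in each case, an explicit family of clusterings that lies in $T_\delta(G)$ with the required cardinality. Throughout, assume $a\ge b$ (the case $a<b$ is symmetric, up to flipping the two defining inequalities of $T_\delta$ and the minor modification noted at the end), and condition on the event of probability $1-o(1)$ on which $G$ has the ``typical topology'' furnished by the lemmas of Section~\ref{size}: $G$ has a unique giant component; the vertices outside it lie in $(\frac{\tau}{d}(1-\frac{\tau}{2})+o(1))n$ tree components carrying $(\frac{\tau^2}{2d}+o(1))n$ edges in total; the pendant trees hanging off the $2$-core of the giant carry $((d-\tau)e^{-(d-\tau)}+o(1))n$ edges; no vertex has degree exceeding $\log^2 n$; the planted clustering $\sigma$ itself lies in $T_\delta(G)$ and in $\mathrm{Bal}(n,k,\log n/\sqrt n)$; and the number of \emph{swappable} vertices defined below is $(e^{-a/k}(1-(1-e^{-b/k})^{k-1})+o(1))n$. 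All of these are first/second--moment statements established in Section~\ref{size}, and it is only through them that the random graph enters.

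\noindent\emph{The bound $|T_\delta(G)|\ge k^{(\psi-\e)n}$.} Freeze the labels of all $2$-core vertices of the giant to their values under $\sigma$, and consider the family of clusterings produced by choosing (i) an arbitrary label in $[k]$ for a designated root of each isolated tree component, and (ii) an arbitrary ``noise'' value $z_e\in\{0,\dots,k-1\}$ for each tree edge $e$ — both in the isolated trees and in the pendant trees of the giant — then propagating labels outward by $x_v=x_u+z_{(u,v)}\bmod k$. On a forest this assignment is a bijection onto the clusterings agreeing with $\sigma$ off the tree vertices. Restrict to the noise vectors whose empirical distribution over the $M:=(\frac{\tau^2}{2d}+(d-\tau)e^{-(d-\tau)}+o(1))n$ tree edges is within $\delta'$ of $\nu$, with $\delta'$ small (depending on $a,b,k$) relative to $\delta$. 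By the asymptotic equipartition property there are at least $k^{(\frac{\tau}{d}(1-\frac{\tau}{2})+o(1))n}\cdot k^{M H_k(\nu)(1-o(1))}=k^{(\psi-o(1))n}$ such clusterings, where $H_k(\nu)=\frac{1}{\ln k}(\frac{a}{a+(k-1)b}\ln\frac{a+(k-1)b}{a}+\frac{(k-1)b}{a+(k-1)b}\ln\frac{a+(k-1)b}{b})$ is the base-$k$ entropy of $\nu$, i.e.\ the logarithmic factor appearing in $\psi$. Each of them differs from $\sigma$ in the number of monochromatic edges among tree edges by only $O(\delta' n)$, while edges with both endpoints in the $2$-core are untouched; hence, since $\sigma\in T_\delta(G)$ with a linear gap to both thresholds, each satisfies the two inequalities of $T_\delta(G)$ once $\delta'$ is small enough. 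Finally, conditionally on $G$, a uniformly random assignment within this restricted family gives community sizes with mean $(1/k+o(1))n$ and variance $O(n)$ (a sum of weakly dependent per-component contributions), so by Chebyshev an $o(1)$ fraction is unbalanced; the rest lie in $T_\delta(G)$, giving $|T_\delta(G)|\ge (1-o(1))k^{(\psi-\e/2)n}\ge k^{(\psi-\e)n}$.

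\noindent\emph{The bound $|T_\delta(G)|\ge 2^{(e^{-a/k}(1-(1-e^{-b/k})^{k-1})-\e)n}$.} Call $v$ \emph{swappable} if it has no neighbor in its own community $\sigma_v$ and there exists $j\ne\sigma_v$ with no neighbor in $j$; fix one such target $j(v)$ for each. For any subset $S$ of the swappable vertices, move every $v\in S$ from $\sigma_v$ to $j(v)$. An edge incident to $v\in S$ whose other endpoint is not in $S$ stays of ``out'' type (its endpoint $v'$ satisfies $\sigma_{v'}\ne j(v)$ since $v$ misses $j(v)$), while an edge between two vertices of $S$ may become monochromatic; since for $a>b$ the set $T_\delta(G)$ only lower-bounds the number of in-edges and only upper-bounds the number of out-edges, creating new in-edges cannot violate either constraint. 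Thus every such relabelling satisfies the edge inequalities of $T_\delta(G)$, and distinct $S$ give distinct clusterings. To secure balance, draw $S$ uniformly among all subsets of the swappable set: by symmetry of $\sbm(n,k,a,b)$ the counts of swappable vertices of each type $(\sigma_v,j(v))$ coincide up to $o(n)$ with probability $1-o(1)$, so each community size changes in expectation by $o(n)$ with variance $O(n)$, and Chebyshev leaves a $1-o(1)$ fraction of the $2^{(e^{-a/k}(1-(1-e^{-b/k})^{k-1})+o(1))n}$ subsets producing balanced clusterings in $T_\delta(G)$.

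\noindent The hardest part is not the entropy bookkeeping but the interface between the counting and the defining constraints of $T_\delta(G)$: the restricted families (typical-noise broadcasts; random subsets of swappable vertices) must be chosen tight enough that balance and the two edge-density inequalities hold for all but an $o(1)$ fraction of their members, yet loose enough that the family's entropy still matches $\psi$ (respectively the swappable exponent) up to $\e$; this forces the size-concentration arguments for community sizes and monochromatic-edge counts to be carried out conditionally on a good realization of $G$, and makes the whole argument rest on the topological concentration estimates of Section~\ref{size}. For $a<b$ one additionally insists that adjacent swapped vertices never receive a common target (and, symmetrically, one bounds the number of monochromatic tree edges created rather than destroyed), which changes the exponents by only $o(1)$.
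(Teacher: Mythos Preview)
Your argument for the first bound, $|T_\delta(G)|\ge k^{(\psi-\e)n}$, is essentially the paper's: both freeze the $2$-core of the giant to $\sigma$, broadcast on the isolated and pendant trees, restrict to AEP-typical root/noise configurations, and count via entropy. The only difference is in how membership in $T_\delta(G)$ is verified. The paper observes in one line that the broadcast law is \emph{exactly} the conditional law of $\sigma$ on the tree parts given $G$, so $(G,\hat X)\stackrel{d}{=}(G,\sigma)$ and hence $\hat X\in T_\delta(G)$ with probability $1-o(1)$ automatically. You instead check the edge-count constraint by hand (``differs from $\sigma$ in monochromatic tree edges by $O(\delta' n)$''); this works, but note that it implicitly uses the same distributional fact, since you need $\sigma$'s monochromatic tree-edge count to concentrate at $M\nu_0$ to make the comparison.

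For the second bound your route is genuinely different. The paper (Lemma~\ref{Tbound2}) runs a \emph{sequential} random relabelling: process vertices in random order and, whenever $v$ currently has no neighbor in its own community, resample $\sigma'_v$ uniformly among the unoccupied communities. The key is that each such step preserves the joint law of $(G,\sigma')$ exactly, so $\sigma'\in T_\delta(G)$ with probability $1-o(1)$ for free, and the bound follows from the pointwise estimate $\pp\{\sigma'=\sigma'_0,\sum z_v\ge z_0\}\le 2^{-z_0}$. Your approach instead fixes one target $j(v)$ per swappable vertex and considers all $2^N$ subsets $S$; for $a>b$ this is pleasantly elementary, since swaps can only turn out-edges into in-edges, so the edge constraints hold deterministically for every $S$ and no distributional invariance is needed.

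The gap is your handling of $a<b$. The claim that ``insisting adjacent swapped vertices never receive a common target \dots\ changes the exponents by only $o(1)$'' is not justified: the conflict graph on swappable vertices has $\Theta(n)$ edges in general, and forbidding both endpoints of each conflicting edge from entering $S$ can cost a constant in the exponent (already a perfect matching on $N$ vertices has only $3^{N/2}\approx 2^{0.79N}$ admissible subsets). The paper's distributional argument is insensitive to the sign of $a-b$ precisely because it never reasons about which edges change type---typicality of $\sigma'$ is inherited directly from that of $\sigma$.
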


As a first step to proving this theorem, we prove one half of the inequality.
\begin{lemma}\label{Tbound2}
Let $T_\delta(G)$ denote the typical set of clusterings for $G$ drawn under $\sbm(n,k,a,b)$. Then, for any $\e>0$, 
\[\pp\{ |T_\delta(G)| < 2^{(e^{-a/k}(1-(1-e^{-b/k})^{k-1})-\epsilon)n}\}=o(1).\]
\end{lemma}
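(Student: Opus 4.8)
\emph{Plan.} I want to produce the required $2^{(c-\e)n}$ typical clusterings, where $c:=e^{-a/k}(1-(1-e^{-b/k})^{k-1})$, by locally perturbing the planted partition $\sigma$ at many ``swappable'' vertices. Call a vertex $v$ \emph{swappable} if it has no neighbour in its own community $\Omega_{\sigma_v}$ \emph{and} there is at least one further community $j\ne\sigma_v$ in which $v$ has no neighbour; put $J(v)=\{j\ne\sigma_v:\ v\text{ has no neighbour in }\Omega_j\}$ and $V_v=\{\sigma_v\}\cup J(v)$, so $|V_v|\ge 2$. For a single vertex the events ``no neighbour in $\Omega_i$'' ($i\in[k]$) are, up to $o(1)$, independent Binomial--tail events, and $|\Omega_i|=n/k+o(n)$ with high probability, so $\pp\{v\text{ swappable}\mid\sigma_v=i\}=c+o(1)$ uniformly in $i$. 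Let $M$ be the set of swappable vertices; since $\1\{v\in M\}$ depends only on the edges incident to $v$, two such indicators have covariance $O(1/n)$, hence $\Var(|M|)=O(n)$ and likewise $\Var(|M\cap\Omega_i|)=O(n)$, so $|M|=cn+o(n)$ and $|M\cap\Omega_i|=cn/k+o(n)$ with high probability.

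\emph{Key observation.} Perturbing $\sigma$ at swappable vertices cannot spoil the two edge--count inequalities defining $T_\delta(G)$. Fix $A\subseteq M$ and, for each $v\in A$, a value $j(v)\in J(v)$; let $x^A$ agree with $\sigma$ off $A$ and satisfy $x^A_v=j(v)$ on $A$. Every edge incident to a swappable $v$ leads to a community outside $V_v$, so the only edges that can change status (inside vs.\ across) in passing from $\sigma$ to $x^A$ are edges $uv$ with $u,v\in A$ and $j(u)=j(v)$; each such edge switches from across to inside. Hence $x^A$ has at least as many inside edges as $\sigma$ and at most as many across edges. Since with high probability $\sigma$ itself satisfies $\#\text{inside}\ge\tfrac{an}{2k}(1-\delta)$ and $\#\text{across}\le\tfrac{bn(k-1)}{2k}(1+\delta)$ with slack $\Theta(\delta n)$, every $x^A$ satisfies both, so the only remaining obstruction to $x^A\in T_\delta(G)$ is balance.

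\emph{Balancing.} For this I would use the probabilistic method. Let $X$ be the random clustering with $X_v=\sigma_v$ for $v\notin M$ and, independently for $v\in M$, $X_v$ uniform on $V_v$; then $X$ is uniform on the family $\mathcal{X}$ of all $x^A$'s and $|\mathcal{X}|=\prod_{v\in M}|V_v|\ge 2^{|M|}$. For a community $j$, $\E_X[\,\#\{v:X_v=j\}\,]=|\Omega_j|-|M\cap\Omega_j|+\sum_{v\in M}\pp\{X_v=j\}$; summing the last term over $j$ gives $\sum_{v\in M}|V_v|/|V_v|=|M|$, so by relabelling symmetry $\sum_{v\in M}\pp\{X_v=j\}$ has mean $|M|/k$ and, again, variance $O(n)$, whence it equals $cn/k+o(n)$ with high probability. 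Combining, $\E_X[\,\#\{v:X_v=j\}\,]=n/k+o(n)$ for every $j$; as $\#\{v:X_v=j\}$ is a sum of $|M|$ independent bounded variables, Hoeffding gives $\pp_X\{X\in\mathrm{Bal}(n,k,\delta)\}\ge 1-k\,e^{-\Omega(\delta^2 n)}\ge\tfrac12$ for large $n$, with high probability over $G$. Therefore $|T_\delta(G)|\ge|\mathcal{X}\cap\mathrm{Bal}(n,k,\delta)|=|\mathcal{X}|\cdot\pp_X\{X\in\mathrm{Bal}(n,k,\delta)\}\ge 2^{|M|-1}\ge 2^{(c-\e)n}$.

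\emph{Main obstacle.} The computation above uses $a>b$: in the disassortative regime $a\le b$ — in particular $a=0$, which is exactly where this bound is invoked for the $k\ln k$ scaling — the inequalities in $T_\delta(G)$ point the other way, so a ``collision'' $j(u)=j(v)$ now pushes the edge counts \emph{out} of $T_\delta(G)$, and a uniformly random $x^A$ creates $\Theta(n)$ of them. The fix is to count only collision--free perturbations, i.e.\ to lower bound the number of proper list--colourings of the sparse induced subgraph $G[M]$ with the lists $V_v$ (non--swappable neighbours impose no constraint, since their colours already lie outside every adjacent $V_v$). Proving that this count is still $2^{|M|-o(n)}$, rather than being degraded by a constant factor in the exponent by short rigid components of $G[M]$, is the delicate step, and is where the precise constant $e^{-a/k}(1-(1-e^{-b/k})^{k-1})$ — as opposed to a strictly smaller one — has to be recovered.
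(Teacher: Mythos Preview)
Your argument for the case $a>b$ is essentially correct: the collision analysis (an edge incident to a swappable vertex always goes to a community outside $V_v$) is right, and your probabilistic‐method balance check works once one observes that for each target community $j$, the sum $\sum_{v\in M}\1\{j\in V_v\}/|V_v|$ has expectation exactly $|M|/k$ after summing over the source community of $v$ (the asymmetry between ``$j=\sigma_v$'' and ``$j\ne\sigma_v$'' cancels), with $O(n)$ variance.

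The gap is precisely the one you flag: for $a\le b$ you have not proved anything, and the lemma is stated for all $a,b$.  (Incidentally, the $k\ln k$ scaling at $a=0$ actually comes from the other bound in Theorem~\ref{main2}, not this one; the present lemma is what makes the bound sharp in the opposite extreme $b\to 0$.  But the paper still needs the lemma uniformly in $a,b$ for the $\min$ in Theorem~\ref{main2} to be valid as written.)

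The paper sidesteps the collision problem entirely with a different idea that you are missing: a \emph{distributional invariance under sequential resampling}.  Process the vertices in a random order; whenever the current vertex $v$ has no neighbour in its own (current) community, resample $\sigma'_v$ uniformly from the set of communities in which $v$ currently has no neighbour.  The key observation is that, conditionally on $G$ and on $\sigma'\setminus\sigma'_v$, the posterior of $\sigma'_v$ under the SBM is \emph{exactly} uniform on that set (all such labels give the same likelihood), so the resampling step leaves the joint law of $(G,\sigma')$ unchanged.  Hence $\sigma'\in T_\delta(G)$ with probability $1-o(1)$ \emph{automatically}, with no need to track inside/across edge counts, and the argument is completely indifferent to the sign of $a-b$.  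Letting $z_v$ indicate that at least two choices were available when $v$ was processed, one has $\sum_v z_v=(c+o(1))n$ with high probability (same second‐moment computation you did), and for any fixed target $\sigma'_0$, $\pp\{\sigma'=\sigma'_0,\ \sum z_v\ge z_0\}\le 2^{-z_0}$; summing over $\sigma'_0\in T_\delta(G)$ yields $|T_\delta(G)|\ge 2^{(c-\e)n}$.

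So the missing idea is: do not try to certify typicality of each perturbed clustering directly; instead generate a \emph{random} typical clustering whose law equals that of $\sigma$, and lower‐bound $|T_\delta(G)|$ via the collision bound $\pp\{\sigma'=\sigma'_0\}\le 2^{-z_0}$.  Your list‐coloring route might be salvageable, but it would require showing that the number of proper list colourings of $G[M]$ with lists $V_v$ is $2^{|M|-o(n)}$, which is a genuinely harder statement than what the lemma needs.
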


\begin{proof}
Construct a clustering of $G$, $\sigma'$, as follows. Start with $\sigma'=\sigma$. Then, go through the vertices of $G$ in a random order, and for each $v$ with no neighbor $v'$ such that $\sigma'_v=\sigma'_{v'}$, change $\sigma'_{v}$ to a random element  of $[k]\backslash \{\sigma'_{v'}: (v,v')\in E(G)\}$.

Initially, $\sigma'=\sigma$, so the probability distributions of $(G,\sigma)$ and $(G,\sigma')$ are identical. For a given ordering of the vertices, if these probability distributions are identical immediately before $\sigma'_{v}$ is changed, then for given values of $G$ and $\sigma'\backslash \sigma'_v$, it is equally likely that $\sigma'_v$ had each value that it may be assigned in this step. So, the probability distribution is unchanged. Therefore, the probability distribution of $(G,\sigma')$ is identical to the probability distribution of $(G,\sigma)$ at any point in this algorithm. In particular, $\sigma'\in T_\delta(G)$ with probability $1-o(1)$.

Now, for each $v\in G$, let $z_v$ be $1$ if there is more than one option for $\sigma'_v$ when it is time for the algorithm to assign it and $0$ otherwise. A vertex has no neighbors in its own community with probability $e^{-a/k}+o(1)$, and no neighbors in any given other community with probability $e^{-b/k}+o(1)$. So, a vertex has no neighbors in its own community and at least one other community with probability $e^{-a/k}(1-(1-e^{-b/k})^{k-1})+o(1)$. Thus, for each $v$, $$\E(z_v)=e^{-a/k}(1-(1-e^{-b/k})^{k-1})+o(1).$$ Also, given two different vertices, they both have no neighbors in their own community and at least one other with probability $e^{-2a/k}(1-(1-e^{-b/k})^{k-1})^2+o(1)$. Given any $v$ and $v'$, assume without loss of generality that $v$ is before $v'$ in the random ordering. With probability $1-o(1)$, $v'$ will not be in the last $\sqrt{n}$ vertices, and at the time $v'$ comes up in the ordering, $e^{-a/k}(1-(1-e^{-b/k})^{k-1})+o(1)$ of the remaining vertices will have no neighbors claimed to be in their communities and at least one other. So, symmetry between the vertices implies that the correlation between $z_v$ and $z_{v'}$ is $o(1)$. That means that with probability $1-o(1)$, we have 
\[\sum_{v\in G} z_v=e^{-a/k}(1-(1-e^{-b/k})^{k-1})n+o(n)\]

For fixed values of $(G,\sigma)$ and any given $\sigma'_0$, $z_0$, we have that $\pp\{ \sigma'=\sigma'_0,\sum_{v\in G} z_v\ge z_0\} \le 2^{-z_0}$ because every time the algorithm has more than one option for $\sigma'_v$, there is at most a $1/2$ chance that it sets $\sigma'_v$ to $(\sigma'_0)_v$, and if $\sum z_v\ge z_0$ then there are at least $z_0$ times when it had such a choice. So, for a fixed graph $G$,
\begin{align}
& \pp \left\{\sigma'\in T_\delta(G), \sum_{v\in G} z_v>e^{-a/k}(1-(1-e^{-b/k})^{k-1})n-\epsilon n/2\right\} \\
& \le |T_\delta(G)|\cdot 2^{e^{-a/k}(1-(1-e^{-b/k})^{k-1})n-\epsilon n/2}.
\end{align}
We already know that with probability $1-o(1)$ the probability in question is $1-o(1)$, so the conclusion holds.
\end{proof}

To prove the rest of this theorem, we need some topological properties of the SBM graph, analog to the Erd\H{o}s-R\'enyi case \cite{ER-seminal2}.

\begin{lemma}\label{topo1}
Let $T_j$ denote the number of isolated $j$-trees (i.e., trees on $j$ vertices) in $\sbm(n,k,a,b)$ and 
$M_j$ the number of edges contained in those trees. 
Then, for any $\e>0$,  
\begin{align*}
&\pp\{ T_j/n  \notin [\tau_j/d - \e, \tau_j/d + \e] \}=o(1),\\
&\pp\{ M_j/n  \notin [(j-1)\tau_j/d - \e, (j-1)\tau_j/d + \e] \}=o(1),
\end{align*}
where 
\begin{align*}
\tau_j=\frac{j^{j-2}(de^{-d})^j }{ j!}.
\end{align*}
\end{lemma}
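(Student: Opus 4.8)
The plan is to compute the expected number of isolated $j$-trees, show that it concentrates, and then transfer the concentration to the edge count. First I would estimate $\E[T_j]$. The number of ways to choose $j$ labeled vertices out of $n$ and a tree structure on them is $\binom{n}{j} j^{j-2}$ by Cayley's formula. For a fixed set of $j$ vertices with a fixed spanning tree and a fixed community assignment drawn under $\sbm(n,k,a,b)$, the probability that exactly these $j-1$ tree edges are present and none of the other $\binom{j}{2}-(j-1)$ internal edges are present is $\prod_{\text{tree edges}}(Q_{\sigma_u\sigma_v}/n)\cdot\prod_{\text{non-edges inside}}(1-Q_{\sigma_u\sigma_v}/n)$, and the probability that the component is isolated (no edge from any of the $j$ vertices to the remaining $n-j$) is $\prod_{u\in \text{tree}}\prod_{w\notin\text{tree}}(1-Q_{\sigma_u\sigma_w}/n)$. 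Averaging over community assignments, the key observation is that the \emph{average} connection probability between two random vertices is $\frac{a+(k-1)b}{nk}=d/n$, so each tree edge contributes a factor $\sim d/n$ and each vertex contributes an isolation factor $\sim e^{-d}$ to leading order (the $O(\log N)$-type corrections and the discrepancy between $\prod(1-Q_{\sigma_u\sigma_w}/n)$ and $e^{-d(n-j)/n}$ are lower order for fixed $j$, and the bias introduced by conditioning on the community labels of the $j$ vertices only affects constants in a way that washes out as $n\to\infty$). This yields $\E[T_j] = \binom{n}{j} j^{j-2} (d/n)^{j-1} e^{-dj}(1+o(1)) = n\cdot \frac{j^{j-2}(de^{-d})^j}{j!\, d}(1+o(1)) = n\tau_j/d\cdot(1+o(1))$.

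Next I would establish concentration via the second moment method: compute $\E[T_j(T_j-1)]$ by summing over \emph{ordered pairs} of disjoint $j$-vertex subsets (pairs sharing a vertex contribute negligibly since a vertex lies in at most one component), and show $\E[T_j(T_j-1)] = \E[T_j]^2(1+o(1))$, because two disjoint trees are isolated and internally structured nearly independently — the only dependence comes from forbidding edges between the two trees, which changes the isolation probability by a factor $1-O(j^2/n)$. Chebyshev's inequality then gives $\pp\{|T_j/n - \tau_j/d| > \e\} = o(1)$. Alternatively, since $T_j$ is a function of the edge indicators with bounded differences (adding/removing one edge changes $T_j$ by $O(1)$), one could invoke a concentration inequality like Azuma–Hoeffding (or the bounded-differences inequality conditioned on the community labels), which is slightly cleaner and avoids the second-moment computation; I would likely do both and pick whichever is shorter to write. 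For $M_j$, observe that $M_j = (j-1)T_j$ deterministically — every isolated $j$-tree has exactly $j-1$ edges — so the concentration statement for $M_j/n$ around $(j-1)\tau_j/d$ follows immediately from the one for $T_j/n$, and in fact is not a separate claim but a corollary.

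The main obstacle I anticipate is being careful about the averaging over community assignments: unlike the Erd\H os--R\'enyi case, the edge probabilities $Q_{\sigma_u\sigma_v}/n$ are not identical, so one must argue that (i) the sum over tree-edge community patterns of $\prod Q_{\sigma_u\sigma_v}$ factors, after normalizing by $p$, into a product of $d$'s — this is exactly the statement that the "broadcast" channel $\nu$ of Section \ref{proof_tech2} has the right marginals, or more elementarily that $\sum_{j'} p_{j'} Q_{jj'} = kd/k\cdot$(something community-independent), which holds precisely because the symmetric SBM has $\sum_{j'}Q_{jj'}$ independent of $j$; and (ii) conditioning the $j$ tree-vertices' labels does not disturb the isolation factor to leading order. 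Both are routine but need the symmetry of the model stated explicitly. Everything else — Stirling for $\binom{n}{j}$, Cayley's formula, the $(1-x/n)^n\to e^{-x}$ limits — is standard bookkeeping that I would not spell out in full.
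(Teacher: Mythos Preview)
Your proposal is correct and follows essentially the same route as the paper: Cayley's formula for the count of labeled trees, the expectation $\E[T_j]=\binom{n}{j}j^{j-2}(d/n)^{j-1}(1-d/n)^{j(n-j)}\sim n\tau_j/d$, a variance bound of order $O(n)$ followed by Chebyshev, and then $M_j=(j-1)T_j$. If anything, you are more careful than the paper's own proof, which simply writes the edge probability as $d/n$ without explicitly invoking the row-sum symmetry of $Q$ that you correctly identify as the reason the community labels wash out to leading order.
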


\begin{lemma}\label{topo2}
Let $T$ denote the number of isolated trees in $\sbm(n,k,a,b)$ and 
$M$ the number of edges contained in those trees. 
Then, for any $\e>0$,  
\begin{align*}
&\pp\left\{ T/n  \notin \left[\frac{\tau}{d}\left(1-\frac{\tau}{2}\right) - \e, \frac{\tau}{d}\left(1-\frac{\tau}{2}\right) + \e \right] \right\}=o(1),\\
&\pp\left\{ M/n  \notin \left[\frac{\tau^2}{2d} - \e, \frac{\tau^2}{2d} + \e \right] \right\}=o(1),
\end{align*}
where $\tau$ is defined in \eqref{tau_def}.
\end{lemma}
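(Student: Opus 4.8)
The plan is to decompose $T$ and $M$ by tree size and combine the per-size concentration of Lemma~\ref{topo1} with two generating-function identities. Write $T=\sum_{j\ge 1}T_j$ and, since an isolated $j$-tree carries exactly $j-1$ edges, $M=\sum_{j\ge 1}M_j=\sum_{j\ge 1}(j-1)T_j$. Lemma~\ref{topo1} gives, for each fixed $j$, that $T_j/n\to\tau_j/d$ and $M_j/n\to(j-1)\tau_j/d$ in probability, with $\tau_j=j^{j-2}(de^{-d})^j/j!$. Hence the proof reduces to: (i) evaluating $\sum_{j\ge1}\tau_j$ and $\sum_{j\ge1}(j-1)\tau_j$; and (ii) justifying the exchange of the probabilistic limit with the infinite sum, i.e. controlling the tails $\sum_{j>J}T_j$ and $\sum_{j>J}M_j$ uniformly in $n$.

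For (i), let $x=de^{-d}$ and let $T(x)=\sum_{j\ge1}\frac{j^{j-1}}{j!}x^j$ be the tree function, which satisfies $T(x)=xe^{T(x)}$; by the defining relation $\tau e^{-\tau}=de^{-d}$ with $\tau\in(0,1)$ one has $T(de^{-d})=\tau$, so $\sum_{j\ge1}j\tau_j=T(de^{-d})=\tau$. Setting $U(x)=\sum_{j\ge1}\frac{j^{j-2}}{j!}x^j$, one has $xU'(x)=\sum_{j\ge1}\frac{j^{j-1}}{j!}x^j=T(x)$, i.e. $U'(x)=T(x)/x$. I would substitute $x=\tau e^{-\tau}$, so that $dx=(1-\tau)e^{-\tau}\,d\tau$ and $T(x)/x=e^{\tau}$, giving $dU=(1-\tau)\,d\tau$ and hence $U(de^{-d})=\tau-\tau^2/2$. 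Thus $\sum_{j\ge1}\tau_j=\tau-\tau^2/2$ and $\sum_{j\ge1}(j-1)\tau_j=\tau-(\tau-\tau^2/2)=\tau^2/2$; dividing by $d$ yields exactly the claimed centers $\frac{\tau}{d}(1-\tau/2)$ for $T/n$ and $\frac{\tau^2}{2d}$ for $M/n$.

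For (ii), the lower bounds come for free: given $\e>0$ pick $J$ with $\sum_{j\le J}\tau_j/d\ge\frac{\tau}{d}(1-\tau/2)-\e/2$ and $\sum_{j\le J}(j-1)\tau_j/d\ge\frac{\tau^2}{2d}-\e/2$; then $T\ge\sum_{j\le J}T_j$ and $M\ge\sum_{j\le J}M_j$ are at least $(\text{center}-\e)n$ with probability $1-o(1)$ by applying Lemma~\ref{topo1} to the finitely many indices $j\le J$. For the upper bounds, bound $T\le\sum_{j\le J}T_j+J^{-1}\sum_{j>J}jT_j$ and $M\le\sum_{j\le J}M_j+\sum_{j>J}jT_j$, so it suffices to show that for every $\e'>0$ there is a $J$ with $\pp\{\sum_{j>J}jT_j>\e'n\}=o(1)$, i.e. that the number of vertices lying in isolated trees of size more than $J$ is $o(n)$ w.h.p. once $J$ is large. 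I would deduce this from the concentration of the size of the giant component of $\sbm(n,k,a,b)$: w.h.p. the number of vertices outside the giant is at most $(\tau/d+\e'')n$ (a standard fact for sparse inhomogeneous random graphs, obtainable via an exploration-process and first/second-moment argument, and consistent with $\sum_j j\tau_j/d=\tau/d$), while the number of non-giant vertices not lying in an isolated tree is $o(n)$ w.h.p. (the expected number of vertices in small components containing a cycle is $O(1)$ by the usual subgraph count). Hence $\sum_{j\ge1}jT_j\le(\tau/d+2\e'')n$ w.h.p.; since $\sum_{j\le J}jT_j\to n\sum_{j\le J}j\tau_j/d$ and $\sum_{j\le J}j\tau_j/d\uparrow\tau/d$, the tail $\sum_{j>J}jT_j=\sum_{j\ge1}jT_j-\sum_{j\le J}jT_j$ is $\le\e'n$ w.h.p. for $J$ large.

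The main obstacle is precisely this last tail estimate: controlling the ``lost mass'' $\sum_{j>J}jT_j$ uniformly in $n$. A naive Markov bound only yields a constant-probability statement, so one genuinely needs either the concentration of the giant-component size quoted above, or else a uniform bound $\E[T_j]\le Cn\rho^j$ with $\rho<1$ together with a second-moment estimate for $\sum_{j>J}T_j$ to upgrade Markov to a $1-o(1)$ statement. Everything else — the size decomposition, the appeal to Lemma~\ref{topo1}, and the generating-function computation — is routine.
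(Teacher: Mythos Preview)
Your proposal is correct, and the generating-function computation of the centers matches the paper's (which simply cites \cite{renyi_tree} for the identity $\sum_{j\ge1}\frac{j^{j-2}}{j!}(de^{-d})^j=\tau-\tau^2/2$). However, you take a different and more laborious route for the concentration step. The paper does \emph{not} apply Lemma~\ref{topo1} per size and then fight with tails; instead it works directly with the full sums $T=\sum_{j=1}^n T_j$ and $M=\sum_{j=1}^n (j-1)T_j$, computes $\E[T]=\frac{n}{d}\sum_{j=1}^n\frac{j^{j-2}(de^{-d})^j}{j!}+O(1)$ (the tail $\sum_{j\ge n}$ being $O(n^{-3/2})$ since $de^{-d}<e^{-1}$ for $d>1$), asserts $\Var(T)=O(n)$, and applies Chebyshev once. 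The same is done for $M$. This sidesteps entirely what you correctly identify as your main obstacle --- the uniform-in-$n$ control of $\sum_{j>J}jT_j$ --- and in particular does not need to invoke the giant-component concentration (your appeal to it is fine, and indeed it is stated as Lemma~\ref{giant} in the paper, but it is overkill here). What your approach buys is that it never requires the variance bound $\Var(T)=O(n)$ for the \emph{full} sum, only the per-size variance already established in Lemma~\ref{topo1}; the paper's approach buys brevity, at the cost of leaving that global variance bound as an unproved (though standard) claim.
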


\begin{lemma}\label{giant}\cite{bollo_inhomo}
Let $Q$ denote the number of nodes that are in the giant component in $\sbm(n,k,a,b)$, i.e., 
nodes that are in a linear size component of $\sbm(n,k,a,b)$.
Then, for any $\e>0$,  
\begin{align*}
&\pp\{ Q/n  \notin [ \beta - \e, \beta + \e] \}=o(1),
\end{align*}
where $\beta = 1-\tau/d$ and $\tau$ is defined in \eqref{tau_def}. 
\end{lemma}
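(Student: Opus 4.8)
The plan is to reduce Lemma~\ref{giant} to the classical Erd\H{o}s--R\'enyi giant-component theorem, using the fact that in $\sbm(n,k,a,b)$ a breadth-first exploration cannot distinguish communities. First I would observe that a vertex in any community $i$ has $\mathrm{Bin}(n/k-1,a/n)$ neighbours in its own class and $\mathrm{Bin}(n/k,b/n)$ neighbours in each of the other $k-1$ classes, so its total number of neighbours is within total variation $o(1)$ of $\mathrm{Poisson}(d)$ \emph{irrespective of} $i$; the same holds for every vertex revealed during a BFS, up to the first $o(n)$ revealed vertices, after we delete the (at most one) already-explored edge. Hence the exploration process is sandwiched between two Galton--Watson processes with offspring laws $\mathrm{Bin}(n,d/n\pm o(1/n))$, both converging to $\mathrm{Poisson}(d)$ --- exactly the coupling used for $G(n,d/n)$ --- while the community labels evolve along the explored tree as an independent Markov chain that never affects degrees. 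Thus the component structure of $\sbm(n,k,a,b)$ is, to leading order, that of $G(n,d/n)$, and the standard result (the exploration/martingale argument for Erd\H{o}s--R\'enyi, or the specialisation of the inhomogeneous-random-graph theory of \cite{bollo_inhomo} to the block-constant kernel $\kappa(i,j)\in\{a,b\}$ on the uniform type space, whose integral operator has norm $d$) gives: for $d>1$, whp there is a unique component of size $(\zeta+o(1))n$ and all others have size $O(\log n)$, where $\zeta$ is the survival probability of a $\mathrm{Poisson}(d)$ Galton--Watson process, i.e.\ the root in $(0,1]$ of $1-\zeta=e^{-d\zeta}$. Taking $Q$ to be the number of vertices of this component then yields $\pp\{Q/n\notin[\zeta-\e,\zeta+\e]\}=o(1)$.

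It remains to identify $\zeta$ with $\beta=1-\tau/d$. If $\beta=\zeta$ then $\tau/d=1-\beta=e^{-d\beta}=e^{-d+d(1-\beta)}=e^{-d+\tau}$, so $\tau e^{-\tau}=de^{-d}$, which is \eqref{tau_def}; conversely, since $x\mapsto xe^{-x}$ is strictly increasing on $[0,1]$ and $d>1$ forces $de^{-d}<e^{-1}$, there is a unique $\tau\in(0,1)$ with $\tau e^{-\tau}=de^{-d}$, and reading the computation backwards shows $\beta=1-\tau/d\in(0,1)$ solves $1-\beta=e^{-d\beta}$, hence $\beta=\zeta$. For the stated series form, note that with $u:=de^{-d}$ the tree function $T(u)=\sum_{j\ge1}\frac{j^{j-1}}{j!}u^j$ satisfies $T(u)=u\,e^{T(u)}$, equivalently $T(u)e^{-T(u)}=u$, and $T$ maps $(0,e^{-1})$ into $(0,1)$; therefore $\tau=T(de^{-d})=\sum_{j\ge1}\frac{j^{j-1}}{j!}(de^{-d})^j$ by Lagrange inversion. (Alternatively, the concentration of $Q/n$ at $\beta$ can be read directly off Lemmas~\ref{topo1} and \ref{topo2}: since all but $o(n)$ of the non-giant vertices lie in isolated finite trees, $Q/n=1-(\text{fraction in isolated trees})+o(1)$, and $\sum_{j}j\tau_j/d=\tau/d\cdot(1-\tau/2)+\dots$ collapses to $\tau/d$ after accounting for all tree sizes; I would cross-check this identity against $\beta=1-\tau/d$ rather than re-deriving the giant size from scratch.)

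The main obstacle --- to the extent there is one, given that the result is classical and we quote \cite{bollo_inhomo} --- is purely the ``whp'' bookkeeping: making precise that the exploration neither dies out nor exhausts more than $o(n)$ vertices while still of sublinear size, and then a sprinkling or second-moment step merging all large components into one. For the block-constant kernel this requires only checking irreducibility (immediate for $b>0$, and trivial for $a>0,b=0$ since each block is then an independent $G(n/k,a/n)$ whose giant has relative size $1-\tau_{a/k}/(a/k)$, matching $\beta$ with $d=a/k$). None of this goes beyond the cited literature, so I would keep the write-up to a short paragraph invoking \cite{bollo_inhomo} and performing the algebraic identification of $\beta$ above.
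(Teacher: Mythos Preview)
Your proposal is correct, but note that the paper does not actually prove this lemma: the statement carries the citation \cite{bollo_inhomo} and is simply quoted as a known result from the inhomogeneous random graph literature, with no proof given. So there is nothing to compare against on the paper's side.

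Your sketch is a sound way to recover the result from scratch. The key observation---that in the symmetric model every vertex has degree approximately $\mathrm{Poisson}(d)$ regardless of its community, so the exploration process is indistinguishable from that of $G(n,d/n)$---is exactly what makes the symmetric SBM a degenerate case of \cite{bollo_inhomo} where the kernel has constant row sums. Your algebraic identification of $\beta=1-\tau/d$ with the Poisson$(d)$ survival probability is clean and correct. The alternative route you mention at the end (via the tree-counting lemmas) is also valid and in fact closer in spirit to how the paper organises the surrounding material: since the total fraction of vertices in isolated trees is $\sum_j j\cdot \tau_j/d = \tfrac{1}{d}\sum_j \tfrac{j^{j-1}}{j!}(de^{-d})^j = \tau/d$, and non-tree small components contribute $o(n)$, one reads off $\beta=1-\tau/d$ directly. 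Either way, a one-line invocation of \cite{bollo_inhomo} plus the identity $\tau e^{-\tau}=de^{-d}\Leftrightarrow 1-\tau/d$ solves $1-\beta=e^{-d\beta}$ is all that is needed here.

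One small caveat: in your parenthetical about the $b=0$ case you say each block is an independent $G(n/k,a/n)$; this is fine for the component structure, but note that then there are $k$ giants rather than one, so ``the giant component'' in the lemma statement should be read as ``vertices in linear-size components'' (which is indeed how the lemma phrases it). Your computation that their total relative size still equals $\beta$ is correct.
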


\begin{lemma}\label{topo3}
Let $F$ denote the number of edges that are in planted trees of the giant in $\sbm(n,k,a,b)$, i.e., 
trees formed by nodes that have a single edge connecting them to a linear size component in $\sbm(n,k,a,b)$.
Then, for any $\e>0$,  
\begin{align*}
&\pp\{ F/n  \notin [ \phi - \e, \phi + \e] \}=o(1),\\
&\text{where }\phi:= (d - \tau)  e^{-(d - \tau)},
\end{align*}
and $\tau$ is defined in \eqref{tau_def}.
\end{lemma}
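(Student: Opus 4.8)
The plan is to set up a local-structure (Poisson-branching) computation for the expected number of vertices lying on planted trees, and then upgrade the expectation to concentration via a second-moment argument, exactly as was done for the isolated-tree statistics of Lemmas \ref{topo1} and \ref{topo2}. Recall from Lemma \ref{giant} that the giant occupies a $\beta = 1-\tau/d$ fraction of the vertices, and that the complement of the $2$-core (roughly) consists of isolated trees plus trees hanging off the giant. The heuristic is that the SBM graph on $n$ vertices with expected degree $d$ behaves locally like a Galton--Watson tree with $\mathrm{Poisson}(d)$ offspring, and that a vertex fails to percolate into the giant with probability $\tau/d$ per pendant edge. A vertex $v$ lies on a planted tree of the giant precisely when, after deleting from $v$'s neighborhood all branches that do reach the giant, the remaining structure containing $v$ is a finite tree and at least one deleted branch exists (so that $v$ itself is not in the giant, but is attached to it via a path). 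The standard way to make this precise: condition on the $2$-core $\mathcal{C}$, which with high probability contains a $\beta' n$ fraction of vertices for an appropriate $\beta'$; then the forest $G \setminus \mathcal{C}$ has a well-understood component-size distribution, and an edge is "in a planted tree of the giant" iff it lies in $G\setminus\mathcal{C}$ in a tree-component attached to $\mathcal{C}$ (as opposed to one of the genuinely isolated trees of Lemma \ref{topo2}).

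First I would write $F = (\text{edges in } G\setminus\text{2-core}) - (\text{edges in isolated trees})$, or more directly work with a "peeling" exploration: repeatedly remove degree-$1$ vertices; the edges removed in this peeling that are not part of a component that is entirely peeled away are exactly the planted-tree edges. The expected fraction is then computed from the Poisson branching heuristic: the probability that a given vertex $u$ is removed during peeling but is attached (through a chain of peeled vertices) to a vertex that survives peeling is $(d-\tau)e^{-(d-\tau)}$ — this is where the quantity $\phi$ comes from. Here $d-\tau$ is the "residual degree" after the subcritical (sub-giant) part has been stripped, and $e^{-(d-\tau)}$ is the probability that a $\mathrm{Poisson}(d-\tau)$ variable equals zero, i.e. that the vertex is a leaf of the pruned structure. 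I would verify this identity carefully by a generating-function computation on the Galton--Watson tree: letting $\tau/d$ be the extinction-type probability per edge, the "backbone-attached pendant" count has expectation governed by the size-biased offspring distribution, and summing over the finitely many configurations yields $\phi$. One must be careful that $F$ counts edges; each planted-tree vertex contributes exactly one edge to its unique parent (toward the giant), so the edge count and the vertex count coincide asymptotically, giving $\E[F]/n \to \phi$.

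The second moment: I would show $\Var(F) = o(n^2)$ by the same argument used implicitly in Lemmas \ref{topo1}--\ref{topo2}, namely that for two vertices $u, u'$ chosen uniformly, the events "$u$ is on a planted tree" and "$u'$ is on a planted tree" are asymptotically independent because the relevant neighborhoods are of size $O(\mathrm{polylog}(n))$ with high probability and decouple. Concretely: with probability $1-o(1)$ the radius-$r$ neighborhoods of $u$ and $u'$ are disjoint for $r$ growing slowly, the status of each vertex (planted-tree or not) is determined up to $o(1)$ error by its neighborhood out to such an $r$ (truncating the tail where the peeling reaches far, which happens with vanishing probability), and hence $\E[(F/n)^2] = \phi^2 + o(1)$. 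Chebyshev then gives the stated concentration. I expect the main obstacle to be the first step — pinning down the exact constant $\phi = (d-\tau)e^{-(d-\tau)}$ rigorously rather than heuristically, i.e. correctly identifying what "residual" branching process governs the pendant trees of the giant and justifying the exchange of the local-weak-limit computation with the finite-$n$ expectation. This is the familiar but delicate passage from the Galton--Watson tree picture to honest expectations on $\sbm(n,k,a,b)$; once it is done for one of the topological lemmas (as it is in \cite{ER-seminal2} for Erd\H{o}s--R\'enyi), the planted-tree case is a variant, but the bookkeeping of which branches count toward the giant is the part that needs care.
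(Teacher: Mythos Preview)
Your outline is sound in spirit --- first-moment computation then Chebyshev --- and would work, but you are taking a much more elaborate route than the paper does, and you have not pinned down the constant in the crispest way.

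The paper's argument is almost a one-liner once Lemma~\ref{giant} is in hand. It does not invoke the $2$-core, a peeling process, or any Galton--Watson bookkeeping. Instead it conditions on the event (which holds w.h.p.) that the giant has size $\beta n$ with $\beta = 1-\tau/d$, and then simply computes the probability that a fixed vertex has exactly one neighbour in the giant:
\[
\beta n \cdot \frac{d}{n}\cdot\Bigl(1-\frac{d}{n}\Bigr)^{\beta n-1}+o(1)
\;=\;\beta d\,e^{-\beta d}+o(1)
\;=\;(d-\tau)\,e^{-(d-\tau)}+o(1),
\]
using $\beta d = d-\tau$. This immediately identifies $\phi$ without any generating-function or local-weak-limit passage; the quantity $d-\tau$ is just the expected number of neighbours a vertex has inside the giant, not a ``residual offspring mean'' obtained after stripping a subcritical process. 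Your heuristic for the constant is correct but arrives at it indirectly.

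For concentration, the paper also does a second-moment argument, but again more directly than yours: for two vertices $v,v'$, conditioning on $v$ having exactly one edge to the giant perturbs the size of the giant by $O(1)$, so $v'$ still has exactly one edge to the giant with probability $\beta d e^{-\beta d}+o(1)$; hence $\E[F^2]=(n\phi)^2+o(n^2)$ and Chebyshev finishes. Your neighbourhood-decoupling argument would also work, but is heavier than needed here. In short: your proposal is not wrong, but the paper's proof bypasses all of the $2$-core/peeling machinery by leaning on Lemma~\ref{giant} and a direct binomial computation.
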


\begin{proof}[Proof of Theorem \ref{size_bound}]
Let $G \sim \sbm(n,k,a,b)$, and let $T$ be the number of isolated trees in $G$, $M$ the number of edges in those trees, and $F$ the number of edges in the planted trees of the largest connected component of $G$ (i.e., the giant). 
We have from Lemma \ref{topo1}, Lemma \ref{topo2} and Lemma \ref{topo3} that, for $\e>0$, with high probability on $G$,
\begin{align*}
T &\in \left[\frac{\tau}{d}\left(1-\frac{\tau}{2}\right) - \e, \frac{\tau}{d}\left(1-\frac{\tau}{2}\right) + \e \right], \\
R &\in \left[\frac{\tau^2}{2d} + (d -\tau)  e^{-(d-\tau)} - \e, \frac{\tau^2}{2d} + (d -\tau)  e^{-(d-\tau)} + \e \right]. 
\end{align*}
Assume that $T$ and $R$ take typical values as above. We now build a typical vertex labelling on these trees:
\begin{itemize}
\item Pick an arbitrary node in each isolated tree, denote these by $\{v_1,\dots,v_T\}$, and denote the set of edges contained in these trees by $\{E_1,\dots, E_M\}$; 
\item Pick the root node for each planted tree that is in the giant, denote these by $\{w_1,\dots,w_k\}$, where $k$ is the number of planted trees (this number is not relevant in the following computations) and denote by $\{E_{M+1},\dots, E_{M+F}\}$ the number of edges contained in those planted trees;
\item Assign the labels $U_1^T:=(U_{v_1},\dots, U_{v_T})$ uniformly at random in $[k]$, and set $\hat{X}_{w_1}=\sigma_{w_1},\dots, \hat{X}_{w_k}=\sigma_{w_k}$, i.e., assign the latter labels to their true community assignments. Then broadcast each of these labels in their corresponding trees by forwarding the labels on each edge with an independent $k$-ary symmetric channel of flip probability $\frac{b}{a+(k-1)b}$. This means that the variables $Z_1,\dots, Z_{M+F}$ are drawn i.i.d.\ from the distribution 
\begin{align}
\nu:=\left(\frac{a}{a+(k-1)b}, \frac{b}{a+(k-1)b}, \dots, \frac{b}{a+(k-1)b} \right), 
\end{align}
on $\F_k:=\{0,1,\dots,k-1\}$, and that for each edge $e$ in the trees, the input bit is forwarded by adding to it the $Z_e$ variable modulo $k$;
\item Assign any other label (that is not contained in the trees) to their true community assignments. Define $R:=M+F$, $Z_1^R:=(Z_1,\dots,Z_R)$, and denote by $\hat{X}(U_1^T,Z_1^R)$ the previously defined assignment. 
\end{itemize}
Note that the above gives the induced label-distribution on trees in $\sbm(n,k,a,b)$. Thus, with high probability on $G$, as $T$ and $M$ grow with $n$, this assignment is typical with high probability on $U_1^T,Z_1^{R}$, i.e., 
\begin{align}
\pp_{U_1^T,Z_1^{R}}\{ \hat{X}(U_1^T,Z_1^R) \in T_\delta(G)\} = 1-o(1). \label{b1}
\end{align}  

Denote by $\mathrm{Emp}(Z_1^R)$ the empirical distribution on $\F_k$ of the $R$-vector $Z_1^R$, and by $\mathcal{B}_\e(\nu)$ the $l_1$-ball around $\mu$ of radius $\e$.  Denote by $\eta$ the uniform distribution on $[k]$. Then by Sanov's Theorem, for any $\e>0$, 
\begin{align}
\pp_{Z_1^R}\{ \mathrm{Emp}(U_1^T) \in \mathcal{B}_\e(\eta), \mathrm{Emp}(Z_1^R) \in \mathcal{B}_\e(\nu) \} \to 1, \label{lln}
\end{align}  
as $T,R$ diverge with $n$. 
Define now the set of realizations of $Z_1^R$ that have a typical likelihood by  
\begin{align*}
A_{\e}(\nu):=\{z_1^R \in \F_k^R :  k^{ -R ( H(\nu) +\e)} \leq \pp\{Z_1^R=z_1^r\} \leq k^{ -R ( H(\nu) -\e)} \}
\end{align*}
where $H$ is the entropy with the logarithm in base $k$.
For convenience of notation, define also $A_{\e}(\eta)=\mathcal{B}_\e(\eta)$. Again, for all $\e>0$
%\begin{align*}
%\pp_{U_1^T,Z_1^R}\{\mathrm{Emp}(U_1^T) \in \mathcal{B}_\e(\eta), \mathrm{Emp}(Z_1^R) \in \mathcal{B}_\e(\nu) \} \to 1, 
%\end{align*}  
%hence, for all $\e>0$,
\begin{align*}
\pp\{U_1^T \in A_{\e}(\eta), Z_1^R \in A_{\e}(\nu) \} \to 1.
\end{align*}
Therefore, 
\begin{align}
&\pp_{U_1^T,Z_1^{R}}\{ \hat{X}(U_1^T,Z_1^R) \in T_\delta(G)\} \\
&=\pp_{U_1^T,Z_1^{R}}\{ \hat{X}(U_1^T,Z_1^R) \in T_\delta(G) , U_1^T \in  A_{\e}(\eta), Z_1^R \in  A_{\e}(\nu)  \}  + o(1)\\
&= \sum_{u_1^T \in A_{\e}(\eta), z_1^{R} \in   A_{\e}(\nu) } \1( \hat{X}(u_1^T,z_1^R)  \in T_\delta(G)) \pp\{ U_1^T= u_1^T\} \pp\{ Z_1^R= z_1^R\} + o(1) \\
&\leq \sum_{u_1^T \in A_{\e}(\eta), z_1^{R} \in A_{\e}(\nu) } \1( \hat{X}(u_1^T,z_1^R)  \in T_\delta(G)) k^{-T}   k^{ -R ( H(\nu) -\e)} +o(1).\label{b2}
\end{align}  
We have 
\begin{align*}
\sum_{u_1^T \in A_{\e}(\eta), z_1^{R} \in A_{\e}(\nu) } \1( \hat{X}(u_1^T,z_1^R)  \in T_\delta(G)) \leq  |T_\delta(G)|, 
\end{align*}  
since the left hand side counts a subset of the typical clusterings.
We thus obtain from \eqref{b1} and \eqref{b2} that with high probability on $G$, 
\begin{align*}
|T_\delta(G)| \geq (1-o(1)) k^{T +R(H(\nu) -\e)}.
\end{align*}
Thus, with high probability on $G$,  
\begin{align*}
|T_\delta(G)| \geq (1-o(1)) k^{n (\psi -\e)  +o(n)},
\end{align*}
where
\begin{align*}
\psi=\frac{\tau}{d}\left(1-\frac{\tau}{2}\right) + H(\nu) \left( \frac{\tau^2}{2d} + (d -\tau)  e^{-(d-\tau)}\right),
\end{align*}
which proves the half of the claim not covered by lemma \ref{Tbound2}. 
\end{proof}

\begin{proof}[Proof of Lemma \ref{topo1}]
Let $T_j$ denote the number of isolated $j$-trees (i.e., trees on $j$ vertices) in $\sbm(n,k,a,b)$, i.e., 
\begin{align*}
T_j = \sum_{S \in \mathcal{T}_j(n)} \1_{\mathrm{iso}}(S)
\end{align*}
where $\mathcal{T}_j(n)$ denotes the set of all $j$-tress on the vertex set $[n]$, and where 
$\1_{\mathrm{iso}}(S)$ is equal to 1 if $S$ is an isolated tree, and 0 otherwise. 
Since the number of different trees on $j$ vertices is given by $j^{j-2}$ \cite{cayley}, and since there is an edge on a designated pair of vertices with probability $d/n$, 
we have 
\begin{align*}
\E [T_j] &= {n \choose j} j^{j-2} \left(\frac{d}{n} \right)^{j-1} \left(1-\frac{d}{n} \right)^{j(n-j)}  \\
&  =  \frac{n}{d}\left( d e^{-d} \right)^j  \frac{ j^{j-2}}{j!} + O(1) 
\end{align*}
and
\begin{align*}
\Var T_j &= \frac{1}{2} \frac{n}{d} \left( d e^{-d} \right)^{2j} \left(\frac{  j^{j-2}}{j!} \right)^2 + O(1) = O(n).
\end{align*}
Thus, by Chebyshev's inequality, for any $\e>0$,  
\begin{align*}
&\pp\{ T_j/n  \notin [\tau_j/d- \e, \tau_j/d + \e] \}=O(1/n),
\end{align*}
where 
\begin{align*}
\tau_j=\frac{j^{j-2}(de^{-d})^j }{j!}.
\end{align*}
and since each tree contains $j-1$ edges, 
\begin{align*}
&\pp\{ M_j/n  \notin [(j-1)\tau_j/d - \e, (j-1)\tau_j/d + \e] \}=O(1/n).
\end{align*}
\end{proof}

\begin{proof}[Proof of Lemma \ref{topo2}]
Let $T$ denote the number of isolated tress in $\sbm(n,k,a,b)$, i.e., 
\begin{align*}
T=\sum_{j=1}^n T_j,
\end{align*}
where $T_j$ is the number of isolated $j$-trees. Hence,
\begin{align*}
\E[ T]= \sum_{j=1}^n \E [T_j] = \frac{n}{d}  \sum_{j=1}^n \frac{j^{j-2}(de^{-d})^j }{ j!} + O(1).
\end{align*}
Since 
\begin{align*}
\sum_{j=n}^\infty \frac{j^{j-2}(de^{-d})^j }{ j!} = O(n^{-3/2}),
\end{align*}
we have 
\begin{align*}
\lim_{n \to \infty} \frac{1}{n} \E[ T] = \frac{1}{d}  \sum_{j=1}^\infty \frac{j^{j-2}(de^{-d})^j }{ j!}.
\end{align*}
Finally, since $\Var (T) = O(n)$, the result for $T$ follows from Chebyshev's inequality and the fact that (see \cite{renyi_tree})
\begin{align}
\sum_{j=1}^\infty \frac{j^{j-2}(de^{-d})^j }{ j!} = \tau - \tau^2/2 \label{idr}
\end{align}
when
\begin{align*}
\tau = \sum_{j=1}^\infty \frac{j^{j-1}(de^{-d})^j }{ j!}.  
\end{align*}
For $M$, the result follows from similar arguments and the fact that 
\begin{align*}
\E [M]= \sum_{j=1}^n (j-1) \E [T_j] &= \frac{n}{d}  \sum_{j=1}^n (j-1) \frac{j^{j-2}(de^{-d})^j }{ j!} + O(1) = \frac{n}{d} \frac{\tau^2}{2} + O(1),
\end{align*}
which uses again \eqref{idr}.
\end{proof}

\begin{proof}[Proof of Lemma \ref{topo3}] 
From Lemma \ref{giant}, the giant component has with high probability a relative size in $[\beta -\e, \beta + \e]$.
The probability that a node is connected to a giant component by a single edge is thus given by 
\begin{align}
\beta n (d/n)(1-d/n)^{\beta n} +o(1)=\beta d e^{- \beta d} +o(1), 
\end{align}
and the expected number of such nodes is 
\begin{align}
n \beta d e^{- \beta d}  +o(n).
\end{align}
Now, let $v$ and $v'$ be random vertices. Conditioning on $v$ being connected to the giant component by a single edge, the giant component still has relative size in $[\beta -\e, \beta + \e]$ for any $\e$ with probability $1-o(1)$, so the probability that $v'$ is connected to the giant component by a single edge is also $\beta d e^{- \beta d} +o(1)$. That means that the expected value of the square of the number of nodes that are connected to the giant component by a single edge is
\[(n \beta d e^{- \beta d})^2  +o(n^2)\]
hence the variance in the number of such nodes is $o(n^2)$ and the lemma follows. 
\end{proof}

\subsubsection{Sampling probability estimates}
\begin{proof}[Proof of Theorem \ref{main2}]
Let $t=\max( k^{(\psi -\e) n},2^{(e^{-a/k}(1-(1-e^{-b/k})^{k-1})-\epsilon)n})$. 
We have
\begin{align}
& \pp\{ \hat{\sigma}(G) \in B_\e(\sigma) \} \\
&= \E \frac{|T_\delta(G) \cap B_\e(\sigma)|}{|T_\delta(G)|}\\
&\leq \E \frac{|T_\delta(G) \cap B_\e(\sigma)|}{|T_\delta(G)|} \1(|T_\delta(G)| \geq t)\1( \sigma \in \mathrm{Bal}(n,k,\delta)) \\
&+ \E \1 (|T_\delta(G)| < t)+\E\1( \sigma \not\in \mathrm{Bal}(n,k,\delta)) \\
&\leq (1/t) \E |T_\delta(G) \cap B_\e(\sigma)| \1(|T_\delta(G)| \geq t)\1( \sigma \in \mathrm{Bal}(n,k,\delta)) \\
&+ \pp\{ |T_\delta(G)| < t \}+\pp( \sigma \not\in \mathrm{Bal}(n,k,\delta))   \\
&\leq (1/t)\cdot \E |T_\delta(G) \cap B_\e(\sigma)| \1( \sigma \in \mathrm{Bal}(n,k,\delta)) +  o(1)\\
&\leq  (1/t)k^n e^{-A(\e,\delta) n/k} +  o(1).  \label{expo} 
\end{align}
Using the bound $t\ge 2^{(e^{-a/k}(1-(1-e^{-b/k})^{k-1})-\epsilon)n}$ and the bound on $A(\e,\delta)$ from Lemma \ref{bad_bound}, the exponent in \eqref{expo} can be made to vanish if 
\begin{align}
&\frac{1}{k} \left(  \frac{a+b(k-1)}{2} \ln \frac{k}{(a+(k-1)b)} +  \frac{a}{2} \ln a + \frac{b(k-1)}{2} \ln b \right)+e^{-a/k}(1-(1-e^{-b/k})^{k-1})\ln(2) \notag \\
&\qquad >\ln(k)  \label{ignore}.\end{align}

Alternately, plugging in the values of $\psi$ from Theorem \ref{size_bound} and $A(\e,\delta)$ from Lemma \ref{bad_bound}, the exponent in \eqref{expo} is vanishing if 
\begin{align}
&\frac{1}{k \ln (k)} \left(  \frac{a+b(k-1)}{2} \ln \frac{k}{(a+(k-1)b)} +  \frac{a}{2} \ln a + \frac{b(k-1)}{2} \ln b \right)\\
&+ \frac{\tau}{d}\left(1-\frac{\tau}{2}\right) \\
&+ \frac{1}{\ln(k)} \left( \frac{a}{a+(k-1)b} \ln\left(\frac{a+(k-1)b}{a}\right) +\frac{ (k-1)b}{a+(k-1)b} \ln\left(\frac{a+(k-1)b}{b}\right) \right) \label{drop} \\
& \phantom{- \frac{1}{\ln(k)}} \cdot \left( \frac{\tau^2}{2d} + (d -\tau)  e^{-(d-\tau)}\right)\\
&>1 . \label{tot}
\end{align}
Since $\tau$ is the solution in $(0,1)$ of $\tau e^{-\tau}=de^{-d}$, we have 
\begin{align*}
  \frac{\tau^2}{2d} + (d -\tau)  e^{-(d-\tau)} = \tau \left(1-\frac{\tau}{2d}\right),
\end{align*}
and this simplifies to 
\begin{align}
&\frac{1}{2 \ln (k)} \left( - d \ln d +  \frac{a \ln a + b(k-1) \ln b}{k} \right)\\
&+ \frac{1}{2 \ln(k)} \left( d \ln d + d \ln k - \frac{a \ln a + b(k-1) \ln b}{k }  \right) \cdot  \frac{2\tau}{d} \left(1-\frac{\tau}{2d}\right) \\
&>1 - \frac{\tau}{d}\left(1-\frac{\tau}{2}\right). \label{tot2}
\end{align}
Algebraic manipulations lead to the bound in the theorem. 
\end{proof}

\begin{proof}[Proof of Corollary \ref{corol_ext}]
Note that dropping the term in \eqref{drop} above and ignoring the contribution of \eqref{ignore} leads to the weaker bound 
\begin{align*}
\frac{1}{2 \ln k} \left( \frac{a \ln a + (k-1) b \ln b}{k}  - d \ln d \right) >1 - \frac{\tau}{d}\left(1-\frac{\tau}{2}\right),
\end{align*}
which implies Corollary \ref{corol_ext} part 2 since $\frac{\tau}{d}\left(1-\frac{\tau}{2}\right)$ tends to $1/2$ as $\tau$ tends to 1 when $b$ tends to 0 and $d$ tends to $a/k$. Part 1 follows from algebraic manipulations. 
\end{proof}

%\subsubsection{Proof of Theorem \ref{converse}}
%\Enote{Colin: see if we can you can put the converse for this theorem (terse arguments suffice). If not, we will drop this part.}

\subsection{Learning the model}
The proof is analog to the case $k=2$ from \cite{Mossel_SBM1}.
\begin{lemma}
Let $G$ be drawn from $SBM(n,p,W)$, $m>0$, and $v_0,v_1,...,v_m$ be vertices such that $v_i\ne v_j$ whenever $|i-j|<\max(m,2)$. For any fixed values of $\sigma_{v_0}$ and $\sigma_{v_m}$, the probability that there is an edge between $v_i$ and $v_{i+1}$ for all $1\le i<m$ is $e_{\sigma_{v_0}} \cdot P^{-1} (PW)^m e_{\sigma_{v_m}}$.
\end{lemma}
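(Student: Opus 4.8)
The plan is a one-shot computation: expose the randomness of the interior community labels, invoke the conditional independence of edges in the SBM given the labels, and then recognize the resulting sum over label-sequences as an iterated matrix product. The distinctness hypothesis is there precisely to make both the ``expose'' and the ``conditional independence'' steps exact rather than merely approximate.

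First I would condition on the fixed endpoint labels $\sigma_{v_0}=c_0$ and $\sigma_{v_m}=c_m$, and observe that since $v_i\ne v_j$ whenever $|i-j|<\max(m,2)$, the interior vertices $v_1,\dots,v_{m-1}$ are distinct from each other and from $v_0,v_m$ (and there are none when $m=1$, in which case the claim is immediate). Hence their labels $\sigma_{v_1},\dots,\sigma_{v_{m-1}}$ are i.i.d.\ $\sim p$, independent of $c_0,c_m$. Next, conditionally on the full label assignment the edges of $G$ are mutually independent, and — again by the distinctness hypothesis — the $m$ unordered pairs $\{v_i,v_{i+1}\}$, $0\le i<m$, are distinct edge-slots of $G$, so the events $\{v_i\sim v_{i+1}\}$ are conditionally independent, each with probability $W_{\sigma_{v_i},\sigma_{v_{i+1}}}$. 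Averaging over the interior labels then gives
\begin{align*}
\pp\{\, v_i\sim v_{i+1}\ \forall\, 0\le i<m \mid \sigma_{v_0},\sigma_{v_m}\,\}
&=\sum_{c_1,\dots,c_{m-1}\in[k]} \Big(\prod_{j=1}^{m-1}p_{c_j}\Big)\prod_{i=0}^{m-1}W_{c_i,c_{i+1}} \\
&=\big(W(PW)^{m-1}\big)_{c_0,c_m},
\end{align*}
where each factor $p_{c_j}W_{c_j,c_{j+1}}$ is absorbed into $(PW)_{c_j,c_{j+1}}$ and the intermediate indices are summed out as matrix multiplications. Finally, since $P$ is a diagonal matrix with strictly positive entries, it is invertible and $W(PW)^{m-1}=P^{-1}(PW)^m$, so the probability equals $e_{\sigma_{v_0}}\cdot P^{-1}(PW)^m e_{\sigma_{v_m}}$, the asserted formula (note that the index range $1\le i<m$ in the statement should read $0\le i<m$ for consistency with the right-hand side, which genuinely depends on $\sigma_{v_0}$).

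The only place where care is needed — the ``main obstacle'', such as it is — is the combinatorial bookkeeping that turns the product of per-edge probabilities into an \emph{exact} identity: one must check from $v_i\ne v_j$ whenever $|i-j|<\max(m,2)$ that (i) the interior labels are genuinely fresh i.i.d.\ draws and (ii) no two of the $m$ consecutive pairs coincide as unordered edges, so that no edge indicator is double-counted and no spurious correlation is introduced (this is clear for $m\ge 3$, and for the parameter-learning application one only uses $m$ growing with $n$). Everything after that is the routine identity $W(PW)^{m-1}=P^{-1}(PW)^m$ together with telescoping the sum into a matrix product; an entirely equivalent alternative is a short induction on $m$ peeling off the last edge $\{v_{m-1},v_m\}$ after conditioning on $\sigma_{v_{m-1}}$, with base case $\pp\{v_0\sim v_1\mid\sigma_{v_0},\sigma_{v_1}\}=W_{\sigma_{v_0},\sigma_{v_1}}=e_{\sigma_{v_0}}\cdot P^{-1}(PW)e_{\sigma_{v_1}}$.
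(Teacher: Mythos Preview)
Your proof is correct and essentially equivalent to the paper's. The paper opts for the short induction on $m$ that you mention at the end as an ``entirely equivalent alternative'': base case $m=1$ gives $W_{\sigma_{v_0},\sigma_{v_1}}=e_{\sigma_{v_0}}\cdot P^{-1}(PW)e_{\sigma_{v_1}}$, and the inductive step conditions on $\sigma_{v_{m-1}}$ to pick up one more factor of $PW$. Your direct expansion over all interior labels and recognition of the sum as $W(PW)^{m-1}=P^{-1}(PW)^m$ is just the unrolled version of the same computation, and your observations about the typo in the index range and the care needed with edge-distinctness for small $m$ are accurate (the paper is equally informal on the latter point, and the corollary on cycle counts only uses $m\ge 3$).
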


\begin{proof}
We proceed by induction on $m$. If $m=1$, then the probability that there is an edge between $v_0$ and $v_1$ is $W_{\sigma_{v_0},\sigma_{v_1}}=e_{\sigma_{v_0}}\cdot P^{-1} PWe_{\sigma_{v_m}}$, as desired. Now, assume that the lemma holds for $m=m_0$ and consider the case where $m=m_0+1$. The probability that there is an edge between $v_i$ and $v_{i+1}$ for all $1\le i<m$ is
\begin{align*}&\sum_i (e_{\sigma_{v_0}} \cdot P^{-1} (PW)^{m-1} e_i) p_i W_{i,\sigma_{v_m}}\\
&=\sum_i (e_{\sigma_{v_0}} \cdot P^{-1} (PW)^{m-1} e_i) (p_i e_i \cdot W e_{\sigma_{v_m}})=e_{\sigma_{v_0}} \cdot P^{-1} (PW)^m e_{\sigma_{v_m}}
\end{align*} 
as desired.
\end{proof}

\begin{corollary}
The expected number of cycles of length $m$ in $\sbm(n,p,Q/n)$ is asymptotic to $\frac{1}{2m}\sum_{i=1}^{k} \lambda_i^m$, where $\{\lambda_i\}$ are the eigenvalues of $PQ$, with multiplicity.
\end{corollary}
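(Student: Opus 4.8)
The plan is to express the expected number of length-$m$ cycles as a sum over ``potential cycles'' and evaluate it with the preceding lemma applied to a closed sequence of vertices. Fix $m\ge 3$. A cycle of length $m$ in $G$, viewed as a subgraph of the complete graph on $[n]$, is in bijection with the set of ordered tuples $(w_1,\dots,w_m)$ of distinct vertices, modulo the $2m$ symmetries given by the $m$ cyclic rotations and the reflection; hence there are exactly $\tfrac{1}{2m}\cdot\tfrac{n!}{(n-m)!}$ potential $m$-cycles on $[n]$, which equals $(1+o(1))\tfrac{n^m}{2m}$ for fixed $m$ (and more generally whenever $m=o(\sqrt n)$, since $\tfrac{n!}{(n-m)!}=n^m\prod_{j=0}^{m-1}(1-j/n)$). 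By linearity of expectation it then suffices to compute, for one such potential cycle on vertices $w_1,\dots,w_m$, the probability $\rho$ that all $m$ cycle edges $\{w_1,w_2\},\dots,\{w_{m-1},w_m\},\{w_m,w_1\}$ are present in $G$.

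To compute $\rho$ I would apply the previous lemma to the sequence $v_0,v_1,\dots,v_m$ defined by $v_i=w_{i+1}$ for $0\le i\le m-1$ and $v_m=w_1$; its $m$ consecutive edges $v_0v_1,\dots,v_{m-1}v_m$ are precisely the $m$ edges of the cycle. The lemma's spacing hypothesis is met: the only repeated vertex is $v_0=v_m=w_1$, and the pair $(0,m)$ has $|0-m|=m\not<\max(m,2)=m$, so the identification $v_0=v_m$ is permitted, while every other required pair is distinct because the $w_i$ are distinct. Conditioning on $\sigma_{w_1}=i$ (so $\sigma_{v_0}=\sigma_{v_m}=i$), the lemma gives that the $m$ edges are all present with probability $e_i\cdot P^{-1}(PW)^m e_i=\tfrac{1}{p_i}\big((PW)^m\big)_{ii}$, where $W=Q/n$. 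Averaging over $\sigma_{w_1}$, which is $i$ with probability $p_i$, the factor $p_i$ cancels:
\begin{align*}
\rho&=\sum_{i=1}^k p_i\cdot\frac{1}{p_i}\big((PW)^m\big)_{ii}=\tr\big((PW)^m\big)\\
&=n^{-m}\,\tr\big((PQ)^m\big)=n^{-m}\sum_{i=1}^k\lambda_i^m,
\end{align*}
using $PW=PQ/n$ and that the eigenvalues of $(PQ)^m$ are the $\lambda_i^m$ counted with multiplicity.

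Multiplying the count of potential cycles by $\rho$ gives that the expected number of $m$-cycles equals $\tfrac{1}{2m}\cdot\tfrac{n!}{(n-m)!}\cdot n^{-m}\sum_{i=1}^k\lambda_i^m$, which is asymptotic to $\tfrac{1}{2m}\sum_{i=1}^k\lambda_i^m$ since $\tfrac{n!}{(n-m)!}\sim n^m$. I do not expect a real obstacle here; the two points needing care are the bookkeeping of the $2m$-element automorphism group of an $m$-cycle, and the observation that the lemma's spacing condition, phrased with $\max(m,2)$ rather than $m+1$, is exactly what licenses setting $v_0=v_m$ to convert a path count into a cycle count.
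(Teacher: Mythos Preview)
Your proof is correct and follows essentially the same approach as the paper: count ordered $m$-tuples of distinct vertices, divide by the $2m$ symmetries of a cycle, apply the preceding lemma to the closed walk $w_1,\dots,w_m,w_1$, average over the community of the anchor vertex so that the $p_i$ and $1/p_i$ cancel to give $\tr((PQ/n)^m)$, and use $\tfrac{n!}{(n-m)!}\sim n^m$. Your write-up is slightly more careful than the paper's in explicitly verifying that the lemma's spacing condition $|i-j|<\max(m,2)$ permits $v_0=v_m$, which is indeed the point that makes the path lemma applicable to a cycle.
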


\begin{proof}
There are $n(n-1)...(n-m+1)=\Theta(n^m)$ possible serieses of $m$ distinct vertices in the graph, and each cycle has $2m$ possible choices of a starting vertex and a direction. The starting vertex is in community $i$ with probability $p_i$. So, the expected number of length $m$ cycles is asymptotic to
\begin{align*}
\frac{n^m}{2m}\sum_i p_i e_i \cdot P^{-1} (PQ/n)^m e_i&=\frac{1}{2m}\sum_i e_i \cdot (PQ)^m e_i\\
&=\frac{1}{2m}\mathrm{Tr}((PQ)^m)\\
&=\frac{1}{2m} \sum_{i=1}^k \lambda_i^m.
\end{align*} 
\end{proof}

\begin{proof}[Proof of Lemma \ref{learn}]
The variance in the number of cycles is asymptotic to the mean, so the expected difference between the actual number of cycles of a given size and the expected number is proportional to the square root of the expected number. In the symmetric SBM where two vertices in the same community are connected with probability $a/n$ and two vertices in different communities are connected with probability $b/n$, this means that with high probability, the number of cycles of length $m$ is
\[ \frac{1}{2m}\left(\frac{a+(k-1)b}{k}\right)^m+\frac{k-1}{2m}\left(\frac{a-b}{k}\right)^m  + O\left(\left(\frac{a+(k-1)b}{k}\right)^{m/2}/\sqrt{m}\ln(n)\right).\]
Now, assume that $\left(\frac{a-b}{k}\right)^2>\frac{a+(k-1)b}{k}$. Clearly $\frac{a+(k-1)b}{k}$ can be computed up to an error of $O(1/\sqrt{n})$ by counting the edges in the graph, so given the number of cycles of length $m$ for all $m\le M=\omega(1)$, one can determine $(a-b)/k$ and $k$ with error asymptotic to $0$, which provides enough information to determine $a$ and $b$ with error asymptotic to $0$. 

In order to obtain an efficient estimator, one can count non-backtracking walks instead of cycles. 
For $m=o(\log_d^{1/4}(n))$, the neighborhood at depth $m$ of a vertex in $\sbm(n,k,a,b)$ contains at most one cycle with high probability. Thus the number of cycles of length $m$ in $\sbm(n,k,a,b)$ is with high probability equal to $\sum_{v \in [n]} C_v$ where $C_v$ is the number of non-backtracking walks of length $m$ starting and ending at $v$. This is efficiently computable as shown in \cite{Mossel_SBM1}.
\end{proof}

%\section{Open problems}

\bibliographystyle{amsalpha}
\bibliography{gen_sbm}

\end{document}